\renewcommand{\Re}{\mathop{\rm Re}\nolimits}
\renewcommand{\Im}{\mathop{\rm Im}\nolimits}
\theoremstyle{plain} \newtheorem{theorem}{Theorem}[section]
\newtheorem{lemma}[theorem]{Lemma}
\newtheorem{proposition}[theorem]{Proposition}
 \theoremstyle{definition}
\newtheorem{definition}[theorem]{Definition} \theoremstyle{remark}
\newtheorem{remark}[theorem]{Remark} 
\newcommand{\R}{{\mathbb R}}
\newcommand{\Z}{{\mathbb Z}}
\newcommand{\N}{{\mathbb N}}
\newcommand{\resto}{{\mathcal R}} \def\im{{\rm i}}
\newcommand{\C}{\mathbb{C}}
\def\({\left(}
\def\){\right)}
\def\<{\left\langle}
\def\>{\right\rangle}
\numberwithin{equation}{section}
\begin{document}

  \title{ On   small energy stabilization     in the NLS with a trapping potential }

 \author {Scipio Cuccagna, Masaya Maeda}

 \maketitle
\begin{abstract}  We describe the asymptotic behavior of small energy solutions of an NLS with a trapping potential  generalizing work of Soffer and Weinstein, and of Tsai and Yau.  The novelty is that we allow generic  spectra associated to the potential.  This is yet a new application of the idea of interpreting the \textit{nonlinear Fermi Golden Rule}  as a consequence of the Hamiltonian structure. \end{abstract}

\section{Introduction}\label{sec:intr}
We consider  the initial value problem
\begin{equation}\label{eq:NLS}
\im u_t = H u + |u|^2  u,\ (t,x)\in \R^{1+3}, \quad u(0)=u_0
\end{equation}
where $H=-\Delta + V$.    For $f,g:\R^3\to \C$ we introduce the bilinear form
 \begin{equation}\label{eq:bilf}
    \langle f , g \rangle = \int _{\R ^3} f(x) g(x) dx.
 \end{equation}
We assume the following.
\begin{itemize}
  \item[(H1)]
$V\in\mathcal{S}(\R^3)$, where $\mathcal S (\R^3)$ is the space of Schwartz functions.

\item[(H2)]
$\sigma_p(H)=\{e_1<e_2< e_3 \cdots < e_n<0\}$.   Here we assume that all the eigenvalues have multiplicity 1.
 0 is neither an eigenvalue nor a resonance (that is, if $(-\Delta +  V)u=0$  with $u\in  C^\infty$ and     $|u(x)|\le C|x|^{-1} $ for a fixed $C$, then $u=0$).

\item[(H3)] There is an $N\in \N$ with $N > |e_1| (\min \{ e_i -e_j : i>j \}) ^{-1}$
s.t.
if  $\mu\in \Z^n$  satisfies
  $|\mu| \leq 4N +8$ and $\mathbf{e} := ({e} _{ 1},\dots , {e} _{n})$,  then we have
$$ \mu \cdot \mathbf{e} :=
\mu _1 {e} _{ 1} +\dots +\mu _n {e} _{n} =0 \iff \mu=0\ .
$$
\item[(H4)]  The following  Fermi Golden Rule (FGR)  holds: the expression \begin{equation*}       \sum _{
L \in \Lambda  }
    \langle \delta ( {H}-L)
   \overline{G} _L(\zeta ),   {G} _L(\zeta )\rangle
 ,
\end{equation*}
which is defined    in the course of the paper (for  $\Lambda \subset \R _+$   see \eqref{eq:FGR22} and  for  $ {G} _L$ see \eqref{eq:pos2})
and which is always nonnegative,
satisfies formula \eqref{eq:FGR}.

\end{itemize}

To each  $e_j$   we associate an eigenfunction $\phi _j$. We choose them s.t.  $\langle  \phi _j,\overline{\phi }_k\rangle  =\delta _{jk} $.
 Since we can,  we also   choose the $\phi _j$  to be all real valued.
To each  $\phi _j$ we associate   nonlinear bound states.

\begin{proposition}[Bound states]\label{prop:bddst}
Fix $j\in \{ 1,\cdots,n\}$. Then
$\exists a _0>0$ s.t.\ $\forall z \in     B_\C ( 0, a _0 )$, there is a unique  $  Q_{jz } \in \mathcal{S}(\R^3, \C ) := \cap _{t\ge 0}\Sigma _t (\R^3, \C )$  (where for the spaces $\Sigma _t  $ see  Sect. \ref{subsec:notation}), s.t.
\begin{equation}\label{eq:sp}
\begin{aligned}
&H Q_{jz } + |Q_{jz }|^2 Q_{jz }= E_{jz }Q_{jz } \quad , \quad
Q_{jz }=  z \phi_j + q_{j z}, \ \langle q_{j z},\overline{\phi}_j\rangle =0,
\end{aligned}
\end{equation}
and s.t. we have for any $r\in \N$:
 \begin{itemize}
\item[(1)]    $(q_{jz },E_{jz }) \in C^\infty ( B_\C ( 0, a _0 ), \Sigma _r\times \R )$;   we have $q_{jz } =  z \widehat{q}_{j  } (|z|^2)$ , with
$ \widehat{q}_{j  } (t^2 ) =t ^2\widetilde{q}_{j }(t^2)$,   $\widetilde{q}_{j } (t ) \in C^\infty (   ( - {a _0 }^{2}, {a _0 }^{2}), \Sigma _r (\R ^3, \R ) )$  and  $E_{jz }  =E_{j } (|z|^2)$ with $E_{j } (t ) \in C^\infty (   ( - {a _0 }^{2}, {a _0 }^{2}),   \R  )$;
\item[(2)]    $\exists$ $C >0$ s.t.
$\|q_{jz }\|_{\Sigma _r} \leq C |z|^3$, $|E_{jz }-e_j|<C | z|^2$.

\end{itemize}

\end{proposition}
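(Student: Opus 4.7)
The plan is to construct $Q_{jz}$ by a Lyapunov--Schmidt reduction around the linear eigenpair $(e_j, \phi_j)$, together with an application of the implicit function theorem in the scale of spaces $\Sigma_r$. First I would substitute the ansatz $Q = z\phi_j + q$ and $E = e_j + \varepsilon$ into \eqref{eq:sp}, obtaining the equivalent system
\begin{equation*}
(H - e_j) q + |z\phi_j + q|^2 (z\phi_j + q) = \varepsilon (z\phi_j + q), \qquad \langle q, \overline{\phi}_j\rangle = 0.
\end{equation*}
Projecting onto $\phi_j$ (using $\langle \phi_j,\overline{\phi}_j\rangle = 1$) isolates a scalar equation for $\varepsilon$ of the form $\varepsilon \, z = \langle |z\phi_j+q|^2 (z\phi_j+q),\overline{\phi}_j\rangle + \varepsilon \langle q,\overline{\phi}_j\rangle$, while projecting onto the range of $P_j^{\perp} := I - \phi_j\langle \cdot,\overline{\phi}_j\rangle$ leaves an equation for $q$ in which the operator $(H-e_j)$ is boundedly invertible on the complement of $\overline{\phi}_j$.

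Next, using (H1)--(H2), the operator $R_j := \bigl( (H-e_j)|_{\mathrm{Ran}\,P_j^{\perp}} \bigr)^{-1} P_j^{\perp}$ maps $\Sigma_r$ to $\Sigma_r$ (via elliptic regularity for $H$ and the Schwartz decay of $V$ and $\phi_j$). I would then set up the fixed-point problem
\begin{equation*}
F(q,\varepsilon,z) := \bigl( q + R_j\bigl[|z\phi_j+q|^2(z\phi_j+q) - \varepsilon (z\phi_j+q)\bigr] ,\ \varepsilon z - \langle |z\phi_j+q|^2(z\phi_j+q) - \varepsilon q,\overline{\phi}_j\rangle \bigr) = 0
\end{equation*}
on a neighborhood of $(0,0,0)$ in $\Sigma_r \times \R \times \C$. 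The Fr\'echet derivative at the origin is the identity on the first factor and nondegenerate in $\varepsilon$ in the direction of the second equation (once divided by $z$; formally, one works with rescaled variables $q = z^3 \widetilde{q}$, $\varepsilon = |z|^2 \widetilde{\varepsilon}$ to absorb the fact that the nonlinearity is cubic). The implicit function theorem then yields a unique smooth solution $(q_{jz}, E_{jz} - e_j)$ with the stated regularity, and the estimate $\|q_{jz}\|_{\Sigma_r} \le C|z|^3$ together with $|E_{jz} - e_j| \le C|z|^2$ follows from the fact that the cubic nonlinearity first contributes at order $|z|^3$ in $q$ and $|z|^2$ in $\varepsilon$.

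To get the structural identities $q_{jz} = z\widehat{q}_j(|z|^2)$ and $E_{jz} = E_j(|z|^2)$, I would exploit two symmetries of the nonlinear eigenvalue problem. The $U(1)$ gauge invariance $Q \mapsto e^{\im\theta} Q$, $E \mapsto E$ combined with uniqueness implies $Q_{j,e^{\im\theta}z} = e^{\im\theta} Q_{jz}$ and $E_{j,e^{\im\theta}z} = E_{jz}$, hence both depend only on $|z|$ through $z$ and $|z|^2$ respectively. Complex conjugation symmetry of \eqref{eq:sp}, together with the reality of $\phi_j$ and $V$, similarly gives $Q_{j\overline{z}} = \overline{Q_{jz}}$, which shows that $\widehat{q}_j$ is real-valued. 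Writing $\widehat{q}_j(t^2) = t^2 \widetilde{q}_j(t^2)$ then records the first nonvanishing order in the expansion of $q_{jz}/z$, obtained by plugging the trivial first approximation $Q^{(0)} = z\phi_j$ into the nonlinearity and applying $R_j$.

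The main obstacle is to carry out the implicit function argument simultaneously in the whole scale $\Sigma_r$, $r\in\N$: one must check that the solution delivered in $\Sigma_{r_0}$ for some fixed $r_0$ actually lies in every $\Sigma_r$ (hence in $\mathcal{S}$), which requires a bootstrap using the mapping properties of $R_j$ and the fact that multiplication by Schwartz functions and the cubic map $u \mapsto |u|^2 u$ preserve each $\Sigma_r$. Beyond this regularity upgrade, the remaining steps are routine consequences of smoothness of the nonlinearity and of the implicit function theorem.
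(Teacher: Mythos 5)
Your proposal is correct and follows essentially the same route as the paper: an ansatz $Q_{jz}=z\phi_j+q$, $E_{jz}=e_j+\varepsilon$ with projection onto $\phi_j$ and its complement, a rescaling ($q\sim z|z|^2\psi$, $\varepsilon\sim|z|^2 f$) to remove the cubic degeneracy before applying the implicit function theorem in $\Sigma_r$, gauge invariance to obtain the $|z|^2$--structure, and a separate weighted bootstrap on the equation to upgrade the fixed-$r$ solution to $\mathcal{S}(\R^3)$ on a $z$-ball independent of $r$ (the paper's Lemmas \ref{lem:commutator}--\ref{lemA:2}). The only cosmetic difference is that the paper first reduces to real $z$ by gauge covariance and eliminates the eigenvalue correction explicitly, running the implicit function theorem in $\psi$ alone rather than in the pair $(q,\varepsilon)$.
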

For the  proof of Proposition \ref{prop:bddst}   see      Appendix \ref{app:bdstates}.

\begin{definition}\label{def:contsp}
Let    $b _0 >0$ be sufficiently small so that for $z_j\in B_{\C^{n }}(0,b _0) $, $Q_{j z_j}$ exists for all $j\in\{ 1,\cdots,n\}$.
For such $z_j$ and for $D_{jI}$ and $D_{jR}$  defined in Sect. \ref{subsec:notation}, we set
\begin{equation}\label{eq:contsp}
\begin{aligned}
\mathcal{H}_c[z]&=\mathcal{H}_c[ z_1\cdots,z_n]:=\left\{\eta\in L^2 :\  \Re \<\im \, \overline{\eta},D _{jR} Q_{j z_j}\>=\Re \<\im \, \overline{\eta} ,D_{jI}Q_{j z_j}\>=0 \text{ $\forall$ $j$}\right\}.
\end{aligned}
\end{equation}
In particular as an elementary consequence of
\eqref{eq:contsp} and Proposition \ref{prop:bddst}  we have
\begin{equation}\label{eq:contsp1}
\begin{aligned}
\mathcal{H}_c[0]  =\left\{\eta\in L^2 ;\    \<  \overline{\eta},\phi _j\>  =0\text{ for all $j$}\right\}.
\end{aligned}
\end{equation}
We denote by $P_c$ the orthogonal projection of $L^2$ onto $\mathcal{H}_c[0]$.
\end{definition}

A pair $(p,q)$   is
{\it admissible} when \begin{equation}\label{admissiblepair}
2/p+3/q= 3/2\,
 , \quad 6\ge q\ge 2\, , \quad
p\ge 2. \end{equation}
The following theorem is our main result.
\begin{theorem}\label{thm:small en}   Assume $(\mathrm{H1})$--$(\mathrm{H4})$.  Then there exist  $\epsilon _0 >0$ and $C>0$ such that  for $\epsilon =\| u (0)\| _{H^1}<\epsilon _0  $ the  solution  $u(t)$ of  \eqref{eq:NLS} can be written uniquely  for all times as
 \begin{equation}\label{eq:small en1}
\begin{aligned}&    u(t)=\sum_{j=1}^nQ_{j z_j(t)}+\eta (t) \text{ with $\eta (t) \in
\mathcal{H}_c[z(t)]$,}
\end{aligned}
\end{equation}
s.t.  there exist  a unique $j_0$, a
$\rho  _+\in [0,\infty )^n$ with $\rho_{+j}=0$ for $j\neq j_0$,
s.t.
  $| \rho  _+ | \le C  \| u (0)\| _{H^1}  $ and an $\eta _+\in H^1$
with $\|  \eta _+\| _{H^1}\le C  \| u (0)\| _{H^1},$
 s.t.
\begin{equation}\label{eq:small en3}
\begin{aligned}&     \lim_{t\to +\infty}\| \eta (t,x)-
e^{\im t\Delta }\eta  _+ (x)   \|_{H^1_x}=0   \quad  , \quad
 \lim_{t\to +\infty} |z_j(t)|  =\rho_{+j}  .
\end{aligned}
\end{equation}
 Furthermore we have $\eta = \widetilde{\eta} +A(t,x) $  s.t.
    for    all admissible pairs  $(p,q)$
 \begin{equation}\label{eq:small en2}
\begin{aligned}&     \| z \| _{L^\infty _t( \mathbb{R}_+ )}+ \|
\widetilde{ {\eta}}   \| _{L^p_t( \mathbb{R}_+,W^{1,q}_x)} \le C \| u (0)\| _{H^1}   \ , \\& \| \dot z _j +\im  e_{ j }z_j \|  _{L^\infty _t( \mathbb{R}_+ ) } \le C  \| u (0)\| _{H^1}^2\
\end{aligned}
\end{equation}
and s.t. $A(t,\cdot )\in \Sigma _2 $  for all $t\ge 0$ and
\begin{equation}\label{eq:small en4}
\begin{aligned}&      \lim_{t\to +\infty}\| A(t,\cdot )   \|_{\Sigma _2 }=0 .
\end{aligned}
\end{equation}

\end{theorem}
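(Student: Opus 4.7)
\emph{Plan of proof.} The strategy is to introduce modulation coordinates adapted to the bound-state manifold $\{\sum_j Q_{j z_j}\}$, bring the Hamiltonian into a Birkhoff normal form whose only unremovable coupling is the FGR interaction, and then extract dissipation from the normal form to force all but one of the discrete modes to die while the continuous part scatters.

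First, for $\|u(0)\|_{H^1}$ small, the implicit function theorem applied to the constraints defining $\mathcal H_c[z]$ in \eqref{eq:contsp} yields a unique decomposition \eqref{eq:small en1} with $\eta(t)\in\mathcal H_c[z(t)]$, valid as long as $\|u(t)\|_{H^1}$ stays small. Differentiating the constraints in $t$ and substituting \eqref{eq:NLS} produces modulation equations $\dot z_j = -\im e_j z_j + N_j(z,\eta)$ together with a Schr\"odinger-type equation $\im \dot\eta = H\eta + P_c[z] F(z,\eta)$. The NLS carries the Hamiltonian $E(u) = \tfrac12\langle Hu,\bar u\rangle + \tfrac14\|u\|_{L^4}^4$ and symplectic form $\omega(u,v) = \Im\langle u,\bar v\rangle$; the coordinates $(z,\bar z,\eta)$ are not canonical because the bound states depend on $z$, so I would run a Darboux-type reduction, smoothing in the $\eta$ direction, that brings $\omega$ to the standard form $\sum_j \im\, dz_j\wedge d\bar z_j + \Im\langle d\eta, d\bar\eta\rangle$ up to acceptable corrections.

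Next I would perform a Birkhoff/Poincar\'e--Dulac normal form. Using $(\mathrm{H3})$, successive canonical transformations eliminate every monomial $z^\mu\bar z^\nu$ of the discrete Hamiltonian with $|\mu-\nu| \le 4N+8$ and $(\mu-\nu)\cdot\mathbf e \neq 0$, and likewise remove mixed linear-in-$\eta$ terms whose effective frequency lies off the continuous spectrum of $H$. What survives to this order is (i) purely actional terms in the $|z_j|^2$ that only shift phases, and (ii) mixed terms $z^\mu\bar z^\nu\langle \eta, G_{\mu\nu}(\zeta)\rangle + \mathrm{c.c.}$ whose effective frequency $L = (\nu-\mu)\cdot\mathbf e$ lies in the continuous spectrum $[0,+\infty)$ of $H$; these are precisely the functions $G_L$ of $(\mathrm{H4})$. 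Computing $\tfrac{d}{dt}\sum_j|z_j|^2$ inside the normal form, solving the $\eta$-equation against $e^{-\im tH}$ by Duhamel, and integrating by parts in time produces the $\delta(H-L)$ on the continuous spectrum and yields
\[
\tfrac{d}{dt}\Phi(z,\eta)(t) = -\Gamma(z(t)) + R(t),
\]
where $\Gamma(z) \ge 0$ is, up to a positive factor, the FGR expression in $(\mathrm{H4})$ and $R \in L^1(\R_+)$ thanks to Strichartz and local-smoothing bounds on $\eta$. Hence $\Gamma(z(\cdot)) \in L^1(\R_+)$; by the nondegeneracy assumed in $(\mathrm{H4})$, this forces $(|z_1(t)|,\dots,|z_n(t)|)$ to approach a tuple $\rho_+$ on which $\Gamma$ vanishes, which is exactly the union of coordinate axes, giving the unique $j_0$ claimed in the theorem.

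Scattering then closes the argument. With $|z_j(t)|\to \rho_{+j}$ under control and the remaining nonlinear source in the $\eta$-equation lying in a dual Strichartz space, Strichartz yields the global bounds \eqref{eq:small en2} on $\widetilde\eta$; a Cook-type comparison produces $\eta_+\in H^1$ with $\|\eta(t) - e^{\im t\Delta}\eta_+\|_{H^1}\to 0$. The remainder $A(t,\cdot)\in\Sigma_2$ collects the spatially localized terms coming from bound-state cores and the auxiliary normal-form coordinates, whose sources decay in time, so $\|A(t)\|_{\Sigma_2}\to 0$ by dominated convergence. \emph{The main obstacle} is the Birkhoff/FGR step: the Darboux and normal-form changes of variables must be executed with enough control on their smoothing properties to make the residual coupling computable, and one must verify that the extracted nonnegative quadratic form coincides with the FGR expression of $(\mathrm{H4})$. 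The genericity built into $(\mathrm{H3})$ is what prevents resonant coupling between different discrete modes and underlies the single-mode selection.
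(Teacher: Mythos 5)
Your overall architecture (modulation decomposition, Darboux reduction, Birkhoff normal form, FGR dissipation, Strichartz scattering) matches the paper's, but there is a genuine gap at the normal form step, and it is precisely the point the paper identifies as the main difficulty. You propose to ``eliminate every monomial $z^\mu\bar z^\nu$ of the discrete Hamiltonian with $|\mu-\nu|\le 4N+8$ and $(\mu-\nu)\cdot\mathbf e\neq 0$'', i.e.\ a full Poincar\'e--Dulac normal form. This cannot be allowed here: eliminating a nonresonant monomial such as $z_1^2\bar z_2$ produces a canonical transformation with $z_2'=z_2+cz_1^2+\dots$, which violates the structure \eqref{eq:rel1} that the paper insists on. Since in this problem $z(t)$ does \emph{not} in general tend to $0$ (only the off-diagonal products $\mathbf Z=(z_i\bar z_j)_{i\neq j}$ do, and one $|z_{j_0}(t)|$ may converge to $\rho_{+j_0}>0$), a coordinate change of the forbidden type destroys the transfer of the asymptotics from the final variables back to the modulation parameters $z_j(t)$ of \eqref{eq:small en1}: from $|z_2(t)|\to 0$ in the new variables you could not conclude $|z_2'(t)|\to\rho_{+2}$ in the old ones. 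The paper therefore only removes monomials of the special form $\bar z_j\mathbf Z^{\mathbf m}$ and $\mathbf Z^{\mathbf m}$ with $\mathbf m\notin\mathcal M_j$, $\mathcal M_0$ (Definition \ref{def:normal form}, Theorem \ref{th:main}), and tracks all remainders in the symbol classes $\mathcal R^{i,j}_{K,M}$, $\mathbf S^{i,j}_{K,M}$ which vanish like powers of $|\mathbf Z|+\|\eta\|_{\Sigma_{-K}}$; this restricted elimination is what keeps every coordinate change of the form \eqref{eq:rel1}. Your proposal, as written, would reprove the asymptotics only for auxiliary variables that are not related in a usable way to the $z_j(t)$ of the statement.

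A second missing ingredient is needed even to start the restricted normal form: the expansion of the energy in the natural coordinates contains ``bad'' terms of size $O(|z|^2\,|\mathbf Z|)$ (the second line of \eqref{eq:enexp1}, cf.\ \eqref{eq:troub31}), which are not expressible as admissible symbols and whose elimination by a standard homological equation would again force coordinate changes violating \eqref{eq:rel1}. The paper's key new technical point is the Cancellation Lemma \ref{lem:KExp2}: after the (carefully chosen) Darboux transformation with the modified reference form $\Omega_0$ of \eqref{eq:defOm0}, all these $l=0$ terms cancel exactly, so the pulled-back Hamiltonian \eqref{eq:enexp10} has no such couplings and the iterative normal form of Theorem \ref{th:main} can proceed. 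Your phrase ``a Darboux-type reduction\dots up to acceptable corrections'' passes over exactly this cancellation; without it (or a substitute) the scheme does not close. The FGR/dissipation and scattering parts of your plan are in the right spirit (the paper's Lyapunov quantity is $\sum_j|e_j||\zeta_j|^2$, the FGR gives $L^2_t$ control of all monomials $z_j\mathbf Z^{\mathbf m}$, hence $\mathbf Z(t)\to 0$ and single-mode selection), but they are the routine part; the selection of monomials compatible with \eqref{eq:rel1} and the cancellation of the bad energy terms are the substance of the proof and are absent from your proposal.
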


\noindent As an interesting corollary to Theorem \ref{thm:small en} we   show rather simply that the excited states are \textit{orbitally unstable}.
We recall that   $e^{-\im t E _{jz}}Q_{jz}$   is called \textit{orbitally stable}  in $H^1(\R ^3)$ for  \eqref{eq:NLS}
if
\begin{equation}\label{eq:orbstab}
\begin{aligned}& \text{ $\forall$ $\varepsilon >0$  $\exists$ $\delta >0$  s.t. }       \| u_0 - Q_{jz} \| _{H^1(\R ^3)}< \delta \Rightarrow \sup _{t\in \R} \inf _{\vartheta \in \R }\| u (t) - e^{\im \vartheta }  e^{-\im t E _{jz}} Q_{jz} \| _{H^1(\R ^3)}<\varepsilon
\end{aligned}
\end{equation}
and is  orbitally unstable if \eqref{eq:orbstab} does not hold. We prove
what follows.

\begin{theorem}\label{thm:orbstab}  Assume  $(\mathrm{H1})$--$(\mathrm{H4})$.  Then
 there exists  $\epsilon _0 >0$  such that if  $j\ge 2$ and for $|z|<\epsilon _0$
 the standing wave $e^{-\im t E _{jz}}Q_{jz}$ is  orbitally unstable.  Furthermore
 $e^{-\im t E _{1z}}Q_{1z}$  is orbitally stable.

\end{theorem}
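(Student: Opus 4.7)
\medskip\noindent\textbf{Proof plan for Theorem~\ref{thm:orbstab}.}
I would treat the ground state $(j=1)$ and the excited states $(j\ge 2)$ separately. The orbital stability of $Q_{1z}$ for $|z|$ small is classical and does not require Theorem~\ref{thm:small en}: Proposition~\ref{prop:bddst} together with the spectral gap $e_2-e_1>0$ shows that $Q_{1z}$ is a strict local $H^1$-minimizer of the energy $E[u]=\frac12\langle Hu,u\rangle+\frac14\|u\|_{L^4}^4$ under the mass constraint $\|u\|_{L^2}^2=\|Q_{1z}\|_{L^2}^2$. Coercivity of the constrained Hessian together with conservation of $E$ and $M=\|\cdot\|_{L^2}^2$ then yields orbital stability by the standard Cazenave--Lions/Weinstein argument.

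For $j\ge 2$ I would argue by contradiction, using Theorem~\ref{thm:small en} as a black box. Suppose $Q_{jz}$ is orbitally stable for some small $|z|>0$. Pick any nonzero $w\in\mathcal{H}_c[0]$ and set $u_0=Q_{jz}+\sigma w$ with $\sigma>0$ small; by orbital stability the solution $u(t)$ remains within some $\varepsilon(\sigma)\to 0$ of the orbit $\{e^{\im\vartheta}e^{-\im t E_{jz}}Q_{jz}\}$ for all times. Applying Theorem~\ref{thm:small en} and using uniqueness of the decomposition \eqref{eq:small en1}, this closeness pins the surviving index to $j_0=j$, gives $|z_j(t)|\ge|z|/2$ for all $t\ge 0$, and keeps $|z_k(t)|$ for $k\ne j$ together with $\|\eta(t)\|_{H^1}$ uniformly small.

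The contradiction should come from the FGR dissipation identity behind Theorem~\ref{thm:small en}: the proof of that theorem should yield an a priori bound of the form
\begin{equation*}
\int_0^{\infty}\sum_{L\in\Lambda}\langle\delta(H-L)\overline{G}_L(\zeta(t)),G_L(\zeta(t))\rangle\,dt\ \lesssim\ \|u_0\|_{H^1}^2 .
\end{equation*}
Since $|z_j(t)|\ge|z|/2$ uniformly in $t$ and $j\ge 2$, hypotheses (H3)--(H4) should provide at least one resonant $L=\mu\cdot\mathbf{e}>0$ for which the integrand has a strictly positive lower bound uniform in $t$. Integrating over $[0,\infty)$ then forces the left-hand side to diverge, contradicting the bound, so $Q_{jz}$ with $j\ge 2$ cannot be orbitally stable.

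The main obstacle I foresee is making this FGR lower bound quantitative: one has to isolate inside $G_L$ the leading monomial in the $j$-th coordinate of $z$ that is responsible for the resonant coupling at $L$, and verify via (H3) that no cancellation with the other multi-indices $\mu$ can occur, so that (H4) truly provides a positive lower bound as soon as $|z_j|$ is bounded below and $j\ge 2$. Once this resonance bookkeeping is in place, the contradiction closes immediately and instability follows.
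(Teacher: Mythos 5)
Your treatment of the ground state is fine and is essentially the paper's own argument: for small $|z|$ one checks positivity of the constrained linearization (the operators $L_{\pm\rho}$) and invokes the classical stability theory of \cite{GSS}; Theorem \ref{thm:small en} is not needed there.

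For $j\ge 2$ there is a genuine gap at the step where you extract a contradiction from the FGR. The a priori bound you invoke is indeed available (it is \eqref{eq:crunch} together with \eqref{eq:FGR}), but the integrand has no positive lower bound on trajectories that stay near the $j$-th excited state. By Def.~\ref{def:setM}, every resonant monomial entering $G_L$ has the form $\overline z_k\,\mathbf{Z}^{\mathbf m}$ with $|\mathbf m|\ge 1$ (the case $\mathbf m=0$ is excluded since $-e_k<0$ fails), and each entry of $\mathbf Z$ is a product $z_i\overline z_l$ with $i\ne l$; hence every monomial indexed by $M$ contains at least one factor $z_k$ with $k\ne j$. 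In exactly the regime your orbital-stability assumption produces, namely $|z_j(t)|$ bounded below while $|z_k(t)|$ for $k\ne j$ and $\eta$ stay small (and may tend to zero), all these monomials are small, so $\sum_{L\in\Lambda}\langle\delta(H-L)\overline G_L(\zeta),G_L(\zeta)\rangle\sim\sum_{(\mu,\nu)\in M}|\zeta^{\mu+\nu}|^2$ tends to $0$ rather than being bounded below, and its time integral can perfectly well converge. This is not a technical obstacle in the "resonance bookkeeping": the FGR only damps products of \emph{distinct} discrete modes, which is precisely why Theorem \ref{thm:small en} allows the surviving index $j_0$ to be any $j$, ground or excited. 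No choice of $L$ in (H3)--(H4) can give the uniform lower bound you need, so no contradiction can come from the FGR alone.

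The paper closes the argument by a different, variational mechanism. Lemma \ref{lem:curve} constructs $\Psi(\varepsilon)=\beta(\varepsilon)Q_{jr}+\varepsilon\phi_1$ with $\|\Psi(\varepsilon)\|_{L^2}=\|Q_{jr}\|_{L^2}$ and $E(\Psi(\varepsilon))<E(Q_{jr})$ for $\varepsilon>0$, the energy gain coming from $e_1-e_j<0$. Taking $u_0=\Psi(\varepsilon)$ and assuming orbital stability, Theorem \ref{thm:small en} gives scattering to some $Q_{j\rho}$ plus free radiation $\eta_+$; mass conservation forces $r\ge\rho$, monotonicity of $r\mapsto E(Q_{jr})$ gives $E(Q_{j\rho})\ge E(Q_{jr})$, and energy conservation yields $E(Q_{jr})>E(\Psi(\varepsilon))=E(Q_{j\rho})+\|\nabla\eta_+\|_{L^2}^2\ge E(Q_{jr})$, a contradiction. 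Your generic perturbation $u_0=Q_{jz}+\sigma w$ with arbitrary $w\in\mathcal H_c[0]$ has no mechanism playing this role; the specific choice of the ground-state direction $\phi_1$, together with the mass-preserving rescaling $\beta(\varepsilon)$, is what makes the instability visible.
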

 Notice that
\cite{TY2,TY3,TY4,SW4,GW1,GW2,GP,NPT} contain only very partial proofs of
the instability of the 2nd excited state.   Theorem \ref{thm:orbstab}  will be proved in Sect.\ref{sec:stab} and until then, and in particular
in the sequel of this introduction, we will focus only on Theorem \ref{thm:small en}.

We recall that \cite{GNT} proved Theorem \ref{thm:small en}, for $|u|^2u$ replaced by more general
functions, in the case when  $ H $ has one eigenvalue (for the NLS with an electromagnetic potential we refer to \cite{koo}). The case of two eigenvalues is discussed in the series \cite{TY1,TY2,TY3} and in  \cite{SW4}, under more stringent conditions on the initial data, which are such that
  $\| u_0 \| _{H ^{k,s}}$ is  small for $k>2$ and  some $s$ large enough  in   \cite{SW4}
  and    $\| u_0 \| _{H ^{1} \cap L ^{2,s}}$ small for $s> 3$ in   \cite{TY1,TY2,TY3}
  .
    A crucial restriction in these papers is that $2e_2>e_1$.    They then  prove versions of  {Theorem} \ref{thm:small en} involving also rates of decay of $|z(t)|$,  of $\| \eta (t)\| _{L^\infty (\R ^3)}$ and of  $\| \eta (t)\| _{ L ^{2,s} (\R ^3)}$ for appropriate $s>0$.

   The  ideas used in proofs such as in   \cite{TY1,TY2,TY3,SW4} appear very difficult to extend
   to operators with more than 2 eigenvalues, where only partial results like in
    \cite{NPT} are known, and for initial data small only in $H^1$.  On one hand, the Poincar\'e Dulac normal form argument in these papers seems not suited to discuss
   the higher   order  FGR needed  when $2e_2<e_1$.  Furthermore, in these papers there is a subdivision
    of the evolution in distinct phases, which the solution enters in a somewhat irreversible
    fashion and which are considered one by one.    This
   division in distinct phases might become unclear in cases when  $u(t)$   oscillates  from one phase to the other, as it is not unlikely to happen in the  $H^1$ case, or
   when the passage from one phase to the other is very slow, as is certainly true in the $H^1$ case. Moreover, an increase in the number of eigenvalues of $H$  increases also the number of distinct phases
   that need to be accounted for and the complexity of the argument.
     So, any hope of proving Theorem \ref{thm:small en} should rely on an argument which yields the asymptotics in a single stroke and which does not
   distinguish distinct cases. This is what we do, see for example in  the second part of Sect. 6.
     We did not check if our method yields the decay estimates
   of   \cite{TY1,TY2,TY3,SW4} under more stringent conditions on $u_0$.

In the present paper we give a yet new application  of the interpretation of the FGR in terms
of the Hamiltonian structure of the equation. This interpretation was first introduced in
\cite{CuInst}  and was then applied in  \cite{bambusicuccagna} to generalize the result
of \cite{SW3}. It was later applied to the problem of asymptotic stability of ground states
of the NLS, first not allowing translation symmetries  in  \cite{Cu2}, and then with
translation in  \cite{Cu3}, see also \cite{Cu0}.

The link between  FGR  and  Hamiltonian structure  rests in the fact that the latter
yields algebraic identities between coefficients of different coordinates in the system
(compare the r.h.s. in \eqref{4.9} with the  second line in \eqref{eq:FGR01}).
   These   allow to show that some other coefficients  in the equations of the $z_j$'s  have a square power structure and  have a
fixed  sign (in the case of the NLS), see Lemma \ref{lem:pos1}.   This then yields   decay of the  $z_j$'s, except  at most for one of the $j$'s  here.
We refer to pp. 287--288  in \cite{Cu2} for the original  intuition behind this approach to the FGR, which   views the FGR as a simple consequence of   Schwartz's Lemma on mixed derivatives,
and    which has made possible   papers such as
  \cite{CuInst,bambusicuccagna,Cu2,Cu3,Cu0}, as well as  others.
For other applications of this theory we refer to  the references in \cite{Cu0}, \cite{CM}. We refer  also to \cite{Cu4}, whose treatment of the FGR is similar to the
one in this paper.  Earlier  treatments of  FGR,  are in   \cite{TY1,TY2,TY3,SW4} and,
still earlier, in   \cite{BP2,SW3}, but  they seem to work only in relatively simple cases,
because they run into trouble if the normal form argument requires more than very few steps.
For more references and comments see  \cite{Cu2}.

As we will see below, the FGR can be  seen relatively easily after one finds    an appropriate  effective Hamiltonian
  in the right system of coordinates.  This coordinate system is obtained by a normal form
  argument. Right from the beginning though, it is crucial to choose
  the right ansatz and system of coordinates. For example, since $H$ has eigenvalues, it would seem  natural to split the NLS \eqref{eq:NLS} into  a system using the  coordinates of  the spectral decomposition of
$H$, see \eqref{eq:specH}. However this would not be a good choice for our nonlinear system.  Following   \cite{GNT}, it is better to pick  as coordinates the $z_j$'s of
Prop.\ref{prop:bddst}, complementing them    with an  appropriate continuous coordinate.
 There is the    natural  ansatz  \eqref{eq:decomposition1}  (the same used in \cite{SW4}) which, following  \cite{GNT},
 can be used to
 obtain the  continuous coordinate,
 here denoted $\eta$ and introduced in Lemma \ref{lem:systcoo}.

Once we have coordinates $ (z,\eta )$ with $z= (z_1,...,z_n )$, where $z_1$ is the ground state coordinate,
$z_j$ for $j>1$    the excited states coordinates and $\eta$   the radiation coordinate,
Theor. \ref{thm:small en}  can be loosely paraphrased as follows:
 \begin{equation} \label{eq:selec} \begin{aligned} &
     \text{$\eta (t)\to  0$ in $H^1 _{loc} $ and $z _j (t)\to 0$ except at most for one $j$.}\end{aligned}  \end{equation}
 In particular, if   $z (t)\to 0$   the solution $u(t)$ of   \eqref{eq:NLS} scatters
like a  solution of $\im \dot u=-\Delta u$ in $H^1$. Otherwise there is   one $j$ such that
$u(t)$ scatters to a $e^{\im \vartheta (t)}Q _{z_{+j}}$, with $\vartheta (t)$ a phase term which we do not control here. We have
  convergence by scattering  to a ground state if $j=1$, and to an excited state if $j>1$. The latter presumably occurs for the $u(t)$ whose trajectory is contained in an appropriate manifold, see \cite{TY4,Beceanu,GP}.

  It is not easy to see \eqref{eq:selec}      in the initial coordinate system. So we need
  a Birkhoff normal form  argument  to identify an   effective Hamiltonian, like in \cite{bambusicuccagna}.  Unlike \cite{bambusicuccagna} and  like in \cite{Cu2},   the initial coordinates, while quite natural from the point of view of  the NLS \eqref{eq:NLS},
  are not   Darboux coordinates
  for the natural symplectic   form  $\Omega $ in the problem, see \eqref{eq:Omega}. Hence before
  doing normal forms,
  we have first to implement the Darboux theorem to   diagonalize the problem (of course the coordinates arising
  from the spectral decomposition of $H$, see \eqref{eq:specH}, are Darboux coordinates,
  but as we wrote they are not suited for our nonlinear asymptotic analysis). So in this paper   we need to perform
  a number of coordinate changes: first a  Darboux Theorem and then normal
  form  analysis.  At the end of the process we get new coordinates
 $ (z_1,...,z_n,\eta )$   where the Hamiltonian is  sufficiently simple that we can prove
 \eqref{eq:selec}  relatively easily
 using the FGR  (which tells us that all $z_j$'s, except at most one, are damped) and a semilinear NLS for $\eta $ which shows scattering of $\eta$ because of linear
 dispersion.
  In the context of the theory developed in \cite{bambusicuccagna,Cu2} and other literature, the work in the last system of coordinates, that is all the material in Sect.\ref{sec:disp}, is rather routine.

Having proved
\eqref{eq:selec} for the   last system of coordinates $(z,\eta)$, the obvious  question
 is why  \eqref{eq:selec} should hold, as Theorem \ref{thm:small en} is saying,  also for the initial coordinates, which we now denote by $(z',\eta ')$, to distinguish them
 from the final coordinates  $(z,\eta )$.   Keeping in mind that
 all coordinate changes are small
 nonlinear perturbations of the identity,  the only simple reason why this might happen is that  different coordinates must be related in  the form
 \begin{equation} \label{eq:rel1}\begin{aligned} &   z'_1 = z_1 + O( z \eta ) + O(\eta ^2) + \sum _{i\neq j}O( z_i z_j )  , ...,  z'_n = z_n + O( z \eta ) + O(\eta ^2)  + \sum _{i\neq j}O( z_i z_j ), \\& \eta ' =\eta + O( z \eta ) + O(\eta ^2) + \sum _{i\neq j}O( z_i z_j ) .  \end{aligned}
\end{equation}
This relation between any two systems of coordinates  forbids relations  like $z_1'=z_1+z_2^2$ etc.
 Indeed, with the  latter relations it would not be true (except for the case $z(t)\to 0$)
that   \eqref{eq:selec} for
 $(z ,\eta  )$ implies   \eqref{eq:selec} for
 $(z',\eta ')$. So our main strategy is to prove   \eqref{eq:selec} for the final  $(z ,\eta  )$ with some relatively standard method using FGR and linear dispersion,
 and to be careful to implement only coordinate changes like in \eqref{eq:rel1}.
This latter point is the novel problem we need to face in this paper.  It is not obvious from the outset that  \eqref{eq:rel1} should hold.

As we wrote above, \cite{GNT} suggests a very natural choice of functions $z_j$, based
on  {Proposition} \ref{prop:bddst} which can be   completed in a
system of independent coordinates. Loosely speaking, the $z_j$'s have the problem that  they are defined somewhat independently to each other.
This shows up in the expansion of the Hamiltonian in Lemma \ref{lem:EnExp},  with a    certain lack of decoupling inside the energy between
distinct $  z_j$'s, see \eqref{eq:troub31} and Remark \ref{rem:crux1}. This leads in
\eqref{eq:enexp1} (see the 2nd line) to terms whose elimination in a normal form argument
would seem incompatible with coordinate changes satisfying \eqref{eq:rel1}. These bad terms
of the Energy
can be better seen in  \eqref{eq:enexp10}: they are the $l=0$ terms in the 2nd line.
Other additional bad terms   arise in the course of the Darboux theorem transformation.
Bad terms  in the differential form $\Gamma $  in \eqref{eq:alpha1} (used in the classical formula \eqref{eq:fdarboux}) are  those in $I_1$ in
\eqref{eq:bad1}. Specifically  they are   the first term in the r.h.s. of \eqref{eq:bad1}.
The   r.h.s. of \eqref{eq:Omegahat1} is also filled with bad terms in the sense that they yield   a coordinate change $\mathfrak{F}$ in Lemma \ref{lem:darflow0} leading to more
$l=0$ terms in the 2nd line  in  \eqref{eq:enexp10}. Specifically, they  originate from
 the pullback $\mathfrak{F} ^*\sum _{j=1}^{n}E (Q_{j z_j}) $ of the 1st term in
 the r.h.s. of \eqref{eq:enexp1} (more bad terms seem to arise if we use $\Omega _0'$, see
 \eqref{eq:defOm0pr} rather than the slightly more complicated   $\Omega _0$, see
 \eqref{eq:defOm0}, as local model of $\Omega$).  In a  somewhat  empirical fashion,
 for which we don't have   a simple conceptual reason, a plain
    and simple computation  shows  that all the bad terms cancel out
 and that there are no $l=0$ terms in  \eqref{eq:enexp10}. This is proved in the Cancelation Lemma \ref{lem:KExp2}, which is the main new ingredient  in the paper. This lemma proves that
 the change of coordinates designed to diagonalize $\Omega$,   is also decoupling the discrete
 coordinates inside the Hamiltonian.    From that point on,  the structure
 \eqref{eq:rel1} for the coordinate changes is automatic and the various steps of the proof
 of Theorem \ref{thm:small en}  are  similar to arguments such as \cite{Cu0,Cu4}  which have been repeated in a number of papers. So they are fairly standard,  even though
 we are  able to   discuss them only in a    rather technical way. We have to go into the details of the proof, rather than refer to the references,    because of some technical
 novelties required by the fact that in general  $z\not \to 0$, and what converges to 0
 is instead the vector $\mathbf{Z}$ introduced in Def.\ref{def:comb0}, whose components
 are  products of distinct components of $z$.

 In the second part of Sect. 6
 the FGR  and the asymptotics of the $z_j$'s in the final coordinate system are rather simple
 to see in a single stroke. Furthermore,
   Theorem \ref{thm:mainbounds} is more or less the same of \cite{Cu2,Cu4}.

  One limitation in our present paper is that we do not generate
examples of equations which satisfy Hypothesis (H4).  Notice though that our result, for solutions only in $H^1$,
is new even in the 2 eigenvalues case of  \cite{TY1,TY2,TY3,SW4} where our FGR  is the same.
 Still,
we believe    that (H4) holds
for generic $V$. And    even if it fails at one stage, this is not necessarily a problem:
the strict positive sign  in the FGR  is  only an obstruction at performing further
the normal form
argument, so if  there is a 0, in principle it is enough  to  proceed with some further
coordinate change until, after a finite number of steps, there will finally be
a positive sign in the   FGR, and so the stabilization will occur, just at a slower rate.
And if the FGR is always 0, then maybe this is because the NLS has a special structure, see
p.69 \cite{SW3} for some thoughts.

  Prop. 2.2 \cite{bambusicuccagna}
  proves    validity  in general of
the   FGR.  Transposing here that proof   would require replacing the cubic nonlinearity with
 a more general nonlinearity $\beta (|u|^2)u$. This   seems rather simple to do because the cubic power     is only used to simplify the discussion in Lemma \ref{lem:EnExp}. But it is not so clear  how to offset here the absence of  a meaningful mass term $m^2u$, which in  \cite{bambusicuccagna}
 pp. 1444--1445, by choosing $m$ generic, is used to move some    appropriate spheres in phase space. Adding   to the NLS  a term $m^2u$   would not    change the spheres here.

We  reiterate that  Proposition \ref{prop:bddst} is valid for small $z_j\in \C$. As $z_j$ increases there are interesting symmetry breaking bifurcation  phenomena, see \cite{kirr,KirrKevPel}
and therein and see also  \cite{FS,Grecchi,Sacchetti} and therein for
  the semiclassical
NLS. Notice that   Theorem \ref{thm:small en} should allow to prove asymptotic breakdown of the beating motion
in the case $\mu _\infty =0$ in   \cite{Grecchi}.
  \cite{Goodman,Marzuola} consider finite dimensional approximations
of the solutions at energies close to the   symmetry breaking  point of \cite{kirr}
and prove the long time existence of  interesting patterns for the full NLS.
Unfortunately, it is beyond the scope of our analysis, and it remains an interesting open problem, to understand the eventual asymptotic behavior
of the solutions in  \cite{Goodman,Marzuola}.

\section{Notation, coordinates and resonant sets} \label{section:set up}

\subsection{Notation}
\label{subsec:notation}
\begin{itemize}
\item We denote by $\N =\{ 1,2,...\}$  the set of natural numbers  and  set $\N_0= \N\cup \{ 0\}  $.
\item
We denote $z=( z_1,\dots ,z_n)$, $|z|:=\sqrt{\sum_{j=1}^n|z_j|^2}$.
%\item
%If $a\leq Cb$ for some constant $C$ which is independent of the quantities which we are concerned.
%Then we write
%$a\lesssim b$.
%If $a\leq C_m b$ for some constant $C_m$ which depends on $m$.
%Then we write $a\lesssim_m b$.
%\item
%If $a\lesssim b$ and $b\lesssim a$ we write $a \sim b$.

\item Given a Banach space $X$, $v\in X$ and $\delta>0$ we set
$
B_X(v,\delta):=\{ x\in X\ |\ \|v-x\|_X<\delta\}.
$

\item
Let $A$ be an operator on $L^2(\R^3)$.
Then $\sigma_p(A)\subset \C$ is the set of eignvalues of $A$ and $\sigma_e(A)\subset \C$ is the   essential spectrum of $A$.

\item
For $\mathbb{K}=\R , \C $
we     denote
by $\Sigma _r=\Sigma _r(\R ^3,  \mathbb{K}) $ for $r\in \N\cup \{ 0\}$  the Banach spaces  defined by  the completion  of $ C_c (\R ^3,  \mathbb{K})$  by the norms
\begin{equation}
\begin{aligned} &
     \| u \| _{\Sigma _r} ^2:=\sum _{|\alpha |\le r}  (\| x^\alpha  u \| _{L^2(\R ^3   )} ^2  + \| \partial _x^\alpha  u \| _{L^2(\R ^3, \mathbb{K}   )} ^2 ).
\end{aligned}\nonumber
\end{equation}
For $m<0$ we  consider the  topological dual $\Sigma _m=(\Sigma  _{-m})'$.  Notice, see \cite{Cu3}, that the  spaces   $\Sigma _r $
can be equivalently defined using   for $r\in \R$    the norm
 $
      \| u \| _{\Sigma _r}  :=  \|  ( 1-\Delta +|x|^2)   ^{\frac{r}{2}} u \| _{L^2}        .   $

\item
$\mathcal{S}(\R^3)= \cap _{m\ge 0}\Sigma _m$ is the space of  Schwartz functions; $\mathcal{S}'(\R^3)= \cup _{m\le 0}\Sigma _m$ is the space of  tempered distributions.

\item We set $z_j = z_{j R}+ \im z_{j I}$ for $z_{j R}, z_{j I}\in \R$.

\item
For $f:\C^{n }\to \C$ set
$D_{jR} f(z):=\frac{\partial}{\partial z_{j R}}f(z)$, $D_{jI}f(z):=\frac{\partial}{\partial z_{j I}}f(z)$.

\item We set $\partial _{ l}:=\partial _{z_l}$  and  $\partial _{\overline{l} }:=\partial _{\overline{z}_l}$.
Here as customary   $\partial _{z_l}  = \frac 12 (D _{lR}-\im  D _{lI} )$ and    $\partial _{\overline{z}_l}  = \frac 12 (D _{lR}+\im  D _{lI} )$.

\item Occasionally we use a single  index $\ell =j, \overline{j}$. To define $\overline{\ell}$  we use the convention $\overline{\overline{j}}=j$.
We will also write   $z_{\overline{j}}=\overline{z}_j$.

\item  We will consider vectors $z=(z_1, ..., z_n)\in \C^n$ and  for  vectors $\mu , \nu \in (\N \cup \{ 0 \} ) ^{n}$ we set $ z^\mu \overline{z}^\nu  := z_1^{\mu _1} ...z_n^{\mu _n}\overline{z}_1^{\nu _1} ...\overline{z}_n^{\nu _n}$.  We will set $|\mu |=\sum _j \mu _j$.

\item   We have  $dz_j  =dz_{jR}+\im dz_{jI}$,    $d\overline{z}_j  =dz_{jR}-\im dz_{jI}$.

%
%\item
%We define    the  operators $H _{jz_j}$ and $H _{z}$  acting on functions $v\in H^2$ as
%\begin{equation}\label{eq:lineariz}
%\begin{aligned}
%  & H _{jz_j}v :=Hv +   |Q_{j z _{j}}|^2 v +  2  Q_{j z_{j}} \Re (Q_{j z} \overline{v}) \ , \\&  H _{z}v :=Hv +
%  \sum _{j=1}^{n}  |Q_{j z_{j}}|^2 v +2 \sum _{j=1}^{n}   Q_{j z_{j}} \Re (Q_{j z_{j}} \overline{v})\ .
%  \end{aligned}
%\end{equation}

\item We consider the vector $ \mathbf{e}=(e_1,...., e_n)$ whose entries are the eigenvalues of $H$.

\item  $P_c$ is the orthogonal projection of $L^2$ onto $\mathcal{H}_c[0]$.

\item  Given two Banach spaces $X$ and $Y$ we denote by $B(X,Y)$ the   space
of bounded linear operators $X\to Y$ with the norm of the uniform operator topology.

\end{itemize}

\subsection{Coordinates}
\label{subsec:coord}

The first thing we need is an ansatz. This is provided by the following lemma.

\begin{lemma}\label{lem:decomposition}
There exists $c _0 >0$ s.t.  there exists a $C>0$ s.t. for all $u \in H^1$   with $\|u\|_{H^1}<c  _0 $, there exists a unique  pair $(z,\Theta )\in   \C^{n }\times  ( H^1 \cap \mathcal{H}_{c}[z])$
s.t.
\begin{equation}\label{eq:decomposition1}
\begin{aligned} &
u=\sum_{j=1}^nQ_{j z_j}+\Theta   \text{ with }  |z |+\|\Theta \|_{H^1}\le C \|u\|_{H^1} .
\end{aligned}
\end{equation}
% Given $r\in \N _0$  there exists    $C(r)>0$  s.t.  $\|u\|_{H^1}<c _0 $ implies
% \begin{equation}\label{eq:decomposition2}
%\begin{aligned} &
% \|\Theta \|_{\Sigma _r}\le  C(r) \|u\|_{\Sigma _r} .
%\end{aligned}
%\end{equation}
 Finally,
%for the open subset $U_r:=B _{H^1}(0, c _0 )\cap \Sigma _r$ of $\Sigma _r$,
 the map  $u \to (z,\Theta )$  is $C^\infty (B _{H^1}(0, c _0 ),
 \C ^n \times  H^1 )$   and  satisfies  the   gauge property  \begin{equation}\label{eq:decomposition3}
\begin{aligned} &
 \text{$z(e^{\im \vartheta} u)=e^{\im \vartheta} z( u)
$ and    $\Theta (e^{\im \vartheta} u)=e^{\im \vartheta} \Theta ( u)$ } .
\end{aligned}
\end{equation}

\end{lemma}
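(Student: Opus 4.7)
To prove Lemma \ref{lem:decomposition} I plan to apply the implicit function theorem to the map encoding the constraint $u - \sum_k Q_{kz_k} \in \mathcal{H}_c[z]$. Define $F: B_{\C^n}(0,\delta) \times B_{H^1}(0,\delta') \to \R^{2n}$ by
\[
F_{jR}(z,u) := \Re\bigl\langle \im\overline{u-\textstyle\sum_k Q_{kz_k}},\, D_{jR}Q_{jz_j}\bigr\rangle, \qquad F_{jI}(z,u) := \Re\bigl\langle \im\overline{u-\textstyle\sum_k Q_{kz_k}},\, D_{jI}Q_{jz_j}\bigr\rangle,
\]
for $j=1,\dots,n$. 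Proposition \ref{prop:bddst} says $z \mapsto Q_{jz}$ is $C^\infty$ into every $\Sigma_r$, so $F$ is $C^\infty$, and $F(0,0)=0$ because $Q_{j,0}=0$.

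The main computation is that $\partial_z F|_{(0,0)}$ is invertible as a map $\R^{2n} \to \R^{2n}$. From $Q_{jz_j} = z_j\phi_j + O(|z_j|^3)$ one reads $D_{jR}Q_{jz_j}|_{z=0} = \phi_j$ and $D_{jI}Q_{jz_j}|_{z=0} = \im\phi_j$; since $u - \sum_k Q_{kz_k}$ vanishes at the origin, the product rule collapses each entry of $\partial_z F|_{(0,0)}$ to a pairing of $\pm\phi_l$ or $\pm\im\phi_l$ against $\phi_j$ or $\im\phi_j$. The orthonormality $\langle \phi_j,\phi_l\rangle = \delta_{jl}$ (using that the $\phi_j$ are real) then gives a block-diagonal Jacobian with $n$ identical $2\times 2$ blocks $\bigl(\begin{smallmatrix} 0 & -1 \\ 1 & 0 \end{smallmatrix}\bigr)$, which is invertible. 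The IFT then supplies $c_0>0$ and a unique $C^\infty$ map $u \mapsto z(u) \in \C^n$ on $B_{H^1}(0,c_0)$ with $z(0)=0$; setting $\Theta(u):= u - \sum_j Q_{jz_j(u)}$, the bound $|z|+\|\Theta\|_{H^1}\le C\|u\|_{H^1}$ follows from smoothness together with $z(0)=0$ and the estimate $\|Q_{jz_j}\|_{H^1}\lesssim|z_j|$ from Proposition \ref{prop:bddst}(2).

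For the gauge property \eqref{eq:decomposition3} I plan to show that $\mathcal{H}_c[e^{\im\vartheta}z] = e^{\im\vartheta}\mathcal{H}_c[z]$ and then invoke uniqueness of the decomposition. The phase invariance of \eqref{eq:sp} together with the uniqueness in Proposition \ref{prop:bddst} forces $Q_{j,e^{\im\vartheta}w} = e^{\im\vartheta}Q_{jw}$; differentiating this in the real coordinates of $w$ by the chain rule yields
\[
D_{jR}Q_j\big|_{e^{\im\vartheta}z} = e^{\im\vartheta}\bigl(\cos\vartheta\, D_{jR}Q_{jz} - \sin\vartheta\, D_{jI}Q_{jz}\bigr),
\]
together with a companion identity for $D_{jI}$. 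The phase $e^{-\im\vartheta}$ from $\overline{e^{\im\vartheta}\eta}$ then cancels the $e^{\im\vartheta}$ from these derivatives by $\C$-bilinearity of $\langle\cdot,\cdot\rangle$, and the two real conditions defining $\mathcal{H}_c[e^{\im\vartheta}z]$, tested against $e^{\im\vartheta}\eta$, become a rotation of those defining $\mathcal{H}_c[z]$, tested against $\eta$. Consequently $(e^{\im\vartheta}z(u),\, e^{\im\vartheta}\Theta(u))$ satisfies the characterization of the decomposition of $e^{\im\vartheta}u$, so by uniqueness equals $(z(e^{\im\vartheta}u),\, \Theta(e^{\im\vartheta}u))$. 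The whole argument is essentially routine; the only mild delicacy is bookkeeping the real/complex structure in the two orthogonality conditions, and none of the cancellation obstructions emphasized later in the paper arises at this preparatory stage.
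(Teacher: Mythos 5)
Your proposal is correct and follows essentially the same route as the paper: the implicit function theorem applied to the orthogonality functionals $\Re\langle \im\,\overline{u-\sum_k Q_{kz_k}},D_{jA}Q_{jz_j}\rangle$ (whose $z$-Jacobian at the origin is invertible by $Q_{jz}=z\phi_j+O(|z|^3)$ and the orthonormality of the real $\phi_j$), followed by the covariance $Q_{j\,e^{\im\vartheta}z}=e^{\im\vartheta}Q_{jz}$ and the resulting identity $e^{\im\vartheta}\mathcal{H}_c[z]=\mathcal{H}_c[e^{\im\vartheta}z]$ together with uniqueness to get the gauge property. No essential differences from the paper's argument.
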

\proof
We consider  the functions
\begin{equation*}
\begin{aligned}&
  F _{jA} ( u, z):=\Re \langle    u- \sum_{l=1}^nQ_{l z_l} ,\im \, \overline{D _{jA} Q_{j z_j}}\rangle   \text{ for $A=R,I$}  .
\end{aligned}
\end{equation*}
We have  $  F _{jR} ( 0, 0) =F _{jI} ( 0, 0)=0$.  These functions are smooth in
$L^2\times B_{\C ^n} (0,  b _0)$  for   the $b _0$
in Def. \ref{def:contsp}.
 We have  $F _{jR} ( 0, z) =\Im z_j+O(z^3)$  and  $F _{jI} ( 0, z) =\Re z_j+O(z^3)$
by Proposition \ref{prop:bddst}. By the   implicit function theorem
there is a map $u\to z$ which is  $C^\infty ( B _{L^2}(0, c _0) ,
 \C ^n   )$ for a $c _0>0$ sufficiently small.   Set $\Theta := u-\sum_{j=1}^nQ_{j z_j}$. Then $\Theta
\in C^\infty ( B _{H^1}(0, c  _0)  ,
    H^1  )$.   The inequalities follow from $|z (u)|\le C \| u\|_{H^1}$ which follows from $z\in C^1$ and $z(0)=0$.
Formula    \eqref{eq:decomposition3}  follows  from
\begin{equation*}
\begin{aligned} &
e^{\im \vartheta}u=\sum_{j=1}^n e^{\im \vartheta}Q_{j z_j}+e^{\im \vartheta}\Theta   = \sum_{j=1}^n Q_{j e^{\im \vartheta}z_j}+e^{\im \vartheta}\Theta
\end{aligned}
\end{equation*}
and from the fact that  $\Theta \in \mathcal{H}_{c}[z]$  implies  $e^{\im \vartheta}\Theta \in \mathcal{H}_{c}[  z' ]$  where $z'=e^{\im \vartheta}z$.  This last fact is elementary.  Indeed, setting only for this proof $z_j=x_j+\im y_j$ and    $z_j'=x_j'+\im y_j'$, we have
\begin{equation*}
\begin{aligned}
 \Re \<\im    \overline{e^{\im \vartheta} \Theta }, \partial _{x'_j} Q_{j z_j'}\>  &= \partial _{x'_j}x_j
\Re \<\im    \overline{e^{\im \vartheta} \Theta }, e^{\im \vartheta} \partial _{x _j} Q_{j z_j }\>   +
\partial _{x'_j}y_j  \Re \<\im    \overline{e^{\im \vartheta} \Theta }, e^{\im \vartheta}\partial _{y_j} Q_{j z_j }\>  =0
\end{aligned}
\end{equation*}
if    $\Theta  \in \mathcal{H}_{c}[z]$. Similarly,  $\Re \<\im    \overline{e^{\im \vartheta} \Theta }, \partial _{y'_j} Q_{j z_j'}\>=0$.
Hence    $\Theta  \in \mathcal{H}_{c}[z]$  implies  $e^{\im \vartheta}\Theta  \in \mathcal{H}_{c}[  e^{\im \vartheta}z ]$.

\qed

\begin{definition}\label{def:comb0} Given $z\in \C^n$, we denote by $ \widehat{Z} $ the vector with entries $(  z_{i}  \overline{{z}}_j)$ with $i,j\in [1,n]$   ordered in lexicographic order. We denote by $\textbf{Z}$ the vector     with entries $(  z_{i}  \overline{{z}}_j)$ with $i,j\in [1,n]$   ordered in lexicographic order      but only with pairs of  indexes  with $i\neq j$.  Here  $\textbf{Z}\in   L$ with $L$ the subspace of $ \C ^{n_0} =\{  (a_{i,j} ) _{i, j =1, ..., n} : i\neq j   \}   $ where $n_0=n (n-1),$ with $(a_{i,j} ) \in  L $ iff $a_{i,j} = \overline{a}_{j,i}$ for all $i,j$.
 For  a multi index $\textbf{m}=\{  {m}_{ij}\in \N _0 :i\neq j  \}$ we set $\textbf{Z}^{\textbf{m}}=\prod  (z_{i}  \overline{{z}}_j) ^{ {m}_{ij}} $  and $ |\mathbf{m}| :=\sum _{i,j}m  _{ij} $.

\end{definition}

We need a system of independent coordinates, which
 the $(z,\Theta  )$
in   \eqref{eq:decomposition1}  are not. The following lemma is used to complete the $z$ with a continuous coordinate.

\begin{lemma} \label{lem:contcoo}
There exists
$d _0>0$ such that for any $z\in\C$ with $|z|<d _0$  there exists a $\R$--linear operator $R[z]:\mathcal{H}[0]\to \mathcal{H}_c[z]$ such that  $ \left. P_c\right|_{\mathcal{H}_c[z]}=R[z]^{-1}$,
with $P_c$ the orthogonal projection of $L^2$ onto $\mathcal{H}_c[0]$, see   Def. \ref{def:contsp}.  Furthermore, for $|z|<d_0$ and $\eta \in  \mathcal{H}_c[0] $, we have the following properties.

\begin{itemize}
\item[(1)]
$R[z]\in C^\infty (B _{\C^n}  (0, d _0 ), B  (
H^1,H^1 ) )$, with $B  (
H^1,H^1 )$ the Banach space of $\R$--linear bounded operators from $H^1$ into itself.

\item[(2)]  For any $r>0$, we have $\|(R[z]-1)\eta \|_{\Sigma _r}\le c_r |z	 |^2 \|\eta \|_{\Sigma _{-r}}$  for a fixed $c_r$.

\item[(3)]   We have the covariance property  $R[e^{\im \vartheta }z]= e^{\im \vartheta }R[z] e^{-\im \vartheta }.$

\item[(4)]    We have, summing on repeated indexes,
\begin{equation} \label{eq:contcoo21}
 R[z]\eta =\eta  +   (\alpha_j[z]\eta )\phi_j  \text{  with }    \alpha_j[z]\eta =\langle B_j (z), \eta \rangle + \langle C_j(z), \overline{\eta }\rangle
\end{equation}
where  $B_j (z)=\widehat{B}_j (\widehat{Z} )$ and $C_j(z) = z_{i}  {z}_\ell  \widehat{C}_{i\ell j}(\widehat{Z})$, for $\widehat{B}$  and $\widehat{C}_{i\ell j}$  smooth  and  the $\widehat{Z}$
 of Def. \ref{def:comb0}.

\item[(5)] We have for $r\in \R$ with $\textbf{Z}$   as in Def. \ref{def:comb0}\begin{equation} \label{eq:contcoo2}
\begin{aligned}  &   \| {B} _j  (z) + \partial _{\overline{z}_j}  \overline{q}_{jz_j} \|_{\Sigma _r}  + \| C _j  (z)-\partial _{ \overline{z}_j}   {q}_{jz_j} \|_{\Sigma _r}   \le c_r |\textbf{Z}	|^2.
\end{aligned}
\end{equation}

\end{itemize}
\end{lemma}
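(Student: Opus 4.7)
The plan is to construct $R[z]\eta$ explicitly by the ansatz $R[z]\eta = \eta + \sum_{j=1}^n \alpha_j[z]\eta\,\phi_j$, where the scalars $\alpha_j[z]\eta\in\C$ are determined so that $R[z]\eta \in \mathcal{H}_c[z]$. The two real conditions $\Re\langle\im\overline{R[z]\eta}, D_{jA}Q_{jz_j}\rangle = 0$ for $A=R,I$ combine, via Wirtinger calculus, into the single complex condition $\partial_{\overline{z}_j}\omega_j = \overline{\partial_{z_j}\omega_j}$ on $\omega_j(w) := \langle\overline{R[z]\eta}, Q_{jw}\rangle$ evaluated at $w=z_j$. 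Substituting the ansatz and using $\langle\overline{\eta},\phi_j\rangle=0$ (because $\eta\in\mathcal{H}_c[0]$) and $\langle q_{jz_j},\phi_j\rangle=0$ from Proposition \ref{prop:bddst}, one obtains the fixed-point system
\begin{equation}\label{eq:planfix}
\alpha_j = \langle\partial_{\overline{z}_j}q_{jz_j},\overline{\eta}\rangle - \langle\partial_{\overline{z}_j}\overline{q}_{jz_j},\eta\rangle + \sum_{k\neq j}\bigl[\overline{\alpha_k}\langle\phi_k,\partial_{\overline{z}_j}q_{jz_j}\rangle - \alpha_k\langle\phi_k,\partial_{\overline{z}_j}\overline{q}_{jz_j}\rangle\bigr].
\end{equation}
Since $\partial_{\overline{z}_j}q_{jz_j}=O(|z_j|^2)$ in $\Sigma_r$ (via $q_{jz_j}=z_j|z_j|^2\widetilde{q}_j(|z_j|^2)$), the linear map on $(\alpha,\overline{\alpha})$ defined by \eqref{eq:planfix} is a small perturbation of the identity for $|z|$ small, and hence invertible by Neumann series. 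This yields a unique $\alpha_j[z]\eta$ depending smoothly on $z$ and $\R$-linearly on $\eta$. The property $R[z]^{-1}=P_c|_{\mathcal{H}_c[z]}$ follows from $P_c\phi_k=0$ and $P_c\eta=\eta$ for $\eta\in\mathcal{H}_c[0]$.

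Next I would verify items (1)--(4) in short order. Item (1) is immediate from the smoothness of the coefficients in \eqref{eq:planfix} together with Proposition \ref{prop:bddst}; item (2) reads off $\|\alpha_j\| \le C|z|^2\|\eta\|_{\Sigma_{-r}}$ from the $O(|z|^2)$ size of the inhomogeneity and the Neumann series. For item (3) I would check that $e^{\im\vartheta}R[z]e^{-\im\vartheta}\eta$ satisfies the defining orthogonality conditions of $R[e^{\im\vartheta}z]\eta$, using $Q_{j,e^{\im\vartheta}z_j}=e^{\im\vartheta}Q_{jz_j}$, and conclude by the uniqueness proved above. For item (4), the canonical decomposition $\alpha_j[z]\eta = \langle B_j(z),\eta\rangle + \langle C_j(z),\overline{\eta}\rangle$ is forced by $\R$-linearity; the covariance of (3) yields $B_j(e^{\im\vartheta}z) = B_j(z)$ (gauge weight $0$, whence $B_j = \widehat{B}_j(\widehat{Z})$) and $C_j(e^{\im\vartheta}z)=e^{2\im\vartheta}C_j(z)$ (weight $+2$, whence $C_j = z_iz_\ell\,\widehat{C}_{i\ell j}(\widehat{Z})$).

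The heart of the lemma is item (5). The first Neumann iterate of \eqref{eq:planfix} with seed $\alpha=0$ gives $\alpha_j \approx \langle\partial_{\overline{z}_j}q_{jz_j},\overline{\eta}\rangle - \langle\partial_{\overline{z}_j}\overline{q}_{jz_j},\eta\rangle$, identifying the leading parts $B_j^{(0)}=-\partial_{\overline{z}_j}\overline{q}_{jz_j}$ and $C_j^{(0)}=\partial_{\overline{z}_j}q_{jz_j}$ demanded by (5). Reinserting this seed into the sum on the right of \eqref{eq:planfix} produces corrections of the form $\alpha_k^{(0)}\cdot\langle\phi_k,\partial_{\overline{z}_j}q_{jz_j}\rangle$ (and the analogue with $\overline{q}$) for $k\neq j$: the $\eta$-coefficients of $\alpha_k^{(0)}$ are of size $O(|z_k|^2)$ in $\Sigma_r$, while the scalar pairings are of size $O(|z_j|^2)$; since $j\neq k$, each correction is a product $|z_j|^2|z_k|^2$ with distinct indices, i.e.\ exactly $O(|\mathbf{Z}|^2)$. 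All higher Neumann iterates give only smaller contributions.

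The main obstacle is this final bookkeeping: one must ensure that no purely diagonal contribution (involving only $|z_j|^4$, which is not part of $|\mathbf{Z}|^2$) ever appears in the correction. This hinges on the cancellation $\langle\phi_j,\partial_{\overline{z}_j}q_{jz_j}\rangle = 0$, a direct consequence of $\langle q_{jz_j},\phi_j\rangle=0$ in Proposition \ref{prop:bddst}, which is exactly why the sum in \eqref{eq:planfix} runs only over $k\neq j$. Thus the piece of $B_j, C_j$ depending only on $|z_j|^2$ is captured exactly by $-\partial_{\overline{z}_j}\overline{q}_{jz_j}$ and $\partial_{\overline{z}_j}q_{jz_j}$, and only genuine cross-indexed remainders appear at order $|\mathbf{Z}|^2$, establishing (5).
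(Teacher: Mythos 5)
Your construction is, in substance, the paper's own: the ansatz $R[z]\eta=\eta+\sum_j(\alpha_j[z]\eta)\phi_j$, the linear system obtained from the orthogonality conditions defining $\mathcal{H}_c[z]$, its solution by a Neumann series for small $|z|$, the use of $\langle q_{jz_j},\phi_j\rangle=0$ (differentiated in $\overline{z}_j$) to remove the diagonal $k=j$ terms, and the identification $B_j\approx-\partial_{\overline{z}_j}\overline{q}_{jz_j}$, $C_j\approx\partial_{\overline{z}_j}q_{jz_j}$ with $O(|\mathbf{Z}|^2)$ errors are exactly the content of \eqref{eq:contcoo3}--\eqref{eq:contcoo2}. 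The only structural difference is the order of (3) and (4): the paper proves (4) first and deduces (3) from it, while you prove (3) by uniqueness and then try to deduce (4) from (3).

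That last step is where there is a genuine gap. From $B_j(e^{\im\vartheta}z)=B_j(z)$ and $C_j(e^{\im\vartheta}z)=e^{2\im\vartheta}C_j(z)$ you conclude ``whence $B_j=\widehat{B}_j(\widehat{Z})$'' and ``whence $C_j=z_iz_\ell\widehat{C}_{i\ell j}(\widehat{Z})$'', but this passage from $S^1$-invariance (resp.\ weight-two equivariance) of a smooth $\Sigma_r$-valued map to \emph{smooth dependence through the quadratic invariants} $\widehat{Z}$ is precisely the nontrivial content of claim (4); it is a Schwarz--Po\'enaru type statement in smooth invariant theory (here moreover for Banach-space-valued maps), not something forced by $\R$-linearity and the weights alone. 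Note that the paper is careful about exactly this kind of assertion: Appendix \ref{app:B} proves only finite-order expansions of gauge-invariant functions, with remainders, and the proof of Lemma \ref{lem:contcoo} never invokes an abstract invariant-theory theorem; instead it reads the structure off the explicit Neumann series \eqref{eq:contcoo31}, in which the block matrices depend only on $(|z_1|^2,\dots,|z_n|^2)$ and the off-diagonal blocks carry explicit $z^2$, $\overline{z}^2$ factors, so that the resulting coefficients are visibly of the form $\widehat{B}_j(\widehat{Z})$ and $z_iz_\ell\widehat{C}_{i\ell j}(\widehat{Z})$. Your argument is repaired the same way and with material you already have: in your fixed-point system the scalar coefficients are $\langle\phi_k,\partial_{\overline{z}_j}q_{jz_j}\rangle=z_j^2\times(\text{smooth in }|z_j|^2)$ and $\langle\phi_k,\partial_{\overline{z}_j}\overline{q}_{jz_j}\rangle=(\text{smooth in }|z_j|^2)$, and the inhomogeneities have the same weight structure, so tracking these factors through the Neumann series gives (4) directly (after which (3) follows as in the paper, or by your uniqueness argument). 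As a minor additional point, $P_c\circ R[z]=1$ on $\mathcal{H}_c[0]$ only gives one half of $R[z]^{-1}=P_c|_{\mathcal{H}_c[z]}$; surjectivity of $R[z]$ onto $\mathcal{H}_c[z]$ (equivalently $\mathcal{H}_c[z]\cap\operatorname{span}\{\phi_k\}=\{0\}$ for small $z$) should at least be remarked, though the paper is equally terse here.
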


\begin{proof}
Summing over repeated indexes, we search for a map $R[z]: L^2\to \mathcal{H}_c[z]$ of the form
\begin{equation*}
 R[z]f=f +   (\alpha_j[z]f)\phi_j  \text{  with }    \alpha_j[z]f=\langle B_j'(z), f\rangle + \langle C_j(z), \overline{f}\rangle
\end{equation*}
such that   $R[z]f \in \mathcal{H}_c[z]$  $\forall$$f\in L^2$. The latter condition    can be expressed as
\begin{equation*}
\begin{aligned}
  \Re \<  \overline{f}          ,\im D _{lA} Q_{lz_l} +      \langle \phi _j ,\im D _{lA} Q_{lz_l}\rangle   \overline{B}'_j
		- \langle \phi _j ,\im D _{lA} \overline{Q}_{lz_l}\rangle   C_j    \>  =0  \text{  for all $f\in L^2$}.
\end{aligned}
\end{equation*}
This and the following equalities
\begin{equation*}
\begin{aligned}  &    \langle \phi _j ,\im D _{lR} Q_{lz_l}\rangle  = \im \delta _{jl}+\langle \phi _j ,\im D _{lR} q_{lz_l}\rangle \ ,\quad
\langle \phi _j ,\im D _{lI} Q_{lz_l}\rangle  = - \delta _{jl}+\langle \phi _j ,\im D _{lI} q_{lz_l}\rangle ,
\\&   \langle \phi _j ,\im D _{lR} \overline{Q}_{lz_l}\rangle             =   \im \delta _{jl}+\langle \phi _j ,\im D _{lR} \overline{q}_{lz_l}\rangle \ ,  \quad   \langle \phi _j ,\im D _{lI} \overline{Q}_{lz_l}\rangle             =     \delta _{jl}+\langle \phi _j ,\im D _{lI} \overline{q}_{lz_l}\rangle  ,
\end{aligned}
\end{equation*}
yield the equalities
\begin{equation*}
\begin{aligned}  &
  D _{lR} Q_{lz_l} +       (\delta _{jl}+\langle \phi _j ,  D _{lR} q_{lz_l}\rangle)   \overline{B}'_j
		- ( \delta _{jl}+\langle \phi _j ,  D _{lR} \overline{q}_{lz_l}\rangle )   C_j =0 , \\&
\im D _{lI} Q_{lz_l} +       (-\delta _{jl}+\im \langle \phi _j ,   D _{lI} q_{lz_l}\rangle)   \overline{B}'_j
		- ( \delta _{jl}+\im \langle \phi _j ,  D _{lI} \overline{q}_{lz_l}\rangle )   C_j =0.
\end{aligned}
\end{equation*}
They can be rewritten  as
\begin{equation} \label{eq:contcoo3}
\begin{aligned}  &
\phi _l + \partial _{ l}  q_{lz_l} +       ( \delta _{jl}+\im \langle \phi _j ,  \partial _{ l} q_{lz_l}\rangle)   \overline{B}'_j
		-   \langle \phi _j ,  \partial _{ l}  \overline{q}_{lz_l}\rangle    C_j =0,\\&
 \partial _{\overline{l} }  q_{lz_l} +       \langle \phi _j ,  \partial _{\overline{l}}  q_{lz_l}\rangle    \overline{B}'_j
		- ( \delta _{jl}+\langle \phi _j , \partial _{\overline{l} }  \overline{q}_{lz_l}\rangle )   C_j =0 .
\end{aligned}
\end{equation}
For $z^2=\{  z_j ^2 \delta _{ij}\} $ and    $\overline{z}^2=\{  \overline{z}_j ^2 \delta _{ij}\} $
two $n\times n$ matrices,
the solution of this  system is of  the form
\begin{equation} \label{eq:contcoo31}
\begin{aligned}  &
 \begin{pmatrix}  \overline{B}'  \\  C \end{pmatrix}  =\sum _{m=0}^\infty   (-1)^m \begin{pmatrix}  \textbf{A}_1 & \overline{z}^2\textbf{A}_2   \\  z^2\textbf{A}_3  & \textbf{A}_4 \end{pmatrix}   ^m
 \begin{pmatrix}  u _1 \\  z^2u _2 \end{pmatrix}
\end{aligned}
\end{equation}
where $\textbf{A}_l=\textbf{A}_l (|z_1|^2, ...,|z_n|^2)$  are $n\times n$ matrices  and    $u_l=u_l (|z_1|^2, ...,|z_n|^2) $ are  $n\times 1$ matrices
 for $l=1$ (resp. $l=2$) with entries  $\phi _j + \partial _{ j} q_{jz_j}$
(resp. $\partial _{\overline{ j}}  q_{jz_j}$) as $j=1,...,n$.  This yields the structure
$\overline{B}' (z)= \widehat{{B}' }(\widehat{Z} )$ and $C _j(z) = z_{i}  {z}_\ell  \widehat{C}_{i\ell j}(\widehat{Z})$.

\noindent Using $\langle \phi _j ,    q_{jz_j}\rangle =0, $ we can rewrite
\eqref{eq:contcoo3}  in the form
\begin{equation}
\begin{aligned}  \label{eq:contcoo4}&
\overline{B}'_l = -\phi _l - \partial _{ l}  q_{lz_l} - \sum _{j\neq l} (\im \langle \phi _j ,  \partial _{ l} q_{lz_l}\rangle \overline{B}'_j-   \langle \phi _j ,  \partial _{ l}  \overline{q}_{lz_l}\rangle    C_j ),\\&
C_l =
\partial _{\overline{l} }  q_{lz_l} +  \sum _{j\neq l} (     \langle \phi _j ,  \partial _{\overline{l} }  q_{lz_l}\rangle    \overline{B}'_j
		- \langle \phi _j , \partial _{\overline{l} }  \overline{q}_{lz_l}\rangle )   C_j   .
\end{aligned}
\end{equation}
  By Proposition \ref{prop:bddst}   this implies
\begin{equation}   \label{eq:contcoo400}
\begin{aligned}  &   \| \overline{B}'_l +\phi _l  \|_{\Sigma _r}   +\| C_l \|_{\Sigma _r}    \le C |z_l|^2 .
\end{aligned}
\end{equation}
Reiterating this estimate, from \eqref{eq:contcoo4}  and for $B_l$ defined by the following formula, we get
\begin{equation*}
\begin{aligned}  &   \|   \overbrace{\overline{B}'_l +\phi _l -\sum _{j\neq l}  \im \langle \phi _j ,  \partial _{ l} q_{lz_l}\rangle \phi _{ j}  }^{\overline{B} _l}+ \partial _{ l}  q_{lz_l}   \|_{\Sigma _r}  \le  C |\mathbf Z|^2\\&
 \| C_l -
\partial _{\overline{l} }  q_{lz_l}  \|_{\Sigma _r}    \le     C |\mathbf Z|^2 .
\end{aligned}
\end{equation*}
 This yields \eqref{eq:contcoo2}.  Claim (3) follows by
\begin{equation}\label{eq:gau1}
\begin{aligned}
  \alpha _j [e^{\im \vartheta }z]\eta = e^{\im \vartheta } \alpha _j [z] e^{-\im \vartheta }\eta ,
\end{aligned}
\end{equation}
which in turn follows by claim (4). Indeed
\begin{equation*}
\begin{aligned} &
  \alpha _j [e^{\im \vartheta }z]\eta = \langle \widehat{B}_j (\widehat{Z} ), \eta \rangle + \langle  e^{2\im \vartheta } z_{i}  {z}_\ell  \widehat{C}_{i\ell j}(\widehat{Z}), \overline{\eta }\rangle   \\ =  &   e^{\im \vartheta }\langle \widehat{B}_j (\widehat{Z} ), e^{-\im \vartheta }\eta \rangle + e^{\im \vartheta }\langle    z_{i}  {z}_\ell  \widehat{C}_{i\ell j}(\widehat{Z}), \overline{e^{-\im \vartheta }\eta }\rangle
	=e^{\im \vartheta } \alpha _j [z] e^{-\im \vartheta }\eta .
\end{aligned}
\end{equation*}

\end{proof}

We are now able to define a system of coordinates near the origin  in  $L^2$.

\begin{lemma} \label{lem:systcoo}  For the $d  _0>0$ of Lemma \ref{lem:contcoo}
the map $(z,\eta )\to u$  defined,
%for $X=\Sigma _r , H^1$ for $r\in \N$,
by
\begin{equation} \label{eq:systcoo1}
 u=\sum_{j=1}^nQ_{j z_j}+R[z] \eta  \text{ for $(z,\eta )\in B_{\C^n}(0 , d _0) \times ( H^1\cap \mathcal{H}_c[0]) $}
\end{equation}
is  with values in   $H^1$  and  is   $C^\infty$.
Furthermore, there is a $d_1>0$ such that    for $(z,\eta )\in B_{\C^n}(0 , d _1) \times  (B_{H^1}(0 , d _1)\cap \mathcal{H}_c[0])$
the above map is a  diffeomorphism and
% there is a   constant $c_1=c_1(X)>1$ s.t.
\begin{equation} \label{eq:coo11}
  |z|+\| \eta \| _{H^1} \sim   \| u \| _{H^1}.
\end{equation}

\noindent
	Finally, we have the gauge properties  $u ( e^{\im \vartheta } z, e^{\im \vartheta } \eta )= e^{\im \vartheta } u (z,\eta ) $
	and    \begin{equation}\label{eq:detion31}
\begin{aligned} &
 \text{$z(e^{\im \vartheta} u)=e^{\im \vartheta} z( u)
$ and    $\eta (e^{\im \vartheta} u)=e^{\im \vartheta} \eta ( u)$ } .
\end{aligned}
\end{equation}

\end{lemma}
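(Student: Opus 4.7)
The strategy is to view \eqref{eq:systcoo1} as defining a map
\[
 F(z,\eta) := \sum_{j=1}^n Q_{jz_j} + R[z]\eta ,
\]
which is a smooth nonlinear perturbation of the spectral decomposition at the origin, and to invoke the inverse function theorem. Smoothness of $F$ from $B_{\C^n}(0,d_0)\times (H^1\cap \mathcal{H}_c[0])$ into $H^1$ follows from Proposition \ref{prop:bddst}(1) (smoothness of $z_j\mapsto Q_{jz_j}$ into any $\Sigma_r \hookrightarrow H^1$) together with Lemma \ref{lem:contcoo}(1) (smoothness of $z\mapsto R[z]$ into $B(H^1,H^1)$) and the bilinear evaluation $(T,\eta)\mapsto T\eta$.

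Using the expansions $Q_{jz_j} = z_j\phi_j + O(|z_j|^3)$ of Proposition \ref{prop:bddst}(2) and $R[z]=\mathrm{Id}+O(|z|^2)$ of Lemma \ref{lem:contcoo}(2), the derivative at the origin reads
\[
  dF_{(0,0)}(\dot z,\dot\eta) = \sum_{j=1}^n \dot z_j \phi_j + \dot\eta.
\]
This is an $\R$-linear continuous map from $\C^n\times (H^1\cap \mathcal{H}_c[0])$ into $H^1$, and is a topological isomorphism because $\{\phi_j\}_{j=1}^n$ is $L^2$-orthonormal and $\mathcal{H}_c[0]$ is precisely the $L^2$-orthogonal complement of $\mathrm{span}_\C\{\phi_j\}$ in $H^1$ (cf.~\eqref{eq:contsp1}). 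The inverse function theorem then produces some $d_1\in(0,d_0]$ on which $F$ is a $C^\infty$ diffeomorphism of $B_{\C^n}(0,d_1)\times (B_{H^1}(0,d_1)\cap \mathcal{H}_c[0])$ onto its image in $H^1$.

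The norm equivalence \eqref{eq:coo11} then follows by combining two elementary bounds. The upper estimate $\|u\|_{H^1}\lesssim |z|+\|\eta\|_{H^1}$ uses $\|Q_{jz_j}\|_{H^1}\lesssim |z_j|$ (from Proposition \ref{prop:bddst}(2)) and $\|R[z]\eta\|_{H^1}\lesssim \|\eta\|_{H^1}$ (from Lemma \ref{lem:contcoo}(2) applied with $r=1$). For the reverse bound I would invoke Lemma \ref{lem:decomposition} to get $|z|+\|\Theta\|_{H^1}\lesssim \|u\|_{H^1}$ for $\Theta = u-\sum_j Q_{jz_j}$, and then observe that $\eta=P_c\Theta = R[z]^{-1}\Theta$, from which $\|\eta\|_{H^1}\lesssim\|\Theta\|_{H^1}$. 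This identification of the inverse map makes the final step transparent as well.

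Finally, the two gauge properties rest on: (i) $Q_{j,e^{\im\vartheta}z_j} = e^{\im\vartheta}Q_{jz_j}$, which follows from the uniqueness in Proposition \ref{prop:bddst} by noting that $e^{\im\vartheta}Q_{jz_j}$ solves \eqref{eq:sp} with leading term $(e^{\im\vartheta}z_j)\phi_j$ and the required orthogonality (since $\phi_j$ is real, the relation $\langle q_{jz_j},\overline{\phi}_j\rangle=0$ is preserved); and (ii) the covariance $R[e^{\im\vartheta}z]\,e^{\im\vartheta} = e^{\im\vartheta}R[z]$ from Lemma \ref{lem:contcoo}(3). Substituting these into \eqref{eq:systcoo1} yields $u(e^{\im\vartheta}z,e^{\im\vartheta}\eta)=e^{\im\vartheta}u(z,\eta)$, and then the dual statement \eqref{eq:detion31} follows from uniqueness of the decomposition combined with \eqref{eq:decomposition3}. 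The only step that demands genuine care is the isomorphism property of $dF_{(0,0)}$: one must consistently regard $\C^n$ as an $\R$-Banach space (since $R[z]$ is only $\R$-linear) while exploiting the complex spectral decomposition of $H^1$; once this bookkeeping is in place the remaining steps are routine.
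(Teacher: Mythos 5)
Your proposal is correct, and it differs from the paper's argument only in how the diffeomorphism is obtained. The paper never computes a differential: it observes that the map $(z,\eta)\mapsto u$ is, by construction, the inverse of the already-smooth composition $u\mapsto (z,\Theta)\mapsto (z,P_c\Theta)$ coming from Lemma \ref{lem:decomposition} together with $P_c|_{\mathcal{H}_c[z]}=R[z]^{-1}$ from Lemma \ref{lem:contcoo}, so invertibility and smoothness of the inverse are immediate, and \eqref{eq:coo11} then follows from the bounds in Proposition \ref{prop:bddst}, Lemma \ref{lem:decomposition} and claim (2) of Lemma \ref{lem:contcoo}. You instead verify that $dF_{(0,0)}(\dot z,\dot\eta)=\sum_j \dot z_j\phi_j+\dot\eta$ is a real-linear isomorphism of $\C^n\times(H^1\cap\mathcal{H}_c[0])$ onto $H^1$ and invoke the inverse function theorem; this is a perfectly valid, self-contained route, but note that when you later prove the lower bound in \eqref{eq:coo11} you identify the inverse exactly as the paper does (via the uniqueness in Lemma \ref{lem:decomposition}, valid once $d_1$ is small enough that $\|u\|_{H^1}<c_0$, and $\eta=P_c\Theta$), so the IFT step is in fact redundant: the explicit inverse already yields the diffeomorphism and its smoothness. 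What your route buys is independence from the decomposition lemma for the invertibility statement itself and a transparent picture of the linearization at the origin; what the paper's route buys is brevity, since the implicit-function-theorem work was already done once in Lemma \ref{lem:decomposition}. The gauge-covariance part of your argument (uniqueness in Proposition \ref{prop:bddst} giving $Q_{j\,e^{\im\vartheta}z_j}=e^{\im\vartheta}Q_{jz_j}$, claim (3) of Lemma \ref{lem:contcoo}, and \eqref{eq:decomposition3} plus commutation of $P_c$ with multiplication by $e^{\im\vartheta}$) coincides with the paper's.
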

\proof  The smoothness follows from the smoothness in $z$ in Proposition \ref{prop:bddst} and Lemma \ref{lem:contcoo}. Property   $u ( e^{\im \vartheta } z, e^{\im \vartheta } \eta )= e^{\im \vartheta } u (z,\eta ) $
and its equivalent  formula   \eqref{eq:detion31}
follow  from \eqref{eq:decomposition3} and claim (3) in Lemma \ref{lem:contcoo}.
Notice that $u  = u (z,\eta ) $ is the inverse of the smooth map
$u\to (z,\Theta )\to (z, P_c \Theta )$.  Formula \eqref{eq:coo11} follows by the
estimates in Prop. \ref{prop:bddst} and by claim (2) in Lemma \ref{lem:contcoo}.

\qed

\subsection{Resonant sets}

\begin{definition}\label{def:setM}   Consider the set of multiindexes  $\textbf{m}$
as in Def.  \ref{def:comb0} and for any $k\in \{ 1, ..., n  \}$ the set
\begin{equation} \label{eq:setM0} \begin{aligned} &  \mathcal{M}_k (r)=
\{   \textbf{m} :   \sum _{i=1} ^{n} \sum _{j=1} ^{n} m  _{ij} (e_i -e_j) - e_k< 0   \text{ and  }  |\mathbf{m}|   \le r \}  \\& \mathcal{M}_0 (r)=
\{   \textbf{m} :   \sum _{i=1} ^{n} \sum _{j=1} ^{n} m  _{ij} (e_i -e_j) =0   \text{ and  }  |\mathbf{m}|   \le r \}  .
\end{aligned}
\end{equation}
Set now
\begin{equation} \label{eq:setM1} \begin{aligned} &   {M}_k(r)=
\{    (\mu , \nu )\in \N _0 ^{n}  \times  \N _0 ^{n} :   \exists \textbf{m}   \in \mathcal{M}_k (r) \text{ s.t. }   z^{\mu} \overline{z}^{\nu} = \overline{z}_k   \mathbf{Z} ^{\textbf{m} }  \}  , \\& M(r)= \cup  _{k=1} ^{n} {M}_k(r) \quad  \text{  and $M= M (2N+4)$}
\end{aligned}
\end{equation}
 \end{definition}

\begin{lemma} \label{lem:M0} Assuming (H3) we have the following facts.

 \begin{itemize}
\item[(1)]
 For $\mathbf{Z}^{\mathbf{m}}=z^{\mu}\overline{z}^{\nu}$, then $\mathbf{m}\in \mathcal{M}_0 (2N+4)$ implies  $\mu =\nu$. In particular  $\mathbf{m}\in \mathcal{M}_0 (2N+4)$ implies $\mathbf{Z}^{\mathbf{m}}=|z_1| ^{2l_1}...|z_n| ^{2l_n}$ for some $(l_1,..., l_n)\in \N _0^n$.

 \item[(2)] For $|\textbf{m}  |  \le  2N+3$  and any $j$ we   have $\sum _{a,b}(e_a-e_b)  {m}_{ab} -e_j \neq 0$.

 \end{itemize}
\end{lemma}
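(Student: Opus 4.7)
The plan is to reduce both statements to a direct application of hypothesis (H3), after rewriting $\mathbf{Z}^{\mathbf m}$ as $z^\mu \overline{z}^\nu$ and observing a conservation law for $|\mu|-|\nu|$.

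\smallskip
\noindent\textbf{Setup.} Given $\mathbf m=\{m_{ij}\in\N_0:i\neq j\}$, define
\[
\mu_k := \sum_{j\neq k} m_{kj}, \qquad \nu_k := \sum_{i\neq k} m_{ik},
\]
so that $\mathbf{Z}^{\mathbf m}=\prod_{i\neq j}(z_i\overline{z}_j)^{m_{ij}}=z^{\mu}\overline{z}^{\nu}$. An interchange of summation yields the key identity
\[
\sum_{i\neq j} m_{ij}(e_i-e_j) = \sum_{k=1}^{n}(\mu_k-\nu_k)\,e_k = (\mu-\nu)\cdot\mathbf e,
\]
and the basic conservation law $\sum_k \mu_k = \sum_k \nu_k = |\mathbf m|$.

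\smallskip
\noindent\textbf{Proof of (1).} The hypothesis $\mathbf m\in\mathcal{M}_0(2N+4)$ reads $(\mu-\nu)\cdot\mathbf e=0$ with $|\mathbf m|\le 2N+4$. Since $\mu-\nu\in\Z^n$ and
\[
|\mu-\nu|\le|\mu|+|\nu|=2|\mathbf m|\le 4N+8,
\]
hypothesis (H3) forces $\mu-\nu=0$, i.e.\ $\mu=\nu$. In particular $\mathbf{Z}^{\mathbf m}=z^{\mu}\overline{z}^{\mu}=\prod_k|z_k|^{2\mu_k}$, which is the claimed form with $l_k=\mu_k$.

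\smallskip
\noindent\textbf{Proof of (2).} Denote by $\mathbf 1_j\in\N_0^n$ the $j$th standard basis vector. By the identity above,
\[
\sum_{a,b}(e_a-e_b)m_{ab}-e_j = (\mu-\nu-\mathbf 1_j)\cdot\mathbf e.
\]
Set $\kappa:=\mu-\nu-\mathbf 1_j\in\Z^n$. The conservation law gives $\sum_k(\mu_k-\nu_k)=0$, so
\[
\sum_k \kappa_k = -1 \neq 0,
\]
which in particular rules out $\kappa=0$. Since $|\kappa|\le|\mu|+|\nu|+1=2|\mathbf m|+1\le 4N+7<4N+8$, hypothesis (H3) yields $\kappa\cdot\mathbf e\neq 0$, which is the claim.

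\smallskip
\noindent\textbf{Remarks on difficulty.} There is no real obstacle here: once the translation $\mathbf{Z}^{\mathbf m}\leftrightarrow(z^\mu,\overline z^\nu)$ is in place, both parts are immediate consequences of (H3) together with the elementary fact $\sum_k\mu_k=\sum_k\nu_k$. The only point to watch is the factor of $2$ in the size bound $|\mu-\nu|\le 2|\mathbf m|$, which is precisely why the generic-spectrum assumption in (H3) is stated for $|\mu|\le 4N+8$ while $\mathcal{M}_k$ is defined with $|\mathbf m|\le 2N+4$.
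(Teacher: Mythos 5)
Your proof is correct and follows essentially the same route as the paper: rewrite $\mathbf{Z}^{\mathbf m}$ (resp. $\mathbf{Z}^{\mathbf m}\overline z_j$) as $z^\mu\overline z^\nu$, note the balance $\sum_k\mu_k=\sum_k\nu_k=|\mathbf m|$, and apply (H3) to the integer vector of exponent differences, whose $\ell^1$ size is at most $4N+8$. The only cosmetic difference is in (2), where the paper argues by contradiction (if the combination vanished, (H3) would force $\mu=\nu$, impossible since $|\mu|=|\nu|-1$) while you argue directly that $\kappa=\mu-\nu-\mathbf 1_j\neq 0$ because its entries sum to $-1$; the two are the same argument.
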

\proof First of all, if  $\mu =\nu$ then $z^{\mu}\overline{z}^{\nu}=|z_1| ^{2\mu _1}...|z_n| ^{2\mu _n} $. So the first sentence in claim (1)  implies the second sentence in claim (1).
  We have
\begin{equation*}     \begin{aligned} &  \mathbf{Z}^{\mathbf{m}} = \prod _{i,l=1} ^{n}(z_i\overline{z}_l)^{m_{il}}= \prod _{i =1}^{n} z_i^{\sum _{l=1}^{n}m_{il} } \overline{z}_i^{\sum _{l=1}^{n}m_{li} } =  z^{\mu}\overline{z}^{\nu}.
\end{aligned}\end{equation*}
The pair $(\mu , \nu)$ satisfies $|\mu |=|\nu|\le 2N+4$ by
\begin{equation*}     \begin{aligned} &
 |\mu |=\sum _l \mu _l=\sum _{i,l} m_{il}  = |\nu| .
\end{aligned}\end{equation*}
We have $(\mu -\nu )\cdot \mathbf{e}=0$ by $\mathbf{m}\in \mathcal{M}_0 (2N+4)$ and
\begin{equation*}     \begin{aligned} &  \sum _i \mu _i e_i -\sum _l \nu _l e_l =  \sum _{i,l} m_{il}(e_i - e_l) = 0   .
\end{aligned}\end{equation*}
We conclude by (H3) that $\mu -\nu =0$.   This proves the 1st sentence of  claim (1).

\noindent The proof of claim (2)  is similar. Set
\begin{equation*}     \begin{aligned} &  \mathbf{Z}^{\mathbf{m}}\overline{z}_j = \prod _{i,l=1} ^{n}(z_i\overline{z}_l)^{m_{il}} \overline{z}_j = \prod _{i =1}^{n} z_i^{\sum _{l=1}^{n}m_{il} } \overline{z}_i^{\sum _{l=1}^{n}m_{li} } \overline{z}_j =  z^{\mu}\overline{z}^{\nu}
\end{aligned}\end{equation*}
  We have
\begin{equation*}     \begin{aligned} & (\mu - \nu )\cdot \mathbf{e}=  \sum _i \mu _i e_i -\sum _l \nu _l e_l  =  \sum _{i,l} m_{il}(e_i - e_l)-e_j    .
\end{aligned}\end{equation*}
We have
\begin{equation}  \label{eq:in}   \begin{aligned} &
 |\mu |=\sum _l \mu _l=\sum _{i,l} m_{il}  = |\nu|-1 .
\end{aligned}\end{equation}
If  $(\mu - \nu )\cdot \mathbf{e}=0$  then by $|\mu -\nu |\le 4N+5$
and by (H3) we would have $\mu =\nu$,   impossible by \eqref{eq:in}.

\qed

\begin{lemma} \label{lem:comb1} We have the  following facts.

\begin{itemize}
\item[(1)]
Consider   $\mathbf{m}=( m  _{ij}) \in \N _0 ^{n_0}  $ s.t.
$\sum _{i<j}m  _{ij} >N$ for   $N > |e_1| (\min \{ e_j -e_i : j>i \}) ^{-1}$, see (H3). Then for any eigenvalue $e_k$  we have

\begin{equation}\label{eq:comb3}
\begin{aligned}
& \sum _{i<j}  m  _{ij} (e_i -e_j) - e_k< 0.
\end{aligned}
\end{equation}

\item[(2)] Consider $\mathbf{m}\in \N _0 ^{n_0} $  with  $|\mathbf{m}|   \ge 2N+3$  and the monomial $ z _j \mathbf{Z}^{\mathbf{m}}$.
    Then $\exists$  $\mathbf{a}, \mathbf{b}\in \N _0 ^{n_0} $ s.t.
\begin{equation}\label{eq:comb4}
\begin{aligned}
& \sum _{i<j}  a  _{ij}    =N+1=\sum _{i<j}  b  _{ij} ,\\&
  a  _{ij}    =  b  _{ij}  =0  \text{ for all }   i>j \text{ and }
   a  _{ij}   +  b  _{ij} \le m  _{ij} + m  _{ji}   \text{ for all } (i,j)
\end{aligned}
\end{equation}
   and moreover there are   two indexes $ (k,l)$ s.t.
 \begin{equation}\label{eq:comb5}
\begin{aligned}
& \sum _{i<j}  a  _{ij} (e_i -e_j) - e_k< 0   \text{ and }    \sum _{i<j}  b  _{ij} (e_i -e_j) - e_l< 0
\end{aligned}
\end{equation}
 and such that  for $|z|\le 1$
  \begin{equation}\label{eq:comb6}
\begin{aligned}
& |z _j \mathbf{Z}^{\mathbf{m}}|\le |z_j| \  |z _k \mathbf{Z}^{\mathbf{a}}|  \  |z _l \mathbf{Z}^{\mathbf{b}}| .
\end{aligned}
\end{equation}

\item[(3)] For $\mathbf{m}$ with $|\mathbf{m}|   \ge 2N+3$  there exist $(k,l)$, $\mathbf{a} \in \mathcal{M}_k$ and $\mathbf{b} \in \mathcal{M}_l$ s.t.  \eqref{eq:comb6}  holds.

    \end{itemize}
\end{lemma}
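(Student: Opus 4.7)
Let $\delta := \min\{e_j - e_i : j > i\} > 0$, so by the ordering $e_1 < \dots < e_n$ we have $e_i - e_j \le -\delta$ for every $i < j$. For claim (1), the hypothesis $\sum_{i<j} m_{ij} > N$ combined with the choice of $N$ in (H3) gives
\[
\sum_{i<j} m_{ij}(e_i - e_j) \le -\delta \sum_{i<j} m_{ij} \le -\delta(N+1) < -|e_1|.
\]
Since $e_1$ is the most negative eigenvalue, $-e_k = |e_k| \le |e_1|$, and \eqref{eq:comb3} follows by adding.

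For claim (2), set $n_{ij} := m_{ij} + m_{ji}$ for $i < j$, so $\sum_{i<j} n_{ij} = |\mathbf{m}| \ge 2N+3$. The plan is to split these capacities greedily into two packets $\mathbf{a}, \mathbf{b} \in \N_0^{n_0}$ supported on $\{i<j\}$ with $a_{ij} + b_{ij} \le n_{ij}$ and $\sum_{i<j} a_{ij} = N+1 = \sum_{i<j} b_{ij}$; this is possible because the total capacity is $\ge 2N+3 > 2(N+1)$. The leftover $\sum_{i<j}(n_{ij} - a_{ij} - b_{ij}) \ge |\mathbf{m}| - 2(N+1) \ge 1$, so there is at least one pair $(k,l)$ with $k < l$ satisfying $a_{kl} + b_{kl} \le n_{kl} - 1$; take this pair as the distinguished one. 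Applying claim (1) to $\mathbf{a}$ (legitimate because $\sum_{i<j} a_{ij} = N+1 > N$) and to $\mathbf{b}$ now yields both inequalities in \eqref{eq:comb5}.

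The estimate \eqref{eq:comb6} reduces, after cancelling the common $|z_j|$, to $|\mathbf{Z}^{\mathbf{m}}| \le |z_k \mathbf{Z}^{\mathbf{a}}|\,|z_l \mathbf{Z}^{\mathbf{b}}|$. I plan to verify this by comparing, for each $r$, the exponent of $|z_r|$ on both sides. On the left it is $M_r := \sum_{j>r} n_{rj} + \sum_{j<r} n_{jr}$; on the right it is $\mathbf{1}_{r=k} + \mathbf{1}_{r=l} + A_r + B_r$ with $A_r, B_r$ defined analogously from $\mathbf{a}, \mathbf{b}$. The pointwise constraint $a_{ij}+b_{ij} \le n_{ij}$ gives $A_r + B_r \le M_r$ for every $r$, while for $r \in \{k,l\}$ the slack at the pair $(k,l)$ contributes to both $M_k$ and $M_l$, giving $M_r \ge A_r + B_r + 1$ and so absorbing the Kronecker terms. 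Since $|z| \le 1$, a larger exponent produces a smaller modulus, so the exponent comparison delivers the inequality. Finally, claim (3) is immediate: $\mathbf{a}$ is supported on $\{i<j\}$, so $\sum_{i,j} a_{ij}(e_i-e_j) = \sum_{i<j} a_{ij}(e_i-e_j)$, and $|\mathbf{a}| = N+1 \le 2N+4$, placing $\mathbf{a} \in \mathcal{M}_k$ via \eqref{eq:comb5}; symmetrically $\mathbf{b} \in \mathcal{M}_l$. The only mildly delicate part is the bookkeeping of the exponents together with the identification of a pair $(k,l)$ carrying strict slack; everything else reduces to the elementary estimate of claim (1).
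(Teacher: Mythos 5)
Your proof is correct and follows essentially the same strategy as the paper: claim (1) by the same elementary estimate from (H3), claim (2) by splitting the multi-index into two blocks of weight $N+1$ (so that claim (1) applies to each for any eigenvalue index) and using $|z|\le 1$ together with the leftover capacity to absorb the extra factors $z_k$, $z_l$, and claim (3) as an immediate consequence. The only difference is cosmetic bookkeeping: the paper splits $\mathbf{m}$ into $\mathbf{c},\mathbf{d}$ over ordered pairs, symmetrizes, and reads $(k,l)$ off the leftover monomial, while you split the symmetrized capacities $m_{ij}+m_{ji}$ directly and verify \eqref{eq:comb6} by an exponent comparison — equivalent arguments.
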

\proof \eqref{eq:comb3} follows immediately from
\begin{equation*}
\begin{aligned}
& \sum _{i<j}  m  _{ij} (e_i -e_j) - e_k\le -\min \{ e_j -e_i : j>i \} N -e_1 <0,
\end{aligned}
\end{equation*}
where the latter inequality follows by the definition of $N$.

Given  $\mathbf{a}, \mathbf{b}\in \N _0 ^{n_0} $ satisfying  \eqref{eq:comb4},
by claim (1) they satisfy   \eqref{eq:comb5}  for any pair of indexes  $ (k,l)$.  Consider now the monomial  $ z _j \mathbf{Z}^{\mathbf{m}}$.   Since  $|\mathbf{m}|\ge 2N+3$, there are vectors
$\mathbf{c}, \mathbf{d}\in \N _0 ^{n_0} $ s.t.  $|\mathbf{c}|=|\mathbf{d}| = N+1$ with $c  _{ij}   +  d  _{ij} \le m  _{ij}  $   for all  $(i,j)$. Furthermore
we have
 \begin{equation}\label{eq:comb7}
\begin{aligned}
& z _j \mathbf{Z}^{\mathbf{m}} = z _j   z^\mu \overline{z} ^\nu \mathbf{Z}^{\mathbf{c}}  \mathbf{Z}^{\mathbf{d}} \text{ with $|\mu | >0 $ and  $|\nu | >0 $.}
\end{aligned}
\end{equation}
So, for $z_k$ a factor of $z^\mu $ and  $\overline{z}_l$ a factor of $\overline{z}^\nu $,
and for
\begin{equation}\label{eq:comb8}
\begin{aligned}
&    a_{ij}  =\left\{\begin{matrix}
          c_{ij}+c_{ji}  \text{ for $i<j$} \,\\
  0     \text{ for $i>j$}
\end{matrix}\right.  ,  \qquad    b_{ij}  =\left\{\begin{matrix}
          d_{ij}+d_{ji}  \text{ for $i<j$} \,\\
  0     \text{ for $i>j$}
\end{matrix}\right.
\end{aligned}
\end{equation}
for $|z|\le 1$ we have from \eqref{eq:comb7}
\begin{equation*}
\begin{aligned}
& |z _j \mathbf{Z}^{\mathbf{m}}| \le | z _j  | \ |  z_k   \mathbf{Z}^{\mathbf{c}} | \  | \ z_l \mathbf{Z}^{\mathbf{d}}  | =  | z _j  | \ |  z_k   \mathbf{Z}^{\mathbf{a}} | \  | \ z_l \mathbf{Z}^{\mathbf{b}}  | .
\end{aligned}
\end{equation*}
Furthermore,  \eqref{eq:comb4} is satisfied.

\noindent Since our $(\mathbf{a},\mathbf{b})$ satisfy
$\mathbf{a} \in \mathcal{M}_k$ and $\mathbf{b} \in \mathcal{M}_l$,
claim (3) is a consequence of claim (2).

\qed

We end this section exploiting the notation introduced  in claim (5) of Lemma \ref{lem:contcoo}  to introduce two classes  of functions.   First of all notice that    the linear maps $\eta \to  \< {\eta},\phi _j \>$
extend into bounded linear maps $\Sigma _r\to \R$ for any $r\in \R $.   We set
\begin{equation}\label{eq:phsp1}
\begin{aligned}
\Sigma _r^c & :=\left\{\eta\in \Sigma _r :\  \< {\eta},\phi _j \>=0,\ j=1,\cdots,n\right\} .
\end{aligned}
\end{equation}
The following two classes of functions  will be used in the rest of the paper.
Recall  that in   Def. \ref{def:comb0}  we introduced the space
$  L$ with   $\dim L= n (n-1).$  In Definitions \ref{def:scalSymb}--\ref{def:opSymb} by $\textbf{Z} $ we denote an auxiliary variable  independent of $z$ which takes values in $L$

\begin{definition}\label{def:scalSymb}
Let  $\mathfrak{B}$ be an open subset of a Banach space.
We will say that   $F(t, \mathfrak{b}, z, \mathbf{Z} ,\eta )\in C^{M}(I\times \mathfrak{B} \times \mathcal{A},\R)$, with
$I$ a neighborhood of 0 in $\R$ and
 $\mathcal{A}$   a neighborhood of 0 in  $  \C ^n \times L \times \Sigma _{-K}^c $
is   $F=\mathcal{R}^{i, j}_{ K,M} (t, \mathfrak{b},z,\textbf{Z},\eta)$,
 if    there exists    a $C>0$   and a smaller neighborhood  $\mathcal{A}'$ of 0   s.t.
 \begin{equation}\label{eq:scalSymb}
  |F(t, \mathfrak{b},z,\mathbf{Z},\eta)|\le C (\|  \eta \| _{\Sigma   _{-K}}+|\textbf{Z} |)^j (\|  \eta \| _{\Sigma   _{-K}}+ |\mathbf{Z}   |+|z |)^{i} \text{  in $I\times \mathfrak{B} \times \mathcal{A}  '$} .
\end{equation}
We will specify      $F=\mathcal{R}^{i, j}_{ K,M} (t, \mathfrak{b},z,\textbf{Z})$    if
\begin{equation}\label{eq:scalSymb1}
  |F(t, \mathfrak{b},z,\mathbf{Z},\eta)|\le C |\textbf{Z} | ^j  |z | ^{i}
\end{equation}
and       $F=\mathcal{R}^{i, j}_{ K,M} (t, \mathfrak{b},z,\eta  )$    if
\begin{equation}\label{eq:scalSymb2}
  |F(t, \mathfrak{b},z,\mathbf{Z},\eta)|\le C  \|  \eta \| _{\Sigma   _{-K}} ^j (\|  \eta \| _{\Sigma   _{-K}}+|z |)^{i} .
\end{equation}
We will  omit $t$ or $\mathfrak{b}$  if there is no dependence on such variables.

We write  $ F=\mathcal{R}^{i, j} _{K, \infty}$  if $F=\mathcal{R}^{i,j}_{K, m}$ for all $m\ge M$.
We write    $F=\mathcal{R}^{i, j}_{\infty, M} $       if   for all   $k\ge K$    the above   $F$ is the restriction  of an
$F(t, \mathfrak{b},z,\eta )\in C^{M}(I\times \mathfrak{B} \times \mathcal{A}_{k },\R)$ with  $\mathcal{A}_k$   a neighborhood of 0 in
$  \C  ^{n }\times L\times \Sigma _{-k}^c $ and
 which is
$F=\mathcal{R}^{i,j}_{k, M}$.
Finally we write
$F=\mathcal{R}^{i, j} _{\infty, \infty} $   if $F=\mathcal{R}^{i, j} _{k, \infty}$  for all $k$.

\end{definition}

\begin{definition}\label{def:opSymb}  We will say that an   $T(t, \mathfrak{b},z,\eta )\in C^{M}(I\times \mathfrak{B} \times \mathcal{A},\Sigma   _{K}  (\R^3, \C ))$,  with the above notation,
 is   $T= \mathbf{{S}}^{i,j}_{K,M} (t, \mathfrak{b},z,\textbf{Z},\eta)$,
 if     there exists  a $C>0$   and a smaller neighborhood  $\mathcal{A}'
$ of 0   s.t.
 \begin{equation}\label{eq:opSymb}
  \|T(t, \mathfrak{b},z,\mathbf{Z},\eta)\| _{\Sigma   _{K}}\le    C (\|  \eta \| _{\Sigma   _{-K}}+|\textbf{Z} |)^j (\|  \eta \| _{\Sigma   _{-K}}+ |\mathbf{Z}   |+|z |)^{i}   \text{  in $I\times \mathfrak{B} \times\mathcal{A}'$}.
\end{equation}
We  use notations
$ \mathbf{{S}}^{i,j}_{K,M} (t, \mathfrak{b},z,\textbf{Z})$, $ \mathbf{{S}}^{i,j}_{K,M} (t, \mathfrak{b},z,\eta  )$ etc.  as above.

\end{definition}

Notice that we have the elementary formulas
\begin{equation}\label{eq:algSymb}
 \resto^{a,b}_{K,M} \mathbf{{S}}^{i,j}_{K,M} = \mathbf{{S}}^{i+a,j+b}_{K,M}  \text{ and  }  \resto^{a,b}_{K,M} \resto^  {i,j}_{K,M} = \resto^  {i+a,j+b}_{K,M}   .
\end{equation}

 \begin{remark}\label{rem:sym}
 For   functions   $F(t, \mathfrak{b},z, \eta)$ and $T(t, \mathfrak{b},z, \eta)$ we   write
  $F(t, \mathfrak{b},z, \eta)=\resto ^{i,j}_{K,M} (t, \mathfrak{b},z,\textbf{Z},\eta)$ and $T(t, \mathfrak{b},z, \eta)=\mathbf{{S}}^{i,j}_{K,M} (t, \mathfrak{b},z,\textbf{Z},\eta)$ when the equality holds  restricting the variable $\mathbf{Z}$  to  $\mathbf{Z} =(z_i \overline{z}_j) _{i,j =1,...,n} $
  where $i\neq j$, for symbols satisfying Definitions \ref{def:scalSymb}--\ref{def:opSymb}.

 \noindent Furthermore, later, when we write $\resto ^{i,j}_{K,M} $ and $\mathbf{{S}}^{i,j}_{K,M}$,    we   mean $\resto ^{i,j}_{K,M} ( z,\textbf{Z},\eta) $ and $\mathbf{{S}}^{i,j}_{K,M}  ( z,\textbf{Z},\eta)$.

 \noindent Notice that $F= \resto ^{i,j}_{K,M} ( z,\textbf{Z} ) $ or  $T= \mathbf{S} ^{i,j}_{K,M} ( z,\textbf{Z} ) $ do not mean independence of the variable $\eta$.
\end{remark}

\section{Invariants}
\label{sec:invariants}

Equation \eqref{eq:NLS}  admits the  energy and mass  invariants,
   defined as follows:
\begin{equation}\label{eq:inv}
\begin{aligned}
&E(u ):= E_ K(u)   + E_P(u) \text{, where }  E_ K(u) := \langle  H u, \overline{u} \rangle \text{ and } \\&
 E_P(u)= \frac 12 \int_{\R^3}  |u(x)  |^4     \,dx \  ; \quad
Q(u ):= \langle    u, \overline{u} \rangle  .
\end{aligned}
\end{equation}
   We have   $ {E}\in C^\infty ( H^1( \R ^3, \C  ), \R  )$ and $Q\in C^\infty (  L^2(
\R ^3, \C ) , \R  )$.    We denote by $dE$ the Frech\'et derivative of $E$.
 We define $ \nabla {E}\in C^\infty ( H^1( \R ^3, \C  ), H ^{-1}( \R ^3, \C  )  )$ by  $dEX= \Re \langle   \nabla {E}, \overline{X} \rangle$
  for any $X\in   H^1$. We define also $\nabla _{u}{E}$ and  $\nabla _{\overline{u}}{E}$
 by
 \begin{equation*}
\begin{aligned}
& dEX =   \langle   \nabla _{u} {E},  {X} \rangle  + \langle   \nabla _{\overline{u}} {E}, \overline{X} \rangle  \text{ that is }
 \nabla _{u} {E} =2^{-1} \overline{\nabla {E}} \text{  and } \nabla _{\overline{u}} {E} =2^{-1}  {\nabla {E}}.
\end{aligned}
\end{equation*}
Notice that $ \nabla   {E} =2Hu+2 |u|^2 u$.
Then equation \eqref{eq:NLS} can be
interpreted as

\begin{equation}\label{eq:NLSham}
\begin{aligned}
& \im \dot u = \nabla _{  \overline{u}} E(u ) .
\end{aligned}
\end{equation}

\begin{lemma}
  \label{lem:EnExp} Consider the coordinates $(z,\eta )\to u$ in  Lemma \ref{lem:systcoo}. Then  there exists some functions as in Definitions
  \ref{def:scalSymb} and  \ref{def:opSymb}
  s.t.  for $(z,\eta )\in B_{\C^n}(0 , d  _0) \times ( B_{H^1}(0 , d  _0)\cap \mathcal{H}_c[0]) $
    we  have   for any preassigned $r_0\in \N$ the expansion (where c.c. means complex conjugate)
\begin{equation}\label{eq:enexp1} \begin{aligned}
  & E (u)= \sum_{j=1}^n E(Q_{jz_j}) + \langle   H   \eta,  \overline{ \eta} \rangle  +  \resto ^{1,  2  }_{r_{0} , \infty} (z, \eta  )
 \\&
+\sum _{j\neq k} [ E _{j z_j}  ( \Re  \langle   q_{j z_j},  \overline{z}_k\phi _k   \rangle   + \Re  \langle   q_{k z_k},  \overline{z}_j\phi _j   \rangle  )+ \Re \langle |Q_{k z_k}|^2   Q_{k z_k}    ,  \overline{z}_j\phi_j \rangle       ]
 \\&   +
\resto ^{0,  2N+5 }_{r_{0} , \infty} (z,\textbf{Z}  ) +
	 \sum _{j =1}^n\sum _{l =1} ^{2N+3}	 \sum _{
	  |\textbf{m}|=l+1 }  \textbf{Z}^{\textbf{m}}   a_{j \textbf{m} }( |z_j |^2 )   \\&   +   \Re \langle
  \textbf{S} ^{0,   2N+4 }_{r_{0} , \infty} (z,\textbf{Z}  ) , \overline{\eta} \rangle   +
   \sum _{j,k=1}^n \sum _{l =1} ^{2N+3}	 \sum _{
	  |\textbf{m}|=l  }
     ( \overline{z}_j \textbf{Z}^{\textbf{m}}  \langle
  G_{jk\textbf{m}}(|z_k|^2  ), \eta \rangle  +c.c. )
		+\\&    \sum _{ i+j  = 2}   \sum _{
		  |\textbf{m}|\le 1 }  \textbf{Z}^{\textbf{m}}    \langle G_{2\textbf{m} ij } ( z ),   \eta ^{   i} \overline{\eta} ^j\rangle
		+  \sum _{  d+c  = 3 }   \sum _{ i+j  = d}   \langle G_{d ij } ( z ),   \eta ^{   i} \overline{\eta} ^j\rangle  \resto ^{0,  c }_{r_{0} , \infty} (z, \eta )       + E _P( \eta)  \text{ where:}
  \end{aligned}
\end{equation}

  \begin{itemize}

\item $  ( a_{j\textbf{m} } ,  G_{jk\textbf{m}}   )  \in C^\infty (B_{\R}(0,d  _0), \C\times  \Sigma _{r_{0}}(\mathbb{R}^3, \mathbb{C}   ) $;
\item
$(G_{2\textbf{m} ij },G_{d ij })\in C^\infty (B_{\C^n}(0,d  _0),
\Sigma _{r_{0}} (\mathbb{R}^3, \mathbb{C}   )\times
\Sigma _{r_{0}} (\mathbb{R}^3, \mathbb{C}   ))$;

\item For $|\textbf{m}|=0$, where in particular we have $G_{2\textbf{0} ij } (0)=0$, we have
\begin{equation}\label{eq:b02}
\begin{aligned}
  &    \sum _{ i+j  = 2}          \langle G_{2\textbf{0} ij } ( z ),   \eta ^{   i} \overline{\eta} ^j\rangle  =  \sum _{j=1}^{n}
  \langle |Q_{j z_{j}}|^2 \eta  , \overline{\eta } \rangle +2 \sum _{j=1}^{n}   \Re \langle  Q_{j z_{j}} \Re (Q_{j z_{j}} \overline{\eta}) ,    \overline{\eta } \rangle ;\end{aligned}
\end{equation}

% \item
%$G_{dij
%}  \in C^{\infty } ( B_{\C ^n}(0,d _0),
%\Sigma _{r_{0}} (\mathbb{R}^3, B   (
%  \mathbb{C} ^{\otimes d},\mathbb{C} ))) $;

\item $\resto ^{1,  2  }_{r_{0} , \infty} (e^{\im \vartheta} z, e^{\im \vartheta} \eta )=\resto ^{1,  2  }_{r_{0} , \infty} (  z,   \eta )  $ for all $\vartheta\in \R$ for the 3rd term in the r.h.s. of \eqref{eq:enexp1}.

\end{itemize}
\end{lemma}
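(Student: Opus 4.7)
The plan is to substitute the decomposition $u=\sum_{j=1}^{n}Q_{jz_j}+R[z]\eta$ from Lemma \ref{lem:systcoo} into $E(u)=\langle Hu,\overline{u}\rangle+\tfrac12\int|u|^4$ and organize the resulting terms by polynomial degree in $z$ and $\eta$, using the eigenvalue equation \eqref{eq:sp} to eliminate $H$ acting on bound states and the orthogonality information provided by $\eta\in\mathcal{H}_c[0]$, $\langle q_{jz_j},\overline{\phi}_j\rangle=0$, $\langle\phi_j,\phi_k\rangle=\delta_{jk}$. First I would split the quadratic part as
\begin{equation*}
\langle Hu,\overline{u}\rangle=\sum_{j}\langle HQ_{jz_j},\overline{Q}_{jz_j}\rangle+\sum_{j\neq k}\langle HQ_{jz_j},\overline{Q}_{kz_k}\rangle+2\Re\langle Hu-H R[z]\eta,\overline{R[z]\eta}\rangle+\langle HR[z]\eta,\overline{R[z]\eta}\rangle,
\end{equation*}
and replace $HQ_{jz_j}=E_{jz_j}Q_{jz_j}-|Q_{jz_j}|^2Q_{jz_j}$ everywhere it appears. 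The diagonal terms $\langle HQ_{jz_j},\overline{Q}_{jz_j}\rangle+\tfrac12\int|Q_{jz_j}|^4$ reconstitute $\sum_j E(Q_{jz_j})$. The cross terms $\langle HQ_{jz_j},\overline{Q}_{kz_k}\rangle$ with $j\neq k$, after writing $Q_{jz_j}=z_j\phi_j+q_{jz_j}$, produce exactly the "bad'' $l=0$ contributions in the second line of \eqref{eq:enexp1}: the pieces $E_{jz_j}(\Re\langle q_{jz_j},\overline{z}_k\phi_k\rangle+\Re\langle q_{kz_k},\overline{z}_j\phi_j\rangle)$ come from $E_{jz_j}\langle Q_{jz_j},\overline{Q}_{kz_k}\rangle$ once the $\delta_{jk}$ pairings vanish, and the term $\Re\langle|Q_{kz_k}|^2Q_{kz_k},\overline{z}_j\phi_j\rangle$ arises from $\langle|Q_{jz_j}|^2Q_{jz_j},\overline{Q}_{kz_k}\rangle$ after relabeling.

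Next I would expand $HR[z]\eta=H\eta+\sum_j(\alpha_j[z]\eta)\,H\phi_j=H\eta+\sum_je_j(\alpha_j[z]\eta)\phi_j$ and use $\langle\eta,\phi_j\rangle=0$ together with estimate \eqref{eq:contcoo2} to write $\langle HR[z]\eta,\overline{R[z]\eta}\rangle=\langle H\eta,\overline{\eta}\rangle+\resto^{1,2}_{r_0,\infty}(z,\eta)$. The mixed term $2\Re\langle HQ_{jz_j},\overline{R[z]\eta}\rangle$ is rewritten via the bound state equation as $2\Re(E_{jz_j}\langle Q_{jz_j},\overline{R[z]\eta}\rangle)-2\Re\langle|Q_{jz_j}|^2Q_{jz_j},\overline{R[z]\eta}\rangle$; expanding $Q_{jz_j}=z_j\phi_j+q_{jz_j}$ and $R[z]\eta=\eta+\alpha[z]\eta$, and using $\langle\eta,\phi_j\rangle=0$ and \eqref{eq:contcoo2}, one gets linear-in-$\eta$ contributions of the form $\sum_{j,k,\mathbf{m}}(\overline{z}_j\mathbf{Z}^{\mathbf{m}}\langle G_{jk\mathbf{m}},\eta\rangle+\text{c.c.})$ modulo $\mathbf{S}^{i,j}$-controlled remainders, where the Schwartz functions $G_{jk\mathbf{m}}$ come from Taylor-expanding $q_{jz_j}$ and $B_j,C_j$ in powers of $z$. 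The pure-$z$ monomials $\sum\mathbf{Z}^{\mathbf{m}}a_{j\mathbf{m}}(|z_j|^2)$ gathered on line three arise analogously from $\tfrac12\int|Q|^4$ minus its diagonal part; Lemma \ref{lem:M0}(1) and claim (4) of Lemma \ref{lem:contcoo} guarantee that the polynomial pieces can be written as monomials in $\mathbf{Z}$ with coefficients depending only on $|z_j|^2$.

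The quartic term $\tfrac12\int|u|^4$ is expanded as $\tfrac12\int|Q+R[z]\eta|^4$ and grouped by the degree in $\eta$: the degree-$0$ piece contributes the diagonal $\tfrac12\int|Q_{jz_j}|^4$ and off-diagonal $\mathbf{Z}^{\mathbf{m}}$ monomials; the degree-$1$ piece $2\Re\int|Q|^2 Q\,\overline{R[z]\eta}$ enriches the $(\overline{z}_j\mathbf{Z}^{\mathbf{m}}\langle G_{jk\mathbf{m}},\eta\rangle)$ list; the degree-$2$ piece produces precisely the identity \eqref{eq:b02} for $|\mathbf{m}|=0$ (coming from the explicit bilinear form $\int|Q|^2(\eta\overline{\eta}+\tfrac12(\eta^2\overline{Q}^2/|Q|^2+\text{c.c.})\cdot|Q|^2)$, i.e., $|Q_{jz_j}|^2|\eta|^2+2\Re(Q_{jz_j}\Re(Q_{jz_j}\overline{\eta})\overline{\eta})$ after summing over $j$) plus $|\mathbf{m}|=1$ contributions; the degree-$3$ piece supplies the $\resto^{0,c}$-coefficient trilinear-in-$\eta$ terms; the degree-$4$ piece is precisely $E_P(\eta)$, modulo $R[z]-I$ corrections that are absorbed into $\resto^{1,2}_{r_0,\infty}(z,\eta)$. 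Finally, polynomial terms in $z$ with total degree beyond $2N+4$ (respectively linear-in-$\eta$ coefficients beyond $|\mathbf{m}|=2N+3$) are collected into the remainders $\resto^{0,2N+5}_{r_0,\infty}(z,\mathbf{Z})$ and $\mathbf{S}^{0,2N+4}_{r_0,\infty}(z,\mathbf{Z})$; their estimates follow from Taylor's theorem applied to the smooth functions $E_{jz_j}$, $\widetilde q_j$, $B_j$, $C_j$ on $B_{\C^n}(0,d_0)$, combined with \eqref{eq:contcoo2}. The main obstacle is the bookkeeping: verifying that every monomial in $z$ which is not of the form $\mathbf{Z}^{\mathbf{m}}\times(\text{function of }|z_j|^2)$ with $j\neq k$ indices occurs in an off-diagonal pattern so that the cross-index structure of $\mathbf{Z}$ is respected, and that the gauge covariance of Lemmas \ref{lem:decomposition} and \ref{lem:contcoo} together with the $U(1)$-invariance of $E$ forces $\resto^{1,2}_{r_0,\infty}$ to satisfy the invariance property stated at the end of the lemma; this invariance follows by applying the expansion to $e^{\im\vartheta}u$ and comparing term by term.
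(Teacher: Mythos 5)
Your overall strategy --- substituting $u=\sum_jQ_{jz_j}+R[z]\eta$ into the quadratic-plus-quartic energy, eliminating $HQ_{jz_j}$ through \eqref{eq:sp}, and sorting by degree --- is in substance the same expansion the paper performs, and your treatment of the zeroth order in $\eta$, of the quadratic-in-$\eta$ part and of the quartic remainder matches Steps 1, 3 and 4 of the paper's proof. The genuine gap is in the linear-in-$\eta$ terms. After the bound-state substitution, the cubic-in-$Q_{jz_j}$ diagonal piece $-2\Re\langle |Q_{jz_j}|^2Q_{jz_j},\overline{R[z]\eta}\rangle$ coming from $\langle Hu,\overline u\rangle$ cancels against the diagonal degree-one part of $\tfrac12\int|u|^4$, and what survives from the purely diagonal (single-$j$) contributions is $2E_{jz_j}\Re\langle Q_{jz_j},\overline{R[z]\eta}\rangle$. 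Expanding this with $Q_{jz_j}=z_j\phi_j+q_{jz_j}$ and $R[z]\eta=\eta+\sum_k(\alpha_k[z]\eta)\phi_k$ produces pieces such as $2E_{jz_j}\Re ( z_j\langle\widehat q_{j}(|z_j|^2),\overline\eta\rangle )$ and $2E_{jz_j}\Re ( z_j\overline{\alpha_j[z]\eta} )$, which depend on $z_j$ alone and carry no factor of $\mathbf{Z}$: taken individually these are exactly the $l=0$ linear-in-$\eta$ terms that the fourth line of \eqref{eq:enexp1} excludes (the sum there starts at $|\mathbf m|=l\ge1$). Writing that ``using $\langle\eta,\phi_j\rangle=0$ and \eqref{eq:contcoo2} one gets contributions of the form $\overline z_j\mathbf Z^{\mathbf m}\langle G_{jk\mathbf m},\eta\rangle$'' asserts, rather than proves, the very point of the lemma: you must show that these diagonal pieces cancel. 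The paper does this exactly and cleanly: since $R[z]\eta\in\mathcal H_c[z]$ and $\im Q_{jz_j}=-z_{jI}D_{jR}Q_{jz_j}+z_{jR}D_{jI}Q_{jz_j}$, one has $\Re\langle\nabla E(Q_{jz_j}),\overline{R[z]\eta}\rangle=2E_{jz_j}\Re\langle Q_{jz_j},\overline{R[z]\eta}\rangle=0$, and the telescoping double-integral identity in Step 2 then exhibits every surviving linear-in-$\eta$ term as a genuine cross term, i.e. carrying at least one factor $z_i\overline z_k$ with $i\neq k$. (One can alternatively verify the cancellation by hand from \eqref{eq:contcoo2} together with the realness of $\widehat q_j$, because $B_j\simeq-\partial_{\overline z_j}\overline q_{jz_j}$ and $C_j\simeq\partial_{\overline z_j}q_{jz_j}$ encode precisely the $\mathcal H_c[z]$ condition, but that computation has to be carried out; it is not a consequence of $\langle\eta,\phi_j\rangle=0$ and the mere size estimate in \eqref{eq:contcoo2}.)

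Two smaller points. First, the claimed coefficient structure --- pure-$z$ monomials of the form $\mathbf Z^{\mathbf m}a_{j\mathbf m}(|z_j|^2)$ and linear-in-$\eta$ coefficients $G_{jk\mathbf m}(|z_k|^2)$ --- does not follow from Lemma \ref{lem:M0}(1) or Lemma \ref{lem:contcoo}(4); it comes from the gauge invariance of the remainders (via \eqref{eq:decomposition3}, claim (3) of Lemma \ref{lem:contcoo} and the $U(1)$-invariance of $E$) combined with the expansion Lemmas \ref{lem:expandzzj} and \ref{lem:expandzzS} of Appendix \ref{app:B}, which is how the paper converts the $O(|\mathbf Z|^2)$ remainders of Step 1 and the cross terms of Step 2 into the third and fourth lines of \eqref{eq:enexp1}. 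Second, the difference $E_P(R[z]\eta)-E_P(\eta)$ is cubic in $\eta$ in norms not controlled by $\|\eta\|_{\Sigma_{-r_0}}$, so it cannot be absorbed into $\resto^{1,2}_{r_0,\infty}(z,\eta)$ as you state; it belongs with the terms $\langle G_{dij}(z),\eta^i\overline\eta^j\rangle\,\resto^{0,c}_{r_0,\infty}(z,\eta)$, $d+c=3$, exactly as in Step 4 of the paper. With the cancellation of the diagonal linear-in-$\eta$ terms supplied and these bookkeeping tools invoked, your outline reproduces the paper's argument.
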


\begin{remark}
\label{rem:crux1} In formula \eqref{eq:enexp1}   the terms  of the second line could potentially derail our proof.  They appear in \eqref{eq:troub2}--\eqref{eq:troub31}.
Similarly problematic is the first term in the r.h.s. in
\eqref{eq:alphaest1} later. All these terms are tied up.
Indeed,   in
Lemma \ref{lem:KExp2} we will show that in a system of  coordinates  better suited to search an effective Hamiltonian  the problematic terms in the expansion of $E$ cancel out.
\end{remark}

\noindent
In the proof of Lemma \ref{lem:EnExp} we use  the following lemma.
\begin{lemma}
  \label{lem:id1} For   we have for $j\neq k$ and $\delta  E_{jz_j } := E_{jz_j }  -e_j$
\begin{equation}\label{eq:id2}
\begin{aligned}
&  E_{jz_j }  \langle q_{kz_k } ,   \phi _j \rangle +
\langle |Q_{kz_k }|^2Q_{kz_k } ,   \phi _j\rangle     =E_{kz_k }  \langle q_{kz_k } , \phi _j\rangle  +  \delta E_{jz_j }  \langle q_{kz_k } , \phi _j\rangle   .
\end{aligned}
\end{equation}

\end{lemma}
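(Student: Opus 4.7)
The plan is to derive the identity directly from the nonlinear eigenvalue equation \eqref{eq:sp} for $Q_{kz_k}$ by pairing with the eigenfunction $\phi_j$ of the \emph{other} eigenvalue $e_j$, and then rewriting $e_j$ in terms of $E_{jz_j}$.

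First I would start from the bound state equation from Proposition \ref{prop:bddst},
\begin{equation*}
H Q_{kz_k} + |Q_{kz_k}|^2 Q_{kz_k} = E_{kz_k} Q_{kz_k},
\end{equation*}
and take the bilinear pairing \eqref{eq:bilf} with $\phi_j$. Since $\phi_j$ is real valued and $H$ is symmetric with respect to $\langle \cdot , \cdot \rangle$, the first term becomes $\langle Q_{kz_k}, H\phi_j\rangle = e_j \langle Q_{kz_k}, \phi_j\rangle$. This yields
\begin{equation*}
e_j \langle Q_{kz_k}, \phi_j\rangle + \langle |Q_{kz_k}|^2 Q_{kz_k}, \phi_j\rangle = E_{kz_k} \langle Q_{kz_k}, \phi_j\rangle.
\end{equation*}

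Next, since $j \neq k$ and the $\phi_l$ are real and orthonormal (so $\langle \phi_k, \phi_j\rangle = \delta_{kj} = 0$), the decomposition $Q_{kz_k} = z_k \phi_k + q_{kz_k}$ gives $\langle Q_{kz_k}, \phi_j\rangle = \langle q_{kz_k}, \phi_j\rangle$. Substituting into the previous display produces
\begin{equation*}
e_j \langle q_{kz_k}, \phi_j\rangle + \langle |Q_{kz_k}|^2 Q_{kz_k}, \phi_j\rangle = E_{kz_k} \langle q_{kz_k}, \phi_j\rangle.
\end{equation*}

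Finally, using the definition $\delta E_{jz_j} = E_{jz_j} - e_j$, I would write $e_j = E_{jz_j} - \delta E_{jz_j}$ on the left-hand side and rearrange to obtain \eqref{eq:id2}. There is no real obstacle here: the lemma is a one-line algebraic consequence of the self-adjointness of $H$ with respect to the bilinear form together with the orthogonality of distinct eigenfunctions. The only subtle point to flag is that one must use the (complex-linear, non-Hermitian) bilinear form \eqref{eq:bilf}, rather than the $L^2$ Hermitian inner product, so that $\langle H \cdot, \phi_j\rangle$ transfers cleanly to $e_j \langle \cdot, \phi_j\rangle$ despite $Q_{kz_k}$ being complex-valued.
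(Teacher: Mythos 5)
Your proof is correct and follows essentially the same route as the paper: the paper pairs the equation satisfied by $q_{kz_k}$ (equivalently, by $Q_{kz_k}$ after dropping the $z_k\phi_k$ part via orthogonality) with $\phi_j$, uses the symmetry of $H$ with respect to the bilinear form \eqref{eq:bilf} to produce $e_j\langle q_{kz_k},\phi_j\rangle$, and then substitutes $e_j=E_{jz_j}-\delta E_{jz_j}$, exactly as you do. The only difference is cosmetic, namely whether the orthogonality $\langle\phi_k,\phi_j\rangle=0$ is invoked before or after taking the pairing.
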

\proof   We apply  $\langle \ , \phi _j \rangle $   to
\begin{equation*}
\begin{aligned}
& Hq_{kz_k } +  |Q_{kz_k }  |^2 Q_{kz_k } =
z_k \delta  E_{kz_k } \phi _k +E_{kz_k }  q_{kz_k }
\end{aligned}
\end{equation*}
to get the following equality which from  $e_j = E_{jz_j }- \delta E_{jz_j }$  yields \eqref{eq:id2}:
\begin{equation*}
\begin{aligned}
& e_j \langle q_{kz_k }  , \phi _j \rangle    +
\langle |Q_{kz_k }|^2Q_{kz_k } ,   \phi _j\rangle     =E_{kz_k }  \langle q_{kz_k } , \phi _j\rangle \ .
\end{aligned}
\end{equation*}

\qed

\noindent \textit{Proof of Lemma \ref{lem:EnExp}.}   First of all, we have  the Taylor expansion
\begin{align}
  & \label{eq:enexp2} E (u)  = E  ( \sum _{j=1}^{n} Q_{j z_j}) + \Re \langle \nabla  E  (\sum _{j=1}^{n} Q_{j z_j}),  \overline{R[z]\eta} \rangle  \\&\nonumber+  2^{-1}\Re \langle  \nabla  ^2 E  (\sum _{j=1}^{n} Q_{j z_j})R[z]\eta ,  \overline{R[z]\eta} \rangle +E_3 (\eta ) \text{ with }  E_3 (\eta ):= \\&   \int _0^1  (1-t) \Re  \langle  \big [\nabla  ^2 E_P  (\sum _{j=1}^{n} Q_{j z_j}+tR[z]\eta)-\nabla  ^2 E _P  (\sum _{j=1}^{n} Q_{j z_j})\big ] R[z]\eta ,  \overline{R[z]\eta} \rangle dt   \nonumber
  \end{align}

  {\bf Step 1}. We consider the  expansion of the 1st term in the r.h.s of \eqref{eq:enexp2}.
   We have
\begin{equation*}
\begin{aligned}
  &    |\sum Q_{j z_j} | ^4 = \sum | Q_{j z_j} | ^4  + 4 \sum _{j\neq k}
   | Q_{j z_j} | ^2 \Re ( Q_{j z_j}  \overline{Q}_{k z_k})
   \\& +
   2 \sum _{j<k} | Q_{j z_j} | ^2| Q_{k z_k} | ^2+  \sum _{\substack{ j\neq k , \  j'\neq k'} }\Re ( Q_{j z_j}  \overline{Q}_{k z_k})\Re ( Q_{j' z_{j'}}  \overline{Q}_{k' z_{k'}})
+4\sum_{k<l,\ j\neq k,l}|Q_{j z_j}|^2\mathrm{Re}(Q_{k z_k}\overline{Q}_{l z_l}).
     \end{aligned}
\end{equation*}
%\textcolor{red}{NOTE
%\begin{align*}
%&|\sum_{j}Q_j|^4=(\sum_j |Q_j|^2+ \sum_{j\neq k} \Re Q_j \overline{Q_k})^2\\&
%=(\sum_{j}|Q_j|^2)^2+2\sum_j\sum_{l\neq k}|Q_j|^2\Re(Q_l\overline{Q_k})+\sum_{j\neq k,j'\neq k'}\Re(Q_j\overline{Q_k})\Re(Q_{j'}\overline{Q_{k'}})\\&
%=\sum_{j}|Q_j|^4+2\sum_{j<k}|Q_j|^2|Q_k|^2+4\sum_{j\neq k}|Q_j|\Re(Q_j\overline{Q_k})+4\sum_{l<k, j\neq l,k}|Q_j|^2\Re(Q_l\overline{Q_k})\\&+\sum_{j\neq k,j'\neq k'}\Re(Q_j\overline{Q_k})\Re(Q_{j'}\overline{Q_{k'}})
%\end{align*}
%We don't need to divide the sum $\sum_{j\neq k, j'\neq k'}$ because it is already $Z^2$
%}
\noindent All terms are invariant
by change of variable  $z\rightsquigarrow e^{\im \vartheta}z$.
 The 2nd   line is $O( |\mathbf{Z}|^2)$.
 We conclude that
\begin{align}\nonumber
     E  ( \sum _{j=1,..., n}  Q_{j z_j})  =&     \sum  _{j,k} \langle H Q_{j z_j} , \overline{Q_{k z_k}}\rangle  + \frac 12 \int   |\sum _{j=1,..., n} Q_{j z_j} | ^4
 = \sum _{j=1,..., n}E  (   Q_{j z_j})       +R_1 \nonumber \\&+ \sum _{j\neq k}[\Re  \langle HQ_{j z_j}, \overline{Q_{k z_k}} \rangle
	+ 2
	 \Re \langle |Q_{j z_j}|^2   Q_{j z_j}    ,  \overline{Q}_{k z_k} \rangle           ]  , \label{eq:troub2}
  \end{align}
where
\begin{align*}
R_1:= &   \sum _{j<k} \int | Q_{j z_j} | ^2| Q_{k z_k} | ^2+ \frac 1 2  \sum _{\substack{ j\neq k , \  j'\neq k'} }\int \Re ( Q_{j z_j}  \overline{Q}_{k z_k})\Re ( Q_{j' z_{j'}}  \overline{Q}_{k' z_{k'}})
\\&+2\sum_{k<l,\ j\neq k,l}\int|Q_{j z_j}|^2\mathrm{Re}(Q_{k z_k}\overline{Q}_{l z_l})=O(|\mathbf Z|^2).
\end{align*}
  By Prop.  \ref{prop:bddst}   and by \eqref{eq:id2} the summation in the last line of \eqref{eq:troub2} equals
\begin{align}\nonumber
  &   \sum _{j\neq k}[E _{j z_j}\Re  \langle   Q_{j z_j}, \overline{Q_{k z_k}} \rangle
	+         \Re \langle |Q_{j z_j}|^2   Q_{j z_j}    ,  \overline{Q}_{k z_k} \rangle ]  \\& \label{eq:expand0}  = \sum _{j\neq k} [ E _{j z_j}  ( \Re  \langle   q_{j z_j},  \overline{z}_k\phi _k   \rangle   + \Re  \langle   q_{k z_k},  \overline{z}_j\phi _j   \rangle  )+ \Re \langle |Q_{k z_k}|^2   Q_{k z_k}    ,  \overline{z}_j\phi_j \rangle       ] +R_2,
  \end{align}
where
\begin{align*}
R_2:=\sum_{j\neq k}E_{j z_j}\Re \<q_{j z_j},\overline{q_{k z_k}}\>+\Re \<|Q_{kz_k}|^2Q_{kz_k},\overline{q_{jz_j}}\>=O(|\mathbf Z|^2).
\end{align*}
  The summation in \eqref{eq:expand0} is $O(|z| ^2 \ |\textbf{Z}|) $ and  not of  the form  $O( |\textbf{Z}|  ^2) $.   Indeed, in the particular case
		when $z_k=\rho _k$ and    $z_j=\rho _j$  are real numbers, we have what follows, which is not $O( \rho _k ^2 \rho _j^2 )$,
	\begin{equation} \label{eq:troub31}
\begin{aligned}
  &        E _{j z_j}    \Re  \langle   q_{j z_j},  \overline{z}_k\phi _k   \rangle   + E _{k z_k}  \Re  \langle   q_{k z_k},  \overline{z}_j\phi _j   \rangle  +\Re \<|Q_{kz_k}|^2Q_{kz_k},\overline{z}_j\phi_j \>\\& =  \rho _k \rho _j      [ E _{j \rho _j}  \rho _j ^2    \langle   \widetilde{q}_{j }(\rho_j^2),  \phi _k   \rangle   + E _{k \rho _k}  \rho _k ^2    \langle   \widetilde{q}_{k }(\rho _k),    \phi _j   \rangle +\rho_k^2\<(\phi_k+\hat q_{k}(\rho_k^2))^3, \phi_j\> ]    \   .
  \end{aligned}
\end{equation}
 Finally, we observe that   the $R_1+R_2=O(|\mathbf Z|^2)$  summed up together
 yield the  3rd line of  \eqref{eq:enexp1}.

\noindent Indeed, since $R_1+R_2$ is gauge invariant, by Lemma \ref{lem:expandzzj} in Appendix \ref{app:B}  we have
  \begin{equation} \label{eq:fix2}
\begin{aligned}
  &    R_1+R_2=
	 \sum _{j =1}^n\sum _{l =1} ^{2N+3}	 \sum _{
	  |\textbf{m}|=l+1 }  \textbf{Z}^{\textbf{m}}   b_{j \textbf{m} }(  |z_j|^2  ) + O( |\mathbf{Z}| ^{2N+5}).
     \end{aligned}
\end{equation}
  with  $O( |\mathbf{Z}| ^{2N+5})$   smooth in $z$, independent of $\eta$
and gauge invariant.

We  have discussed the contribution to \eqref{eq:enexp1} of the 1st term in the expansion  \eqref{eq:enexp2}. Now we consider the other terms  in
\eqref{eq:enexp2}.

 {\bf  Step 2}. We consider the  expansion of the 2nd term in the r.h.s of \eqref{eq:enexp2}.

By   $\Re  \langle \nabla  E  (Q_{j z_j}),  \overline{R[z]\eta }\rangle =2\Re  E_{jz_j} \langle  Q_{j z_j} ,  \overline{R[z]\eta}  \rangle  =0$, which follows by $R[z]\eta\in \mathcal{H}_c[z] $ and by $ \im Q_{ j z_j} =- z _{jI} D _{jR} {Q}_{ j z_j} + z _{jR} D _{jI}  {Q}_{ j z_j}$,
see (11) in \cite{GNT} (and which is an immediate   consequence of ${Q}_{ j z_j}=e^{\im\theta}{Q}_{ j |z_j|}$ for  $z_j = e^{\im\theta} |z_j| $),
we have
\begin{align} & \nonumber   \Re \langle \nabla  E  (\sum _{j=1}^{n} Q_{j z_j}),  \overline{R[z]\eta} \rangle  = \overbrace{\Re \langle \nabla  E  (  Q_{1 z_1}),  \overline{R[z]\eta} \rangle}^{0}  \\&\nonumber    + \int _0^1    \partial _{t}    \Re \langle \nabla  E  (Q_{1 z_1} + t\sum _{j>1}  Q_{j z_j}),  \overline{R[z]\eta} \rangle   dt = \Re \langle \nabla  E  (\sum _{j>1} Q_{j z_j}),  \overline{R[z]\eta} \rangle   \\& \nonumber   + \int _{[0,1]^2}\partial _{s}\partial _{t}  \Re \langle  \nabla  E  _P(sQ_{1 z_1}+t\sum _{l>1}  Q_{l z_l}),  \overline{R[z]\eta}  \rangle dt ds  \\& = \sum    _{j=1}^{n-1}
  \int _{[0,1]^2}\partial _{s}\partial _{t}  \Re \langle  \nabla  E  _P(sQ_{j z_j}+t\sum _{l>j }  Q_{l z_l}),  \overline{R[z]\eta}  \rangle dt ds ,\label{eq:enexp20}
\end{align}
where the last line is obtained repeating the argument in the first three lines.
For $\widehat{Q}_j =\sum _{l>j }  Q_{l z_l} $ and   by $\nabla E_P(u)= 2|u|^2u$, the last line of \eqref{eq:enexp20}
is, in the notation of Lemma \ref{lem:contcoo},
\begin{equation*}\label{eq:enexp21}
\begin{aligned}
       2 \sum    _{j=1}^{n-1}
     \Re \big \langle  2 Q_{j z_j} |\widehat{Q}_j|  ^2 +& 2 |Q_{j z_j} | ^2\widehat{Q}_j +Q_{j z_j}^2\overline{\widehat{Q}_j} +\overline{Q}_{j z_j}\widehat{Q}_j ^2,\    \overline{ \eta}   + \phi _j  (\langle \overline{\widehat{B}}_j (\widehat{Z})  , \overline{\eta} \rangle +      \langle \overline{z}_i \overline{z}_\ell  \overline{\widehat{C}}_{i\ell j}(\widehat{Z}) ,  {\eta}\rangle)\big \rangle  .
  \end{aligned}
\end{equation*}
Further expanding  $\widehat{Q}_j =\sum _{l>j }  Q_{l z_l} $ and using  $Q_{l z_l}   =z_l (\phi _l + \widehat{q} _{l  }  (|z_l|^2)$,  the above term is of the form
\begin{align} \nonumber
  &     \nonumber  \sum _{j=1}^n	\sum _{   |\textbf{m}  | =1} ( \overline{z}_j \textbf{Z}^{\textbf{m}}  \langle
  G_{j\textbf{m}}(\widehat{Z} ), \eta \rangle  +   \text{c.c.} )  .
  \end{align}
  As in Step 1, by Lemma \ref{lem:expandzzS},
	this can be expanded into
\begin{align}
  &     \sum _{j=1}^n	\sum _{ 1\le |\textbf{m}  |  \le 2N+3}\left(  \overline{z}_j \textbf{Z}^{\textbf{m}}  \langle
  G_{jk\textbf{m}}(|z_k|^2    ), \eta \rangle  +      \text{c.c.}   \right)
		       +  \sum _{   |\textbf{m}  |  = 2N+4}
\left(  \textbf{Z}^{\textbf{m}}  \langle
  G_{ \textbf{m}}(z  ), \eta \rangle  +         \text{c.c.}   \right)   . \label{eq:Tayl1}
  \end{align}
	Thus    the last line in \eqref{eq:enexp20} can be  absorbed in the  4th line of
	\eqref{eq:enexp1}.

{\bf  Step 3}. We consider the  expansion of the 3rd term in the r.h.s of \eqref{eq:enexp2}.
Using $ \nabla  ^2 E _K (u)=2H$ and
proceeding  as  for \eqref{eq:enexp2},  we obtain
\begin{equation*}
\begin{aligned}
  &  2^{-1}\Re \langle  \nabla  ^2 E  (\sum _{j=1}^{n} Q_{j z_j})R[z]\eta ,  \overline{R[z]\eta} \rangle \\& = 2^{-1}
	\Re \langle  \nabla  ^2 E_K  (\sum _{j=1}^{n} Q_{j z_j})R[z]\eta ,  \overline{R[z]\eta} \rangle + 2^{-1}
		\sum _{j=1}^{n}\Re \langle  \nabla  ^2 E _P (  Q_{j z_j})R[z]\eta ,  \overline{R[z]\eta} \rangle \\& + 2^{-1} \sum    _{j=1}^{n-1}
  \int _{[0,1]^2}\partial _{s}\partial _{t}  \Re \langle  \nabla ^2 E _{P} (sQ_{j z_j}+t\sum _{l=j+1}^{n} Q_{l z_l})R[z]\eta,  \overline{R[z]\eta}  \rangle dt ds.
  \end{aligned}
\end{equation*}
The 3rd line is absorbed in the $ \textbf{Z}^{\textbf{m}}    \langle G_{2\textbf{m} ij } ( z ) ,   \eta ^{   i} \overline{\eta} ^j\rangle+ \resto ^{1,  2  }_{{r_{0}} , \infty} (z, \eta  )$ with $|\textbf{m}|=1$
terms    in \eqref{eq:enexp1}.
From the  2nd line, using \eqref{eq:contcoo21}--\eqref{eq:contcoo2}  and in particular
$\alpha_j[z]\eta =\resto ^{1,  1  }_{{r_{0}} , \infty} (z, \eta  )
         $ for
 the last equality,   we have
\begin{equation*}
\begin{aligned}
        2^{-1}
	\Re \langle  \nabla  ^2 E_K  (\sum _{j=1}^{n} Q_{j z_j})R[z]\eta ,  \overline{R[z]\eta} \rangle & =  \langle  H R[z]\eta ,  \overline{R[z]\eta} \rangle
     =  \langle   H \eta,  \overline{ \eta} \rangle  +2 \sum _{j=1}^{n}\Re \left [(\alpha_j[z]\eta ) \langle   H \phi_j  ,  \overline{ \eta} \rangle \right ]\\&+ \sum _{j,k=1}^{n} e_j |\alpha_j[z]\eta  |^2
         =  \langle   H \eta,  \overline{ \eta} \rangle +
         \resto ^{1,  2  }_{{r_{0}} , \infty} (z, \eta  ),
  \end{aligned}
\end{equation*}
which yield the 2nd and 3rd terms in the r.h.s. of \eqref{eq:enexp1}.
For
\begin{equation*}
\begin{aligned}
  &   2^{-1}\sum _{j=1}^{n} \nabla  ^2 E _P (  Q_{j z_j}) \eta = \sum _{j=1}^{n}  |Q_{j z_{j}}|^2 \eta +2 \sum _{j=1}^{n}   Q_{j z_{j}} \Re (Q_{j z_{j}} \overline{\eta})
  \end{aligned}
\end{equation*}
 we have for $ G_{2\textbf{0} ij } ( z )$  as in     \eqref{eq:b02}
 \begin{equation}\label{eq:exp-1}
\begin{aligned}
  &   2^{-1}
		\sum _{j=1}^{n}\Re \langle  \nabla  ^2 E _P (  Q_{j z_j})R[z]\eta ,  \overline{R[z]\eta} \rangle   =      \resto ^{1,  2  }_{{r_{0}} , \infty} (z, \eta  )+\sum _{ i+j  = 2}   \langle G_{2\textbf{0} ij } ( z ),   \eta ^{   i} \overline{\eta} ^j\rangle  .
  \end{aligned}
\end{equation}
This $ \resto ^{1,  2  }_{{r_{0}} , \infty} (z, \eta  )$ defines the
   3rd term  in the r.h.s. of \eqref{eq:enexp1}.  Notice that $\resto ^{1,  2  }_{r_{0} , \infty} (e^{\im \vartheta} z, e^{\im \vartheta} \eta )=\resto ^{1,  2  }_{r_{0} , \infty} (  z,   \eta )  $ because this invariance is satisfied  both by the l.h.s. of  \eqref{eq:exp-1} (by the invariance of $E$, \eqref{eq:decomposition3} and by Lemma \ref{lem:contcoo}) and by the last summation in the r.h.s. of  \eqref{eq:exp-1}, by formula \eqref{eq:b02}.

  {\bf  Step 4}.
	We now turn to the  $ E_3 (\eta )$ term in  \eqref{eq:enexp2}.
	By elementary computations
\begin{align}
  &  \nonumber  E_3 (\eta ) = \int _{[0,1]^2} t (1-t)   d^3
  E_P  (\sum _{j\ge 1}  Q_{j z_j}+stR[z]\eta)\cdot (R[z]\eta  )^3 dt ds= E_P(R[z]\eta) \\&+\int _{[0,1]^3} t(1-t)   d^4
  E_P  (\tau \sum _{j\ge 1} Q_{j z_j}+stR[z]\eta)\cdot (R[z]\eta  )^3 \sum _{j\ge 1}Q_{j z_j}dt dsd\tau , \label{eq:EPexp}
  \end{align}
  with   $ d^3E_P(u)\cdot v ^3$  the trilinear differential form applied to
  $(v,v,v)$ and $ d^4E_P(u)\cdot v ^3w$ the 4--linear differential form applied to
  $(v,v,v,w)$.

\noindent In particular we have used the fact that since $ d^{j}E_P(0)=0$ for $0\le j\le 2$ we have
\begin{equation}\label{eq:EPexp1}
\begin{aligned}
  &    E_P(R[z]\eta) = \int _{[0,1]^2} t \ (1-t)   d^3
  E_P  ( stR[z]\eta)\cdot (R[z]\eta  )^3 dt ds .
  \end{aligned}
\end{equation}
 For $\beta (u)=|u|^4$ and using the fact that
  $d^4\beta (u)\in  B^4( \C , \ R )$ is   constant  in $u$,  the
  2nd line of  \eqref{eq:EPexp}  is
   \begin{align}
  &    \frac{1}{12}  \int _{\R ^3}   d^4
  \beta  \cdot ((R[z]\eta ) (x) )^3 \sum _{j\ge 1}Q_{j z_j}(x) dx  , \nonumber
  \end{align}
  and  can be  absorbed in the   $\langle G_{d ij } ( z ),   \eta ^{   i} \overline{\eta} ^j\rangle  \resto ^{0,  c }_{{r_{0}}, \infty} (z, \eta )$ terms in \eqref{eq:enexp1}.  We   expand $E_P(R[z]\eta)$  as a sum of
   similar terms and of
   $E_P( \eta)$.
\qed

In order  to extract from the functional in \eqref{eq:enexp1} an effective Hamiltonian well suited for the FGR and dispersive estimates, we need to implement   a Birkhoff normal form  argument, see Sect.\ref{sec:nforms}. This requires an intermediate change of coordinates,
which will partially normalize  the  symplectic form $\Omega$ defined in  \eqref{eq:Omega}
below, and   diagonalize the homological equations. Notice that, as a bonus,
 this  change of coordinates
   erases the bad terms in the
expansion of $E$  in \eqref{eq:enexp1} discussed in {Remark}
\ref{rem:crux1}.

\section{Darboux Theorem}
\label{sec:darboux}

System  \eqref{eq:NLSham}    is   Hamiltonian   with respect to the symplectic form in $H^1(\R^3,\C
)$
\begin{equation}
\label{eq:Omega} \Omega (X,Y
):=\im \langle X ,\overline{Y} \rangle -\im  \langle \overline{X} ,Y \rangle =2\Im \langle \overline{X} , Y\rangle .
\end{equation}
In terms of the spectral decomposition of $H$ (recall $\overline{\phi} _j= {\phi} _j$)
\begin{equation}
\label{eq:specH}  X=  \sum _{j=1}^{n} \langle X ,  {\phi} _j \rangle {\phi} _j+P_cX
\end{equation}
\begin{equation}
\label{eq:Omega1} \Omega (X,Y
) = \im \sum _{j=1}^{n} \left (  \langle X ,  {\phi} _j \rangle \langle \overline{Y} ,  {\phi} _j \rangle  -
  \langle \overline{X} ,  {\phi} _j \rangle \langle  {Y} ,  {\phi} _j \rangle
  \right ) +   \im  \langle P_cX ,P_c\overline{Y} \rangle - \im \langle P_c\overline{X} ,P_cY \rangle .
\end{equation}
However, in terms of the  coordinates in Lemma \ref{lem:systcoo}, $\Omega $ admits
a quite more complicated representation, as we shall see. This will require  us to adjust these coordinates.

Our first observation is that for the  coordinates in Lemma \ref{lem:systcoo} we have the following facts.

\begin{lemma}\label{lem:etaprime}
 The Frech\'et derivative of $\eta (u)$ and $d z_{j  }$ is given by the following formula:
\begin{equation}\label{eq:etaprime}
d\eta (u)=  - \sum _{j=1,...,n}  \sum _{A=I,R}P_c D_ {jA} q_{j z_j} dz_{j A} +P_c ,
\end{equation}
\begin{equation}   \label{eq:dzR}   \begin{aligned}&   d z_{j  }=    \langle \quad  , \phi _j\rangle
        -\sum _{k:k\neq j}\sum _{A=I,R}  \langle D_{ kA}q_{kz_k}  , \phi _j\rangle dz_{kA } -   \sum _{k=1 }^{n}\sum _{A=I,R}    D_{ kA} \alpha _j[z]\eta  dz_{kA } -    \alpha _j[z] \circ d\eta .
         \end{aligned}
\end{equation}
 Analogous formulas for $ d z_{j R }$  and  $ d z_{j  I}$   are obtained applying $\Re$  and $ \Im $   to \eqref{eq:dzR}.
\end{lemma}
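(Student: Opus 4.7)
The plan is to derive both formulas by carefully inverting the change of coordinates $(z,\eta)\mapsto u$ given in Lemma \ref{lem:systcoo} and then applying the chain rule. Writing out that map explicitly using claim (4) of Lemma \ref{lem:contcoo} gives
\begin{equation*}
u \;=\; \sum_{j=1}^n Q_{jz_j} + R[z]\eta \;=\; \sum_{j=1}^n Q_{jz_j} + \eta + \sum_{j=1}^n \bigl(\alpha_j[z]\eta\bigr)\phi_j,
\end{equation*}
and from here I will extract $z_k$ and $\eta$ as functions of $u$ by two distinct projections.

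For the formula for $d\eta$, I will exploit the fact that $\eta = P_c\Theta$, where $\Theta = u - \sum_j Q_{jz_j}$. Since $P_c\phi_j=0$ and $Q_{jz_j} = z_j\phi_j + q_{jz_j}$, one has $P_c Q_{jz_j} = P_c q_{jz_j}$, whence $\eta = P_c u - \sum_j P_c q_{jz_j}$. Differentiating and using $d q_{jz_j} = \sum_A (D_{jA}q_{jz_j})\,dz_{jA}$ (by the chain rule, since $q_{jz_j}$ depends on $u$ only through $z_j$) yields directly the first displayed identity. The only thing to check is that this differentiation is legitimate, which follows from the smoothness asserted in Proposition \ref{prop:bddst} and Lemma \ref{lem:systcoo}.

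For the formula for $dz_j$, I will project onto $\phi_k$ instead. Taking $\langle\cdot,\phi_k\rangle$ of the displayed identity for $u$, using $\langle\eta,\phi_k\rangle = 0$ (from $\eta\in\mathcal{H}_c[0]$ and $\phi_k$ being real) and the normalization $\langle q_{kz_k},\phi_k\rangle=0$, one obtains the implicit representation
\begin{equation*}
z_k \;=\; \langle u,\phi_k\rangle - \sum_{j\neq k}\langle q_{jz_j},\phi_k\rangle - \alpha_k[z]\eta .
\end{equation*}
Differentiating term by term, and writing
$d(\alpha_k[z]\eta) = \sum_{j,A}(D_{jA}\alpha_k[z]\eta)\,dz_{jA} + \alpha_k[z]\circ d\eta$
for the dependence of $\alpha_k[z]\eta$ on $(z,\eta)$, I arrive at the claimed formula for $dz_k$ after relabeling. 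The expressions for $dz_{jR}$ and $dz_{jI}$ then follow by taking the real and imaginary parts of $dz_j = dz_{jR} + \im\,dz_{jI}$.

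The step requiring the most care is the chain-rule computation in the last paragraph, specifically keeping track of the fact that $\alpha_k[z]\eta$ is a function of \emph{both} $z$ and $\eta$, so that its total differential produces both the $D_{kA}\alpha_j[z]\eta\,dz_{kA}$ contribution and the $\alpha_j[z]\circ d\eta$ contribution appearing in \eqref{eq:dzR}; everything else is routine bookkeeping once the implicit identity for $z_k$ is established.
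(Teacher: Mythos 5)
Your proposal is correct. For the formula \eqref{eq:dzR} you argue exactly as the paper does: project the identity $u=\sum_k Q_{kz_k}+R[z]\eta$ onto $\phi_j$, use $\langle\eta,\phi_j\rangle=0$, $\langle q_{jz_j},\phi_j\rangle=0$ to get $z_j=\langle u,\phi_j\rangle-\sum_{k\neq j}\langle q_{kz_k},\phi_j\rangle-\alpha_j[z]\eta$, and differentiate, splitting $d(\alpha_j[z]\eta)$ into its $z$-part and the $\R$-linear $\eta$-part $\alpha_j[z]\circ d\eta$; this is the paper's computation verbatim. For \eqref{eq:etaprime}, however, you take a genuinely different and somewhat more direct route: you use the explicit inversion $\eta(u)=P_c\bigl(u-\sum_j Q_{jz_j(u)}\bigr)=P_cu-\sum_jP_cq_{jz_j(u)}$ (legitimate because $P_c\phi_j=0$ and $P_cR[z]=1$ on $\mathcal{H}_c[0]$, cf. \eqref{eq:contcoo21}), and then apply the chain rule through the smooth maps $u\mapsto z_j(u)$ and $z_j\mapsto q_{jz_j}$ of Lemma \ref{lem:decomposition} and Proposition \ref{prop:bddst}. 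The paper instead determines $d\eta$ abstractly: it postulates $d\eta=\sum_j(a_j\,dz_{jR}+b_j\,dz_{jI})+P_c$ from $d\eta\,R[z]P_c=1|_{\mathcal{H}_c[0]}$ and then solves for the coefficients $a_j,b_j$ by evaluating on the coordinate vector fields $\partial/\partial z_{jA}$ of \eqref{eq:coord2}, on which $d\eta$ vanishes by independence of the coordinates. Your version avoids the dual-basis bookkeeping at the price of invoking the explicit formula for $R[z]$; both are equally rigorous, and the smoothness needed for your chain rule is indeed supplied by the references you cite.
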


\begin{proof}
We start  with \eqref{eq:etaprime}.
By the  independence of $z$ and $\eta$, we have
\begin{equation}\label{eq:coord1}
d\eta  \frac{\partial}{\partial z_{j R}}=d\eta  \frac{\partial}{\partial z_{j I}}=0,
\end{equation}
where  \begin{equation}\label{eq:coord2}
\begin{aligned}
\frac{\partial}{\partial z_{j A}}=& D _{jA} Q_{j z_j}+\sum_{k=1}^nD _{jA} \(\alpha_k[z]\eta\)\phi_k.
\end{aligned}
\end{equation}
Next, for $\xi\in \mathcal{H}_c[0]$ we have what follows, which implies $d\eta  R[z]P_c=\left. 1 \right|_{\mathcal{H}_c[0]}$:
\begin{equation*}
 \begin{aligned}  & d\eta  R[z]P_c\xi =
\left.\frac {d}{dt}\right|_{t=0}\eta(Q_{j z_j}+R[z](\eta+t\xi))
=\xi .  \end{aligned}    \end{equation*}
 So $d\eta  = \sum (a_j dz_{j R} + b_j dz_{j I}) + P_c $,
where we   used $P_c R[z]=1$.
  $a_j$ and $b_j$ can be computed applying $ \sum (a_j dz_{j R} + b_j dz_{j I}) + P_c $  to   the vectors \eqref{eq:coord2}
and using  \eqref{eq:coord1}.
Finally  \eqref{eq:dzR}   follows by  \begin{align} \nonumber
                         & z_{j }(u)   =    \langle u-\sum _{k=1}^{n}q_{k z_k}  - R[z] \eta    , \phi _j\rangle  =  \langle u-\sum _{k:k\neq j} q_{k z_k} , \phi _j\rangle -  \alpha _j[z]\eta    .
                      \end{align}
\end{proof}
We consider the function $\overline{\eta} (u)$. Notice that
$d \overline{\eta} (u) X= \frac{d}{dt}\overline{\eta} (u+t X) _{|t=0}
=\overline{d  {\eta} (u) X }$. Now we introduce a new symplectic form. Notice that
our final choice of  symplectic form is not the  $\Omega_0'$ defined right here in
\eqref{eq:defOm0pr},  but rather the  $\Omega_0 $ defined   in
\eqref{eq:defOm0} further down.

\begin{lemma}
  \label{lem:Omega0}
Set
\begin{equation}\label{eq:defOm0pr}
\begin{aligned}
\Omega_0 '&:=  2\sum_{j=1}^ndz_{j R}\wedge dz_{j I}  + \im \<d \eta  , d\overline{\eta}  \>   -\im \<d \overline{\eta}  , d {\eta}  \>   \text{ and }
\\ B_0' &:=  \sum_{j=1}^n (z_{j R}dz_{j I}-z_{j I}dz_{j R})   - \frac{{\im} }{2} (\<  \overline{\eta} , d {\eta} \>-\<   {\eta} , d \overline{{\eta}} \> ).
\end{aligned}
\end{equation}
Then   $dB'_0=\Omega_0'$ and       $\Omega=\Omega_0'$ at $u=0$ for the $\Omega$ of \eqref{eq:Omega}. Furthermore
\begin{equation}\label{eq:defOm0pr1}
\begin{aligned}
 \Phi ^*B'_0=B'_0  \text{ for $\Phi (u)= e^{\im \vartheta}u$ for any fixed $\vartheta\in \R$}.
\end{aligned}
\end{equation}

\end{lemma}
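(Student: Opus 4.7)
\textbf{Proof plan for Lemma \ref{lem:Omega0}.} The lemma bundles three separate claims: (a) $dB_0'=\Omega_0'$, (b) $\Omega=\Omega_0'$ at $u=0$, and (c) the gauge invariance $\Phi^*B_0'=B_0'$. Each will be verified by a direct computation, splitting $B_0'$ and $\Omega_0'$ into their discrete and continuous pieces.

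For (a) I would compute the exterior derivative termwise. On the discrete side, $d(z_{jR}\,dz_{jI}-z_{jI}\,dz_{jR})=2\,dz_{jR}\wedge dz_{jI}$ recovers the discrete summand of $\Omega_0'$. On the continuous side, since $\bar\eta$ and $\eta$ are $0$-forms (with values in $H^1$) and $d\eta$, $d\bar\eta$ are exact $1$-forms, we have $d\langle\bar\eta,d\eta\rangle=\langle d\bar\eta\wedge d\eta\rangle$ and $d\langle\eta,d\bar\eta\rangle=\langle d\eta\wedge d\bar\eta\rangle$. Exploiting the symmetry of the bilinear form \eqref{eq:bilf}, I would then verify the identity
\[
-\tfrac{\im}{2}\bigl(\langle d\bar\eta\wedge d\eta\rangle-\langle d\eta\wedge d\bar\eta\rangle\bigr)=\im\langle d\eta,d\bar\eta\rangle-\im\langle d\bar\eta,d\eta\rangle,
\]
which is the continuous part of $\Omega_0'$; the symmetry of $\langle\cdot,\cdot\rangle$ is exactly what forces the right-hand side to be antisymmetric in its two vector arguments and to agree, up to the required factor, with the explicit wedge on the left.

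For (b) I would evaluate both forms at $u=0$ on an arbitrary pair $(X,Y)\in H^1\times H^1$. From Lemma \ref{lem:etaprime} at $z=0$, $\eta=0$ one has $dz_j(0)X=\langle X,\phi_j\rangle$ and $d\eta(0)X=P_cX$, and since the $\phi_j$ are real, $d\bar\eta(0)X=\overline{P_cX}=P_c\bar X$. Then, with $a_j=\langle X,\phi_j\rangle$, $b_j=\langle Y,\phi_j\rangle$, the identity $\im(a_j\bar b_j-\bar a_jb_j)=2\Im(\bar a_jb_j)=2(\Re a_j\Im b_j-\Im a_j\Re b_j)$ shows that the discrete part of \eqref{eq:Omega1} equals $2\sum_j dz_{jR}\wedge dz_{jI}(X,Y)$ at $u=0$. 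A parallel calculation, again using the symmetry of $\langle\cdot,\cdot\rangle$, shows that $\im\langle d\eta,d\bar\eta\rangle-\im\langle d\bar\eta,d\eta\rangle$ evaluated at $u=0$ on $(X,Y)$ equals $\im\langle P_cX,P_c\bar Y\rangle-\im\langle P_c\bar X,P_cY\rangle$, the continuous part of \eqref{eq:Omega1}.

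For (c), the covariance \eqref{eq:detion31} implies that in the coordinates $(z,\eta)$ the pullback by $\Phi(u)=e^{\im\vartheta}u$ acts as $z_j\mapsto e^{\im\vartheta}z_j$, $\eta\mapsto e^{\im\vartheta}\eta$, and similarly on the differentials. The discrete summand can be rewritten as $\Im(\overline{z}_j\,dz_j)$, in which the factors $e^{-\im\vartheta}$ and $e^{\im\vartheta}$ cancel; the continuous summands $\langle\bar\eta,d\eta\rangle$ and $\langle\eta,d\bar\eta\rangle$ are manifestly invariant under the same cancellation. Hence $\Phi^*B_0'=B_0'$.

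The only genuinely fiddly step is the bookkeeping in (a): one has to track carefully the interplay between the antisymmetry of the wedge product and the symmetry of the pairing $\langle\cdot,\cdot\rangle$ to reconcile the notation $\langle d\eta,d\bar\eta\rangle-\langle d\bar\eta,d\eta\rangle$ of \eqref{eq:defOm0pr} with the wedge forms arising from differentiating $B_0'$. Once this is done, (b) and (c) are immediate evaluations.
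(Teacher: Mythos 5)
Your proposal is correct and follows essentially the same route as the paper: termwise computation of $dB_0'$ (your appeal to $d(d\eta)=0$ via the Leibniz rule is exactly the paper's use of the symmetry $d^2\eta(X,Y)=d^2\eta(Y,X)$), evaluation at $u=0$ through Lemma \ref{lem:etaprime} with the same elementary identity for the discrete part, and gauge invariance via the rewriting $B_0'=\sum_j\Im(\overline{z}_j\,dz_j)+\Im\langle\overline{\eta},d\eta\rangle$ together with \eqref{eq:detion31}. No gaps.
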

\proof The equality  $dB_0'=\Omega_0'$  is elementary.
Indeed $d(z_{j R}dz_{j I}-z_{j I}dz_{j R})=2dz_{j R}\wedge dz_{j I}$
and for a pair of constant vectorfields $X$ and $Y$, by     $d ^2\eta (X,Y)=d ^2\eta (Y,X)$, we have
\begin{equation*}
\begin{aligned}
  d\<  \overline{\eta} , d {\eta} \> (X,Y)=   X\<  \overline{\eta} , d {\eta} Y\>-Y\<  \overline{\eta} , d {\eta} X\>=  \<  d\overline{\eta} X , d {\eta} Y\>- \<  d\overline{\eta} Y , d {\eta} X\> .
\end{aligned}
\end{equation*}
  This yields $d\<  \overline{\eta} , d {\eta} \> = \<  d\overline{\eta  }  , d {\eta}  \>-\<  d {\eta}    , d \overline{{\eta}}  \> $ and also $d\<   {\eta} , d \overline{{\eta}} \> =-d\<  \overline{\eta} , d {\eta} \> = \<  d {\eta  }  , d \overline{{\eta}}  \>-\<  d \overline{{\eta} }   , d  {{\eta}}  \> $

To compute $\Omega_0'$ at $u=0$ we observe that by Lemma \ref{lem:etaprime}
we have $d\eta =P_c$ at  $u=0$, so that
\begin{equation}\label{eq:defOm01}
  \im \<d \eta  X, d\overline{\eta} Y \>   -\im \<d \overline{\eta} X , d {\eta} Y \> = \im  \langle P_cX ,P_c\overline{Y} \rangle - \im \langle P_c\overline{X} ,P_cY \rangle \text{ at  $u=0$}.
\end{equation}
By Lemma \ref{lem:etaprime} and
    Proposition \ref{prop:bddst},
 at $u=0$ we have $dz_{jR }= \Re \langle \quad  , \phi _j\rangle $
and $dz_{jI }= \Im \langle \quad  , \phi _j\rangle $.
Summing on repeated indexes, we have

  \begin{align}&\label{eq:defOm02}  \im   \left (  \langle X ,  {\phi} _j \rangle \langle \overline{Y} ,  {\phi} _j \rangle  -
  \langle \overline{X} ,  {\phi} _j \rangle \langle  {Y} ,  {\phi} _j \rangle
  \right ) = -2\Im  \left(  \langle X ,  {\phi} _j \rangle \langle \overline{Y} ,  {\phi} _j \rangle  \right ) =\\& 2 (\Re \langle X ,  {\phi} _j \rangle \Im \langle {Y} ,  {\phi} _j \rangle  -\Re \langle {Y} ,  {\phi} _j \rangle \Im \langle X ,  {\phi} _j \rangle   )=\nonumber\\& 2  \Re \langle \ ,  {\phi} _j \rangle \wedge \Im \langle \ ,  {\phi} _j \rangle  (X,Y) =2  dz_{j R}\wedge dz_{j I}   {| _{u=0}} (X,Y) .\nonumber \end{align}
By \eqref{eq:defOm01}--\eqref{eq:defOm02} we get    $\Omega=\Omega_0'$ at $u=0$.
Finally, \eqref{eq:defOm0pr1} follows immediately by
\begin{equation}\label{eq:defOm0pr2}
\begin{aligned}
B_0'&:=  \sum_{j=1}^n \Im (\overline{z}_{j  }dz_{j  } )   + \Im    \<  \overline{\eta} , d {\eta} \> .
\end{aligned}
\end{equation}

\qed

\noindent  Summing on repeated indexes and using the notation in Prop.\ref{prop:bddst}, we introduce  the differential  forms:
 \begin{equation}\label{eq:defOm0}
\begin{aligned}
&\Omega_0  :=  \Omega_0 ' +   \im\gamma _j(|z_j|^2)  dz_{j } \wedge d\overline{z}_{j }
  \text{  where}
\\&   \gamma _j(|z_j|^2):= \<  \widehat{{q}}_{j  } (|z_j|^2), \widehat{{q}}_{j  }   (|z_j|^2) \> +  2|z_j|^2   \<  \widehat{{q}}_{j  } (|z_j|^2), \widehat{{q}}_{j  } ' (|z_j|^2) \>
,   \\ &
B_0  :=  B_0' -\Im \<  D_{jA}\overline{q}_{j z_j}, {q}_{j z_j} \>  dz_{jA} .
\end{aligned}
\end{equation}
 with $\widehat{{q}}_{j  } '  (t)= \frac{d}{dt}\widehat{{q}}_{j  } $.
We have the following lemma.

\begin{lemma}
  \label{lem:gau2} We have
    $\gamma _j(|z_j|^2) =\resto ^{2,0}_{\infty,\infty}(|z_j|^2)$.
  We have
  $dB_0 =\Omega_0$ and
\begin{equation}\label{eq:gau21}
\begin{aligned}
 \Phi ^*B _0=B _0  \text{ for $\Phi (u)= e^{\im \vartheta}u$ for any fixed $\vartheta\in \R$}
\end{aligned}
\end{equation}
\end{lemma}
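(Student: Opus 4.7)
The plan is to verify the three assertions one at a time, exploiting the explicit structure of $q_{jz_j}$ given in Proposition \ref{prop:bddst}, namely $q_{jz_j}=z_j\widehat{q}_j(|z_j|^2)$ with $\widehat{q}_j$ real-valued and $\widehat{q}_j(t^2)=t^2\widetilde{q}_j(t^2)$ for $\widetilde{q}_j\in C^\infty$.

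For the estimate $\gamma_j(|z_j|^2)=\mathcal{R}^{2,0}_{\infty,\infty}(|z_j|^2)$ I would substitute $\widehat{q}_j(|z_j|^2)=|z_j|^2\widetilde{q}_j(|z_j|^2)$ into the two inner products defining $\gamma_j$. Each inner product is smooth in $|z_j|^2$ in any $\Sigma_r$, so both summands are of order at least $|z_j|^4$, which in particular satisfies the bound and smoothness required by Definition \ref{def:scalSymb} for $\mathcal{R}^{2,0}_{\infty,\infty}$.

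For the identity $dB_0=\Omega_0$, I would use $dB_0'=\Omega_0'$ from Lemma \ref{lem:Omega0} and reduce the problem to showing
\[
d\Bigl(-\sum_{j,A}\Im\langle D_{jA}\overline{q}_{jz_j},q_{jz_j}\rangle\,dz_{jA}\Bigr)=\im\sum_j\gamma_j(|z_j|^2)\,dz_j\wedge d\overline{z}_j.
\]
Since $\widehat{q}_j$ is real, $D_{jA}\overline{q}_{jz_j}=\overline{D_{jA}q_{jz_j}}$, and a direct computation gives $D_{jR}q_{jz_j}=\widehat{q}_j+2z_{jR}z_j\widehat{q}_j'$ and $D_{jI}q_{jz_j}=\im\widehat{q}_j+2z_{jI}z_j\widehat{q}_j'$. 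Pairing with $q_{jz_j}=z_j\widehat{q}_j$ and taking imaginary parts yields
\[
\Im\langle D_{jR}\overline{q}_{jz_j},q_{jz_j}\rangle=z_{jI}\|\widehat{q}_j\|^2,\quad \Im\langle D_{jI}\overline{q}_{jz_j},q_{jz_j}\rangle=-z_{jR}\|\widehat{q}_j\|^2,
\]
where the contributions involving $\langle\widehat{q}_j',\widehat{q}_j\rangle$ are real and drop out. The correction one-form therefore simplifies to $\sum_j\|\widehat{q}_j(|z_j|^2)\|^2\bigl(z_{jR}dz_{jI}-z_{jI}dz_{jR}\bigr)$. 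Writing $f_j:=\|\widehat{q}_j(|z_j|^2)\|^2$ with $f_j'=2\langle\widehat{q}_j,\widehat{q}_j'\rangle$ and $d|z_j|^2=2(z_{jR}dz_{jR}+z_{jI}dz_{jI})$, the exterior derivative collapses to $(2f_j+4|z_j|^2\langle\widehat{q}_j,\widehat{q}_j'\rangle)\,dz_{jR}\wedge dz_{jI}=2\gamma_j(|z_j|^2)\,dz_{jR}\wedge dz_{jI}$, which is exactly $\im\gamma_j(|z_j|^2)\,dz_j\wedge d\overline{z}_j$ since $dz_j\wedge d\overline{z}_j=-2\im\,dz_{jR}\wedge dz_{jI}$.

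For the gauge invariance \eqref{eq:gau21}, I would use $\Phi^*B_0'=B_0'$ from Lemma \ref{lem:Omega0} and rewrite the correction term, by the computation above, as $\sum_j\|\widehat{q}_j(|z_j|^2)\|^2\,\Im(\overline{z}_j\,dz_j)$. Under $z_j\mapsto e^{\im\vartheta}z_j$, $|z_j|^2$ is invariant and $\overline{z}_j\,dz_j$ is invariant, so the correction term is invariant and \eqref{eq:gau21} follows. The only mildly delicate step is the cancellation of the $\langle\widehat{q}_j,\widehat{q}_j'\rangle$ contributions to the imaginary parts; once that is observed, everything is a short and mechanical computation, and there is no substantial obstacle.
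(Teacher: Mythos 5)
Your proof is correct and follows essentially the same route as the paper: a direct computation, based on $q_{jz_j}=z_j\widehat q_j(|z_j|^2)$ with $\widehat q_j$ real valued, showing that the exterior derivative of the one--form correction $-\Im \< D_{jA}\overline q_{jz_j}, q_{jz_j}\> dz_{jA}$ equals $\im \gamma_j(|z_j|^2)\, dz_j\wedge d\overline z_j$, plus the elementary size estimate for $\gamma_j$. Your explicit rewriting of that correction as $\sum_j \|\widehat q_j(|z_j|^2)\|_{L^2}^2\,\Im(\overline z_j\, dz_j)$ and checking gauge invariance directly at the level of the one--form is, if anything, slightly more on point for \eqref{eq:gau21} than the paper's displayed verification, which is carried out on the two--form $\im\gamma_j(|z_j|^2)\,dz_j\wedge d\overline z_j$.
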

\proof  $\gamma _j(|z_j|^2) =\resto ^{2,0}_{\infty,\infty}(|z_j|^2)$ is elementary
from  Prop. \ref{prop:bddst} and Def. \ref{def:scalSymb}.
$dB_0 =\Omega_0$ follows by  $dB_0 '=\Omega_0'$ and
by
\begin{equation*}
\begin{aligned}& - d\Im \<  D_{jA}\overline{q}_{j z_j}, {q}_{j z_j} \>  dz_{jA} =
   \Im \<  D_{jA}\overline{q}_{j z_j}, D_{jB}{q}_{j z_j} \>  dz_{jA} \wedge dz_{jB} =\\&
2 \Im \<  D_{jR}\overline{q}_{j z_j}, D_{jI}{q}_{j z_j} \>  dz_{jR} \wedge dz_{jI}=2\gamma(|z_j|^2)dz_{jR}\wedge dz_{jI}\\&
%   \Im \int _{\R^3} d  \overline{q}_{j z_j}\wedge   d {q}_{j z_j} dx =   \Im [\int _{\R^3} ( -|\widehat{{q}}_{j  } (|z_j|^2)|^2 +2 |z_j|^2\widehat{{q}}_{j  } (|z_j|^2)  \widehat{{q}}_{j  } ' (|z_j|^2)   ) dx  \,   dz_{j } \wedge d\overline{z}_{j } ]\\&
  = \im\gamma _j(|z_j|^2)  dz_{j } \wedge d\overline{z}_{j }
\end{aligned}
\end{equation*}
where
%    $\Im [dz_{j } \wedge d\overline{z}_{j } (X,Y)]= dz_{j } \wedge d\overline{z}_{j } (X,Y)$ for any pair of vectors  $(X,Y)$ and
$ q_{jz_j } =z_j \widehat{q}_{j }(|z_j|^2)$.

Turning to the proof of \eqref{eq:gau21}, we have
\begin{align*}
\Phi^*\(\im\gamma_j(|z_j|^2)dz_j \wedge d\overline z_j\) = \im\gamma_j(|z_j|^2)d\(\Phi^* z_j\) \wedge d\(\Phi^* \overline z_j\)=\im\gamma_j(|z_j|^2)dz_j \wedge d\overline z_j.
\end{align*}
\qed

\begin{lemma}
  \label{lem:1forms}
  We  have  $dB=\Omega $  with $B$ the differential form   in the manifold $H^1$ defined by
\begin{equation} \label{eq:1forms}\begin{aligned} &
B (u)X:= \Im \langle \overline{u} , X\rangle
\end{aligned}
\end{equation}
 Consider  for $u\in B _{  H^1} (0,d _0)$ for the $d _0>0$  of Lemma \ref{lem:contcoo}
 the function $\psi \in C^\infty (B _{  H^1} (0,d _0) , \R) $   and the differential form $\Gamma(u)$ defined as follows:
 \begin{align} &
\psi (u) :=\sum _{j=1}^{n} \Im \langle \overline{q}_{jz_j } , u\rangle
 +\sum _{j=1}^{n} \Im \left (    \alpha _j[z]\eta  \ \overline{z} _j \right )
\label{eq:psi} \\&   \Gamma(u):=B (u)-B _0(u)
+d\psi (u)  . \label{eq:alpha1}
\end{align}
Then the map $( z, \eta )\to \Gamma(u (z,\eta ))$,  for $u (z,\eta )$ the r.h.s. of
\eqref{eq:systcoo1}, which is
 initially defined in   $  B_{\C^n}(0 , d _0  ) \times ( H^1\cap \mathcal{H}_c[0]) $,
 extends to  $   B_{\C^n}(0 , d _0 ) \times \Sigma ^c _{-r} $ for any $r\in \N$.
In particular,   we have  $\Gamma = \Gamma _{jA}  dz_{jA} + \langle \Gamma _{\eta}, d\eta \rangle  + \langle \Gamma _{\overline{\eta}}, d\overline{\eta} \rangle
   $ with, in the sense of Remark \ref{rem:sym},
\begin{align} \label{eq:alphaest1}
  &          \Gamma _{jA}=   \mathcal{R}^{1,1}_{\infty, \infty}(z,\textbf{Z},\eta )  \text{ and } \Gamma _{\xi }= \mathbf{S} ^{1,1}_{\infty, \infty}(z,\textbf{Z},\eta )  \text{ for $\xi = \eta ,\overline{\eta} $.}
\end{align}
    Furthermore, $\Gamma $ satisfies the  invariance property    in $ B_{H^1}(0, d _0)$:
\begin{equation}\label{eq:gau41}
\begin{aligned}
 \Phi ^*\Gamma=\Gamma  \text{ for $\Phi (u)= e^{\im \vartheta}u$ for any fixed $\vartheta\in \R$}.
\end{aligned}
\end{equation}
\end{lemma}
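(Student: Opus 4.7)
The identity $dB=\Omega$ is immediate: for constant vector fields $X,Y$ on $H^1$,
\begin{equation*}
dB(X,Y)= X(B(\cdot)Y)-Y(B(\cdot)X)=\Im\<\overline{X},Y\>-\Im\<\overline{Y},X\>=2\Im\<\overline{X},Y\>=\Omega(X,Y).
\end{equation*}
The remaining content — the structural expansion of $\Gamma$ in the coordinates $(z,\eta)$, the extension to negative Sobolev indices, and the gauge invariance — will be handled in three steps.

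First I would expand the pullback of $B$ along $u=\sum_jQ_{jz_j}+R[z]\eta$. Writing $\overline{u}=\sum_j(\overline{z}_j\phi_j+\overline{q}_{jz_j})+\overline{R[z]\eta}$ and using the formula $du=\sum_{j,A}D_{jA}Q_{jz_j}\,dz_{jA}+d(R[z]\eta)$ together with the gauge identity $\im Q_{jz_j}=-z_{jI}D_{jR}Q_{jz_j}+z_{jR}D_{jI}Q_{jz_j}$ (already invoked in \eqref{eq:enexp20}) and its linear analogue $\im(z_j\phi_j)=-z_{jI}\phi_j\,dz_{jR}+z_{jR}\phi_j\,dz_{jI}$, the "principal" parts of $\Im\<\overline{u},du\>$ coming from $\overline{z}_j\phi_j$ paired with $dz_{kA}\phi_k$ and from $\overline{\eta}$ paired with $d\eta$ reproduce exactly $B_0'$. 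What remains is a collection of cross correction terms involving the rapidly decaying quantities $q_{jz_j}$, their derivatives $D_{jA}q_{jz_j}$, and the correction $(R[z]-1)\eta$, the latter controlled via \eqref{eq:contcoo21}--\eqref{eq:contcoo2}.

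Second, I would verify that the diagonal correction $-\Im\<D_{jA}\overline{q}_{jz_j},q_{jz_j}\>dz_{jA}$ appearing in $B_0-B_0'$ together with $d\psi$ cancels the "bad" residual terms — those  linear in $\eta$ alone or linear in a single $z_k$ — leaving only expressions of product type. Indeed, using Lemma \ref{lem:etaprime} to express $d\eta$ and $dz_j$ in terms of $du$, a direct computation of $d\psi$ for
\begin{equation*}
\psi=\sum_{j}\Im\<\overline{q}_{jz_j},u\>+\sum_{j}\Im(\alpha_j[z]\eta\,\overline{z}_j)
\end{equation*}
produces cancellations with the leading $\Im\<\overline{q}_{jz_j},dQ_{jz_j}\>$--type and $\Im\<\overline{z}_j\phi_j,d(\alpha_j[z]\eta)\phi_j\>$--type terms in $B-B_0'$. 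Combining the bound $\|q_{jz_j}\|_{\Sigma_r}\le C|z_j|^3$ from Prop.\ \ref{prop:bddst} with the $\textbf{Z}$--type bounds in \eqref{eq:contcoo2}, and restricting $\textbf{Z}=(z_i\overline{z}_j)_{i\neq j}$ as in Remark \ref{rem:sym}, the remaining $dz_{jA}$--coefficient acquires factors $|z|\cdot(|\textbf{Z}|+\|\eta\|_{\Sigma_{-K}})$ giving $\mathcal{R}^{1,1}_{\infty,\infty}$, while the coefficients of $d\eta$ and $d\overline{\eta}$ take values in $\Sigma_r$ for any $r$ and satisfy the analogous bound, giving $\mathbf{S}^{1,1}_{\infty,\infty}$.

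Third, extension and gauge invariance are routine. In the final expression for $\Gamma$ in coordinates, every pairing with $\eta$ or $\overline{\eta}$ occurs against a Schwartz function (coming from $q_{jz_j}$, $D_{jA}q_{jz_j}$, $\phi_j$, or the $B_j,C_j$ of Lemma \ref{lem:contcoo}), so each such pairing extends by duality to $\eta\in\Sigma^c_{-r}$ for any $r\in\N$. The invariance \eqref{eq:gau41} follows by combining $\Phi^*B=B$ (immediate from $\Im\<\overline{e^{\im\vartheta}u},e^{\im\vartheta}X\>=\Im\<\overline{u},X\>$), Lemma \ref{lem:gau2} for $\Phi^*B_0=B_0$, and the gauge invariance of $\psi$, which holds because each summand pairs a $z_j$--object with its conjugate via \eqref{eq:decomposition3} and \eqref{eq:gau1}.

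The main obstacle is the algebraic matching in the second step: one must verify by direct computation that the precise choice of $\psi$, together with the $\gamma_j$ correction built into $B_0$, annihilates exactly the $O(|z|)$ and $O(\|\eta\|)$ linear residual pieces of $B-B_0'$ that would otherwise prevent the product structure $\mathcal{R}^{1,1}$ and $\mathbf{S}^{1,1}$. This bookkeeping is local and elementary, but it is the only place where the specific definition of $\psi$ in \eqref{eq:psi} is used essentially.
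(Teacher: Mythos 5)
Your proposal is correct and follows essentially the same route as the paper's proof: after the constant–vector–field computation of $dB=\Omega$, you expand $B-B_0$ in the coordinates of Lemma \ref{lem:systcoo} using Lemmas \ref{lem:contcoo} and \ref{lem:etaprime}, let $d\psi$ together with the $\Im \<  D_{jA}\overline{q}_{j z_j}, {q}_{j z_j} \>dz_{jA}$ correction inside $B_0$ cancel the residual terms linear in $q_{jz_j}$ and the $\overline{z}_j\,\alpha_j[z]\circ d\eta$ contribution, and then obtain the $\mathcal{R}^{1,1}/\mathbf{S}^{1,1}$ structure, the $\Sigma^c_{-r}$ extension by duality, and the gauge invariance from $\Phi^*B=B$, \eqref{eq:gau21} and \eqref{eq:gau1}, exactly as the paper does.
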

\proof  By the definition of the exterior differential,
and focusing on constant vectorfields $X$ and $Y$,
\begin{equation*}   \begin{aligned} &
  dB(X,Y)=XB(u)Y-YB(u)X=\Im \langle \overline{X} , Y\rangle  -\Im \langle \overline{Y} , X\rangle   =\Omega (X,Y).
\end{aligned}
\end{equation*}
This is enough to prove $dB=\Omega $.
Next, using $R[z]\eta = \eta +\sum _j\alpha _j [z]\eta  \,  \phi _j  $,
we expand
\begin{equation} \label{eq:1forms1}   \begin{aligned}
B (u) &= \sum _{j}\Im \langle \overline{Q} _{jz_j } ,  \ \rangle  +\Im \langle \overline{R[z]\eta}  ,  \ \rangle = \sum _{j}\Im \langle \overline{z}_j\phi _j  ,  \ \rangle  +\Im \langle \overline{ \eta}  ,  \ \rangle \\& + \sum _{j}\Im \langle \overline{q} _{jz_j }  ,  \ \rangle  +
\sum _{j} \Im   (\overline{\alpha _j [z]\eta}  \ \  \langle \phi _j , \ \rangle )
 \  .  \end{aligned}
\end{equation}
  By the definition of $B_0$ in \eqref{eq:defOm0}   we have
\begin{align} &
B- B_0 = I_1+I_2+I_3 +\sum _{j,A}\Im \langle  D_{jA}\overline{q}_{j z_j}, {q}_{j z_j} \rangle  dz_{jA}   +\sum _{j}\Im \langle \overline{q} _{jz_j }  ,  \ \rangle \, , \label{eq:bad2} \\&
I_1:=
\sum _{j}  \Im \left  [     \overline{z}_{j }   \( \langle  \phi _j  ,  \ \rangle -dz_j \) \right   ] \ , \quad   I_2:=   -\Im   \<  \overline{\eta} , d {\eta}   - P_c   \quad \>
  \ ,
   \quad   I_3:=
\sum _{j} \Im   \left [\overline{\alpha _j [z]\eta}  \   \langle \phi _j , \ \rangle   \right ] \  .\nonumber
\end{align}
We substitute  $ d {\eta}$ with \eqref{eq:etaprime} and
$\langle \phi _j , \ \rangle $ with \eqref{eq:dzR}. For $\alpha _j[z] \circ d\eta$ the linear operator defined by $\alpha _j[z] \circ d\eta (X):= \alpha _j[z]  d\eta (X)$  we then get
 \begin{equation} \label{eq:bad1} \begin{aligned}
  I_1  &= \Im    \langle D_{ jA}q_{jz_j}  ,  \overline{z}_k \phi _k\rangle  dz_{jA }+
  \Im       (  \overline{z}_j D_{ kA}\alpha _j [z]\eta   )   dz_{kA }   +   \Im \left  (   \overline{z}_j \alpha _j [z]\circ d\eta   \right   )  \\& =  \sum  _{jA} \resto ^{1,1}_{\infty,\infty } dz_{jA } +   \Im \left  (   \overline{z}_j \alpha _j [z]\circ d\eta   \right   ),
\end{aligned}
\end{equation}
where, as anticipated in
 Remark \ref{rem:sym},  here we set  $\resto ^{i,j}_{K,M}=\resto ^{i,j}_{K,M} ( z,\textbf{Z},\eta) $ and $\mathbf{{S}}^{i,j}_{K,M}=\mathbf{{S}}^{i,j}_{K,M}  ( z,\textbf{Z},\eta)$, where here $\mathbf{Z} =(z_i \overline{z}_j) _{i,j =1,...,n} $
  with $i\neq j$.

 The  second term in the last line  of the last formula is incorporated in  \eqref{eq:bb0}. We have
 \begin{equation*}
    I_2  = \Im    \langle \overline{ \eta} , D_{ jA}q_{jz_j}  \rangle  dz_{jA }  =\sum  _{jA} \resto ^{2,1}_{\infty,\infty } dz_{jA } .
 \end{equation*}
    Substituting with  \eqref{eq:dzR}  we have
  \begin{equation*}  \begin{aligned} &
  I_3  =  \sum  _{jA} \resto ^{2,1}_{\infty,\infty } dz_{jA }  +
 \langle \mathbf{S} ^{1,1}_{\infty, \infty}, d\eta \rangle  + \langle \mathbf{S} ^{1,1}_{\infty, \infty}, d\overline{\eta} \rangle   .
\end{aligned}
\end{equation*}
Hence we get
\begin{align} & \label{eq:bb0}
B- B_0= \sum _{j} \Im \left  (   \overline{z}_j \alpha _j [z]\circ d\eta   \right   )    \\&
\label{eq:bb1}   +  \sum  _{jA} \resto ^{1,1}_{\infty,\infty } dz_{jA }  +
 \langle \mathbf{S} ^{1,1}_{\infty, \infty}, d\eta \rangle  + \langle \mathbf{S} ^{1,1}_{\infty, \infty}, d\overline{\eta} \rangle
			\\& \label{eq:bb6}  +\sum  _{jA}
    \Im \langle  D_{jA}\overline{q}_{j z_j}, {q}_{j z_j} \rangle  dz_{jA} + \sum _{j} \Im \langle \overline{q} _{jz_j }  ,  \ \rangle \ .
\end{align}
Set now $
  \widetilde{\psi }(u):=-\sum _{j=1}^n \Im \langle \overline{q} _{jz_j }  , u \rangle \  . $
 Then   it is elementary that we have
 \begin{equation} \label{eq:tildepsi} \begin{aligned} &
  d\widetilde{\psi }  =-\sum _{j=1}^n\Im \langle \overline{q} _{jz_j }  ,   \rangle
	 -\sum _{j, A}\Im \langle D_ {jA}\overline{q} _{jz_j }  ,  {q} _{jz_j } \rangle    dz_{j A}
	  + \sum _{j, A} \resto ^{2,1}_{\infty,\infty}  dz_{j A}   .
\end{aligned}
\end{equation}
  By the Leibnitz rule  we have
  \begin{align} & \label{line1} \Im \left  (   \overline{z}_j \alpha _j [z]\circ d\eta   \right   ) = d \Im \left  (   \overline{z}_j \alpha _j [z]\ \eta   \right   ) - \Im \left  (   d(\overline{z}_j \alpha _j [z])\ \eta   \right   ).\end{align}
The contribution to  \eqref{eq:bb0}  of the
last    term   in the r.h.s. of \eqref{line1}  can be absorbed in  \eqref{eq:bb1}.
Then
\begin{align*} &
B-B_0+d\psi=   \sum  _{jA} \resto ^{2,1}_{\infty,\infty } dz_{jA }  +
 \langle \mathbf{S} ^{1,1}_{\infty, \infty}, d\eta \rangle  + \langle \mathbf{S} ^{1,1}_{\infty, \infty}, d\overline{\eta} \rangle .
\end{align*}
Here we have used:    the first two terms in the r.h.s. of \eqref{eq:tildepsi} cancel  with   \eqref{eq:bb6};  there is a cancelation between the  contribution to
 the
r.h.s. of  \eqref{eq:bb0}  of the first term on the r.h.s. of  \eqref{line1}
and the differential of the last   term  in   \eqref{eq:psi}.  This yields \eqref{eq:alphaest1}.

Finally we consider \eqref{eq:gau41}. We have $\Phi ^*B_0=B_0$ by  \eqref{eq:gau21}, while  $\Phi ^*B =B $ follows immediately from the definition
of $B$ in  \eqref{eq:1forms}. Finally $\Phi ^*\psi  =\psi $  follows immediately from  $ \Phi ^* \langle \overline{q}_{jz_j } , u\rangle = \langle \overline{q}_{jz_j } , u\rangle$, which follows from ${q}_{jz_j }(e^{\im \vartheta}z)=e^{\im \vartheta} {q}_{jz_j }(z)$, and  from \eqref{eq:gau1} and \eqref{eq:detion31} which impy
$$ \Phi ^*    \left ( \overline{z}_{j }   {\alpha _j[z]\eta}  \right )=   e^{-\im \vartheta}\overline{z}_{j }   {\alpha _j[e^{\im \vartheta}z]e^{\im \vartheta}\eta}     =    \overline{z}_{j }   {\alpha _j[ z]\eta}    .  $$

\qed

 \begin{lemma}
  \label{lem:Omegahat}  Consider the differential form $ \Omega -\Omega _{0}$, which is defined in $ B_{H^1}(0, d _0)$ for the  $d _0>0$ of Lemma \ref{lem:contcoo}.
 Then, summing on repeated indexes, we have
\begin{equation}\label{eq:Omegahat1}
\begin{aligned}
     &\Omega -\Omega _{0}   =
 \widetilde{\Omega}_{ijAB} dz_{iA} \wedge dz_{jB}  + \sum _{\xi = \eta , \overline{\eta}} dz_{iA}  \wedge \langle 	\widetilde{\Omega}  _{iA\xi}, d\xi \rangle +   \sum _{\xi , \xi '= \eta , \overline{\eta}} \langle 	
  \widetilde{{\Omega}}  _{ \xi ' \xi  }   d\xi  , d\xi '\rangle
\end{aligned}
\end{equation}
where,  expressed   as functions  of $(z,\eta )$, the coefficients
extend into functions defined $B_{\C ^n}(0, d _0 ) \times \Sigma ^{c}_{-r}$
for any $r\in \N$ and in particular we have
  $ \widehat{\Omega}  _{iA \xi} =\textbf{S}^{1,0}_{\infty, \infty }( z,\textbf{Z},\eta)   $,
 $ \widehat{\Omega}  _{ijAB} =\mathcal{R}^{1,0}_{\infty, \infty } ( z,\textbf{Z},\eta)   $
 in the sense of Remark \ref{rem:sym}  and $ \widetilde{{\Omega}}  _{ \xi ' \xi  } = \partial _{\xi }\textbf{S}^{1,1}_{\infty, \infty }( z,\textbf{Z},\eta) -(\partial _{\xi '}\textbf{S}^{1,1}_{\infty, \infty }( z,\textbf{Z},\eta))^*     $ (with two distinct $\mathbf{S}$'s).
We furthermore have
\begin{equation}\label{eq:gau31}
\begin{aligned}
 \Phi ^*(\Omega -\Omega _{0})=\Omega -\Omega _{0}  \text{ for $\Phi (z,\eta)= (e^{\im \vartheta}z,e^{\im \vartheta}\eta)$ for any fixed $\vartheta\in \R$}.
\end{aligned}
\end{equation}

\end{lemma}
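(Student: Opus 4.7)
The strategy is to avoid computing $\Omega - \Omega_0$ directly from its unwieldy definitions and instead to exploit the 1-form primitive $\Gamma = B - B_0 + d\psi$ constructed in Lemma~\ref{lem:1forms}. Since $d^2\psi = 0$ and $dB = \Omega$, $dB_0 = \Omega_0$, we have $d\Gamma = \Omega - \Omega_0$, so the entire content of the lemma reduces to taking the exterior derivative of $\Gamma$ and tracking the symbol classes of the resulting coefficients.

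The first step is to write $\Gamma = \Gamma_{jA}\, dz_{jA} + \langle \Gamma_\eta, d\eta\rangle + \langle \Gamma_{\overline\eta}, d\overline\eta\rangle$ as given by Lemma~\ref{lem:1forms} and to apply $d$, summing over repeated indices, to obtain
\begin{equation*}
d\Gamma = d\Gamma_{jA}\wedge dz_{jA} + \langle d\Gamma_\eta\wedge, d\eta\rangle + \langle d\Gamma_{\overline\eta}\wedge, d\overline\eta\rangle.
\end{equation*}
Expanding each $d\Gamma_\bullet$ in the basis $\{dz_{iB}, d\eta, d\overline\eta\}$ produces three blocks: the $dz_{iB}\wedge dz_{jA}$ block, the mixed $dz_{iA}\wedge d\xi$ block (with $\xi = \eta,\overline\eta$), and the $d\xi\wedge d\xi'$ block, which is exactly the structure displayed in \eqref{eq:Omegahat1}.

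To obtain the symbol orders, I would apply the product and differentiation rules \eqref{eq:algSymb} to the bounds of Lemma~\ref{lem:1forms}. From $\Gamma_{jA} = \mathcal{R}^{1,1}_{\infty,\infty}$, differentiating in $z_{iB}$ drops one power of $z$ and gives $\widetilde\Omega_{ijAB} = \mathcal{R}^{1,0}_{\infty,\infty}$; differentiating in $\eta$ or $\overline\eta$ preserves the $z$-content and yields $\widetilde\Omega_{iA\xi} = \mathbf{S}^{1,0}_{\infty,\infty}$. The $d\xi\wedge d\xi'$ block is governed by $\Gamma_\xi = \mathbf{S}^{1,1}_{\infty,\infty}$: taking exterior derivative of the pairing $\langle\Gamma_\xi, d\xi\rangle$ and antisymmetrising under the wedge produces the combination $\partial_\xi \mathbf{S}^{1,1} - (\partial_{\xi'}\mathbf{S}^{1,1})^*$, in which the adjoint arises because the pairing need not be symmetric in the two slots.

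The gauge invariance \eqref{eq:gau31} needs no new computation: $\Omega$ is manifestly $\Phi$-invariant from its definition $\Omega(X,Y) = 2\Im\langle\overline X,Y\rangle$, while $\Phi^*\Omega_0 = \Omega_0$ follows by differentiating $\Phi^*B_0 = B_0$, which was established in Lemma~\ref{lem:gau2}. The main obstacle will be the symbol-class bookkeeping: one must carefully distinguish the auxiliary variable $\mathbf{Z} \in L$ appearing in Definitions~\ref{def:scalSymb}--\ref{def:opSymb} from its restriction $\mathbf{Z} = (z_i\overline z_j)_{i\ne j}$ of Remark~\ref{rem:sym}, and verify that differentiation of the $\mathcal{R}^{1,1}$ and $\mathbf{S}^{1,1}$ symbols in the appropriate direction yields precisely the stated orders (rather than weaker bounds), which is ultimately what makes these coefficients suitable for the Darboux-theorem argument performed in the sequel.
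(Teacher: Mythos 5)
Your proposal is correct and follows essentially the same route as the paper: both compute $\Omega-\Omega_0=d\Gamma$ from Lemma \ref{lem:1forms}, expand the exterior derivative in the basis $\{dz_{iA},d\eta,d\overline\eta\}$ to read off the three blocks with $\partial_{z}\mathcal{R}^{1,1}=\mathcal{R}^{1,0}$, $\partial_\xi\mathcal{R}^{1,1}=\mathbf{S}^{1,0}$, $\partial_z\mathbf{S}^{1,1}=\mathbf{S}^{1,0}$ and the antisymmetrized $\partial_\xi\mathbf{S}^{1,1}-(\partial_{\xi'}\mathbf{S}^{1,1})^*$ term. Your gauge-invariance argument via $\Phi^*B=B$ and $\Phi^*B_0=B_0$ (Lemma \ref{lem:gau2}) is a harmless variant of the paper's use of $\Phi^*\Gamma=\Gamma$ together with $\Omega=dB$, $\Omega_0=dB_0$.
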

\proof   We have
\begin{equation*}
    \Omega -\Omega _{0} =d\Gamma =d  \sum _{j,A}\resto ^{1,1}_{\infty, \infty }d z_{jA} +  d  \sum _{\xi } \langle \textbf{S}^{1,1}_{\infty, \infty }, d\xi \rangle  .
\end{equation*}
Summing over  $k,B,\xi$ we have
 \begin{equation*}
     d   (\resto ^{1,1}_{\infty, \infty }d z_{jA}) = \partial _{z_{kB}}\resto ^{1,1}_{\infty, \infty }   dz_{kB} \wedge dz_{jA}  + \langle \partial _\xi  \resto ^{1,1}_{\infty, \infty } , d\xi \rangle \wedge  dz_{jA}
\end{equation*}
 with the $\partial _\xi  \resto ^{1,1}_{\infty, \infty } \in \mathcal{H}_c[0]$
  defined, summing on repeated indexes and for $F$ with values in $\R$,  by
 \begin{equation*}
     d   F    X= \partial _{z_{kB}} F \  d z_{ {kB}} X  + \langle \partial _\xi F  ,  d \xi  X
     \rangle   \text{  for any $X\in L^2(\R ^3 , \C )$.}
\end{equation*}
 It is easy to see that $\partial _\xi  \resto ^{1,1}_{\infty, \infty }= \textbf{S}^{1,0}_{\infty, \infty } $  and $\partial _{z_{kB}}\resto ^{1,1}_{\infty, \infty }  =  \resto ^{1,0}_{\infty, \infty }  $ .

 \noindent Furthermore, summing on repeated indexes  we have
 \begin{equation} \label{eq:Omegahat2} \begin{aligned}
      d\langle \textbf{S}^{1,1}_{\infty, \infty }, d\xi \rangle  & =  dz_{kB} \wedge \langle \partial _{z_{kB}} \textbf{S}^{1,1}_{\infty, \infty }, d\xi \rangle  +  \langle \partial _{\xi '}  \textbf{S}^{1,1}_{\infty, \infty }d\xi ' , d\xi \rangle  - \langle d\xi , \partial _{\xi '}  \textbf{S}^{1,1}_{\infty, \infty }d\xi '\rangle  \\& =  dz_{kB} \wedge \langle \partial _{z_{kB}} \textbf{S}^{1,1}_{\infty, \infty }, d\xi \rangle   +  \langle \partial _{\xi '}  \textbf{S}^{1,1}_{\infty, \infty }d\xi ' , d\xi \rangle  - \langle (\partial _{\xi '}  \textbf{S}^{1,1}_{\infty, \infty })^* d\xi , d\xi '\rangle    , \end{aligned}
\end{equation}
where, for $T\in C^1(U _{L^2}, L^2)$ for $U _{L^2}$ open subset in $ L^2$,
$\partial _\xi T \in B(\mathcal{H}_c[0],L^2)$
is defined by
 \begin{equation*}
     d   T    X= \partial _{z_{kB}} T \  d z_{ {kB}} X  +   \partial _\xi T     d \xi  X
        \text{  for any $X\in L^2(\R ^3 , \C )$.}
\end{equation*}
Summing on $\xi$ in \eqref{eq:Omegahat2} we get terms which are absorbed in the last two terms of \eqref{eq:Omegahat1}.

\noindent Formula  \eqref{eq:gau31} follows from \eqref{eq:gau41},  $\Omega _0=dB _0$ and $\Omega  =dB $.
\qed

\begin{lemma}
  \label{lem:vectorfield0}   Consider the form  $\Omega _t:=\Omega _0+t(\Omega -\Omega _0)$
  and set $i_X\Omega _t (Y):=\Omega _t (X,Y)$. For any preassigned $r\in \N$ recall by \eqref{eq:defOm0pr}, \eqref{eq:defOm0} and  Lemmas   \ref{lem:1forms} and \ref{lem:Omegahat} that  $(\Omega -\Omega _0)$ and $\Gamma $  extend to forms defined in $    B_{\C^n}(0 , d _0 ) \times \Sigma ^c _{-r} $. Then there is   $\delta _0 \in (0, d_0) $ s.t.
  for any   $(t, z, \eta )\in (-4, 4)\times B_{\C ^n} (0,\delta _0 )
    \times   B_{\Sigma _{-r}^c} (0,\delta _0 )$    there exists
  exactly one solution
    $\mathcal{X}^t  (z, \eta )\in L^2$ of the  equation     $
 i_{\mathcal{X}^t} \Omega _t=-  \Gamma
   $.
     Furthermore,  we have  the following facts.

\begin{itemize}
\item[(1)]
     $\mathcal{X}^t  (z, \eta )\in \Sigma _{ r}$ and if we set $\mathcal{X}^t _{jA} (z, \eta )=dz_{jA}\mathcal{X}^t  (z, \eta )$ and  $\mathcal{X}^t _{\eta} (z, \eta )=d\eta \mathcal{X}^t  (z, \eta )$,   we have
  $\mathcal{X}^t _{jA} (z, \eta )=\mathcal{R}^{1,1}_{r,\infty }(t,z,\textbf{Z}, \eta )$ and
  $ \mathcal{X}^t _{\eta} (z, \eta )=\textbf{S}^{1,1}_{r,\infty}(t,z,\textbf{Z},\eta )$ in the sense of Remark \ref{rem:sym}.

  \item[(2)]   For $\mathcal{X}^t _{j }:=dz_j\mathcal{X}^t$ and $\mathcal{X}^t _{\eta }:=d\eta \mathcal{X}^t$, we have
  $\mathcal{X}^t _{j } (e^{\im \vartheta}z, e^{\im \vartheta}\eta) = e^{\im \vartheta}\mathcal{X}^t _{j } (z, \eta)$ and  $\mathcal{X}^t _{\eta } (e^{\im \vartheta}z, e^{\im \vartheta}\eta) = e^{\im \vartheta}\mathcal{X}^t _{\eta } (z, \eta)$.

 \end{itemize}

\end{lemma}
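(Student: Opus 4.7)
My plan is to invert $\Omega_t$ and read off the symbol class of the unique solution $\mathcal{X}^t=-\Omega_t^{-1}\Gamma$ from those of $\Omega_t^{-1}$ and $\Gamma$. In the coordinates of Lemma \ref{lem:systcoo}, the form $\Omega_0'$ from \eqref{eq:defOm0pr} is block-diagonal: its finite-dimensional block is $2\sum_j dz_{jR}\wedge dz_{jI}$, and its $\eta$-block acts on $\mathcal{H}_c[0]$ as multiplication by $\pm 2\im$ via the pairing terms. Hence $\Omega_0'^{-1}$ is explicit, bounded, and maps $\Sigma_r$-valued 1-forms to $\Sigma_r$-valued vector fields for every $r$. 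By Lemma \ref{lem:gau2} the remainder $\Omega_0-\Omega_0'$ has coefficient $\im\gamma_j=\mathcal{R}^{2,0}_{\infty,\infty}$, and by Lemma \ref{lem:Omegahat} all coefficients of $\Omega-\Omega_0$ are of order $\mathcal{R}^{1,0}$ or $\mathbf{S}^{1,0}$. For any preassigned $r\in\N$ I would then shrink to some $B_{\C^n}(0,\delta_0)\times B_{\Sigma_{-r}^c}(0,\delta_0)$ where $\|\Omega_0'^{-1}(\Omega_t-\Omega_0')\|<1/2$ uniformly in $t\in(-4,4)$, so that $\Omega_t$ is invertible by Neumann series and the unique solution $\mathcal{X}^t$ exists.

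For claim (1), I would expand
\[
\mathcal{X}^t=-\Omega_0'^{-1}\Gamma+\sum_{k\ge 1}\bigl(-\Omega_0'^{-1}(\Omega_t-\Omega_0')\bigr)^{k}\bigl(-\Omega_0'^{-1}\Gamma\bigr)
\]
and read off the bidegrees. By Lemma \ref{lem:1forms} one has $\Gamma_{jA}=\mathcal{R}^{1,1}_{\infty,\infty}$ and $\Gamma_\xi=\mathbf{S}^{1,1}_{\infty,\infty}$ in the sense of Remark \ref{rem:sym}, so the principal term $-\Omega_0'^{-1}\Gamma$ already gives a vectorfield whose $dz_{jA}$-component is $\mathcal{R}^{1,1}_{r,\infty}$ and whose $d\eta$-component is $\mathbf{S}^{1,1}_{r,\infty}$, since the $\eta$-block of $\Omega_0'^{-1}$ is essentially $\mp\im/2$ times the identity on $\Sigma_r$. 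Each additional factor $\Omega_0'^{-1}(\Omega_t-\Omega_0')$ in the Neumann series has bidegree at least $(1,0)$ in $(z,\mathbf{Z},\eta)$, so by the algebra \eqref{eq:algSymb} it raises the first index by one while preserving the second index $j\ge 1$ inherited from $\Gamma$. Summing the series therefore yields exactly the claimed bidegree $(1,1)$. Smoothness in $t\in(-4,4)$ is automatic from the smooth dependence $\Omega_t=\Omega_0+t(\Omega-\Omega_0)$.

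Claim (2) is a uniqueness argument. Combining \eqref{eq:defOm0pr1} and \eqref{eq:gau21} with \eqref{eq:gau31} gives $\Phi^*\Omega_t=\Omega_t$, and \eqref{eq:gau41} gives $\Phi^*\Gamma=\Gamma$. Hence the pushforward $\Phi_*\mathcal{X}^t$ satisfies $i_{\Phi_*\mathcal{X}^t}\Omega_t=-\Gamma$ at $\Phi(p)$, and uniqueness of the solution forces $\Phi_*\mathcal{X}^t=\mathcal{X}^t$. Evaluating $dz_j$ and $d\eta$ on both sides and using $z_j\circ\Phi=e^{\im\vartheta}z_j$, $\eta\circ\Phi=e^{\im\vartheta}\eta$ from Lemma \ref{lem:systcoo} gives the stated covariance $\mathcal{X}^t_j(e^{\im\vartheta}z,e^{\im\vartheta}\eta)=e^{\im\vartheta}\mathcal{X}^t_j(z,\eta)$ and analogously for $\mathcal{X}^t_\eta$.

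The only real obstacle, as opposed to routine bookkeeping, is verifying that no step of the Neumann inversion silently degrades the second index from $j=1$ to $j=0$, since a bound of mere bidegree $(1,0)$ would later be too weak to run the Darboux flow of Lemma \ref{lem:darflow0}. This reduces to checking that $\Omega_t-\Omega_0'$ has no zeroth-order term in $(z,\mathbf{Z},\eta)$ and then applying \eqref{eq:algSymb} inductively; the absence of such a term is guaranteed by $\gamma_j=\mathcal{R}^{2,0}_{\infty,\infty}$ and by the bidegrees of Lemma \ref{lem:Omegahat}, so the second index is preserved throughout.
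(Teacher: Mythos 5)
Your proposal is correct and follows essentially the same route as the paper: the paper also writes $\mathcal{X}^t+K_t\mathcal{X}^t=Y$ with $Y=-\Omega_0'^{-1}\Gamma$ and $K_t$ defined by $i_{K_tX}\Omega_0'=i_X(\Omega_t-\Omega_0')$, solves by Neumann expansion using the vanishing of $K_t$ at the origin (estimate \eqref{eq:kt011}, which is your ``no zeroth-order term'' check, needed in particular for the operator part $\partial_\xi\mathbf{S}^{1,1}-(\partial_{\xi'}\mathbf{S}^{1,1})^*$) together with \eqref{eq:algSymb}, and proves claim (2) by the same gauge-invariance-plus-uniqueness argument. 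Only a cosmetic slip: the $\eta$-block of $\Omega_0'^{-1}$ acts as $\pm\im$ rather than $\mp\im/2$, which does not affect the argument.
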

\proof  We define $Y$ such that $i_Y \Omega '_0= -\Gamma$, which yields  $Y _{jR}=-\frac 12 \Gamma  _{jI}
$,      $Y _{jI}= \frac 12 \Gamma  _{jR}
$  (both $\resto ^{1,1}_{\infty,\infty}$),    $Y _{\eta }=  -\im \Gamma _{\overline{\eta} }  $
and    $Y _{\overline{\eta} }= \im \Gamma _{ {\eta} }    $  (both $\mathbf{S} ^{1,1}_{\infty,\infty}$) . We use  $i_{K_tX} \Omega '_0=    i_X (  \Omega  _0- \Omega '_0+ t\widehat{ \Omega}    ) $, where $\widehat {\Omega}:=\Omega-\Omega_0$, to   define
in $L^2
$    the operator $K_t$ .
 We claim the following lemma.

 \begin{lemma}
  \label{lem:kt} For appropriate symbols    $\resto ^{1,0}_{\infty,\infty}(t,z,\textbf{Z}, \eta ) $ and $\mathbf{S} ^{1,0}_{\infty,\infty}(t,z,\textbf{Z}, \eta )$ which differ from one term to the other and for   $\mathbf{Z} =(z_i \overline{z}_j) _{i,j =1,...,n} $
  with $i\neq j$,  we have
\begin{equation} \label{eq:kt0}\begin{aligned} &  (K_tX) _{jA} =\sum  _{ l B} \resto ^{1,0}_{\infty,\infty}   X  _{ l B} +\sum _{\xi = \eta ,\overline{\eta}} \langle \mathbf{S} ^{1,0}_{\infty,\infty} ,   X  _{\xi }  \rangle ,  \\&   (K_tX) _{\xi } =\sum  _{ l B} \mathbf{S} ^{1,0}_{\infty ,\infty}   X  _{ l B} +\sum _{\xi ' = \eta ,\overline{\eta}}  ( \partial _{\xi  ' }\textbf{S}^{1,1}_{\infty, \infty }( t,z,\textbf{Z},\eta) -(\partial _{\xi }\textbf{S}^{1,1}_{\infty, \infty }( t,z,\textbf{Z},\eta))^*  X  _{\xi '}  .
\end{aligned}\end{equation}
  \end{lemma}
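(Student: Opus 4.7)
The plan is to invert $\Omega_0'$ componentwise, using the fact that it is the canonical symplectic form in the coordinates $(z,\eta)$. Concretely, for any vector $X$ we have
\begin{equation*}
i_X \Omega_0' = 2\sum_j \bigl(X_{jI}\, dz_{jR} - X_{jR}\, dz_{jI}\bigr) + \im\langle X_\eta, d\overline\eta\rangle - \im\langle X_{\overline\eta}, d\eta\rangle,
\end{equation*}
so the map $Y \mapsto i_Y\Omega_0'$ is block-diagonal with trivially computable inverse on the discrete and continuous blocks. Hence solving $i_{K_tX}\Omega_0' = i_X(\Omega_0 - \Omega_0' + t\widehat\Omega)$ reduces to reading off the $dz_{jA}$ and $d\xi$ coefficients of the right-hand side and applying this inverse.

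Next I would compute $i_X$ applied to each piece of $\Omega_0 - \Omega_0' + t\widehat\Omega$. For the deterministic part $\Omega_0 - \Omega_0' = \im\gamma_j(|z_j|^2)\, dz_j \wedge d\overline z_j$, since $\gamma_j = \resto^{2,0}_{\infty,\infty}(|z_j|^2)$ by Lemma \ref{lem:gau2}, contraction produces $dz_{jA}$-coefficients that are $\resto^{2,0}_{\infty,\infty}\cdot X_{jB}$, which fit inside the claimed symbol class $\resto^{1,0}_{\infty,\infty}$ times $X_{lB}$. For the remaining $t\widehat\Omega$ term, using the structural expansion from Lemma \ref{lem:Omegahat}
\begin{equation*}
\widehat\Omega = \widehat\Omega_{ijAB}\, dz_{iA}\wedge dz_{jB} + \sum_{\xi}\, dz_{iA}\wedge\langle\widehat\Omega_{iA\xi}, d\xi\rangle + \sum_{\xi,\xi'}\langle\widetilde\Omega_{\xi'\xi} d\xi, d\xi'\rangle
\end{equation*}
with $\widehat\Omega_{ijAB} = \resto^{1,0}_{\infty,\infty}$, $\widehat\Omega_{iA\xi} = \mathbf{S}^{1,0}_{\infty,\infty}$, and $\widetilde\Omega_{\xi'\xi} = \partial_\xi \mathbf{S}^{1,1}_{\infty,\infty} - (\partial_{\xi'}\mathbf{S}^{1,1}_{\infty,\infty})^*$, one obtains a 1-form whose $dz_{lB}$-coefficient is a linear combination of $\resto^{1,0}_{\infty,\infty}\cdot X_{mC}$ and $\langle \mathbf{S}^{1,0}_{\infty,\infty}, X_\xi\rangle$, and whose $d\xi$-coefficient is a linear combination of $\mathbf{S}^{1,0}_{\infty,\infty}\cdot X_{lB}$ and expressions of type $\partial_{\xi'}\mathbf{S}^{1,1}_{\infty,\infty}\, X_{\xi'}$ or $(\partial_\xi \mathbf{S}^{1,1}_{\infty,\infty})^* X_{\xi'}$.

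The final step is to apply the explicit inversion of $\Omega_0'$ from the first paragraph. The $dz_{jB}$ coefficient of the right-hand side (with $B\neq A$) determines $(K_tX)_{jA}$ up to a factor of $\pm 1/2$, while the $d\eta$ and $d\overline\eta$ coefficients determine $(K_tX)_{\overline\eta}$ and $(K_tX)_{\eta}$ up to factors of $\pm\im$. Since these rescalings preserve the symbol classes $\resto^{1,0}_{\infty,\infty}$, $\mathbf{S}^{1,0}_{\infty,\infty}$ and the $\partial_{\xi}\mathbf{S}^{1,1}_{\infty,\infty}$ structure, one arrives exactly at \eqref{eq:kt0}.

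The argument is essentially bookkeeping: there is no analytic obstacle because the inversion of $\Omega_0'$ is trivial and the symbol calculus from Definitions \ref{def:scalSymb}--\ref{def:opSymb} is stable under the operations we perform. The only point to watch is the asymmetric structure $\partial_\xi \mathbf{S}^{1,1} - (\partial_{\xi'}\mathbf{S}^{1,1})^*$ in Lemma \ref{lem:Omegahat}, which reappears in the second line of \eqref{eq:kt0} with the same form (the asterisk arising because contracting a vector into the second slot of a bilinear form, rather than the first, produces the adjoint) — once one is careful to track which slot of $\widetilde\Omega_{\xi'\xi}$ absorbs $X_\xi$, the formula \eqref{eq:kt0} matches term by term.
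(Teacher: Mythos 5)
Your argument is correct and follows essentially the same route as the paper's proof: contract $X$ separately into $\Omega_0-\Omega_0'$ (whose coefficients come from $\gamma_j$) and into $t\widehat{\Omega}$ (using the structure of Lemma \ref{lem:Omegahat}), then invert the block-diagonal canonical form $\Omega_0'$ and observe that the trivial rescalings preserve the symbol classes, tracking the adjoint in the $\partial_{\xi}\textbf{S}^{1,1}_{\infty,\infty}$ term exactly as the paper does. The only quibble is a sign in your displayed formula for $i_X\Omega_0'$ (with the paper's convention $i_X\Omega(Y)=\Omega(X,Y)$ the discrete part reads $2X_{jR}\,dz_{jI}-2X_{jI}\,dz_{jR}$), which is harmless since you explicitly absorb such $\pm$ factors into the symbol classes.
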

 We assume  for a moment Lemma \ref{lem:kt} and complete the proof of  Lemma \ref{lem:vectorfield0}.
$
 i_{\mathcal{X}^t} \Omega _t=-  \Gamma
   $ becomes $ \mathcal{X}^t +K_t \mathcal{X}^t =Y  .$
Indeed, suppose $ \mathcal{X}^t +K_t \mathcal{X}^t =Y  $ holds.
Then, by definition of $K_t$, we have
\begin{equation*}
i_{\mathcal X_t}(\Omega_t-\Omega') = i_{K_t \mathcal X_t}\Omega_0' \text{  and so }
i_{\mathcal X _t} \Omega_t = i_{\mathcal X _t}\Omega_0' + i_{K_t \mathcal X_t}\Omega_0' =-\Gamma.
\end{equation*}
   By Lemma \ref{lem:kt}, in coordinates and for $\xi =\eta ,\overline{\eta} $  the last equation is  schematically  of the form
\begin{align}     & \mathcal{X}^t  _{jA}   +  \sum  _{\ell ,B}\resto ^{1,0}_{r, \infty}   \mathcal{X}^t  _{\ell B}       + \sum _{\xi = \eta , \overline{\eta}} \langle   \textbf{S} ^{1,1}_{r, \infty} , \mathcal{X}^t _\xi \rangle = \resto ^{1,1}_{r, \infty} \label{eq:kt010} \\&    \mathcal{X}^t _{\xi }   + \sum  _{\ell B} \mathbf{S} ^{1,0}_{r,\infty}   \mathcal{X}^t  _{\ell B} +\sum _{\xi ' = \eta ,\overline{\eta}}( \partial _{\xi  ' }\textbf{S}^{1,1}_{\infty, \infty }( t,z,\textbf{Z},\eta) -(\partial _{\xi }\textbf{S}^{1, 1}_{\infty, \infty }( t,z,\textbf{Z},\eta))^*   \mathcal{X}^t  _{\xi '}  =\mathbf{S} ^{1,1}_{r,\infty}   .    \nonumber
\end{align}
Notice that $ (\partial _{\xi }\textbf{S}^{1, 1}_{\infty, \infty })  \mathbf{S} ^{1,1}_{r,\infty} $ is $C^\infty$  in $( t,z,\textbf{Z},\eta)$ with values in $ \Sigma _r$. We have
\begin{equation*}   \begin{aligned} & \| (\partial _{\xi }\textbf{S}^{1, 1}_{\infty, \infty })  \mathbf{S} ^{1,1}_{r,\infty} \| _{\Sigma _r} \le   \| \partial _{\xi }\textbf{S}^{1, 1}_{\infty, \infty }   \| _{B(  \Sigma  _{-r} ,\Sigma _r)}  \|   \mathbf{S} ^{1,1}_{r,\infty} \| _{\Sigma _r}
   .
\end{aligned}
\end{equation*}
By  \eqref{eq:opSymb} we have $\partial _{\xi }\textbf{S}^{1, 1}_{\infty, \infty } ( t,0,0,0) $. This implies
\begin{equation}\label{eq:kt011}
  \| \partial _{\xi }\textbf{S}^{1, 1}_{\infty, \infty }   \| _{B(  \Sigma  _{-r} ,\Sigma _r)}\le C  \|  \eta \| _{\Sigma   _{-K}}+ |\mathbf{Z}   |+|z |
\end{equation}
  and so
\begin{equation*}   \begin{aligned} & \| (\partial _{\xi }\textbf{S}^{1, 1}_{\infty, \infty })  \mathbf{S} ^{1,1}_{r,\infty} \| _{\Sigma _r} \le  C (\|  \eta \| _{\Sigma   _{-K}}+|\textbf{Z} |)  (\|  \eta \| _{\Sigma   _{-K}}+ |\mathbf{Z}   |+|z |)^{i2}
   .
\end{aligned}
\end{equation*}
So $(\partial _{\xi }\textbf{S}^{1, 1}_{\infty, \infty })  \mathbf{S} ^{1,1}_{r,\infty} = \mathbf{S} ^{2,1}_{r,\infty}. $

\noindent Inequality \eqref{eq:kt011}, a Neumann  expansion and formulas \eqref{eq:algSymb}
yield claim (1) in Lemma \ref{lem:vectorfield0}.

\noindent Claim (2) in Lemma \ref{lem:vectorfield0} follows from
$$ i _{\Phi  _*^{-1}\mathcal{X}^t}\Phi ^*\Omega _t=-\Phi ^*\Gamma =- \Gamma = i _{ \mathcal{X}^t} \Omega _t = i _{\Phi  _*^{-1}\mathcal{X}^t} \Omega _t , $$
where $\Phi ^*\Gamma =  \Gamma$ is \eqref{eq:gau41} and we use \eqref{eq:gau21} and   \eqref{eq:gau31} to conclude  $\Phi ^*\Omega _t= \Omega _t $. Then $\Phi  _*^{-1}\mathcal{X}^t=\mathcal{X}^t$, which is equivalent to  $\Phi _*\mathcal{X}^t=\mathcal{X}^t.$  For the other formulas in claim (2) we have for instance
\begin{equation*}
\begin{aligned}     &  \mathcal{X}^t_j (e^{\im \vartheta}z, e^{\im \vartheta}\eta) = \mathcal{X}^t_j (\Phi (u) ) = dz_j(\mathcal{X}^t  (\Phi (u) )) = dz_j(\Phi _*\mathcal{X}^t  ( u ))\\&  =d(z_j \circ \Phi ) ( \mathcal{X}^t ( u )) = e^{\im \vartheta} \mathcal{X}^t_j ( u ) .
\end{aligned}
\end{equation*}
This ends the proof of Lemma \ref{lem:vectorfield0}, assuming Lemma \ref{lem:kt}.

\qed

\noindent \textit{Proof of Lemma \ref{lem:kt}.}
 By \eqref{eq:defOm0}   and summing over the indexes $(j,A ,B )$ we can write
\begin{equation} \label{eq:pert1}
\begin{aligned}    \Omega_0  -  \Omega_0 '&=    \resto ^{4,0}_{\infty,\infty}
   dz_{jA} \wedge dz_{jB}   \Rightarrow   i_X(\Omega_0  -  \Omega_0 ')=   \resto ^{4,0}_{\infty,\infty} X_{jR} dz_{jI} +
\resto ^{4,0}_{\infty,\infty} X_{jI} dz_{jR} .
\end{aligned}
\end{equation}
  So if we define $K'X$ setting $ i_{K'X}\Omega_0 '=i_X(\Omega_0  -  \Omega_0 ')$, by comparing \eqref{eq:pert1} with
\begin{equation*}
\begin{aligned}     &    i_{K'X}\Omega_0 '=   2 (K'X)_{jR} dz_{jI} -
 2 (K'X)_{jI} dz_{jR}  + \im
 \langle  (K'X)_{\eta} , X _{\overline{\eta}} \rangle  - \im
 \langle  (K'X)_{\overline{\eta}} , X _{ {\eta}} \rangle ,
\end{aligned}
\end{equation*}
we obtain
\begin{equation} \label{eq:kt1}\begin{aligned} &  (K'X) _{jA} =  \resto ^{4,0}_{\infty,\infty}   X  _{j A}   \text{ and }   (K'X) _{\xi } =0 \text{ for $\xi = \eta ,\overline{\eta}$.}
\end{aligned}\end{equation}
%Set now $\widehat{\Omega}:=\Omega -\Omega _{0}$.
Summing on $(j,l,A,B,\xi ,\xi ')$ we have
\begin{equation*}
\begin{aligned}     &  t \widehat{\Omega} =    \resto ^{1,0}_{\infty,\infty}
   dz_{jA} \wedge dz_{l B}   + dz_{jA} \wedge    \langle  \mathbf{S} ^{1,0}_{\infty,\infty} , d\xi \rangle  + t \langle  [\partial _{\xi }\textbf{S}^{1,1}_{\infty, \infty }( z,\textbf{Z},\eta) -(\partial _{\xi '}\textbf{S}^{1,1}_{\infty, \infty }( z,\textbf{Z},\eta))^*] d\xi   , d\xi '\rangle  .
\end{aligned}
\end{equation*}
Hence
\begin{equation*}
\begin{aligned} \label{eq:K1}    &   t\  i_X\widehat{\Omega}=   \resto ^{1,0}_{\infty,\infty} X_{jA} dz_{ l B} +  \langle  \mathbf{S} ^{1,0}_{\infty,\infty} , X_\xi  \rangle   dz_{jA}        +X_{jA}     \langle  \mathbf{S} ^{1,0}_{\infty,\infty} , d\xi \rangle  +    \langle  [\partial _{\xi }\textbf{S}^{1,1}_{\infty, \infty } -(\partial _{\xi '}\textbf{S}^{1,1}_{\infty, \infty } )^*] X_\xi   , d\xi '\rangle .
\end{aligned}
\end{equation*}
So, if we define $K ^{\prime\prime}X$ setting $ i_{K ^{\prime\prime}X}\Omega_0 '=t\  i_X\widehat{\Omega}$,
we obtain
\begin{equation} \label{eq:kt2}\begin{aligned} &  (K ^{\prime\prime}X) _{jA} =\sum  _{\ell B} \resto ^{1,0}_{\infty,\infty}   X  _{\ell B} +\sum _{\xi = \eta ,\overline{\eta}} \langle \mathbf{S} ^{1,0}_{\infty,\infty} ,   X  _{\xi }  \rangle ,  \\&   (K ^{\prime\prime}X) _{\xi } =\sum  _{l B} \mathbf{S} ^{1,0}_{\infty,\infty}   X  _{l B} + \sum _{\xi = \eta ,\overline{\eta}} [\partial _{\xi '}\textbf{S}^{1,1}_{\infty, \infty } -(\partial _{\xi }\textbf{S}^{1,1}_{\infty, \infty } )^*] X_{\xi '}  .
\end{aligned}\end{equation}
Since $K_t =K ^{\prime } +K ^{\prime\prime}$, summing up \eqref{eq:kt1} and \eqref{eq:kt2}
we get \eqref{eq:kt0}  and so    Lemma \ref{lem:kt}.

\qed

\noindent Having established that  $\mathcal{X}^t(z,\eta )$ has components
which are restrictions of symbols as in Definitions \ref{def:scalSymb} and  \ref{def:opSymb}
 we  have the following   result.

\begin{lemma}
  \label{lem:darflow0}    Fix an $r\in \N$ and for the   $\delta _0>0$  and the $\mathcal{X}^t  (z, \eta ) $
   of  Lemma \ref{lem:vectorfield0},   consider
         the   following system, which  is well defined in
     $(t, z, \eta )\in (-4, 4)\times B_{\C ^n} (0,\delta _0 )
    \times   B_{\Sigma _{ k}^c} (0,\delta _0 )$  for all  $k\in \Z\cap [-r,r]$:
\begin{equation}\label{eq:syst1}
\begin{aligned}
  &    \dot z_{j } = \mathcal{X}^t _{j }  (z, \eta ) \text{  and }     \dot \eta = \mathcal{X}^t _{\eta}  (z, \eta ) .
     \end{aligned}
\end{equation}
Then the following facts hold.

\begin{itemize}
\item[(1)]  For   $\delta _1\in (0,\delta _0 ) $  sufficiently small system \eqref{eq:syst1}
generates flows
\begin{align} \nonumber
  &   \mathfrak{F}^t \in C ^\infty ((-2, 2)\times B_{\C ^n} (0,\delta _1 )
    \times   B_{\Sigma _{ k}^c} (0,\delta _1 ) ,B_{\C ^n} (0,\delta _0 )
    \times   B_{\Sigma _{ k}^c} (0,\delta _0 )) \text{ for all $k\in \Z\cap [-r,r]$} \\&  \mathfrak{F}^t \in C ^\infty ((-2, 2)\times B_{\C ^n} (0,\delta _1 )
    \times   B_{H^1 \cap \mathcal{H}_{c}[0]} (0,\delta _1 ) ,B_{\C ^n} (0,\delta _0 )
    \times   B_{H^1 \cap \mathcal{H}_{c}[0]} (0,\delta _0 ) ) .\label{eq:syst122}
     \end{align}
 In particular   for $ z_{j  }  ^{t} :=z_j\circ \mathfrak{F}^t (z,\eta )$ and $ \eta   ^{t} :=\eta \circ \mathfrak{F}^t (z,\eta )$ we have
\begin{equation}\label{eq:syst11}
\begin{aligned}
  &      z_{j  }  ^{t}= z_{j }+ S _{j  }  (t,z, \eta ) \text{  and }       \eta  ^t= \eta+ S _{\eta}  (t,z, \eta )\end{aligned}
\end{equation}
with
  $S _{j  } (t,z, \eta )=\mathcal{R}^{1,1}_{r, \infty}(t,z,\textbf{Z}, \eta )$   and
  $ S _{\eta}  (t,z, \eta )=\textbf{S}^{1,1}_{r, \infty}(t,z,\textbf{Z}, \eta )$ in the sense
  of Remark \ref{rem:sym}.

 \item[(2)]       $\mathfrak{F}=\mathfrak{F}^1$ is a local diffeomorphism
 of $H^1$ into itself near  the origin s.t. $\mathfrak{F}^*\Omega =\Omega _0$.

  \item[(3)]      We have  $S _{j }  (t,e^{\im \vartheta}z,e^{\im \vartheta} \eta )=e^{\im \vartheta}S _{j }  (t,z, \eta )$,
 $S _{\eta }  (t,e^{\im \vartheta}z,e^{\im \vartheta} \eta )=e^{\im \vartheta}S _{\eta }  (t,z, \eta )$.
 \end{itemize}

\end{lemma}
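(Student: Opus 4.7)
The plan is to apply the classical Moser deformation argument that underlies the Darboux theorem. The vector field $\mathcal{X}^t$ of Lemma \ref{lem:vectorfield0} was designed so that $i_{\mathcal{X}^t}\Omega_t=-\Gamma$, and by Lemmas \ref{lem:1forms}--\ref{lem:gau2} we have $d\Gamma=\Omega-\Omega_0$ together with $d\Omega_t=0$. Cartan's magic formula therefore gives
\begin{equation*}
\frac{d}{dt}(\mathfrak{F}^t)^*\Omega_t = (\mathfrak{F}^t)^*\bigl(d\,i_{\mathcal{X}^t}\Omega_t + i_{\mathcal{X}^t}d\Omega_t + \partial_t\Omega_t\bigr) = (\mathfrak{F}^t)^*\bigl(-d\Gamma+(\Omega-\Omega_0)\bigr) = 0,
\end{equation*}
so that integrating from $t=0$ to $t=1$ yields claim (2): $\mathfrak{F}^*\Omega=(\mathfrak{F}^0)^*\Omega_0=\Omega_0$.

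For claim (1) I would invoke Picard--Lindel\"of in the product Banach space $\C^n\times\Sigma^c_k$ for each $k\in\Z\cap[-r,r]$. By Lemma \ref{lem:vectorfield0} the scalar components $\mathcal{X}^t_{jA}\in\mathcal{R}^{1,1}_{r,\infty}$ and the $\eta$-component $\mathcal{X}^t_\eta\in\mathbf{S}^{1,1}_{r,\infty}$ depend smoothly on $(t,z,\eta)$ and are at least quadratic in $(|z|,|\mathbf{Z}|,\|\eta\|)$ near the origin; moreover $\mathcal{X}^t_\eta$ takes values in $\Sigma_r$, hence in $\Sigma_k$. The quadratic smallness lets me shrink $\delta_0$ to some $\delta_1\in(0,\delta_0)$ so that trajectories starting with $|z|+\|\eta\|_{\Sigma^c_k}<\delta_1$ remain in the ball of radius $\delta_0$ for all $t\in(-2,2)$, producing the smooth flow in \eqref{eq:syst122}. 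Invariance of $H^1\cap\mathcal{H}_c[0]$ is built in, because the defining relation $i_{\mathcal{X}^t}\Omega_t=-\Gamma$ forces $\mathcal{X}^t_\eta$ to lie in $\mathcal{H}_c[0]$ (the $\eta$-block of $\Omega_0$ pairs $P_c$-projected vectors only). Writing $S_\ell(t,z,\eta):=\int_0^t\mathcal{X}^s_\ell(z^s,\eta^s)\,ds$ for $\ell\in\{j,\eta\}$, a Gronwall bootstrap gives the crude bound $|z^s-z|+\|\eta^s-\eta\|\le C(|z|+\|\eta\|)^2$, and then the symbol form \eqref{eq:syst11} follows by the chain rule together with the algebra \eqref{eq:algSymb}, using that the classes $\mathcal{R}^{i,j}_{r,M}$ and $\mathbf{S}^{i,j}_{r,M}$ are stable under composition with identity-plus-quadratic maps.

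The gauge covariance in claim (3) is then immediate: by part (2) of Lemma \ref{lem:vectorfield0} the vector field is equivariant under the gauge action $\Phi(z,\eta)=(e^{\im\vartheta}z,e^{\im\vartheta}\eta)$, so the two smooth curves $\Phi\circ\mathfrak{F}^t$ and $\mathfrak{F}^t\circ\Phi$ solve the same initial value problem and therefore coincide; reading this off in the coordinate functions $z_j$ and $\eta$ gives the asserted equivariance of $S_j$ and $S_\eta$. The main technical obstacle is the symbol-class bookkeeping: one must verify that inserting the flow $(z^s,\eta^s)$ into a symbol of class $\mathcal{R}^{i,j}_{K,M}$ or $\mathbf{S}^{i,j}_{K,M}$ returns a symbol of the same class, uniformly in $s\in(-2,2)$. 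This preservation — essentially a quantitative form of the chain rule in the $(|z|,|\mathbf{Z}|,\|\eta\|)$-variables — is what will have to be invoked repeatedly when pushing estimates through the normal-form transformations of Section \ref{sec:nforms}, and it is the price one pays for working with these variable decompositions of $\Omega$ instead of the spectral Darboux coordinates of $H$.
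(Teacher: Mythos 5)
Your claims (2) and (3) are handled exactly as in the paper: (2) is the Moser/Cartan computation $\partial_t(\mathfrak{F}^{t*}\Omega_t)=\mathfrak{F}^{t*}(d\,i_{\mathcal{X}^t}\Omega_t+\partial_t\Omega_t)=0$ together with $d\Gamma=\Omega-\Omega_0$, and (3) follows from gauge equivariance of $\mathcal{X}^t$ plus uniqueness for the ODE. The genuine gap is in the symbol part of claim (1). After integrating the field you invoke the crude Gronwall bound $|z^s-z|+\|\eta^s-\eta\|\le C(|z|+\|\eta\|)^2$ and then assert that the classes $\mathcal{R}^{i,j}_{r,M}$, $\mathbf{S}^{i,j}_{r,M}$ are ``stable under composition with identity-plus-quadratic maps''. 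That bound has the wrong shape, and the stability assertion is exactly what has to be proved. Membership of $S_j$ in $\mathcal{R}^{1,1}_{r,\infty}(t,z,\mathbf{Z},\eta)$ in the sense of Remark \ref{rem:sym} carries two pieces of information your argument loses: quantitatively, the estimate \eqref{eq:scalSymb} requires $|S_j|\le C(\|\eta\|_{\Sigma_{-r}}+|\mathbf{Z}|)(\|\eta\|_{\Sigma_{-r}}+|\mathbf{Z}|+|z|)$, where the first factor is $(\|\eta\|_{\Sigma_{-r}}+|\mathbf{Z}|)$ and not $(\|\eta\|+|z|)$, so that $S_j$ and $S_\eta$ vanish on $\{\eta=0,\ \mathbf{Z}=0\}$, i.e.\ at the points $p_k=(0,\dots,z_k,\dots,0;0)$; this is precisely what later gives \eqref{eq:Fpk}, on which the Cancellation Lemma \ref{lem:KExp2} rests, and a bound quadratic in $|z|+\|\eta\|$ does not give it. Structurally, $S_j$ must moreover be the restriction to $\mathbf{Z}=(z_i\overline{z}_j)_{i\neq j}$ of a function which is smooth in an \emph{independent} variable $\mathbf{Z}\in L$ and satisfies the estimates there; a chain-rule argument carried out only in the variables $(z,\eta)$ never produces such an extension.

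The missing device is the paper's Lemma \ref{lem:flow1}: append to \eqref{eq:syst1} the evolution equation for $\mathbf{Z}$, note that $\dot{\mathbf{Z}}=\mathcal{R}^{2,1}_{r,\infty}(t,z,\mathbf{Z},\eta)$, and extend the enlarged system to a full neighborhood of $0$ in $\C^n\times L\times\Sigma^c_{-r}$ with $\mathbf{Z}$ treated as an independent variable. The enlarged flow is smooth in $(t,z,\mathbf{Z},\eta)$ and restricts to $\mathfrak{F}^t$ on $\{\mathbf{Z}=(z_i\overline{z}_j)_{i\neq j}\}$; a Gronwall estimate run on the pair $(\mathbf{Z}^t,\eta^t)$ gives $|\mathbf{Z}^t|+\|\eta^t\|_{\Sigma_{-r}}\le C(|\mathbf{Z}|+\|\eta\|_{\Sigma_{-r}})$, and re-inserting this in $S_\ell(t,z,\eta)=\int_0^t\mathcal{X}^s_\ell\,ds$ yields exactly $S_j=\mathcal{R}^{1,1}_{r,\infty}$ and $S_\eta=\mathbf{S}^{1,1}_{r,\infty}$ as claimed in \eqref{eq:syst11}, together with the smooth dependence on the auxiliary $\mathbf{Z}$. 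With this lemma supplied, the remainder of your proposal (Picard--Lindel\"of and shrinking of $\delta_1$ for \eqref{eq:syst122}, Cartan's formula for (2), equivariance plus uniqueness for (3)) coincides with the paper's proof.
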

 \proof  The first sentence has been established in Lemma \ref{lem:vectorfield0}.
 Elementary theory of ODE's yields \eqref{eq:syst122}. The rest of  claim  (1) is a special case of a more general result,   see {Lemma}
  \ref{lem:flow1} below.  We  get
 claim  (2)   by the classical formula, for $L_X$ the Lie derivative,  \begin{equation}\label{eq:fdarboux}  \begin{aligned} & \partial _t(
 \mathfrak{F}^{ t  * } \Omega  _t   )      =   \mathfrak{F}^{ t  * }  (L_{\mathcal{ X} ^{ t   }}\Omega  _t +\partial _t\Omega  _t   )     =
 \mathfrak{F}^{ t  * }
  ( d i_{\mathcal{X}  ^{ t   }}\Omega  _t +d\Gamma    )
=0  .\end{aligned} \end{equation}
 Notice that   \eqref{eq:fdarboux} is well defined here,
while it has no clear meaning  for the NLS with translation treated in  \cite{Cu0,Cu3}, where
   the flows $\mathfrak{F}^{ t    }$ are not differentiable (see \cite{Cu0} for a rigorous argument on how to offset this problem).
The symmetry in  claim  (3) is elementary and we skip it.

  \qed

\begin{lemma}
  \label{lem:flow1}   Consider   a system
   \begin{equation}\label{eq:syst23}
\begin{aligned}
  &      \text{ $ \dot z_{j } = X_j (t,z, \eta ) $  and  $ \dot \eta = X _{\eta}  (t, z, \eta )$,}\end{aligned}
\end{equation}
   where $X_j  =\resto ^{a,b}_{r,m}(t,z, \mathbf{Z},\eta )$ $\forall $ $j$  and   $X_\eta  =\mathbf{S} ^{c,d}_{r,m}(t,z, \mathbf{Z},\eta )$, for fixed pairs  $(r,m)$, $(a,b) $   and   $(c,d) $. Assume $m, b, d\ge 1$, with possibly $m=\infty$,  and $r\ge 0$.  Then for the
  flow
  $(z^t,\eta^t )=\mathfrak{F}^t(z ,\eta  )$  we have
\begin{equation}\label{eq:syst21}
\begin{aligned}
  &      z_{j  }  ^{t}= z_{j }+ S _{j  }  (t,z, \eta ) \text{  and }       \eta  ^t= \eta+ S _{\eta}  (t,z, \eta ) \end{aligned}
\end{equation}
for appropriate functions   $S _{j }= \resto ^{a,b}_{r,m}(t,z, \mathbf{Z},\eta )  $  and
    $S _{\eta}=\mathbf{S} ^{c,d}_{r,m}(t,z, \mathbf{Z},\eta )$ in the sense of Remark \ref{rem:sym}.
  \end{lemma}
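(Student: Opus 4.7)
\medskip
\noindent\textbf{Proof plan for Lemma \ref{lem:flow1}.} The plan is to combine a bootstrap/Gronwall argument with the integral form of the ODE, exploiting crucially the assumptions $b,d\ge 1$. First I would invoke standard ODE theory on the Banach space $\C^n\times \Sigma^c_{k}$ for each $k\in\Z\cap[-r,r]$: since $X_j$ and $X_\eta$ are $C^m$ by hypothesis and vanish at $(z,\mathbf{Z},\eta)=(0,0,0)$ (because $b,d\ge 1$), the Cauchy--Lipschitz theorem, applied locally and uniformly in time on a short interval, gives existence, uniqueness and $C^m$ dependence on the initial data of $\mathfrak{F}^t(z,\eta)=(z^t,\eta^t)$ on some $B_{\C^n}(0,\delta_1)\times B_{\Sigma_k^c}(0,\delta_1)$, for a $\delta_1$ independent of $k\in[-r,r]$. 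Note that the $\Sigma_r$ regularity of $\eta^t-\eta^0$ (needed for a $\mathbf{S}^{c,d}_{r,m}$ conclusion) is automatic from $X_\eta\in\Sigma_r$ and the integral formula $\eta^t-\eta^0=\int_0^t X_\eta(s,z^s,\eta^s)\,ds$, even though $\eta^0$ may only lie in $\Sigma_{-r}^c$.

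The heart of the proof is an a priori bound. Set
\begin{equation*}
Y(t):=|\mathbf{Z}^t|+\|\eta^t\|_{\Sigma_{-r}}, \qquad W(t):=|z^t|,\qquad \text{so } \mathbf{Z}^t_{ij}=z^t_i\overline{z^t_j},\ i\neq j.
\end{equation*}
From the symbol bounds \eqref{eq:scalSymb}, \eqref{eq:opSymb} and the product rule
\begin{equation*}
\dot{\mathbf{Z}}^t_{ij}=X_i(s,z^s,\eta^s)\,\overline{z^s_j}+z^s_i\,\overline{X_j(s,z^s,\eta^s)},
\end{equation*}
I would derive, on the initial small neighborhood where $Y+W\le 1$,
\begin{equation*}
|\dot W|\le Cn\,Y^b(Y+W)^a\le C\,Y, \qquad |\dot Y|\le 2CW\,Y^b(Y+W)^a+C\,Y^d(Y+W)^c\le C(1+W)\,Y,
\end{equation*}
where I used $b\ge 1$, $d\ge 1$ and $a,c\ge 0$ to factor out one power of $Y$. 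A Gronwall argument on $Y$ and then simple integration of $\dot W$ give, for $t\in[-2,2]$ and possibly after shrinking $\delta_1$,
\begin{equation*}
Y(t)\le C_1 Y(0),\qquad W(t)\le W(0)+C_1 Y(0), \qquad Y(t)+W(t)\le C_2(Y(0)+W(0)).
\end{equation*}

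With these uniform a priori bounds in hand, I would conclude by substituting back into the integral equations. Specifically,
\begin{equation*}
|z^t_j-z^0_j|\le\int_0^{|t|}|X_j(s,z^s,\eta^s)|\,ds\le C\int_0^{|t|}Y(s)^b(Y(s)+W(s))^a\,ds\le C'\,Y(0)^b(Y(0)+W(0))^a,
\end{equation*}
and similarly $\|\eta^t-\eta^0\|_{\Sigma_r}\le C'\,Y(0)^d(Y(0)+W(0))^c$; these are precisely the estimates \eqref{eq:scalSymb} and \eqref{eq:opSymb} required to conclude $S_j=\mathcal{R}^{a,b}_{r,m}(t,z,\mathbf{Z},\eta)$ and $S_\eta=\mathbf{S}^{c,d}_{r,m}(t,z,\mathbf{Z},\eta)$ in the sense of Remark \ref{rem:sym}. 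The $C^m$ smoothness of $S_j,S_\eta$ in $(t,z,\eta)$ is inherited from the $C^m$ smoothness of the flow given by ODE theory, and in the case $m=\infty$ one iterates.

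The main potential obstacle is the a priori control of $Y$, since the individual components $\dot z^t_i=X_i$ are only $O(Y^b(Y+W)^a)$ and therefore do not vanish when $\mathbf{Z}=0$ alone; one really needs to use that the combination $\mathbf{Z}^t_{ij}=z^t_i\overline{z^t_j}$ satisfies the better bound $|\dot{\mathbf{Z}}^t_{ij}|\lesssim W\,Y^b$, in order to close the Gronwall inequality on $Y$ alone rather than on $Y+W$. This is exactly where the structural hypothesis that symbols are controlled in terms of $\mathbf{Z}$ (and not of arbitrary monomials in $z$) pays off; every other step is routine.
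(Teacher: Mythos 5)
Your Gronwall scheme (first control $|\mathbf{Z}^t|+\|\eta^t\|_{\Sigma_{-r}}$ using $b,d\ge 1$, then plug back into the integral equations for $z^t-z$ and $\eta^t-\eta$) is exactly the estimate the paper runs, and your bounds coincide with \eqref{eq:syst33} and the step following it. The gap is in what the conclusion means. By Definitions \ref{def:scalSymb}--\ref{def:opSymb} and Remark \ref{rem:sym}, the assertion $S_j=\resto^{a,b}_{r,m}(t,z,\mathbf{Z},\eta)$ requires exhibiting a function of the \emph{independent} variables $(t,z,\mathbf{Z},\eta)\in I\times\C^n\times L\times\Sigma^c_{-r}$, of class $C^m$ on a neighborhood of $0$ and obeying the symbol bound there, whose restriction to $\mathbf{Z}=(z_i\overline z_j)_{i\neq j}$ equals $z^t_j-z_j$ (and similarly for $S_\eta$). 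Your argument never leaves the constrained set: you obtain smoothness in $(t,z,\eta)$ only, and the inequality with $\mathbf{Z}$ computed from $z$, which does not by itself produce the required extension. This is not a pedantic point, because downstream (Lemma \ref{lem:KExp1}, Lemma \ref{lem:birkflow1}, Theorem \ref{th:main}) these symbols are Taylor-expanded in $\mathbf{Z}$ at fixed $z$ and derivatives $\partial_{\mathbf{Z}}^{\mathbf{m}}$ are taken at $\mathbf{Z}=0$, which is meaningful only thanks to that extension.

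The fix is short and is precisely the paper's first move, which your own computation already contains in embryo: the product rule gives $\dot{\mathbf{Z}}=\resto^{a+1,b}_{r,m}(t,z,\mathbf{Z},\eta)$, and since $X_j$, $X_\eta$ (hence this expression) are symbols defined for independent $\mathbf{Z}\in L$, one adjoins the $\mathbf{Z}$-equation to \eqref{eq:syst23} with an arbitrary initial datum $\mathbf{Z}\in L$ near $0$. The augmented system has a flow that is $C^m$ jointly in $(t,z,\mathbf{Z},\eta)$, reduces to $\mathfrak{F}^t$ when the initial $\mathbf{Z}$ equals $(z_i\overline z_j)_{i\neq j}$ by construction, and your Gronwall argument applied verbatim to the augmented flow yields the symbol bounds with $\mathbf{Z}$ independent; restricting at the end gives the statement in the sense of Remark \ref{rem:sym}. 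With that insertion your proof coincides with the paper's.
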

\proof Consider the vectors $ {\mathbf{Z}}=(z_i  \overline{z}_j) _{i,j=1,...,n}$ with $i\neq j$. Notice that
$\dot {\mathbf{Z}} = \resto ^{a+1,b}_{r,m}(t,z, \mathbf{Z},\eta )$, and this equation can be extended to a whole neighborhood of 0 in the space $L$. Pairing  the latter equation with
equations \eqref{eq:syst21},  a system remains defined  which has a flow $\mathfrak{F}^t(z ,\mathbf{Z}, \eta  )$ which is $C^m$ in $(t,z ,\mathbf{Z}, \eta)$ and which reduces to the flow in \eqref{eq:syst23} when we restrict to vectors   $ {\mathbf{Z}}\in \{ (z_i  \overline{z}_j) _{i,j=1,...,n}: i\neq j \}$, by construction.  The inequalities \eqref{eq:scalSymb} and  \eqref{eq:opSymb}, required to prove  $S _{j }= \resto ^{a,b}_{r,m}   $  and
    $S _{\eta}=\mathbf{S} ^{c,d}_{r,m} $,
can be obtained as follows. We have    for all $|k|\le r $
\begin{align}
  &   \nonumber   | z^t-z|\le \int _0^ t |\resto ^{a ,b}_{r,m}(s,z^s, \mathbf{Z}^s,\eta  ^s)|   ds
  \le C  \int _0^ t   (\|  \eta ^s\| _{\Sigma   _{-r}}+|\textbf{Z} ^s|)^b (\|  \eta ^s\| _{\Sigma   _{-r}}+|\textbf{Z} ^s|+|z^s |)^{a}  ds, \\&  \nonumber  \| \eta ^t-\eta \| _{\Sigma _{k}}\le \int _0^ t \|\mathbf{S} ^{c,d}_{r,m}(s,z^s, \mathbf{Z}^s,\eta  ^s)\|   _{\Sigma   _{k}}   ds
  \le C  \int _0^ t   (\|  \eta ^s\| _{\Sigma   _{-r}}+|\textbf{Z} ^s|)^d (\|  \eta ^s\| _{\Sigma   _{-r}}+|\textbf{Z} ^s|+|z^s |)^{c}  ds \\&  | \mathbf{Z}^t-\mathbf{Z}|\le \int _0^ t |\resto ^{a ,b}_{r,m}(s,z^s, \mathbf{Z}^s,\eta  ^s)|   ds
  \le C  \int _0^ t   (\|  \eta ^s\| _{\Sigma   _{-r}}+|\textbf{Z} ^s|)^b (\|  \eta ^s\| _{\Sigma   _{-r}}+|\textbf{Z} ^s|+|z^s |)^{a+1}  ds. \label{eq:syst33}\end{align}
By Gronwall inequality we get that $|\mathbf{Z}^t|  $  and    $\| \eta ^t  \| _{\Sigma _{-r }}  $   are bounded by $C (  |\mathbf{Z}   |+\|  \eta \| _{\Sigma _{-r}})$.   Plugging
this in the r.h.s. of \eqref{eq:syst33}, we obtain  the last part of the statement.

\qed

	We  discuss  the
	pullback of the energy $E$ by the map $\mathfrak{F}:=\mathfrak{F}^1  $ in claim (2) of  Lemma \ref{lem:darflow0}.  We set $H_2(z,\eta ) = \sum _{j=1}^n
	e_j |z_j|^2+  \langle   H  \eta,  \overline{ \eta} \rangle$.  Our first preliminary result
is the following one.

\begin{lemma}
  \label{lem:KExp1}    Consider the $\delta _1>0$ of Lemma \ref{lem:darflow0}, the   $\delta _0>0$ of Lemma \ref{lem:vectorfield0}
   and set      $r=r_0$ with  $r_0 $  the index   in Lemma \ref{lem:EnExp}.   Then for the map $\mathfrak{F}   $ in claim (2) of  Lemma \ref{lem:darflow0} we have
   \begin{equation}\label{eq:KExp11}
\begin{aligned} &
  \mathfrak{F}(B_{\C^n}(0 , \delta  _1) \times ( B_{H^1}(0 , \delta  _1)\cap \mathcal{H}_c[0]) )\subset  B_{\C^n}(0 , \delta   _0) \times ( B_{H^1}(0 , \delta  _0)\cap \mathcal{H}_c[0])
\end{aligned}
\end{equation}
   and $ \mathfrak{F}|_{B_{\C^n}(0 , \delta  _1) \times ( B_{H^1}(0 , \delta  _1)\cap \mathcal{H}_c[0])} $ is a diffeomorphism between domain and   an open neighborhood of the origin in
   $\C^n \times (H^1 \cap \mathcal{H}_c[0])$.
Furthermore, the functional  $K:=E\circ   \mathfrak{F} $ admits an expansion
\begin{align} \nonumber
  & K (z,\eta )=H_2(z,\eta )  + \sum _{j=1,...,n} \lambda _j(|z_j| ^{2}  )
 \\&   \nonumber  +
	 	 \sum _{l=0}^{2N+3 } \sum _{
		  |\textbf{m}|=   l+1 }  \textbf{Z}^{\textbf{m}}   a_{  \textbf{m} }^{(1)}(|z _{1}|^2,..., |z _{n}|^2 )+ \sum _{j =1}^n \sum _{l=0}^{ 2N+3 }	\sum _{   |\textbf{m}  |  =l} ( \overline{z}_j \textbf{Z}^{\textbf{m}}  \langle
  G_{j \textbf{m}}^{(1)}(|z_j|^2  ), \eta \rangle  +   c.c. )
		\\&  \nonumber
	 	  +     \resto ^{1,  2  }_{{r_1}, \infty} (z , \eta  )  +
	 	  \resto ^{0,  2N+5 }_{{r_1}, \infty} (z,\textbf{Z}  ,\eta )+   \Re \langle
  \textbf{S} ^{0,   2N+4 }_{{r_1}, \infty} (z,\textbf{Z},\eta ) , \overline{\eta} \rangle
		\\& +  \sum _{ i+j  = 2}   \sum _{
		  |\textbf{m}|\le 1 }  \textbf{Z}^{\textbf{m}}    \langle G_{2\textbf{m} ij } ^{(1)}( z,\eta  ),   \eta ^{   i} \overline{\eta} ^j\rangle
		+     \sum _{ d+c  = 3}   \sum _{ i+j  = d}   \langle G_{d ij }^{(1)} ( z ),   \eta ^{   i} \overline{\eta} ^j\rangle  \resto ^{0,  c }_{r, \infty} (z, \eta )      + E _P( \eta) \  , \label{eq:enexp10}
  \end{align}
where: $r_1=r_0-2$;  $G_{j \textbf{m}}^{(1)}$,     $G^{(1)}_{2\textbf{m} ij }    $   and    $G^{(1)}_{d ij }  $ are   $\textbf{S} ^{0,  0} _{{r_1}, \infty }  $;   $a_{  \textbf{m}}^{(1)}(|z _{1}|^2,..., |z _{n}|^2 )  =\resto ^{0,  0} _{\infty, \infty  }  (z) $;     c.c. means complex conjugate;  $\lambda _j(|z_j| ^{2}  )
=\resto ^{2,  0  }_{\infty, \infty} (|z_j| ^{2}  )$. For $|\textbf{m}|=0$,  $G^{(1)} _{2\textbf{m} ij }( z,\eta  ) = G_{2\textbf{m} ij }( z   ) $ is the same of \eqref{eq:b02}.
Finally, we have the invariance  $\resto ^{1,  2  }_{r_{1} , \infty} (e^{\im \vartheta} z, e^{\im \vartheta} \eta )\equiv \resto ^{1,  2  }_{r_{1} , \infty} (  z,   \eta )  $.

\end{lemma}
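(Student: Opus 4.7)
The plan is to first verify the mapping property \eqref{eq:KExp11} and the diffeomorphism claim using the quadratic smallness of $\mathfrak{F}-\mathrm{Id}$ provided by Lemma~\ref{lem:darflow0}, and then to compute $K=E\circ\mathfrak{F}$ by substituting the flow representation into the expansion of Lemma~\ref{lem:EnExp} and collecting terms according to symbol class. For the mapping property, Lemma~\ref{lem:darflow0}(1) gives $\mathfrak{F}(z,\eta)=(z+S_j(1,z,\eta),\eta+S_\eta(1,z,\eta))$ with $S_j=\mathcal{R}^{1,1}_{r,\infty}$ and $S_\eta=\mathbf{S}^{1,1}_{r,\infty}$, so in particular $|S_j|+\|S_\eta\|_{H^1}\lesssim(|z|+\|\eta\|_{H^1})^{2}$; choosing $\delta_1$ small enough relative to $\delta_0$ secures the inclusion, and the inverse function theorem (applied to the smooth, identity-plus-quadratic map $\mathfrak{F}$) yields the local diffeomorphism onto a neighborhood of the origin.

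For the computation, denote $(z',\eta'):=\mathfrak{F}(z,\eta)$. The diagonal piece $\sum_{j=1}^{n}E(Q_{jz'_j})$ is, by gauge invariance and the identity $Q_{jz}=e^{\im \arg z}Q_{j|z|}$, a smooth function of each $|z'_j|^2=|z_j|^2+2\Re(\overline{z}_j S_j)+|S_j|^2$. Taylor-expanding in these corrections yields $\sum_{j}e_j|z_j|^2+\sum_j\lambda_j(|z_j|^2)$ at the top, and a remainder falling into $\mathcal{R}^{1,2}_{r_1,\infty}(z,\eta)$ by repeated use of the algebra \eqref{eq:algSymb} (each factor of $S_j$ lifts the indices by $(1,1)$). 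The quadratic form $\langle H\eta',\overline{\eta'}\rangle$ produces $\langle H\eta,\overline{\eta}\rangle$ plus cross terms of the form $\Re\langle HS_\eta,\overline{\eta}\rangle$ and $\langle HS_\eta,\overline{S_\eta}\rangle$, each $\mathcal{R}^{1,2}_{r_1,\infty}$; the loss $r_1=r_0-2$ is precisely what is needed to absorb the two derivatives of $H$ acting on $S_\eta\in\Sigma_{r_0}$. The problematic second line of \eqref{eq:enexp1} is a sum of monomials $\overline{z}_j z_k\cdot f(|z_j|^2,|z_k|^2)$ with $j\neq k$, each of which is exactly of the form $\mathbf{Z}^{\mathbf{m}}\,a^{(1)}_{\mathbf{m}}(|z_1|^2,\dots,|z_n|^2)$ with $|\mathbf{m}|=1$, and so fits into the $l=0$ slot of the new second line of \eqref{eq:enexp10}. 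The remaining terms of \eqref{eq:enexp1} transform into terms of the same shape in \eqref{eq:enexp10}, now with coefficients possibly depending on all $|z_k|^2$ rather than on a single one, because the substitutions $S_j$ and $S_\eta$ mix all coordinates.

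The structural correctness of the result rests on gauge invariance of $K$, which follows from the gauge invariance of $E$ together with Lemma~\ref{lem:darflow0}(3): any monomial $\mathbf{Z}^{\mathbf{m}}$ or $\overline{z}_j\mathbf{Z}^{\mathbf{m}}\eta$ in the Taylor expansion of $K$ must carry a gauge-invariant scalar or $\Sigma_{r_1}$-valued coefficient, and the polynomial expansion of such coefficients (as in the proof of Lemma~\ref{lem:EnExp}, via the analogue of \eqref{eq:fix2}) produces the stated $a^{(1)}_{\mathbf{m}}$ and $G^{(1)}_{j\mathbf{m}}$, together with higher-order tails absorbed into $\mathcal{R}^{0,2N+5}_{r_1,\infty}$ and $\mathbf{S}^{0,2N+4}_{r_1,\infty}$. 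The assertion $G^{(1)}_{2\mathbf{0}ij}(z,\eta)=G_{2\mathbf{0}ij}(z)$ at $|\mathbf{m}|=0$ is automatic because every correction arising from substituting $S_j,S_\eta$ into the quadratic-in-$\eta$ piece carries an extra factor bounded by $|z|^2+|\mathbf{Z}|+\|\eta\|$, hence belongs to $\mathcal{R}^{1,2}_{r_1,\infty}$ rather than at the leading quadratic level; the invariance $\resto^{1,2}_{r_1,\infty}(e^{\im\vartheta}z,e^{\im\vartheta}\eta)\equiv\resto^{1,2}_{r_1,\infty}(z,\eta)$ is inherited from the corresponding invariance in Lemma~\ref{lem:EnExp} and the gauge equivariance of $\mathfrak{F}$. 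The main obstacle is purely organisational: checking that each of the many cross-terms produced by the substitution lands in exactly the designated symbol class with the correct indices. This is a systematic but lengthy bookkeeping exercise, powered by the symbol algebra \eqref{eq:algSymb} and the chain rule, and does not require a new conceptual idea beyond the gauge-invariance argument that funnels every coefficient into a function of $|z_1|^2,\dots,|z_n|^2$.
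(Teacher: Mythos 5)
Your overall route---substituting $z'=z+S_j$, $\eta'=\eta+S_\eta$ from Lemma \ref{lem:darflow0} into the expansion of Lemma \ref{lem:EnExp} and re-sorting by symbol class, with gauge invariance and Lemmas \ref{lem:expandzzj}--\ref{lem:expandzzS} normalizing the coefficients---is the paper's route, and your treatment of \eqref{eq:KExp11} and of the diffeomorphism claim is fine. But there is a genuine error in how you classify the pullback of the first term $\sum_j E(Q_{jz_j})$. You claim its Taylor remainder falls into $\mathcal{R}^{1,2}_{r_1,\infty}(z,\eta)$. It does not: the first-order correction is $2E_j'(|z_j|^2)\,\Re(\overline{z}_jS_j)+\dots$ with $S_j=\mathcal{R}^{1,1}_{r_0,\infty}(z,\mathbf{Z},\eta)$, and $S_j$ contains pieces linear in $\eta$ and pieces linear in $\mathbf{Z}$ whose coefficients are only $O(1)$; hence $\Re(\overline{z}_jS_j)$ contains terms of size $|z|^2\|\eta\|_{\Sigma_{-r}}$ and $|z|\,|\mathbf{Z}|$, which are not dominated by $\|\eta\|_{\Sigma_{-r}}^2(\|\eta\|_{\Sigma_{-r}}+|z|)$ and so cannot be absorbed in the slot $\resto^{1,2}_{r_1,\infty}(z,\eta)$, nor in $\resto^{0,2N+5}_{r_1,\infty}$ or $\Re\langle\mathbf{S}^{0,2N+4}_{r_1,\infty},\overline{\eta}\rangle$. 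These contributions must themselves be expanded (as in \eqref{eq:tay1}, using gauge invariance and Lemmas \ref{lem:expandzzj}--\ref{lem:expandzzS}) and placed in the second line of \eqref{eq:enexp10}, including its $l=0$ slots; this is exactly what the paper's proof records when it notes that $E(Q_{jz_j})$ ``can be expanded similarly, but this time we need $l=0$ in the 2nd line''.

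The misclassification is not mere bookkeeping. In your account the only $l=0$ terms surviving in \eqref{eq:enexp10} would be those inherited from the problematic second line of \eqref{eq:enexp1}, which are explicitly nonzero (see \eqref{eq:troub31}); were that the full story, the Cancellation Lemma \ref{lem:KExp2}, which asserts that all $l=0$ terms of \eqref{eq:enexp10} vanish, would be false. The whole point of the paper's structure is that the pullback $\mathfrak{F}^*\sum_jE(Q_{jz_j})$ generates new $l=0$ terms which cancel against the inherited ones. Since the statement of Lemma \ref{lem:KExp1} does allow $l=0$ terms, your proof is repairable: expand the correction to $E(Q_{jz'_j})$ into second-line terms with $l\ge 0$ plus admissible tails instead of discarding it into $\mathcal{R}^{1,2}(z,\eta)$. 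A related, smaller imprecision: the cross terms of $\langle H\eta',\overline{\eta}'\rangle$ are $\resto^{1,2}_{r_0-2,\infty}(z,\mathbf{Z},\eta)$ (the three-variable class, which still needs the Taylor expansion in $(\mathbf{Z},\eta)$ to split off second-line $l\ge1$ terms and tails), not directly the two-variable $\resto^{1,2}_{r_1,\infty}(z,\eta)$ slot appearing in \eqref{eq:enexp10}.
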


\proof Consider the expansion  \eqref{eq:enexp1}  for   $E(u(z',\eta '))$, and substitute  the  formulas   $  z_{j }  '= z_{j }+ S _{j }  ( z, \eta ) $   and  $   \eta  '= \eta+ S _{\eta}  ( z, \eta )$, with $ S _{\ell }  ( z, \eta ) =S _{\ell }  (1,z, \eta )$  for $\ell =j, \overline{j}, \eta , \overline{\eta }$, with $ S _{\overline{\ell} }    =\overline{S} _{\ell }   $.  By  $S _{j  } ( z, \eta )=\mathcal{R}^{1,1}_{{r_0}, \infty}( z,\textbf{Z}, \eta )$   and
  $ S _{\eta}  ( z, \eta )=\textbf{S}^{1,1}_{{r_0}, \infty}( z,\textbf{Z}, \eta )$   it is elementary to see that the last three lines of
   \eqref{eq:enexp1} yield terms that can be absorbed in   last three lines of
   \eqref{eq:enexp10} (with $l\ge 1$ in the 2nd line). Notice that the   $z$ dependence of the   $ a_{  \textbf{m} }^{(1)}$ in terms of $(|z _{1}|^2,..., |z _{n}|^2 )$ follows by Lemmas \ref{lem:darflow0} and \ref{lem:expandzzj}.
    The  $z$ dependence of the   $ G_{ j \textbf{m} }^{(1)}$ is obtained by Lemma \ref{lem:expandzzS}.
     Notice also that if an $\mathcal{R}^{i,0}_{{r }, \infty}( z  )$
depends only on $z$, then it is an $\mathcal{R}^{i,0}_{\infty, \infty}( z  )$.

We have  $ \resto ^{1,  2  }_{{r_0}, \infty } (z', \eta ' )  =  \resto ^{1,  2  }_{{r_0}, \infty } (z , \mathbf{Z}, \eta   )$.  Notice that  by the    invariance  of
   $ \resto ^{1,  2  }_{{r_0}, \infty } (z , \eta   )$ and by claim (3) in Lemma \ref{lem:darflow0} we have
  $\resto ^{1,  2  }_{r_{0} , \infty} (e^{\im \vartheta} z, \mathbf{Z}, e^{\im \vartheta} \eta )\equiv  \resto ^{1,  2  }_{r_{0} , \infty} (  z, \mathbf{Z},   \eta )  $.
By Taylor expansion (using the conventions under \eqref{eq:EPexp})
\begin{equation}\label{eq:tay1}   \begin{aligned} &  \resto ^{1,  2 }_{{r_0}, \infty } (z,\textbf{Z} ,\eta  ) =
\resto ^{1,  2 }_{ {r_0}, \infty } (z,\textbf{Z} ,0    ) +d _\eta   \resto ^{1,  2 }_{ {r_0}, \infty } (z,\textbf{Z} ,0    ) \eta +\int _0^1 (1-t)\partial _\eta  ^2  \resto ^{1,  2 }_{ {r_0}, \infty  } (z,\textbf{Z} ,t \eta     ) dt \cdot \eta ^ 2 .
\end{aligned}
\end{equation}
   Each of the terms in the r.h.s. is invariant by change of variables $(  z,    \eta ) \rightsquigarrow (e^{\im \vartheta} z,  e^{\im \vartheta} \eta )$.
   We have
  \begin{equation*}   \begin{aligned} &  \resto ^{1,  2 }_{ {r_0}, \infty } (z,\textbf{Z} ,\eta  )| _{\eta =0} =
  \resto ^{1,  2 }_{ \infty, \infty } (z,\textbf{Z}    )  =\sum _{k\le 2N+4} \frac{1}{k!} d _{\textbf{Z} }^{k} \resto ^{1,  2 }_{\infty, \infty } (z,0    )  \textbf{Z}^k +
  \resto ^{1,  2N+5 }_{\infty, \infty } (z,\textbf{Z}    )= \\&   \resto ^{1,  2N+5 }_{ \infty, \infty } (z,\textbf{Z}    ) +  \sum _{l =2} ^{2N+4}	 \sum _{
	  |\textbf{m}|=l+1 }  \textbf{Z}^{\textbf{m}}   c _{  \textbf{m} }( z )   =  \resto ^{1,  2N+5 }_{\infty, \infty } (z,\textbf{Z}    ) +  \sum _{l =2} ^{2N+4}	 \sum _{
	  |\textbf{m}|=l+1 }  \textbf{Z}^{\textbf{m}}  \sum _{j =1}^n  c_{ j \textbf{m} }( |z_j |^2 )    ,
\end{aligned}
\end{equation*}
where,  as in step 1 in Lemma \ref{lem:EnExp},
the last equality is obtained   by the invariance w.r.t $(  z,    \eta ) \rightsquigarrow (e^{\im \vartheta} z,  e^{\im \vartheta} \eta )$
and by smoothness.
We have  proceeding like above   \begin{equation*}   \begin{aligned} & d _\eta   \resto ^{1,  2 }_{ {r_0}, \infty } (z,\textbf{Z} ,0    ) \eta =
 \Re \langle \mathbf{S} ^{1,  1 }_{{r_0}, \infty } (z,\textbf{Z}     ),  \overline{\eta}   \rangle   =\sum _{k\le 2N+3} \frac{1}{k!}    \Re \langle  d _{\textbf{Z} }^{k}  \mathbf{S} ^{1,  1 }_{ {r_0}, \infty } (z,0    )  ,  \overline{\eta}   \rangle \textbf{Z}^k \\& +
  \Re \langle \mathbf{S} ^{1,  2N+4  }_{ {r_0}, \infty } (z,\textbf{Z}  ,\eta   ),  \overline{\eta}   \rangle  = \Re \langle \mathbf{S} ^{1,  2N+4  }_{ {r_0}, \infty } (z,\textbf{Z}  ,\eta   ),  \overline{\eta}   \rangle   +
   \sum _{j =1}^n \sum _{l =1} ^{2N+3}	 \sum _{
	  |\textbf{m}|=l  }
     ( \overline{z}_j \textbf{Z}^{\textbf{m}}  \langle
  A_{j \textbf{m}}(|z_j|^2  ), \eta \rangle  +c.c. )     ,
\end{aligned}
\end{equation*}
Finally, for a   $\resto ^{1,  2  }_{r_{0} , \infty} (e^{\im \vartheta} z, e^{\im \vartheta} \eta )\equiv \resto ^{1,  2  }_{r_{0} , \infty} (  z,   \eta )  $ we have, see {Definition} \ref{def:scalSymb},
 \begin{equation*}   \begin{aligned} &     \int _0^1 (1-t)\partial _\eta  ^2  \resto ^{1,  2 }_{{r_0}, \infty } (z,\textbf{Z} ,t \eta     ) dt \eta ^ 2 =  \resto ^{1,  2 }_{ {r_0}, \infty } (z,  \eta     ) .
\end{aligned}
\end{equation*}
By \eqref{eq:tay1} and the subsequent formulas we see that  $  \resto ^{1,  2  }_{{r_0}, \infty } (z', \eta ' )$ is absorbed  in   last three lines of
   \eqref{eq:enexp10} (with $l\ge 1$ in the 2nd line).
 The term
     $\langle   H   \eta',  \overline{ \eta}' \rangle = \langle   H   \eta ,  \overline{ \eta}  \rangle +\resto ^{1,  2 }_{ {r_0}-2, \infty } (z,\textbf{Z} ,\eta  )$  behaves similarly, recalling that $r_1=r_0 -2 $. Here too we have $\resto ^{1,  2 }_{ {r_0}-2, \infty } (e^{\im \vartheta} z,\textbf{Z}, e^{\im \vartheta} \eta )\equiv \resto ^{1,  2 }_{ {r_0}-2, \infty } (  z,\textbf{Z},   \eta )  $. This function can be treated like
     the  $  \resto ^{1,  2 }_{ {r_0} , \infty } (  z,\textbf{Z},   \eta )  $ discussed earlier.

     The terms $E (Q_{j z_j})$  and, for $j\neq k$,  $ \Re  \langle   q_{j z_j},  \overline{z}_k\phi _k \rangle  =\resto ^{1,  1 }_{\infty , \infty } (z,\textbf{Z}    )  $
     can be expanded similarly. But this time we need $l=0$  in the 2nd line.

\qed

The expansion in
Lemma \ref{lem:KExp1}  is too crude. We have the following additional and crucial fact.

\begin{lemma} [Cancellation Lemma]
  \label{lem:KExp2}  In   the 2nd line  of  \eqref{eq:enexp10}  all the terms with  $l = 0$ are zeros.
\end{lemma}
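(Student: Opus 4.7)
The strategy is a direct bookkeeping argument. The $l=0$ terms in the second line of \eqref{eq:enexp10} split into two gauge-invariant families: scalar contributions $\sum_{i\neq j} z_i \overline{z}_j\, c_{ij}(|z_1|^2,\ldots,|z_n|^2)$ and linear-in-$\eta$ contributions $\sum_j \overline{z}_j \langle G_{j\mathbf{0}}^{(1)}(|z_j|^2), \eta\rangle + c.c.$ I would begin by tracing every source of such terms in $K = E\circ\mathfrak{F}$: Taylor-expand \eqref{eq:enexp1} under the substitution $z_j' = z_j + S_j(z,\eta)$, $\eta' = \eta + S_\eta(z,\eta)$ from Lemma \ref{lem:darflow0}, and note that at the relevant leading order only a finite list of terms contributes, namely the second line of \eqref{eq:enexp1} (already of scalar $l=0$ type by \eqref{eq:troub31}), plus the first-order variations $2\sum_j \Re(\overline{z}_j E_j'(|z_j|^2) S_j)$ of $\sum_j E(Q_{jz_j'})$ and $2\Re\langle H S_\eta, \overline{\eta}\rangle$ of $\langle H\eta', \overline{\eta'}\rangle$, together with the $|\mathbf{m}|=0$ quadratic-in-$\eta$ pieces of the fifth line of \eqref{eq:enexp1}.

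Second, I would substitute for $S_j, S_\eta$ their leading expressions. Since $\Omega_t = \Omega_0' + O(z^2)$, at the relevant order $\mathcal{X}^t \approx Y$ with $i_Y \Omega_0' = -\Gamma$, yielding $S_j = \tfrac{i}{2} \Gamma_j + (\text{higher})$ and $S_\eta = -i\Gamma_{\overline{\eta}} + (\text{higher})$. From Lemma \ref{lem:1forms}, $\Gamma = B - B_0 + d\psi$; the contributing components at the required order are fully determined by the explicit formula \eqref{eq:psi} for $\psi$, whose summand $\sum_j \Im\langle \overline{q}_{jz_j}, u\rangle$ is designed precisely to generate the matching counter-terms in $d\psi$. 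Computing $\Gamma_j$ and $\Gamma_{\overline{\eta}}$ modulo higher-order corrections yields closed expressions in terms of $q_{jz_j}$, $\phi_k$, and $\eta$.

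Third, the cancellation is verified by explicit coefficient matching. For the scalar contributions, the key algebraic input is Lemma \ref{lem:id1}, which ties $E_{jz_j}\langle q_{kz_k}, \phi_j\rangle$, $E_{kz_k}\langle q_{kz_k}, \phi_j\rangle$, and $\langle |Q_{kz_k}|^2 Q_{kz_k}, \phi_j\rangle$; combined with $\nabla E(Q_{jz_j}) = 2E_{jz_j} Q_{jz_j}$, this forces the sum of the three summands in the second line of \eqref{eq:enexp1} and the pullback variation $2\Re(\overline{z}_j E_j'(|z_j|^2) S_j)$ to vanish at order $z_i \overline{z}_j$. For the linear-in-$\eta$ contributions, the cancellation uses $H\phi_j = e_j \phi_j$ together with the explicit form of $\alpha_j[z]\eta$ from Lemma \ref{lem:contcoo} to match the $\eta\mapsto\eta+S_\eta$ variation of $\langle H\eta, \overline{\eta}\rangle$ against the $|\mathbf{m}|=0$ quadratic-in-$\eta$ piece of \eqref{eq:enexp1}, via the formula \eqref{eq:b02}.

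The main obstacle is the sheer volume of bookkeeping: as the authors emphasize, the cancellation is empirical and lacks a conceptual explanation, so no structural shortcut appears to be available. One must patiently collect every $l=0$ coefficient from the various sources listed in the first paragraph and verify algebraically that the sum vanishes, with the role of $\psi$ being to encode exactly the right counter-terms so that this happens.
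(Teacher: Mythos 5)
Your plan reduces the lemma to a lowest-order coefficient check, and that is where it falls short of the statement. The $l=0$ terms in the second line of \eqref{eq:enexp10} are, in the notation of the paper's proof, the \emph{full functions} $b_{kjA}(z_k)$ and $\mathbf{A}_k(z_k)$ of \eqref{eq:altcan0}, and the lemma asserts they vanish identically, i.e.\ to all orders in $|z_k|^2$. Your truncations (replace $\mathcal{X}^t$ by $Y$ with $i_Y\Omega_0'=-\Gamma$, replace the time-one flow by identity plus field, keep only ``first-order variations'' of $\sum_jE(Q_{jz_j})$ and $\langle H\eta,\overline\eta\rangle$) discard contributions that are higher order in $|z_k|^2$ but still exactly of $l=0$ type: the corrections $K_t$ in Lemma \ref{lem:vectorfield0} carry factors $\gamma_k(|z_k|^2)$, the second Picard iterate of the flow contributes at order $|z_k|^2$ times the first, and even the pullback of the $l\ge1$ terms feeds back into $l=0$ (e.g.\ substituting $z_j\mapsto z_j+S_j$ with the piece $S_j\sim z_kf(|z_k|^2)$ into a monomial $(z_j\overline z_k)^2a(\cdot)$ produces $z_j\overline z_k\,|z_k|^2f a$, an $l=0$ term your finite list omits). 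Conversely, the $|\mathbf{m}|=0$ quadratic-in-$\eta$ pieces of the fifth line of \eqref{eq:enexp1} that you do include cannot produce $l=0$ terms at all, since the $\eta$-independent part of $S_\eta=\mathbf{S}^{1,1}$ carries a $\mathbf{Z}$ factor. Finally, even at lowest order the cancellation is asserted rather than exhibited: Lemma \ref{lem:id1} is used in the paper only to put the second line of \eqref{eq:enexp1} into its stated form, and nothing in your sketch actually computes the relevant components of $\Gamma$, integrates the flow, and matches constants. So the proposal, as it stands, would at best verify the cubic Taylor coefficient of $b_{kjA}$, not the lemma.

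The deeper miss is your closing claim that no structural shortcut is available: the paper's proof is precisely such a shortcut, valid to all orders at once. One observes that the $l=0$ coefficients are derivatives of $K=E\circ\mathfrak{F}$ at the points $p_k=(0,\dots,0,z_k,0,\dots,0;0)$, namely $b_{kjA}(z_k)=\partial_{z_{jA}}K|_{p_k}$ and $\mathbf{A}_k(z_k)=\nabla_\eta K(p_k)$ (formula \eqref{eq:altcan2}), because all other terms in \eqref{eq:enexp10} vanish to the relevant order at $p_k$. Then three exact facts finish the proof: $\mathfrak{F}(p_k)=p_k$ (equation \eqref{eq:Fpk}), so $\nabla E$ is evaluated at $Q_{kz_k}$ where $\nabla E(Q_{kz_k})=2E_{kz_k}Q_{kz_k}$ and $Q_{kz_k}=-\im\,\mathfrak{F}_*\partial_{\vartheta_k}|_{p_k}$; and the symplectomorphism property $\mathfrak{F}^*\Omega=\Omega_0$ from Lemma \ref{lem:darflow0}. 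Hence both coefficients equal $-E_{kz_k}\,\Omega_0(\partial_{\vartheta_k},\,\cdot\,)|_{p_k}$ applied to $\partial_{z_{jA}}$ ($j\ne k$) or to $\Xi\in\mathcal{H}_c[0]$, which vanishes by the block structure of $\Omega_0$ in \eqref{eq:defOm0}. If you want to salvage your route, the minimal repair is to prove and exploit these three facts rather than attempt term-by-term bookkeeping, which cannot be closed at all orders by hand.
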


\proof
We first observe that the terms in the 2nd line of \eqref{eq:enexp10} with $l=0$ can be written as
\begin{equation}\label{eq:altcan0}
\sum_{k=1}^n\sum_{j\neq k}\sum_{A=R,I}z_{j A}b_{kjA}(z_k)+\sum_{k=1}^n\Re  \langle \mathbf{A}_k  ( z_k  ),\overline{\eta }\rangle.
\end{equation}
Indeed they are
\begin{equation}\label{eq:altcan3}
\sum _{ |\textbf{m}|=   1 }  \textbf{Z}^{\textbf{m}}   a_{  \textbf{m} }^{(1)}(|z _{1}|^2,..., |z _{n}|^2 )
+ \sum _{j =1}^n   ( \overline{z}_j    \langle
  G_{j \textbf{0}}^{(1)}(|z_j|^2  ), \eta \rangle  +   c.c. ),
\end{equation}
and it is obvious that the 2nd term of \eqref{eq:altcan3} is the second term of \eqref{eq:altcan0}.
    Arguing as in Lemma \ref{lem:EnExp}, the first term of \eqref{eq:altcan3} can be written as
\begin{equation*}
\sum_{k=1}^n\sum _{ |\textbf{m}|=   1 }  \textbf{Z}^{\textbf{m}}   a_{  k\textbf{m} }^{(1)}(|z _ k |^2 )
\end{equation*}
Further, for $\textbf{Z}^{\textbf{m}}=z_i\overline{z}_j$, we can assume that $i$ or $j$ must equal to $k$, because if not, it can be absorbed in the terms with $l\geq1$.
Set $\mathcal N _k:=\{\textbf{m}\ |\ |\textbf{m}|=1,\ m_{i,j}=0\ \mathrm{if}\ i\neq k\ \mathrm{and}\ j\neq k\}$.
We have
\begin{align*}
\sum_{k=1}^n\sum _{ |\textbf{m}|=   1 }  \textbf{Z}^{\textbf{m}}   a_{  k\textbf{m} }^{(1)}(|z _ k |^2 )&=\sum_{k=1}^n\sum _{ \textbf{m}\in \mathcal N _k }  \textbf{Z}^{\textbf{m}}   a_{  k\textbf{m} }^{(1)}(|z _ k |^2 )=\sum_{k=1}^n\sum_{j\neq k}(z_j\overline{z}_ka_{  k m_{jk} }^{(1)}(|z _ k |^2) +z_k \overline{z}_ja_{  k m_{jk} }^{(1)}(|z _ k |^2) ).
\end{align*}
So, we can write the term in the form of the first term of \eqref{eq:altcan0}.

\noindent Next, notice that for $p_k=(0,\cdots,0,z_k,\cdots,0;0)$,
\begin{align}
b_{kjA}(z_k)&=\left.\partial_{z_{j A}}K(z,\eta)\right|_{p_k} \text{  and }
 \mathbf{A}_k  ( z_k  )  = \nabla _{\eta} K (p_k).  \label{eq:altcan2}
\end{align}
% Reason
Therefore, it suffices to show the r.h.sides in \eqref{eq:altcan2} are both zero.
Recall $u(z,\eta)=\sum_{j=1}^n Q_{j z_j} + R[z]\eta$. We have
\begin{align*}
\left.\partial_{z_{j A}}K(z,\eta)\right|_{p_k} &= \left.\partial_{z_{j A}}E(u(z'(z,\eta),\eta'(z,\eta)))\right|_{p_k}\\
&= \Re \langle \nabla E(u(z'(p_k),\eta'(p_k))), \overline{\left.\partial_{z_{j A}}u(z'(z,\eta),\eta'(z,\eta))\right|_{p_k}}\rangle.
\end{align*}
By Lemma \ref{lem:darflow0} we have
\begin{equation}\label{eq:Fpk}
(z'(p_k),\eta'(p_k))=p_k.
\end{equation}
So
\begin{equation*}
\nabla E(u(z'(p_k),\eta'(p_k)))=\nabla E(Q_{k z_k})=2E_{k z_k}Q_{k z_k}.
\end{equation*}
By Prop. \ref{prop:bddst}  and by \eqref{eq:Fpk}, for  $z_k= e ^{\im \vartheta _k} \rho_k$ we have
\begin{equation*}  \begin{aligned}
  &   -\im  \mathfrak{F}_*\frac{\partial}{\partial \vartheta _k} | _{p_k} =
  -\im  \frac{\partial}{\partial \vartheta _k}  (\sum_{j=1}^nQ_{j z_j'}+R[z'] \eta ') | _{p_k}
    =
    -\im \frac{\partial}{\partial \vartheta _k}  Q_{k  z_k }  =-  \im \frac{\partial}{\partial \vartheta _k} e ^{\im \vartheta _k} Q_{k   \rho_k }=  Q_{k z_k } ,
  \end{aligned}
\end{equation*}
where the 1st equality follows by definition of push forward, the 2nd by \eqref{eq:Fpk} and the 3rd by Prop.\ref{prop:bddst}.
Similarly, by the definition of push forward, we have
\begin{equation*}
\left.\partial_{z_{j A}}u(z'(z,\eta),\eta'(z,\eta))\right|_{p_k}= \mathfrak{F}_* \left.\partial_{z_{j A}}\right|_{p_k}.
\end{equation*}
Therefore  $b_{kjA}(z_k)=0$ follows by
\begin{equation*}
\left.\partial_{z_{j A}}K(z,\eta)\right|_{p_k} = 2E_{k z_k}\Im\langle \mathfrak{F}_* \partial_{\vartheta _k} | _{p_k}, \overline{ \mathfrak{F}_* \partial_{z_{j A}}} |_{p_k} \rangle=-  E_{k z_k}
\left.\Omega_0(\partial_{\vartheta _k}, \partial_{z_{j,A}}) \right|_{p_k} =0.
\end{equation*}

To get  $\mathbf{A}_k  ( z_k  )=0$, fix $\Xi \in \mathcal H_c [0]$ and set $p_{k,\Xi}(t):=(0,\cdots,0,z_k,0,\cdots,0;t\Xi)$. Then $\forall \, \Xi$
\begin{align*}
\Re \langle \nabla K (p_k), \Xi \rangle &= \frac{d}{dt} \left.K(p_{k,\Xi}(t))\right|_{t=0}
=\frac{d}{dt}\left.E(u(z'(p_{k,\Xi}(t)),\eta'(p_{k,\Xi}(t))))\right|_{t=0}\\
&=\Re\langle \nabla E(Q_{k z_k}), \overline{\frac{d}{dt}\left.u(z'(p_{k,\Xi}(t)),\eta'(p_{k,\Xi}(t)))  \right|_{t=0}}   \rangle \\
&=2E_{k z_k} \Im \langle \mathfrak{F}_*\frac{\partial}{\partial \vartheta _k} | _{p_k}, \overline{\mathfrak{F}_* \Xi} \rangle= -  E_{k z_k} \left.\Omega_0 (\frac{\partial}{ \partial \vartheta _k}, \Xi)\right|_{p_k}=0\Rightarrow \mathbf A _k (z_k) =0.
\end{align*}
\qed

\section{Birkhoff normal form  }
\label{sec:nforms}
In this section, where we search the effective Hamiltonian, the main result is Theorem \ref{th:main}.

 \noindent We consider the symplectic form $\Omega _0$ introduced in \eqref{eq:defOm0}.
  We introduce an index $\ell =j, \overline{j}$, for $\overline{\overline{j}}=j$
  with $j=1,...,n$. We write $\partial _j=\partial _{z_j}$ and  $\partial _{ \overline{j}}=\partial _{\overline{z}_j}$, $z_{\overline{j}}=\overline{z}_j$. With this notation, summing
    on $j$,
    by \eqref{eq:defOm0pr} and \eqref{eq:pert1}  for $\gamma _j(|z_j|^2) =\resto ^{2,0}_{\infty, \infty }(|z_j|^2)$ we have
\begin{equation} \label{eq:sympl0} \begin{aligned}          \Omega _0& = \im     (1 +   \gamma _j(|z_j|^2) ) dz_j
\wedge d\overline{z}_{j }  + \im \<d \eta  , d\overline{\eta}  \>   -\im \<d \overline{\eta}  , d {\eta}  \>      .
\end{aligned}\end{equation}
Given   $F\in C^1(U,\R )$
with $U$ an open subset of $\C^n\times \Sigma _{r}^c$,  its Hamiltonian vector field $X_F$ is defined by
$i_{X_F}\Omega _0=dF$.     We have summing on $j$
\begin{equation*} \begin{aligned}        &      i_{X_F}\Omega _0=    \im     (1 +   \gamma _j(|z_j|^2) ) ((X_F) _{j }   d\overline{z}_{j } -(X_F) _{\overline{j} }   d {z}_{j })  + \im \<(X_F) _{\eta  }  , d\overline{\eta}  \>   -\im \<(X_F) _{\overline{\eta} }    , d {\eta}  \>   \\&  =  \partial _j F    d {z}_j   +     \partial _{\overline{j} } F  d\overline{z}_{j }    +\<\nabla _{\eta  } F , d {\eta}  \> +  \<\nabla _{\overline{\eta}  } F   , d\overline{\eta}  \>     .
\end{aligned}\end{equation*}
So comparing the components of the two sides we get   for $1 +   \varpi _j(|z_j|^2)=  (1 +   \gamma _j(|z_j|^2) )^{-1}$  where    $\varpi  _j(|z_j|^2) =\resto ^{2,0}_{\infty, \infty }(|z_j|^2)$
:
\begin{equation} \label{eq:hamf1}\begin{aligned}        &    (X_F)_j    =-\im   (1 +   \varpi  _j(|z_j|^2))
 \partial _{\overline{j}} F   \ ,  \quad    (X_F)_{\overline{j}}   = \im   (1 +  \varpi  _j(|z_j|^2)) \partial _j F          \\&    (X_F)_\eta   =-\im    \nabla  _{\overline{\eta }} F  \,  , \quad     (X_F)_{\overline{\eta }}  =  \im    \nabla  _\eta F .
\end{aligned}\end{equation}

Given    $G\in C^1(U,\R )$  and   $F\in C^1(U, \textbf{E} )$  with $\textbf{E}$ a Banach space, we set
$  \{   F, G  \}  :=dF X_G$.

\begin{definition}[Normal Form]
\label{def:normal form}  Recall Def. \ref{def:setM}  and in particular  \eqref{eq:setM0}.
Fix
$r\in \N _0$.
A real valued function $Z( z,\eta  )$ is in normal form if  $
  Z=Z_0+Z_1
$
where $Z_0$ and $Z_1$ are finite sums of the following type  for $\mathbf l \ge 1$ and for $\mathbf{Z}= (z_i \overline{z}_j) _{i,j =1,...,n} $
  where $i\neq j$:
\begin{equation}
\label{e.12a}Z_1 (z,\mathbf{Z},\eta )=      \sum _{j =1}^n	\sum _{   \substack{|\textbf{m}  |   = \mathbf l \\ \mathbf{m}  \in \mathcal{M}_{j}(\mathbf l)} } \(\overline{z}_j \textbf{Z}^{\textbf{m}}  \langle
  G_{j \textbf{m}}(|z_j|^2  ), \eta \rangle  +   \text{c.c.}\), \text{ where   $ G_{j \textbf{m}}( |z_j|^2 ) =\mathbf{S} ^{0,0}_{r,\infty}(|z_j|^2  )$}
\end{equation}
   and where c.c. means complex conjugate; for  $ a_{  \textbf{m} }( |z _1|^2,...,|z _n|^2  )  =\resto ^{0,0}_{r,\infty}(|z _1|^2,...,|z _n|^2  )$
\begin{equation}
\label{e.12c}Z_0 (z,\mathbf{Z}  ) =  	\sum _{   \substack{|\textbf{m}  |   = \mathbf l +1\\ \mathbf{m}  \in \mathcal{M}_{0}(\mathbf l+1)} } \textbf{Z}^{\textbf{m}}   a_{  \textbf{m} }( |z _1|^2,...,|z _n|^2 )   \ . \end{equation}
 \end{definition}

\begin{remark}\label{rem:H3}
  By Lemma \ref{lem:M0},       $\textbf{Z}^{\textbf{m}} =|z_1| ^{2m_1}...|z_n| ^{2m_n} $  $\forall$  $\textbf{m} \in  \mathcal{M}_{0}(2N+4)$ for an $m \in  \N  _0^n$  with $2|m|=|\mathbf{m}| $.
By Lemma \ref{lem:M0}  for $|\textbf{m}  |  \le  2N+3$   either  $\sum _{a,b}(e_a-e_b)  {m}_{ab} -e_j >0$
or    $\sum _{a,b}(e_a-e_b)  {m}_{ab} -e_j <0$.
\end{remark}

For $ \mathbf{l} \le 2N+4$ we will consider   flows   associated to   Hamiltonian   vector fields $X_\chi$
with real valued
   functions  $\chi $ of the following form,  with    $ b_{  \textbf{m} }  =\resto ^{0,0}_{\mathbf r ,\infty }(|z _1|^2,...,|z _n|^2  )$  and    $ B_{j \textbf{m}} =\mathbf  S^{0,0}_{\mathbf{r},\infty}(|z_j|^2  )$  for some $\mathbf{r}\in \N $ defined in $B _{\C^n} (0, \mathbf{d})$ for some $\mathbf{d}>0$:
\begin{equation} \label{eq:chi1}\begin{aligned}
\chi   &=     	\sum _{   \substack{|\textbf{m}  |   = \mathbf{l} +1\\   \mathbf{{m}}
 \not \in \mathcal{M} _0 (\mathbf{l}+1) } } \textbf{Z}^{\textbf{m}}   b_{  \textbf{m} }( |z _1|^2,...,|z _n|^2 )    +
 \sum _{j =1}^n	\sum _{   \substack{|\textbf{m}  |   = \mathbf{l} \\ \mathbf{{m}}
 \not \in \mathcal{M} _j (\mathbf{l}) } } (\overline{z}_j \textbf{Z}^{\textbf{m}}  \langle
  B_{j \textbf{m}}(|z_j|^2  ), \eta \rangle  +   \text{c.c.})   \ .
\end{aligned}\end{equation}

\noindent
The Hamiltonian vector field $X_{\chi}$ can be explicitly computed using \eqref{eq:hamf1}. We have
\begin{equation}\label{eq:approxY0}
\begin{aligned}
(X_{\chi})_j&=(Y_{\chi})_j + (\tilde Y _ \chi)_j\ , \quad
(X_{\chi})_\eta  = -\im \sum _{j =1}^n	\sum _{   \substack{|\textbf{m}  |   = \mathbf{l} \\ \mathbf{{m}} \not \in \mathcal{M} _j (\mathbf{l}) } } z_j \overline{\textbf{Z}}^{\textbf{m}}
\overline{  B}_{j \textbf{m}}(|z_j|^2  ),
\end{aligned}
\end{equation}
where
\begin{equation}\label{eq:aproxY}
\begin{aligned}
 (Y _{\chi })  _{j} (z,\eta) := &
    -\im   (1 +   \varpi  _j(|z_j|^2)) \big [ \sum _{    |\textbf{m}  |   = \mathbf{l}+1}  b_{  \textbf{m} }( |z_1|^2,\cdots,|z_n|^2)
 \partial _{\overline{j}}\textbf{Z}^{\textbf{m}}   \\&
 +\sum _{k=1} ^{n} \sum _{     |\textbf{m}  |   = \mathbf{l}  }  ( \langle
  B_{k \textbf{m}}(|z_k|^2 )   , \eta \rangle   \partial _{\overline{j}}( \overline{z}_k \textbf{Z}^{\textbf{m}} )   + \langle
  \overline{B}_{k \textbf{m}}(|z_k|^2 )   , \overline{\eta} \rangle  \partial _{\overline{j}} (  {z}_k \overline{\textbf{Z}}^{\textbf{m}}   )  \big ]  , \\
 (\tilde Y _{\chi })  _{j} (z,\eta   ) := &
    -\im   (1 +   \varpi  _j(|z_j|^2)) \big [ \sum _{    |\textbf{m}  |   = \mathbf{l}+1}  \partial _{|z_j|^2}b_{  \textbf{m} }( |z_1|^2,\cdots,|z_n|^2)
 z_j \textbf{Z}^{\textbf{m}}   \\&
 + \sum _{     |\textbf{m}  |   = \mathbf{l}  }  ( \langle
  B_{j \textbf{m}}'(|z_j|^2  )   , \eta \rangle    |z_j|^2 \textbf{Z}^{\textbf{m}}    + \langle
  \overline{B}_{j \textbf{m}}'(|z_j|^2 )   , \overline{\eta} \rangle  {z}_j^2 \overline{\textbf{Z}}^{\textbf{m}}     \big ] .
\end{aligned}
\end{equation}
 Notice that $(Y _ \chi)_j =\mathcal R ^{1,\mathbf l}_{\mathbf r ,\infty}$, $(\tilde Y _ \chi)_j =\mathcal R ^{1,\mathbf l + 1}_{\mathbf r,\infty}$ and $(X_{\chi})_\eta = \mathbf{S}^{1,\mathbf l}_{\mathbf r ,\infty}$.
We introduce now a new space.

\begin{definition}\label{def:xr}
 We denote by $X_{\mathbf{r}}(\mathbf{l})$  the space  formed by
\begin{equation*}        \begin{aligned} &
\{ (b, B):=(\{b _{\mathbf{m}}\}_{\mathbf m\in \mathcal A(\mathbf l)} , \{B  _{j\mathbf{n}}\}_{j\in 1,\cdots, n, \mathbf n\in \mathcal B_j(\mathbf l)} )  :  \text{ $b _{\mathbf{m}}\in \C$, $B  _{j\mathbf{n}} \in \Sigma _{\mathbf{r} } ^{c}$}\\&\quad \text{ and $\chi (b,B)$ is real valued for all $z\in B_{\C^n(0,\mathbf d)}$} \}, \text { where}\\&
\mathcal A(\mathbf l):=\{ \mathbf m:\ |\mathbf m|=\mathbf l +1,\quad \mathbf m\not\in \mathcal M_0(\mathbf l +1)\},\\&
\mathcal B_j(\mathbf l):=\{ \mathbf n : \ |\mathbf n|=\mathbf l ,\quad \mathbf n\not\in \mathcal M_j(\mathbf l +1)\},
\end{aligned}
\end{equation*}
where we have assigned some order in the coordinates  and where \begin{equation*} \begin{aligned} &
\chi (b,B)   =     	\sum _{  \mathbf m  \in \mathcal A(\mathbf l)
%\substack{|\textbf{m}  |   = \mathbf{l} +1\\   \mathbf{{m}} \not \in \mathcal{M} _0 (\mathbf{l}+1) }
} \textbf{Z}^{\textbf{m}}   b_{  \textbf{m} }    +
 \sum _{j =1}^n	\sum _{   \mathbf m \in \mathcal B_j(\mathbf l)
%\substack{|\textbf{m}  |   = \mathbf{l} \\ \mathbf{{m}} \not \in \mathcal{M} _j (\mathbf{l}) }
} (\overline{z}_j \textbf{Z}^{\textbf{m}}  \langle
  B_{j \textbf{m}} , \eta \rangle  +   \text{c.c.})   \ .
\end{aligned}\end{equation*}
We provide  $X_\mathbf{r}(\mathbf{l})$  with the norm
\begin{equation*}        \begin{aligned} &
 \| (b,B)\| _{X_\mathbf{r}(\mathbf{l})} =\sum _{  \mathbf m\in  \mathcal A(\mathbf l)
%\substack{|\textbf{m}  |   = \mathbf{l} +1\\   \mathbf{{m}} \not \in \mathcal{M} _0 (\mathbf{l}+1) }
 } | b _{\mathbf{m}} |+  \sum _{j =1}^n	\sum _{   \mathbf m \in \mathcal B_j(\mathbf l)
%\substack{|\textbf{m}  |   = \mathbf{l} \\ \mathbf{{m}} \not \in \mathcal{M} _j (\mathbf{l}) }
 } \| B  _{j\mathbf{m}}  \| _{\Sigma _{\mathbf{r} }} .
\end{aligned}
\end{equation*}

\end{definition}

    Set $\varrho (z)=   ( \varrho _1 (z),....,\varrho _n (z)) $ with $\varrho _j (z) =  |z_j|^2   $.

\begin{lemma}
  \label{lem:birkflow0}   Consider  the $\chi$ in \eqref{eq:chi1} for fixed $\mathbf{r}>0$  and $\mathbf{l}\ge 1$,  with   coefficients $(b (\varrho (z))  , B (\varrho (z))   )\in C^2( B_{\C ^n}(0 , \mathbf{d}), X _{\mathbf{r}}(\mathbf{l} ))$  and with $B  _{j\mathbf{m}}(\varrho (z))=B_{j\mathbf{m}} (\varrho _j(z))$. Consider the
    system
\begin{equation*}\label{eq:bsyst1}
\begin{aligned}
  &    \dot z_{j } = (X_\chi )_{j }  (z, \eta ) \text{  and }     \dot \eta = (X_\chi )  _{\eta}  (z, \eta ),
     \end{aligned}
\end{equation*}
which is defined   in
     $(t, z)\in \R\times B_{\C ^n} (0,\mathbf{d} ) $ and
    $\eta   \in \Sigma _{ k}^c   $  for all  $k\in \Z\cap [-\mathbf{r},\mathbf{r} ]$
    (or    $\eta   \in H^1 \cap \mathcal{H}_c[0]     $) . Let $\delta  \in (0, \min (\mathbf{d}, \delta _1 ))$  with $\delta _1$ the constant of Lemma \ref{lem:darflow0}.
Then  the following properties hold.

\begin{itemize}
\item[(1)]  If
the following inequality holds,
\begin{equation}\label{eq:delta0}
\begin{aligned}
  &  4 (\mathbf{l}+1) \delta    \| (b(\varrho (z)),B(\varrho (z)))\| _{W ^{1,\infty} ( B _{\C^n }   (0, \mathbf{d} ),  X_\mathbf{r}(\mathbf{l}))} <1,
     \end{aligned}
\end{equation}
 then  for all $k\in \Z\cap [-\mathbf{r},\mathbf{r}]$  for the flow $\phi ^{t}(z,\eta ) $ we have
 \begin{align}
  &   \phi ^t \in C ^{\infty} ((-2, 2)\times B_{\C ^n} (0,\delta /2 )
    \times   B_{\Sigma _{ k}^c} (0,\delta  /2 ) ,B_{\C ^n} (0,\delta    )
    \times   B_{\Sigma _{ k}^c} (0,\delta   )) \text{ and} \label{eq:bsyst1220}\\&
   \phi ^t \in C ^{\infty} ((-2, 2)\times B_{\C ^n} (0,\delta  /2 )
    \times   B_{H^1 \cap \mathcal{H}_{c}[0]} (0,\delta  /2 ) ,B_{\C ^n} (0,\delta    )
    \times   B_{H^1 \cap \mathcal{H}_{c}[0]} (0,\delta    ) ) . \nonumber
     \end{align}
 In particular  for $z^t _j := z_j  \circ \phi ^{t}(z,\eta ) $ and $\eta ^t   := \eta  \circ \phi ^{t}(z,\eta ) $ and in the sense of Remark \ref{rem:sym}
\begin{equation}\label{eq:bsyst10}
\begin{aligned}
  &      z_{j }  ^{t}= z_{j }+ S _{j }  (t,z, \eta ) \text{  and }       \eta  ^t= \eta+ S _{\eta}  (t,z, \eta )\\ & \text{ with
  $S _{j } (t,z, \eta )=\mathcal{R}^{1, \mathbf{l} }_{\mathbf{r}, \infty}(t,z,\mathbf{Z},  \eta )$   and
  $ S _{\eta}  (t,z, \eta )=\textbf{S}^{1, \mathbf{l} }_{\mathbf{r}, \infty}(t,z,\mathbf{Z},  \eta )$.}\end{aligned}
\end{equation}

 \item[(2)] We have  $S _{j }  (t,e^{\im \vartheta}z,e^{\im \vartheta} \eta )=e^{\im \vartheta}S _{j }  (t,z, \eta )$,
 $S _{\eta }  (t,e^{\im \vartheta}z,e^{\im \vartheta} \eta )=e^{\im \vartheta}S _{\eta }  (t,z, \eta )$.

 \item[(3)] The flow  $ \phi ^t $ is canonical, that is $\phi  ^{t*}\Omega _0=\Omega _0$
 in $B_{\C ^n} (0,\delta  /2 )
    \times   B_{H^1 \cap \mathcal{H}_c[0]} (0,\delta  /2 )$.

 \end{itemize}

\end{lemma}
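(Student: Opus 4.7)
The three assertions are essentially an ODE existence/regularity result, a gauge-equivariance statement, and the classical invariance of the symplectic form under a Hamiltonian flow; I would handle them in that order.

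For (i), I would read off from \eqref{eq:hamf1}--\eqref{eq:aproxY}, already established in the text, that $(X_\chi)_j\in\mathcal R^{1,\mathbf l}_{\mathbf r,\infty}(z,\mathbf Z,\eta)$ with $(X_\chi)_j$ linear in $\eta,\overline\eta$ with polynomial coefficients in $(z,\overline z)$, and $(X_\chi)_\eta\in\mathbf S^{1,\mathbf l}_{\mathbf r,\infty}(z,\mathbf Z)$, independent of $\eta$ because $\chi$ is at most linear in $\eta$; moreover $(X_\chi)_\eta$ takes values in $\Sigma_{\mathbf r}^c$, so $\mathcal H_c[0]$ is preserved along trajectories. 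A routine degree count bounds $|(X_\chi)_j|+\|(X_\chi)_\eta\|_{\Sigma_k}$ on $B(0,\delta)$ by a constant multiple of $(\mathbf l+1)\delta^{\mathbf l}\|(b(\varrho),B(\varrho))\|_{W^{1,\infty}}$, and the smallness condition \eqref{eq:delta0} is precisely what is needed to guarantee that over $|t|\le 2$ the trajectory displacement stays $\le\delta/2$. Standard Picard iteration in $C^0([-2,2];\C^n\times\Sigma_k^c)$ (respectively $C^0([-2,2];\C^n\times(H^1\cap\mathcal H_c[0]))$) then yields the flow, with $C^\infty$ regularity in the initial data obtained by differentiating the ODE and using smoothness of $X_\chi$; this gives \eqref{eq:bsyst1220}. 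The symbol form of $S_j$ and $S_\eta$ in \eqref{eq:bsyst10} follows directly from Lemma \ref{lem:flow1} applied with $(a,b,c,d)=(1,\mathbf l,1,\mathbf l)$ and $m=\infty$.

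For (ii), each monomial in \eqref{eq:chi1} is gauge invariant: $\mathbf Z^{\mathbf m}$ is obviously invariant under $z\mapsto e^{\im\vartheta}z$, and $\overline z_j\mathbf Z^{\mathbf m}\langle B_{j\mathbf m},\eta\rangle$ acquires a factor $e^{-\im\vartheta}$ from $\overline z_j$ and a compensating factor $e^{\im\vartheta}$ from $\eta$ by bilinearity of $\langle\cdot,\cdot\rangle$. Hence $\Phi^*\chi=\chi$. Combined with $\Phi^*\Omega_0=\Omega_0$, which follows from $\Phi^*B_0=B_0$ in Lemma \ref{lem:gau2} by taking the exterior derivative, the defining relation $i_{X_\chi}\Omega_0=d\chi$ forces $\Phi_*X_\chi=X_\chi$, so the flow commutes with $\Phi$; in the $(z,\eta)$ coordinates this is exactly the claimed equivariance of $S_j$ and $S_\eta$.

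For (iii), the closedness $d\Omega_0=d^2B_0=0$ together with Cartan's formula yields
\begin{equation*}
L_{X_\chi}\Omega_0=d\,i_{X_\chi}\Omega_0+i_{X_\chi}d\Omega_0=d(d\chi)+0=0,
\end{equation*}
and therefore $\tfrac{d}{dt}\phi^{t*}\Omega_0=\phi^{t*}L_{X_\chi}\Omega_0=0$, giving $\phi^{t*}\Omega_0=\Omega_0$ throughout the interval of existence. This manipulation is legitimate on the smooth Banach manifold $\C^n\times(H^1\cap\mathcal H_c[0])$ on which $\phi^t$ is genuinely differentiable. The only real obstacle to a clean write-up is item (i): tracking the constant $4(\mathbf l+1)$ in \eqref{eq:delta0} carefully enough to extract the existence interval $|t|\le 2$, and verifying that the flows constructed in the various scales $\Sigma_k^c$ (for $|k|\le\mathbf r$) and $H^1\cap\mathcal H_c[0]$ coincide on their common domain, which follows by ODE uniqueness and the inclusion $\Sigma_{\mathbf r}\subset H^1$. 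Everything else is algebraic or soft functional-analytic bookkeeping.
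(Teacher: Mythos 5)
Your proposal is correct and follows essentially the same route as the paper: claim (1) by bounding $(X_\chi)_j$ and $(X_\chi)_\eta$ in terms of $(\mathbf l +1)\,\| (b,B)\| _{W^{1,\infty}}$ and a power of $\delta$, using \eqref{eq:delta0} to keep the flow in the larger ball for $|t|\le 2$, with the symbol form of $S_j,S_\eta$ taken from Lemma \ref{lem:flow1}; claim (2) from gauge invariance of $\chi$ and $\Omega_0$; and claim (3) via Cartan's formula exactly as in \eqref{eq:fdarboux}. One small sharpening: your stated bound $(\mathbf l+1)\delta^{\mathbf l}\|(b,B)\|$ is a power of $\delta$ short of what makes \eqref{eq:delta0} yield displacement $\le \delta/2$ when $\mathbf l =1$, but since $\partial_{\overline j}\chi$ has degree at least $\mathbf l+1$ in $(|z|,\|\eta\|_{\Sigma_{-\mathbf r}})$ the correct bound $\delta^{\mathbf l +1}$ (the one the paper uses) closes this immediately.
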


 \proof
  Claim  (2)   is elementary. The same is true for (3) given that $\phi ^t$ is a standard sufficiently regular flow.
In claim (1), \eqref{eq:bsyst10} and the following sentence are   a consequence of Lemma \ref{lem:flow1}.  The first part of claim (1) follows from
elementary estimates such as
\begin{equation*}\begin{aligned}&
  |(X_\chi )_{j }  (z, \eta )| = | (1 +   \varpi  _j(|z_j|^2))
 \partial _{\overline{j}} \chi (z, \eta )| \\& \le  (1+\| \varpi  _j\| _{L^\infty (B_{\C  } (0,\delta  _0   ))})  (\mathbf{l}+1)  \| (b ,B )\| _{W ^{1,\infty} ( B _{\C^n }   (0, \delta _0),  X_\mathbf{r}(\mathbf{l}))}\delta _0 ^{\mathbf{l}+1}
  \end{aligned}
\end{equation*}
for $(z, \eta )\in  B_{\C ^n} (0,\delta     )
    \times   B_{\Sigma _{-r}^c} (0,\delta     )$.  Notice that taking $\delta_0$ sufficiently small in Lemma \ref{lem:vectorfield0}, we can arrange $\| \varpi  _j\| _{L^\infty (B_{\C  } (0,\delta  _0   ))}<1$.    We also have
    \begin{equation*}\begin{aligned}&
  \|(X_\chi )_{\eta }  (z, \eta )\| _{\Sigma _{\mathbf{ r}}}   \le     \| (0 ,B )\| _{L ^{ \infty} ( B _{\C^n }   (0, \delta _0),  X_\mathbf{r}(\mathbf{l}))}\delta _0 ^{\mathbf{l}+1}.
  \end{aligned}
\end{equation*}
     Then if
    \eqref{eq:delta0}  holds    we obtain \eqref{eq:bsyst1220}.
\qed

The main part of $\phi^t$ will be given by the following lemma.
\begin{lemma}\label{lem:approx}
Consider  a function $\chi $ as in \eqref{eq:chi1}.
     For a  parameter $\varrho \in  [0,\infty )^n$  consider
      the field  $W_\chi$  defined as follows (notice that $W_\chi (z,\eta ,\varrho (z)  ) = Y _\chi (z,\eta) $):
   \begin{equation} \label{eq:appr1}
\begin{aligned}
 &   (W _{\chi })  _{j} (z,\eta ,\varrho    ) :=
    -\im   (1 +   \varpi  _j(\varrho _j  )) \big [ \sum _{    |\textbf{m}  |   = \mathbf{l}+1}  b_{  \textbf{m} }( \varrho    )
 \partial _{\overline{j}}\textbf{Z}^{\textbf{m}}   +\\& \sum _{k=1} ^{n} \sum _{     |\textbf{m}  |   = \mathbf{l}  }  (
\langle
  B_{k \textbf{m}}(\varrho _k    )   , \eta \rangle   \partial _{\overline{j}}( \overline{z}_k \textbf{Z}^{\textbf{m}} )   + \langle
  \overline{B}_{k \textbf{m}}(\varrho _k    )   , \overline{\eta} \rangle  {z}_k \partial _{\overline{j}}   \overline{\textbf{Z}}^{\textbf{m}}     \big ]  , \\&
 (W _{\chi })  _{\eta} (z,\eta  ,\varrho   ) :=-\im   \sum _{k=1} ^{n} \sum _{     |\textbf{m}  |   = \mathbf{l}  }
    {z}_k \overline{\textbf{Z}}^{\textbf{m}} \overline{B}_{k \textbf{m}}(\varrho _k    )      .
\end{aligned}
\end{equation}
 Denote by  $(w^t, \sigma ^t )=\phi _0^t (z,\eta )  $ the flow associated to     the system
  \begin{equation} \label{eq:appr2}   \begin{aligned} &
  \dot w _j =   (W _{\chi })  _{j} (w ,\sigma ,\varrho (z)  ) \, , \quad  w_j(0)=z_j \, , \\&
   \dot \sigma =   (W _{\chi })  _{\sigma } (w ,\sigma  ,\varrho (z)    ) \, , \quad  \sigma(0)=\eta \, .
\end{aligned}
\end{equation}
Let   $\delta  \in (0, \min (\mathbf{d}, \delta _1 ))$  like in  Lemma \ref{lem:birkflow0}.
Then the following facts hold.

\begin{itemize}
\item[(1)]   If \eqref{eq:delta0} holds, then, for $B(\varrho (z))=(B_{j \mathbf{m}}(\varrho _j(z))_{j \mathbf{m}}$,
 \begin{equation} \label{eq:appr3}
\begin{aligned}
  &      w_{j }  ^{t}= z_{j }+ T _{j }  (t, b(\varrho (z)), B(\varrho (z)) ,z, \eta ) \text{  and }       \sigma   ^t= \eta+ T _{\eta}  (t, b(\varrho (z)), B(\varrho (z)),z, \eta )\end{aligned}
\end{equation}
\begin{equation} \label{eq:appr33}
\begin{aligned}
  &    \text{$T _{j } $     (resp.   $T _{\eta}$)    $C^{\infty }$ for $(t,b , B, z,\eta )\in (-2, 2)\times  B_{X_{\mathbf{r}}} (0,c) \times  B_{\C ^n} (0,\delta )
    \times   B_{\Sigma _{-\mathbf{r}}} (0,\delta )$ }\\& \text{ with values in $\C$
     (resp.   $\Sigma _{  \mathbf{r} })$.}\end{aligned}
\end{equation}
 Furthermore, we have
\begin{equation} \label{eq:appr4}
\begin{aligned}
  &      {T} _{j } (t,b,B, z, \eta        )=    \mathcal{R}^{1, \mathbf{l} }_{\mathbf{r}, \infty }( t,b,B ,z,\mathbf{Z},  \eta )   \\
  &     {T} _{ \eta } (t,b,B, z, \eta        )  =  \mathbf{S}^{1, \mathbf{l} }_{\mathbf{r}, \infty }(t,b,B  ,z,\mathbf{Z},\eta   ).   \end{aligned}
\end{equation}

\item[(2)] We have the gauge covariance for any fixed $\vartheta \in \R$  \begin{equation} \label{eq:appr404}
\begin{aligned}
  &      {T} _{j } (t,b,B,  e^{\im \vartheta}z, e^{\im \vartheta}\eta        )=  e^{\im \vartheta}   {T} _{j } (t,b,B,  z, \eta        )  \\
  &     {T} _{ \eta } (t,b,B, e^{\im \vartheta}z,e^{\im \vartheta} \eta        )  = e^{\im \vartheta}  {T} _{ \eta } (t,b,B, z, \eta       ).   \end{aligned}
\end{equation}

  \item[(3)]     Consider the Hamiltonian flow $(z^t,\eta^t )=\phi ^t(z ,\eta  )$ associated to  $\chi $, see Lemma \ref{lem:birkflow0}. Then
     \begin{equation}\label{eq:appr6}
\begin{aligned}
  &  z^t -w^t= \mathcal{R}^{1, \mathbf{l} +1}_{\mathbf{r}, \infty }(t,z,\mathbf{Z},  \eta )\quad , \quad
   \eta ^t -\sigma ^t= \mathbf{S}^{1, \mathbf{l} +1}_{\mathbf{r}, \infty}(t,z,\mathbf{Z},  \eta )
\end{aligned}
\end{equation}

\end{itemize}
\end{lemma}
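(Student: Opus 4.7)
The plan is to establish the three claims in order.

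\smallskip

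\noindent\textbf{Claim (1).} The system \eqref{eq:appr2} is an autonomous ODE on $\C^n\times\Sigma_k^c$ with vector field $W_\chi(\cdot,\cdot,\varrho(z))$; by direct inspection of \eqref{eq:appr1}, this field is polynomial of degree $\le\mathbf{l}+1$ in $(w,\bar w)$, linear in $(\sigma,\bar\sigma)$, with coefficients smooth in the frozen parameter $\varrho(z)$ and in $(b,B)\in X_{\mathbf{r}}(\mathbf{l})$. The smallness condition \eqref{eq:delta0} is precisely what is needed to keep trajectories in $B_{\C^n}(0,\delta)\times B_{\Sigma_k^c}(0,\delta)$ for $|t|<2$, yielding \eqref{eq:appr33}. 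The symbol estimates \eqref{eq:appr4} then follow by the Gronwall argument of Lemma~\ref{lem:flow1}, once one notes that $(W_\chi)_j\in\mathcal R^{1,\mathbf{l}}_{\mathbf{r},\infty}$ and $(W_\chi)_\sigma\in\mathbf S^{1,\mathbf{l}}_{\mathbf{r},\infty}$ as functions of $(w,\sigma)$.

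\smallskip

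\noindent\textbf{Claim (2).} Each monomial $\textbf Z^{\mathbf m} b_{\mathbf m}$ and $\bar z_j \textbf Z^{\mathbf m}\langle B_{j\mathbf m},\eta\rangle + \mathrm{c.c.}$ entering $\chi$ has total gauge weight zero, so $\chi(e^{\im\vartheta}z,e^{\im\vartheta}\eta)=\chi(z,\eta)$. Combined with the $U(1)$-invariance of $\Omega_0$ (Lemma~\ref{lem:gau2}), this makes $X_\chi$ and thus $W_\chi$ equivariant under the phase action. Since moreover $\varrho(e^{\im\vartheta}z)=\varrho(z)$, the frozen-parameter field $W_\chi(\cdot,\cdot,\varrho(z))$ is still equivariant, and \eqref{eq:appr404} follows from uniqueness of the flow associated to \eqref{eq:appr2}.

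\smallskip

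\noindent\textbf{Claim (3).} Comparing \eqref{eq:approxY0}--\eqref{eq:aproxY} with \eqref{eq:appr1} (using that $z_k\partial_{\bar j}\bar{\textbf Z}^{\mathbf m}=\partial_{\bar j}(z_k\bar{\textbf Z}^{\mathbf m})$ since $z_k$ is holomorphic) gives the identities $(X_\chi)_j(z,\eta)=(W_\chi)_j(z,\eta,\varrho(z))+(\tilde Y_\chi)_j(z,\eta)$ and $(X_\chi)_\eta(z,\eta)=(W_\chi)_\eta(z,\eta,\varrho(z))$. Writing $\Delta z:=z^t-w^t$, $\Delta\eta:=\eta^t-\sigma^t$, subtraction produces
\begin{align*}
\tfrac{d}{dt}\Delta z_j&=(\tilde Y_\chi)_j(z^t,\eta^t)+\bigl[(W_\chi)_j(z^t,\eta^t,\varrho(z^t))-(W_\chi)_j(w^t,\sigma^t,\varrho(z))\bigr],\\
\tfrac{d}{dt}\Delta\eta &= (W_\chi)_\eta(z^t,\eta^t,\varrho(z^t))-(W_\chi)_\eta(w^t,\sigma^t,\varrho(z)),
\end{align*}
with zero initial data. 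I split each bracket as $[W_\chi(z^t,\eta^t,\varrho(z^t))-W_\chi(z^t,\eta^t,\varrho(z))]+[W_\chi(z^t,\eta^t,\varrho(z))-W_\chi(w^t,\sigma^t,\varrho(z))]$. Using $\varrho(z^t)-\varrho(z)=2\Re(\bar z(z^t-z))+O(|z^t-z|^2)$, the bound $z^t-z=\mathcal R^{1,\mathbf l}_{\mathbf r,\infty}$ from Lemma~\ref{lem:birkflow0}(1) applied to $X_\chi$, and smoothness of $b_{\mathbf m}$, $B_{j\mathbf m}$ in $\varrho$, the first difference lies in $\mathcal R^{1,\mathbf l+1}_{\mathbf r,\infty}$ (resp.\ $\mathbf S^{1,\mathbf l+1}_{\mathbf r,\infty}$); the extra power of $|z|$ comes from the $\bar z$ in $\varrho=|z|^2$. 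The second difference is Lipschitz and contributes $O(|\Delta z|+\|\Delta\eta\|_{\Sigma_{-\mathbf r}})$ times an $\mathcal R^{0,\mathbf l}$-factor. Since $(\tilde Y_\chi)_j\in\mathcal R^{1,\mathbf l+1}_{\mathbf r,\infty}$ by the remark following \eqref{eq:aproxY}, a Gronwall argument with vanishing initial data delivers \eqref{eq:appr6}.

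\smallskip

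\noindent\textbf{Main obstacle.} The delicate point is claim~(3): I must track how the symbol orders $(i,j)$ in $\mathcal R^{i,j}$, $\mathbf S^{i,j}$ behave under substitution of $z^t$ for $z$, freezing of $\varrho$, and Lipschitz differencing. The key mechanism is that the $\bar z$ sitting inside $\varrho(z)=(|z_k|^2)_k$ buys one extra power in the first index of the symbol class, exactly compensating for the $\varrho$-derivative of $W_\chi$ being of the same order $\mathbf l$ as $W_\chi$ itself, so the combined contribution from freezing sits in $\mathcal R^{1,\mathbf l+1}$ rather than merely $\mathcal R^{1,\mathbf l}$.
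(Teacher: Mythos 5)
Your argument is correct and follows essentially the same route as the paper: claim (1) via the integral representation and the Gronwall scheme of Lemma \ref{lem:flow1}, claim (2) via equivariance of $W_\chi$ together with uniqueness of the flow, and claim (3) by writing the ODE for $z^t-w^t$, $\eta^t-\sigma^t$, splitting off the $\varrho$-freezing error and $(\tilde Y_\chi)_j$ as an $\mathcal{R}^{1,\mathbf{l}+1}_{\mathbf{r},\infty}$ source, and closing with Gronwall. The only small imprecision is in your closing remark: the gain needed is in the \emph{second} symbol index ($\mathbf{l}\to\mathbf{l}+1$), supplied because $\varrho(z^t)-\varrho(z)$ already carries a factor $(\|\eta\|_{\Sigma_{-\mathbf r}}+|\mathbf{Z}|)^{\mathbf l}$ with $\mathbf{l}\ge 1$, not in the first index, though the estimate you actually compute is the right one.
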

\proof    We have \eqref{eq:appr3}--\eqref{eq:appr33} by standard   ODE theory.
For $\mathbf{W} =(w _i \overline{w}_j  )_{i\neq j}$  like the $\mathbf{Z}$ in \eqref{def:comb0}
\begin{equation} \label{eq:appr51}
\begin{aligned}
  &  w^t_j=  z_j -\im   (1 +   \varpi  _j(\varrho _j (z)))  \big [ \sum _{    |\textbf{m}  |   = \mathbf{l}+1}  b_{  \textbf{m} }( \varrho   (z))
  \int _0^t (\partial _{\overline{j}} \textbf{W}^{ \textbf{m}})^s
   ds +\\& \sum _{k=1} ^{n} \sum _{     |\textbf{m}  |   = \mathbf{l}  }   (
\langle
  B_{k \textbf{m}}(\varrho _k (z)  )   ,  \int _0^t \sigma  ^s  (\partial _{\overline{j}}( \overline{w}_k   \textbf{W} ^{ \textbf{m}} ))^s ds\rangle       + \langle
  \overline{B}_{k \textbf{m}}(\varrho _k (z)  )   ,   \int _0^t \overline{\sigma} ^s {w}_k^s (\partial _{\overline{j}}   \overline{ \textbf{W} ^{ \textbf{m}} })^s ds\rangle  \big ] .
\end{aligned}
\end{equation}
where  $(\partial _{\overline{j}}   \overline{ \textbf{W} ^{ \textbf{m}} })^s = \partial _{\overline{j}}   \overline{ \textbf{W} ^{ \textbf{m}} } | _{w=w^s} $.
Similarly we have
\begin{equation} \label{eq:appr52}
\begin{aligned}
  &  \sigma ^t =  \eta  - \im   \sum _{k=1} ^{n} \sum _{     |\textbf{m}  |   = \mathbf{l } }
  \overline{B}_{k \textbf{m}}(\varrho _k (z)  )  \int _0^t  {w}_k^s(   \overline{ \textbf{W} ^{ \textbf{m}} })^s ds .
\end{aligned}
\end{equation}
 Like in Lemma \ref{lem:flow1},  we have  also $  {\mathbf{W}} ^t =\mathbf{Z} +\int _0^t  \resto ^{1,\mathbf{l} }_{\mathbf{r},\infty}(s,b(\varrho (z)),B(\varrho (z)), z, \mathbf{Z},\eta ) ds$.
 We can apply  Gronwall inequality like in Lemma \ref{lem:flow1} on these formulas
 to obtain    \eqref{eq:appr4}.
    This yields claim (1).

\noindent $ (W _{\chi })  _{j} (e^{\im \vartheta}w,e^{\im \vartheta}\sigma  ,\varrho (z)  )= e^{\im \vartheta}  (W _{\chi })  _{j} (w,\sigma  ,\varrho (z)  ) $  and  $ (W _{\chi })  _{\eta} (e^{\im \vartheta}w,e^{\im \vartheta}\sigma  ,\varrho (z) ) = e^{\im \vartheta}  (W _{\chi })  _{\eta} (w,\sigma  ,\varrho (z) )$ yield  claim (2).

\noindent  Consider claim (3). Observe that  \eqref{eq:appr6}  holds
replacing   $\mathbf{l}+1$  by $\mathbf{l}$.
By \eqref{eq:approxY0},
we have  for a fixed $C$
\begin{equation*}  \begin{aligned}    |\dot z -\dot w |  &\le   | (W _{\chi })  _{j} (z, \eta   )-(W _{\chi })  _{j} (w,\sigma )| +|\mathcal{R}^{1, \mathbf{l} +1}_{\mathbf{r}, \infty}(t,z,\mathbf{Z},  \eta )| \\& \le C |  z -  w |  +  C \| \eta -\sigma \|  _{\Sigma _{-r }}
	+|\mathcal{R}^{1, \mathbf{l} +1}_{\mathbf{r}, \infty}(t, z,\mathbf{Z},  \eta )| . \end{aligned}   \end{equation*}
Similarly we have
\begin{equation*}  \begin{aligned}  &   \| \dot  \eta -\dot  \sigma \|  _{\Sigma _{ \mathbf{r} }}
\le  \|  (W _{\chi })  _{\eta } (z, \eta ,\varrho (z)  )-(W _{\chi })  _{\eta} (w,\sigma ,\varrho (z)  ) \|  _{\Sigma _{ r }}
\le C |  z -  w |  +  C \| \eta -\sigma \|  _{\Sigma _{-\mathbf{r} }}
	 . \end{aligned}   \end{equation*}
We then conclude  by Gronwall's inequality
\begin{equation*}  \begin{aligned}  &  |  z ^t -  w ^t | + \|   \eta ^t -   \sigma  ^t\|  _{\Sigma _{ \mathbf{r} }}  \le   |\mathcal{R}^{1, \mathbf{l} +1}_{\mathbf{r}, \infty}(t,z,\mathbf{Z},  \eta )|   \end{aligned}   \end{equation*}
which, along with  \eqref{eq:appr6}    with $\mathbf{l}+1$ replaced by $\mathbf{l}$, yields  \eqref{eq:appr6}
ending Lemma \ref{lem:approx}.

\qed

Using Lemma \ref{lem:approx}, we expand $\phi^1$ given in Lemma \ref{lem:birkflow0}.

\begin{lemma}\label{lem:expand}
Let $(z',\eta')=\phi^1(z,\eta)$, where $\phi^t$ is the canonical flow given in Lemma \ref{lem:birkflow0}.
We have:
\begin{itemize}
\item[(1)]  for $\mathcal T_j(b,B,z,\eta ) = \mathcal R ^{3, 2\mathbf l -1}_{\mathbf r, \infty}$, $\mathcal T_\eta (b,B,z,\eta )= \mathbf S ^{3, 2\mathbf l -1}_{\mathbf r, \infty}$ and $\mathcal T_j$, $\mathcal T_\eta$   smooth   in $(b,B,z,\eta)$,
\begin{equation}\label{eq:expand1}
\begin{aligned}
z_j'&=z_j + (Y_\chi)_j(z,\eta) + \mathcal T_j(b(\varrho (z)),B(\varrho (z)),z,\eta ) + \mathcal R^{1,\mathbf l +1}_{\mathbf r,\infty},\\
\eta'&= \eta + (X_\chi)_\eta (z,\eta) + \mathcal T_\eta (b(\varrho (z)),B(\varrho (z)),z,\eta )+  \mathcal S^{1,\mathbf l +1}_{\mathbf r,\infty};
\end{aligned}
\end{equation}

\item[(2)] for $\tilde {\mathcal T}_j (b ,B ,z,\eta )= \mathcal R ^{1, 2\mathbf l}_{\mathbf r, \infty}$    smooth in $(b,B,z,\eta)$,
\begin{equation}\label{eq:expand2}
|z_j'|^2= |z_j|^2 + \overline z _j (Y_\chi)_j(z,\eta) + z_j \overline{(Y_\chi)_{  j}(z,\eta)} + \tilde{\mathcal T}_j(b(\varrho (z)),B(\varrho (z)),z,\eta ) + \mathcal R^{1,2\mathbf l +1}_{\mathbf r, \infty}.
\end{equation}

\end{itemize}

\end{lemma}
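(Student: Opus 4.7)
The plan is to combine Lemma \ref{lem:approx} with an integral Taylor expansion of the approximating flow $\phi_0^1$. By claim (3) of Lemma \ref{lem:approx}, $z'-w^1\in\mathcal R^{1,\mathbf l+1}$ and $\eta'-\sigma^1\in\mathbf S^{1,\mathbf l+1}$, so it suffices to expand $(w^1,\sigma^1)=\phi_0^1(z,\eta)$. Integrating the ODE \eqref{eq:appr2} and applying Taylor's formula with integral remainder in the time variable $s$ gives
\begin{equation*}
w^1_j = z_j + \dot w^0_j + \int_0^1 (1-s)\,\ddot w^s_j\,ds, \qquad \sigma^1 = \eta + \dot\sigma^0 + \int_0^1 (1-s)\,\ddot\sigma^s\,ds.
\end{equation*}
By comparison of \eqref{eq:appr1} with \eqref{eq:approxY0}, $\dot w^0_j = (W_\chi)_j(z,\eta,\varrho(z)) = (Y_\chi)_j(z,\eta)$ and $\dot\sigma^0 = (W_\chi)_\sigma(z,\eta,\varrho(z)) = (X_\chi)_\eta(z,\eta)$, producing the stated leading terms in \eqref{eq:expand1}.

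To control the second-order remainders, decompose $\ddot w^s_j$ via the chain rule as a sum of products of derivatives $\partial_{w_k}(W_\chi)_j$, $\partial_{\overline w_k}(W_\chi)_j$, $\partial_\sigma(W_\chi)_j$, $\partial_{\overline\sigma}(W_\chi)_j$ with the velocities $\dot w^s_k,\dot{\overline w}^s_k,\dot\sigma^s,\dot{\overline\sigma}^s$. Each derivative of $W_\chi$ remains polynomial in $(w,\overline w)$ of reduced degree and inherits the $\mathbf W$-type structure visible in \eqref{eq:appr1}, while the velocities lie in $\mathcal R^{1,\mathbf l}$ or $\mathbf S^{1,\mathbf l}$. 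Combining with the bounds $w^s-z\in\mathcal R^{1,\mathbf l}$ and $\sigma^s-\eta\in\mathbf S^{1,\mathbf l}$ from Lemma \ref{lem:approx}(1) and the product rules \eqref{eq:algSymb}, the integrand splits into a component of class $\mathcal R^{3,2\mathbf l-1}$ (which defines $\mathcal T_j$) plus a component of higher $\mathbf Z$-degree that is absorbed into the residual $\mathcal R^{1,\mathbf l+1}$ (using $\mathcal R^{1,2\mathbf l}\subset\mathcal R^{1,\mathbf l+1}$ in the bounded working region, valid since $2\mathbf l\ge\mathbf l+1$). Smoothness of $\mathcal T_j$ in $(b,B,z,\eta)$ is inherited from that of $\phi_0^s$. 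The analogous argument for $\sigma^1$ yields the second equation of \eqref{eq:expand1}.

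For part (2), set $A_j = z'_j-z_j = (Y_\chi)_j+\mathcal T_j+\mathcal R^{1,\mathbf l+1}$ and expand
\begin{equation*}
|z'_j|^2 = |z_j|^2 + \overline z_j A_j + z_j\overline{A_j} + |A_j|^2.
\end{equation*}
The linear-in-$A_j$ part contributes the required $\overline z_j(Y_\chi)_j+z_j\overline{(Y_\chi)_j}$ plus cross terms $\overline z_j\mathcal T_j$, $z_j\overline{\mathcal T_j}$, $\overline z_j\cdot\mathcal R^{1,\mathbf l+1}$ and their conjugates; the quadratic $|A_j|^2$ adds $|(Y_\chi)_j|^2$ and further mixed pieces. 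The smooth-in-$(b,B,z,\eta)$ portions compatible with $\mathcal R^{1,2\mathbf l}$ are collected into $\tilde{\mathcal T}_j$, while the rest is put in $\mathcal R^{1,2\mathbf l+1}$.

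The main obstacle is the symbol-class bookkeeping in part (2): multiplication by $\overline z_j$ shifts $\mathcal R^{i,j}\to\mathcal R^{i+1,j}$, which does not automatically fit either $\mathcal R^{1,2\mathbf l}$ or $\mathcal R^{1,2\mathbf l+1}$. To obtain the sharp classes one must exploit the explicit polynomial structure of $(Y_\chi)_j$ from \eqref{eq:aproxY}: the key algebraic identity $\overline z_j\partial_{\overline z_j}\mathbf Z^{\mathbf m}=\bigl(\textstyle\sum_i m_{ij}\bigr)\mathbf Z^{\mathbf m}$ collapses products with $\overline z_j$ back into pure $\mathbf Z$-monomials of degree $\mathbf l+1$, raising the $\mathbf Z$-exponent rather than producing extra $|z|$-factors; analogous identities handle the $\eta$-linear terms of types (ii)--(iii) in \eqref{eq:aproxY}. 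A careful case-by-case verification along these lines, combined with the convention $\mathbf Z=(z_i\overline z_k)_{i\ne k}$ from Remark \ref{rem:sym} and the bounded-region assumption, completes the proof.
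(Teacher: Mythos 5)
Part (1) of your proposal is essentially the paper's argument in different packaging: both reduce to the frozen-coefficient flow $(w^s,\sigma^s)$ through Lemma \ref{lem:approx}(3), identify the leading term via $W_\chi(z,\eta,\varrho(z))=Y_\chi(z,\eta)$, and define $\mathcal T_j,\mathcal T_\eta$ as the deviation of the approximating flow from its time-zero velocity (your Taylor-in-time remainder equals the paper's $\int_0^1 (W_\chi)_j(w^s,\sigma^s,\varrho(z))\,ds-(W_\chi)_j(z,\eta,\varrho(z))$ up to the same bookkeeping); smoothness in $(b,B,z,\eta)$ comes from Lemma \ref{lem:approx}(1) in both cases. (Incidentally, the paper's own proof only certifies $\mathcal T_j=\mathcal R^{1,2\mathbf l-1}_{\mathbf r,\infty}$, and only the smoothness in $(b,B)$ and the second index matter later, so your sharper first index is not where the difficulty lies.)

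Part (2), however, has a genuine gap: you derive \eqref{eq:expand2} by squaring \eqref{eq:expand1}, and the cross terms with the anonymous remainders cannot be placed in the stated classes. Indeed $\overline z_j\cdot\mathcal R^{1,\mathbf l+1}_{\mathbf r,\infty}=\mathcal R^{2,\mathbf l+1}_{\mathbf r,\infty}$ and $\overline z_j\,\mathcal T_j=\mathcal R^{4,2\mathbf l-1}_{\mathbf r,\infty}$, and for $\mathbf l\ge 2$ neither is contained in $\mathcal R^{1,2\mathbf l}_{\mathbf r,\infty}$ nor in $\mathcal R^{1,2\mathbf l+1}_{\mathbf r,\infty}$: a containment $\mathcal R^{a,b}\subset\mathcal R^{c,d}$ on a bounded neighborhood requires $b\ge d$, while here $\mathbf l+1<2\mathbf l$ (resp.\ $2\mathbf l-1<2\mathbf l$). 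Your collapsing identity $\overline z_j\partial_{\overline j}\mathbf Z^{\mathbf m}=(\sum_i m_{ij})\mathbf Z^{\mathbf m}$ does exactly the right thing for the explicit term $\overline z_j(Y_\chi)_j$ (this is the paper's observation that $\mathcal Y_j\in\mathcal R^{0,\mathbf l+1}$, see \eqref{eq:squareapprox-1}), but it cannot be applied to remainders known only through the bound of Definition \ref{def:scalSymb}, which carry no monomial structure. Worse, the key cancellation is invisible along your route: the contribution of $(\tilde Y_\chi)_j$, which you pushed into the $\mathcal R^{1,\mathbf l+1}$ remainder of (1) via Lemma \ref{lem:approx}(3), satisfies $\overline z_j(\tilde Y_\chi)_j+z_j\overline{(\tilde Y_\chi)_j}=2\Re\big(\overline z_j(\tilde Y_\chi)_j\big)=0$ identically because $\chi$ is real valued (see \eqref{eq:aproxY}); once it is absorbed into an unlabelled symbol this exact cancellation is lost and cannot be recovered by class bookkeeping. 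The correct repair is the paper's: do not square (1), but compute $\frac{d}{dt}|z_j|^2=\overline z_j(X_\chi)_j+\mathrm{c.c.}=\overline z_j(Y_\chi)_j+\mathrm{c.c.}=\mathcal Y_j(z,\eta)$ exactly along the flow, compare $\mathcal Y_j(z^s,\eta^s)$ with the frozen-coefficient $\tilde{\mathcal Y}_j(w^s,\sigma^s,\varrho(z))$ using \eqref{eq:squareapprox2} and \eqref{eq:appr6}, and set $\tilde{\mathcal T}_j=\int_0^1\tilde{\mathcal Y}_j(w^s,\sigma^s,\varrho(z))\,ds-\tilde{\mathcal Y}_j(z,\eta,\varrho(z))$; then the multiplication by $\overline z_j$ happens only on the explicit vector field, where the gain in $\mathbf Z$-degree is manifest, and never on an already-integrated remainder.
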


\begin{remark}
For $\mathbf l \geq 2$,   $\mathcal T_j$ and $\mathcal T_\eta$ are absorbed in  $\mathcal R^{1,\mathbf l +1}_{\mathbf r,\infty}$ and $\mathcal S^{1,\mathbf l +1}_{\mathbf r,\infty}$
and do not appear in the  \textit{homological equations} in     Theorem \ref{th:main}.  But if   $\mathbf l =1$ they do, although as   small perturbations.
\end{remark}

\proof
First of all  by \eqref{eq:aproxY} and by Definition \ref{def:xr}  we have $  \overline{z}_j (\tilde Y_\chi)_{ j} + z_j\overline{ (\tilde Y_\chi)_{ j}}=  2\Re \left ( \overline{z}_j (\tilde Y_\chi)_{ j} \right ) =0.$
So, using the following formula to define $\mathcal Y_j  $, we have
\begin{equation}\label{eq:squareapprox-1}
\frac{d}{dt} |z_j|^2 = \overline z_j (X_\chi)_{ j} + z_j \overline{( X_\chi)_{  j}}=\overline z_j (Y_\chi)_{ j} + z_j \overline{(Y_\chi)_{  j}}=:\mathcal Y_j (z,\eta).
\end{equation}
Notice that $\mathcal Y_j $ is $\resto^{0,\mathbf l +1}_{\mathbf r, \infty}$.
Therefore, we have
\begin{equation}\label{eq:squareapprox1}
|z_j^s|^2-|z_j|^2=\resto^{0,\mathbf l +1}_{\mathbf r, \infty}.
\end{equation}
This implies
\begin{equation}\label{eq:squareapprox2}
b(\varrho (z^s))-b(\varrho  (z))=\resto^{0,\mathbf l +1}_{\mathbf r, \infty}\ \mathrm{and}\ B(\varrho (z^s))-B(\varrho (z))=\mathcal S^{0,\mathbf l +1}_{\mathbf r, \infty}.
\end{equation}
Similarly, see right before \eqref{eq:hamf1}, we have
\begin{equation}\label{eq:squareapprox3}
\varpi_j(|z_j^s|^2)-\varpi_j(|z_j|^2)=\resto^{2,\mathbf l +1}_{\mathbf r, \infty}
\end{equation}
Now  we show (1).
By \eqref{eq:approxY0} and \eqref{eq:appr1}, using \eqref{eq:squareapprox2} and \eqref{eq:squareapprox3}, we have
\begin{equation}\label{eq:squareapprox4}
 (Y_\chi)_j (z^s,\eta^s)- (W_\chi)_j(z^s,\eta^s,\varrho (z))=\resto^{1,2\mathbf l +1}_{\mathbf r, \infty}
\end{equation}
By \eqref{eq:approxY0}, \eqref{eq:bsyst10}, \eqref{eq:appr6} and \eqref{eq:squareapprox4}, we have
\begin{equation*}
\begin{aligned}
z_j'=&z_j + \int_0^1 (W_\chi)_j(z^s,\eta^s,\varrho  (z)) ds+\int_0^1\( (Y_\chi)_j (z^s,\eta^s)- (W_\chi)_j(z^s,\eta^s,\varrho (z))\) ds + \int_0^1 (\tilde Y _\chi)_j (z^s,\eta^s) ds\\
 = & z_j + \int_0^1 (W_\chi)_j (w^s+ \resto^{1,\mathbf l + 1}_{\mathbf r , \infty},\sigma^s+ S^{1,\mathbf l + 1}_{\mathbf r , \infty},\varrho (z))\,ds + \resto^{1,\mathbf l +1}_{\mathbf r, \infty}\\
= &  z_j + \int_0^1 (W_\chi)_j (w^s,\sigma^s,\varrho (z))\,ds + \resto^{1,\mathbf l +1}_{\mathbf r, \infty}= z_j  + (W_\chi)_j (z,\eta,\varrho (z)) + \mathcal T_j +\resto^{1,\mathbf l +1}_{\mathbf r, \infty},
\end{aligned}
\end{equation*}
where $\mathcal T_j = \int_0^1 (W_\chi)_j (w^s,\sigma^s,\varrho (z))\,ds - (W_\chi)_j (z,\eta,\varrho (z))$ and  the last     $\resto^{1,\mathbf l +1}_{\mathbf r, \infty}$ in the 2nd line   is different from the   $\resto^{1,\mathbf l +1}_{\mathbf r, \infty}$ in the 3rd line.
Finally, by (1) of Lemma \ref{lem:approx} and the fact $(W_\chi)_j=\resto^{1,\mathbf l}_{\mathbf r, \infty}$, we have $\mathcal T_j =\resto^{1,2\mathbf l-1}_{\mathbf r, \infty}$ with $\mathcal T_j$    smooth  in $(t,b, B, z,\eta)$.
The argument for  $\eta'$ is similar.

\noindent We next show (2).
Set $\tilde{\mathcal Y}_j(z,\eta,\varrho ):=\overline z_j (W_\chi)_{ j}(z,\eta,\varrho ) + z_j \overline{(W_\chi)_{  j}(z,\eta,\varrho )}$.
As in \eqref{eq:squareapprox1}--\eqref{eq:squareapprox2}  we have
\begin{equation*}
\tilde{\mathcal Y}_j(z^s,\eta^s,\varrho (z))-{\mathcal Y}_j(z^s,\eta^s)=\resto^{0,2\mathbf l + 2}_{\mathbf r,\infty}
\end{equation*}
where ${\mathcal Y}_j$ is defined in \eqref{eq:squareapprox-1}.
So  we have
\begin{equation*}
\begin{aligned}
|z_j'|^2&=|z_j|^2 + \int_0^1 \mathcal Y_j (z^s,\eta^s)\,ds
=|z_j|^2 + \int_0^1 \tilde{\mathcal Y}_j (z^s,\eta^s,\varrho (z))\,ds+\resto^{0,2\mathbf l + 2}_{\mathbf r,\infty} \\&
 =  |z_j|^2 + \int_0^1 \tilde{\mathcal Y}_j (w^s,\sigma^s,\varrho (z))\,ds + \resto^{1,2\mathbf l +1}_{\mathbf r, \infty} = |z_j|^2  + \tilde{\mathcal Y}_j (z,\eta) + \tilde {\mathcal T}_j +\resto^{1,2\mathbf l +1}_{\mathbf r, \infty},
\end{aligned}
\end{equation*}
where $ \tilde {\mathcal T}_j =\int_0^1 \tilde{\mathcal Y}_j (w^s,\sigma^s,\varrho (z))\,ds -\tilde{\mathcal Y}_j (z,\eta)$.
As in (1), we see $\tilde {\mathcal T}_j=\resto^{1,2\mathbf l}_{\mathbf r,\infty}$ and $\tilde T$ is $C^\infty$ for $(b,B,z,\eta)$.
\qed

After a coordinate change  $\phi=\phi^1$ as in Lemma \ref{lem:birkflow0}
 the Hamiltonian  expands like in  \eqref{eq:enexp10}.

\begin{lemma} [Structure Lemma]
  \label{lem:birkflow1}   Consider   a function $K$ which admits an expansion as in
  \eqref{eq:enexp10}  defined for $(z,\eta )\in B_{\C^n}(0 , \delta   ) \times ( B_{H^1}(0 , \delta   )\cap \mathcal{H}_c[0])$ for some small $\delta  >0$  and with   $r_1$ is replaced by a    $r'$.
  Suppose also that the  $l=0$  terms in the first two lines are zero.    Consider a function $\chi$  such as in
  \eqref{eq:chi1} with $1\le \mathbf{l} \le 2N+4$
  with $ \| (b ,B )\| _{W ^{1,\infty} ( B _{\C^n }   (0, \delta  ),  X_r(\mathbf{l}))}\le \underline{C}$  and with $\underline{C}$ a preassigned number.
  Suppose also that $2c_2 (2N+4)\delta   \underline{C}<1$ with $c_2$
  the constant of Lemma \ref{lem:birkflow0}.
  Denote by   $\phi =\phi ^1$
  the corresponding flow
      Then   claims (1)--(5) of  Lemma \ref{lem:birkflow0}
  hold and   for $(z,\eta )\in  B_{\C^n}(0 , \delta  /2) \times ( B_{H^1}(0 , \delta   /2)\cap \mathcal{H}_c[0])$ and for  $r= r'-2   $   for $\mathbf{Z} =(z_i \overline{z}_j) _{i,j =1,...,n} $
  where $i\neq j$ we have
  an expansion  \begin{equation}  \label{eq:enexp30}    \begin{aligned} &
    K\circ  \phi  (z,\eta )=H_2(z,\eta )  + \sum _{j=1}^{n}
    \lambda _j(|z_j| ^{2}  )    \\& +
	 	 \sum _{l=1}^{2N+3 } \sum _{
		  |\textbf{m}|=   l+1 }  \textbf{Z}^{\textbf{m}}   a_{  \textbf{m} }( |z_1 |^2,..., |z_n |^2 )  + \sum _{j =1}^n \sum _{l=1}^{2N+3 }	\sum _{   |\textbf{m}  |  =l} ( \overline{z}_j \textbf{Z}^{\textbf{m}}  \langle
  G_{j \textbf{m}}(|z_j|^2  ), \eta \rangle  +   c.c. )
		\\&          +
	 	  \resto ^{1,  2  }_{r, \infty } (z, \eta )+
	 	  \resto ^{0,  2N+5 }_{r, \infty} (z,\textbf{Z}  ,\eta )+   \Re \langle
  \textbf{S} ^{0,   2N+4 }_{r, \infty} (z,\textbf{Z},\eta ) , \overline{\eta} \rangle
		\\& +  \sum _{ i+j  = 2}   \sum _{
		  |\textbf{m}|\le 1 }  \textbf{Z}^{\textbf{m}}    \langle G_{2\textbf{m} ij } ( z,\eta  ),   \eta ^{   i} \overline{\eta} ^j\rangle
		+     \sum _{ d+c  = 3}   \sum _{ i+j  = d}   \langle G_{d ij } ( z,\eta ),   \eta ^{   i} \overline{\eta} ^j\rangle  \resto ^{0,  c }_{r,\infty } (z, \eta )     + E _P( \eta) \  , \end{aligned}
\end{equation}
where  $G_{j \textbf{m}}$,     $G_{2\textbf{m} ij }    $   and    $G_{d ij }  $ are   $\textbf{S} ^{0,  0} _{r,\infty }  $ and  the $a_{  \textbf{m}}$ are  $\resto ^{0,  0} _{\infty,\infty  }  $.  Furthermore,  for $|\textbf{m}|=0$ we have $G_{2\textbf{m} ij } ( z,\eta  ) = G_{2\textbf{m} ij } ( z   )$ are the functions in   \eqref{eq:b02}    and the  $ \lambda _j(|z_j| ^{2}  )$   are  the same of \eqref{eq:enexp10}. Furthermore the 1st function  in the 3rd line of \eqref{eq:enexp30}   satisfies
 $\resto ^{1,  2 }_{ r, \infty } (e^{\im \vartheta} z , e^{\im \vartheta} \eta )\equiv \resto ^{1,  2 }_{ r, \infty } (  z ,   \eta )  $.

\end{lemma}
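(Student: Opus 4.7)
The plan is to substitute the flow expansion from Lemma \ref{lem:expand} into \eqref{eq:enexp10} term by term and verify that each resulting contribution fits into one of the structural slots of \eqref{eq:enexp30}. Writing $(z',\eta')=\phi(z,\eta)$, we have
\[
z'_j=z_j+(Y_\chi)_j+\mathcal T_j+\mathcal R^{1,\mathbf l+1}_{r,\infty},\quad |z'_j|^2=|z_j|^2+\tilde{\mathcal Y}_j+\mathcal R^{1,2\mathbf l+1}_{r,\infty},\quad \eta'=\eta+(X_\chi)_\eta+\mathcal T_\eta+\mathbf S^{1,\mathbf l+1}_{r,\infty},
\]
where $(Y_\chi)_j=\mathcal R^{1,\mathbf l}_{r,\infty}$, $\tilde{\mathcal Y}_j=\mathcal R^{0,\mathbf l+1}_{r,\infty}$ and $(X_\chi)_\eta=\mathbf S^{1,\mathbf l}_{r,\infty}$, with $\mathbf l\geq 1$. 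Thus every perturbation produced by the flow carries at least one extra factor of $|\mathbf Z|$ or $\|\eta\|_{\Sigma_{-r}}$ beyond the identity; the two-derivative loss $r=r'-2$ is accounted for by the pairing of $d\eta$ against test kernels inside the $\mathbf S^{0,0}$-type symbols when one differentiates composite expressions of the flow.

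Proceeding termwise, $H_2(z',\eta')$ yields $H_2(z,\eta)+\sum_j e_j\tilde{\mathcal Y}_j+2\Re\langle H\eta,\overline{(X_\chi)_\eta}\rangle$ plus higher-order remainders, and by the explicit formulas \eqref{eq:aproxY}--\eqref{eq:approxY0} these corrections are linear combinations of $\mathbf Z^{\mathbf m}$ with $|\mathbf m|=\mathbf l+1$ and of $\bar z_k\mathbf Z^{\mathbf m}\langle G,\eta\rangle+c.c.$ with $|\mathbf m|=\mathbf l$, belonging to the $l\geq 1$ slots of \eqref{eq:enexp30}. The $\lambda_j(|z'_j|^2)$ terms Taylor-expand, contributing the unchanged leading $\lambda_j(|z_j|^2)$ plus higher-order pieces absorbed in the same $l\geq 1$ slots. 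The existing $l\geq 1$ monomial and $\eta$-linear terms reproduce themselves at leading order (with coefficient shifts of the allowed form, invoking Lemmas \ref{lem:expandzzj} and \ref{lem:expandzzS}) modulo contributions that go into $\resto^{0,2N+5}_{r,\infty}$, $\mathbf S^{0,2N+4}_{r,\infty}$ and $\resto^{1,2}_{r,\infty}$; the quadratic and higher-order $\eta$ terms and $E_P(\eta')$ transform by composition and stay within their own structural classes, using the algebra \eqref{eq:algSymb} combined with the regularity of $\phi$.

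The main structural point is that no $l=0$ term is generated. The $l=0$ templates are the bilinear monomials $z_i\bar z_j$ with $i\neq j$ and the $\bar z_j\langle G(|z_j|^2),\eta\rangle+c.c.$ terms without any extra $\mathbf Z$ factor; but every correction produced by the substitution carries a factor from $\mathcal R^{1,\mathbf l}$ or $\mathbf S^{1,\mathbf l}$ with $\mathbf l\geq 1$, giving an additional factor of $\mathbf Z$ or $\eta$, and so has $l\geq 1$. Since by hypothesis $K$ had no $l=0$ contribution, the property persists. Gauge invariance of the new remainders follows from the gauge covariance of $\phi$ in claim (2) of Lemma \ref{lem:birkflow0} combined with the gauge invariance of $K$ from Lemma \ref{lem:KExp1}, so in particular the residual $\resto^{1,2}_{r,\infty}$ piece obeys $\resto^{1,2}_{r,\infty}(e^{\im\vartheta}z,e^{\im\vartheta}\eta)=\resto^{1,2}_{r,\infty}(z,\eta)$. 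The main obstacle is the bookkeeping for $\mathbf l=1$, where the corrections $\mathcal T_j\in\mathcal R^{3,1}_{r,\infty}$, $\mathcal T_\eta\in\mathbf S^{3,1}_{r,\infty}$ from Lemma \ref{lem:expand} are of the same order as $l=1$ terms with $|\mathbf m|=1$: they must be absorbed into the $l=1$ slots in the precise gauge-invariant functional form prescribed by \eqref{eq:enexp30}, rather than simply relegated to the higher-order remainders.
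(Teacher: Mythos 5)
Your proposal follows the paper's own route: substitute the flow expansion of Lemmas \ref{lem:birkflow0} and \ref{lem:expand} into \eqref{eq:enexp10}, Taylor--expand term by term exactly as in Lemma \ref{lem:KExp1}, use the gauge covariance of $\phi$ together with Lemmas \ref{lem:expandzzj} and \ref{lem:expandzzS} to put the new coefficients in the prescribed form, and check that every correction falls into an $l\ge 1$ slot or one of the remainder slots; this is precisely what the paper does around \eqref{eq:str1}--\eqref{eq:str3}, so in substance the argument is correct and the same.

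One caution about your summarizing ``no $l=0$ term is generated'' paragraph: the bare count ``every correction carries one extra factor of $\mathbf Z$ or $\eta$, hence $l\ge 1$'' does not by itself exclude $l=0$ terms. For the pieces of $K$ of $(\mathbf Z,\eta)$--degree zero, namely $\sum_j e_j|z_j|^2$ and $\sum_j\lambda_j(|z_j|^2)$, a single extra factor of $\mathbf Z$ (resp.\ of $\eta$) is exactly an $l=0$ template $\mathbf Z^{\mathbf m}a(z)$ with $|\mathbf m|=1$ (resp.\ $\overline z_j\langle G,\eta\rangle$ with no $\mathbf Z$). What rules these out is the specific structure of the Hamiltonian field: the symmetric combination $\overline z_j(Y_\chi)_j+\mathrm{c.c.}$ regroups into the very monomials of $\chi$, so the corrections appear as $\mathbf Z^{\mathbf m}$ with $|\mathbf m|=\mathbf l+1\ge 2$ and as $\overline z_j\mathbf Z^{\mathbf m}\langle\cdot,\eta\rangle$ with $|\mathbf m|=\mathbf l\ge 1$ (this is \eqref{eq:expand2}, \eqref{eq:squareapprox-1} and the computation \eqref{eq:h2expan1}). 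Your second paragraph does invoke exactly these formulas for $H_2$ and $\lambda_j$, so the proof is complete, but the blanket degree argument should be stated as applying only to the terms whose base degree in $(\mathbf Z,\eta)$ is already $\ge 1$. Two minor points: the loss $r=r'-2$ comes from applying $H$ to the $\Sigma_{r'}$--valued correction $S_\eta$ in the cross terms of $\langle H\eta',\overline\eta'\rangle$ (this is where the paper's \eqref{eq:str2} drops the index), not from the mechanism you describe; and for this lemma the $\mathbf l=1$ corrections $\mathcal T_j,\mathcal T_\eta$ require only the degree count plus the gauge--invariant Taylor machinery to be absorbed into \eqref{eq:enexp30} --- their explicit functional form becomes relevant only later, in the homological equations of Theorem \ref{th:main}.
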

\proof Like in Lemma \ref{lem:KExp1}  we consider   the expansion  \eqref{eq:enexp10}  for   $K( z',\eta ' )$, and substitute  the  formulas   $  z_{j }  '= z_{j }+ S _{j }  ( z, \eta ) $   and  $   \eta  '= \eta+ S _{\eta}  ( z, \eta )$.
Proceeding like in   Lemma \ref{lem:KExp1}      we have
\begin{equation} \label{eq:str1}   \begin{aligned} &  \resto ^{1,  2 }_{r', \infty} (z' ,\eta ' ) = \resto ^{1,  2 }_{r'  , \infty } (z  ,\eta   ) +   \resto ^{1,  2N+5 }_{ r', \infty} (z,\textbf{Z}  ,\eta   ) +
  \Re \langle \mathbf{S} ^{1,  2N+4  }_{ r', \infty} (z,\textbf{Z}  ,\eta   ),  \overline{\eta}   \rangle \\& +   \text{ terms like in the 2nd line of \eqref{eq:enexp30}}    ,
\end{aligned}
\end{equation}

 Similarly we have
    \begin{align} &
\langle   H   \eta',  \overline{ \eta}' \rangle = \langle   H   \eta ,  \overline{ \eta}  \rangle +\resto ^{1,  \mathbf{l}+1 }_{ r' -2  , \infty} (z,\textbf{Z} ,\eta  ) =    \langle   H   \eta ,  \overline{ \eta}  \rangle + \resto ^{1,  \mathbf{l}+1 }_{ r' -2  , \infty} (z, \eta  ) +\resto ^{1,  \mathbf{l}+1 }_{ r' -2  , \infty} (z,\textbf{Z}    ) \nonumber \\&+ \Re \langle \mathbf{S} ^{1,  \mathbf{l}  }_{ r' -2, \infty} (z,\textbf{Z}  ,\eta   ),  \overline{\eta}   \rangle    =  \langle   H   \eta ,  \overline{ \eta}  \rangle + \resto ^{1,  \mathbf{l}+1 }_{ r' -2  , \infty} (z, \eta  ) + \resto ^{1,  2N+5 }_{ r'-2, \infty} (z,\textbf{Z}  ,\eta   ) +
  \Re \langle \mathbf{S} ^{1,  2N+4  }_{ r'-2, \infty} (z,\textbf{Z}  ,\eta   ),  \overline{\eta}   \rangle \nonumber \\& +   \text{ terms like in the 2nd line of \eqref{eq:enexp30}} \label{eq:str2}
\end{align}
Consider  an   $ \lambda _j(|z_j| ^{2}  )$ in \eqref{eq:enexp10}. Then by \eqref{eq:expand2} we have
\begin{equation} \label{eq:str3}  \begin{aligned} &
\lambda (|z_j'| ^{2}  ) = \lambda \left (|z_j|^2 + \mathcal{R}^{0, \mathbf{ {l}}+1  }_{r, \infty }( z, \mathbf{{Z}},  \eta )  \right ) = \mu  (|z_j| ^{2}  ) +\mathcal{R}^{1,  \mathbf{{l}}+1  }_{r, \infty }( z,\mathbf{Z},  \eta ).
\end{aligned}
\end{equation}
The latter admits an expansion like    in  and below formula \eqref{eq:tay1}.

The term    $  \resto ^{1,  2 }_{ r, \infty } (  z ,   \eta )  $  in  the 3rd line of \eqref{eq:enexp30}    is either the first in the r.h.s in  \eqref{eq:str1} for $l>1$ in Lemma \ref{lem:darflow0}, or the sum of the latter with the $\resto ^{1,  l+1 }_{ r' -2  , \infty} (z, \eta  )$ originating from
\eqref{eq:str2}--\eqref{eq:str3} for $l=1$ in Lemma \ref{lem:darflow0}.  In either case  it satisfies
 $\resto ^{1,  2 }_{ r, \infty } (e^{\im \vartheta} z , e^{\im \vartheta} \eta )\equiv \resto ^{1,  2 }_{ r, \infty } (  z ,   \eta )  $.
Other  terms in \eqref{eq:enexp10}  computed at  $(z', \eta ')$  and by similar
elementary  expansions  are similarly absorbed in  \eqref{eq:enexp30}.
\qed

All of the above lemmas are preparatory for the following result, which will give us
an effective Hamiltonian by picking $\iota  =  2 {N}+4$.

\begin{theorem}[Birkhoff normal form]
\label{th:main} For any $\iota \in \N \cap [2,  2 {N}+4]$
there are a $\delta _\iota >0$,   a  polynomial
$\chi _{\iota}$  as in \eqref{eq:chi1}    with   $\mathbf{l}=\iota$,  $\mathbf{d}=\delta _\iota $ and   $\mathbf{r}= {r_{\iota}} =r_0- 2(\iota +1)$ s.t.
for all $k\in \Z\cap [-r(\iota),r(\iota)]$ we have for each $\chi _{\iota}$
a flow (for $\delta _1>0$ the constant in Lemma \ref{lem:KExp1})
 \begin{align}
  &   \phi ^t_{\iota} \in C ^{\infty} ((-2, 2)\times B_{\C ^n} (0,  \delta _\iota  )
    \times   B_{\Sigma _{ k}^c} (0,\delta _\iota  ) ,B_{\C ^n} (0,\delta _{\iota -1 } )
    \times   B_{\Sigma _{ k}^c} (0,\delta _{\iota -1 }    )) \text{ and} \label{eq:bsyst122}\\&   \phi ^t_{\iota} \in C ^{\infty} ((-2, 2)\times B_{\C ^n} (0,\delta _\iota  )
    \times   B_{H^1 \cap \mathcal{H}_{c}[0]} (0,\delta _\iota  ) ,B_{\C ^n} (0,\delta _{\iota -1 }   )
    \times   B_{H^1 \cap \mathcal{H}_{c}[0]} (0,\delta _{\iota -1 }   ) )   \nonumber
     \end{align}
 and  s.t.,for  $\mathfrak{F} ^{( \iota )} := \mathfrak{F}  \circ \phi _2\circ ...\circ \phi _ \iota   $,  $\mathfrak{F} $  the transformation in
Lemma  \ref{lem:darflow0} and
  $\phi _j=  \phi ^1_{\iota}   $, then
  for $(z,\eta )\in  B_{\C^n}(0 , \delta _\iota) \times ( B_{H^1}(0 ,\delta _\iota  )\cap \mathcal{H}_c[0])$  and for $\mathbf{Z} =(z_i \overline{z}_j) _{i,j =1,...,n} $,
  where $i\neq j$,  we have    \begin{equation}  \label{eq:enexp40}    \begin{aligned} &
   H^{(\iota )} (z,\eta ):= E\circ \mathfrak{F} ^{( \iota )} (z,\eta )=H_2(z,\eta )  +\sum _{j=1}^{n} \lambda _j (|z_j| ^{2}  )+ Z  ^{(\iota )} (z,\mathbf{Z},\eta ) \\& +
	 	 \sum _{l=\iota  }^{2N+3 } \sum _{
		  |\textbf{m}|=   l+1 }  \textbf{Z}^{\textbf{m}}   a_{  \textbf{m} }^{(\iota )}( |z_1 |^2,...,|z_n |^2 )     + \sum _{j =1}^n \sum _{l=\iota  }^{ 2N+3 }	 \sum _{   |\textbf{m}  |  =l} ( \overline{z}_j \textbf{Z}^{\textbf{m}}  \langle
  G_{j \textbf{m}}^{(\iota )}(|z_j|^2  ), \eta \rangle  +   c.c. )
		\\&          +
	 	  \resto ^{1,  2  }_{{r_{\iota}},\infty} (z, \eta )+
	 	  \resto ^{0,  2N+5 }_{{r_{\iota}},\infty} (z,\textbf{Z}  ,\eta )+   \Re \langle
  \textbf{S} ^{0,   2N+4 }_{{r_{\iota}},\infty} (z,\textbf{Z},\eta ) , \overline{\eta} \rangle +
		\\&    \sum _{ i+j  = 2}   \sum _{
		  |\textbf{m}|\le 1 }  \textbf{Z}^{\textbf{m}}    \langle G_{2\textbf{m} ij } ^{(\iota )}( z,\eta  ),   \eta ^{   i} \overline{\eta} ^j\rangle
		+     \sum _{ d+c  = 3}  \ \sum _{ i+j  = d}   \langle G_{d ij }^{(\iota )} ( z,\eta ),   \eta ^{   i} \overline{\eta} ^j\rangle  \resto ^{0,  c }_{{r_{\iota}},\infty} (z, \eta )     + E _P( \eta)  \end{aligned}
\end{equation}
where, for coefficients like in  Def. \ref{def:normal form}  for $(r ,m )=({r_{\iota}},\infty)$,
\begin{equation}  \label{eq:enexp41}
\begin{aligned}
&   Z ^{(\iota )} = \sum _{   \mathbf{m}  \in \mathcal{M}_{0}(\iota )  } \textbf{Z}^{\textbf{m}}   a_{  \textbf{m} }( |z _1|^2,...,|z _n|^2 )    + \sum _{j =1}^n	 ( \sum _{   \mathbf{m}  \in \mathcal{M}_{j}(\iota- 1)  } \overline{z}_j \textbf{Z}^{\textbf{m}}  \langle
  G_{j \textbf{m}}(|z_j|^2  ), \eta \rangle  +   \text{c.c.} ).
     \end{aligned}
\end{equation}
We have
$\resto ^{1,  2 }_{ r _{\iota}, \infty }  = \resto ^{1,  2 }_{ r _{2}, \infty }
 $ and
 $\resto ^{1,  2 }_{ r _{2}, \infty }
 (e^{\im \vartheta} z , e^{\im \vartheta} \eta )\equiv \resto ^{1,  2 }_{ r _{2}, \infty } (  z ,   \eta )  $.

\noindent  In particular we have for $\delta  _f:=  \delta  _{2 {N}+4} $ and for the $\delta _0$
 in Lemma \ref{lem:vectorfield0},
   \begin{equation}\label{eq:diff1}
\begin{aligned} &
  \mathfrak{F}^{(2 {N}+4)}(B_{\C^n}(0 , \delta  _f) \times ( B_{H^1}(0 , \delta  _f)\cap \mathcal{H}_c[0]) )\subset  B_{\C^n}(0 , \delta  _0) \times ( B_{H^1}(0 , \delta  _0)\cap \mathcal{H}_c[0])
\end{aligned}
\end{equation}
   with $ \mathfrak{F}|_{B_{\C^n}(0 , \delta  _f) \times ( B_{H^1}(0 , \delta  _f)\cap \mathcal{H}_c[0])} $   a diffeomorphism between its domain and   an open neighborhood of the origin in
   $\C^n \times (H^1 \cap \mathcal{H}_c[0])$.

 \noindent    Furthermore, for $r=r_0- 4N-10$
   there is a pair $\mathcal{R}^{1, 1} _{r, \infty}$ and  $\mathbf{S}^{1, 1} _{r, \infty}$ s.t.
for $(z',\eta ')=\mathfrak{F} ^{(2 {N}+4)}(z ,\eta  )$
\begin{equation}\label{eq:diff2}
\begin{aligned} &  z'=z+ \mathcal{R}^{1, 1} _{r, \infty}(z,\mathbf{Z},\eta ) \, \quad  \eta'= \eta+ \mathbf{S} ^{1, 1} _{r, \infty}(z,\mathbf{Z},\eta ) .
\end{aligned}
\end{equation}
Furthermore, by taking  all the  $\delta  _\iota >0$ sufficiently small, we can assume that all the symbols in the proof, i.e. the symbols in \eqref{eq:diff2}
and the symbols in the expansions \eqref{eq:enexp40}, satisfy the estimates
of Definitions \ref{def:scalSymb} and \ref{def:opSymb} for $|z|<\delta  _\iota $ and $\|  \eta \| _{\Sigma_{r(\iota)}}<\delta  _\iota$ for their respective
$\iota$'s.

\end{theorem}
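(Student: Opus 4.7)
The plan is to construct the Birkhoff transformations $\phi_2, \ldots, \phi_\iota$ one at a time by induction on $\iota$, at each step solving a homological equation to eliminate the leading non-resonant terms. We take as input the expansion of $K = E \circ \mathfrak{F}$ from Lemma \ref{lem:KExp1}, into which we incorporate the Cancellation Lemma \ref{lem:KExp2} that forces the $l=0$ (i.e.\ scalar $|\textbf{m}|=1$ and $\eta$-linear $|\textbf{m}|=0$) terms to vanish. This is the crucial structural input: without it, those terms would have to be eliminated by a coordinate change incompatible with the shape \eqref{eq:rel1}.

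At the inductive step, given $H^{(\iota-1)}$ in the form \eqref{eq:enexp40}, one examines the leading terms of the non-normal-form sums in the second line (the smallest $l$ permitted by that expansion). Split them into a resonant part (indexed by $\mathbf{m}\in \mathcal{M}_0$ or $\mathcal{M}_j$), which is absorbed into $Z^{(\iota)}$, and a non-resonant part, which we cancel through the Poisson bracket with $\chi_\iota$. A direct calculation using \eqref{eq:hamf1} with $H_2(z,\eta) = \sum_j e_j|z_j|^2 + \langle H\eta,\overline\eta\rangle$ gives the homological relations
\begin{equation*}
\{H_2,\,\mathbf{Z}^{\mathbf{m}}\} = -\im\Big(\sum_{i,j} m_{ij}(e_i-e_j)\Big)\mathbf{Z}^{\mathbf{m}},\quad
\{H_2,\,\overline z_j\mathbf{Z}^{\mathbf{m}}\langle B,\eta\rangle\}
= -\im\,\overline z_j\mathbf{Z}^{\mathbf{m}}\langle (H+\omega_{\mathbf{m},j})B,\eta\rangle,
\end{equation*}
where $\omega_{\mathbf{m},j}=\sum_{i,l}m_{il}(e_i-e_l)-e_j$. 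The scalar coefficient $b_{\mathbf{m}}$ is obtained by dividing by the nonzero real number $\sum_{i,j}m_{ij}(e_i-e_j)$, which is nonzero exactly because $\mathbf{m}\notin \mathcal{M}_0$ (via Lemma \ref{lem:M0}(1) and hypothesis (H3)). The operator coefficient $B_{j\mathbf{m}}$ is obtained by inverting $H+\omega_{\mathbf{m},j}$ on $\Sigma_r^c$: when $\mathbf{m}\notin\mathcal{M}_j$, Lemma \ref{lem:M0}(2) and the definition of $\mathcal{M}_j$ yield $\omega_{\mathbf{m},j}>0$, and since $H$ restricted to $\mathcal{H}_c[0]$ has spectrum $[0,\infty)$, this shift is invertible with values again in $\Sigma_r^c$. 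Gauge covariance is preserved at every step because both $H_2$ and the monomials we eliminate are gauge invariant.

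With $\chi_\iota$ so constructed, Lemma \ref{lem:birkflow0} gives a canonical time-one flow $\phi_\iota$ satisfying \eqref{eq:bsyst122}, provided $\delta_\iota$ is chosen small enough to verify \eqref{eq:delta0}; and Lemma \ref{lem:expand} expands $H^{(\iota-1)}\circ\phi_\iota$ so that the $\{H_2,\chi_\iota\}$ contribution precisely cancels the non-resonant leading terms, while the Structure Lemma \ref{lem:birkflow1} guarantees that every other term produced by the change of variables fits back into the symbol classes $\resto^{\cdot,\cdot}_{r_\iota,\infty}$ and $\textbf{S}^{\cdot,\cdot}_{r_\iota,\infty}$. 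Two Sobolev/Schwartz indices are lost at each step (one for the vector-field construction, one for the pullback), which explains the choice $r_\iota = r_0 - 2(\iota+1)$ and dictates how large $r_0$ must be chosen in Lemma \ref{lem:EnExp} so that $r_{2N+4}\ge 2$. The composition structure \eqref{eq:diff2} follows because $\mathfrak{F}$ itself satisfies a relation of type \eqref{eq:rel1} by Lemma \ref{lem:darflow0}, and each $\phi_\iota$ is a near-identity correction whose remainders $\resto^{1,\mathbf{l}}$, $\textbf{S}^{1,\mathbf{l}}$ are subsumed in $\resto^{1,1}_{r,\infty}$, $\textbf{S}^{1,1}_{r,\infty}$; the diffeomorphism property \eqref{eq:diff1} is then obtained by shrinking the $\delta_\iota$ along the chain so each stage lands in the next one's domain.

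The main obstacle is not any single analytical estimate but the combinatorial bookkeeping: one must verify that after each Birkhoff step the expansion is again of the form \eqref{eq:enexp40} with the right shifted indices, with no mixing between distinct $z_j$'s that would spoil the crucial property \eqref{eq:rel1} --- and this is exactly what the Cancellation Lemma \ref{lem:KExp2} ensures at the base case and what the symbol calculus of Definitions \ref{def:scalSymb}--\ref{def:opSymb} propagates at each inductive step.
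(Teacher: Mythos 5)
Your overall architecture coincides with the paper's: induction on $\iota$ with the Cancellation Lemma \ref{lem:KExp2} supplying the base case, the flows of Lemma \ref{lem:birkflow0} and the expansions of Lemma \ref{lem:expand} and of the Structure Lemma \ref{lem:birkflow1} controlling each pullback, non-resonance via Lemma \ref{lem:M0} together with invertibility of $H+\omega_{\mathbf m ,j}$ on the continuous subspace, and the same bookkeeping for $r_\iota$ and for \eqref{eq:diff1}--\eqref{eq:diff2}.

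The one substantive gap is in how you solve the homological equations. You assert that $b_{\mathbf m}$ is obtained by dividing by the nonzero number $\sum m_{ij}(e_i-e_j)$, that $B_{j\mathbf m}$ is obtained by inverting $H+\omega_{\mathbf m ,j}$, and that every other term produced by the change of variables ``fits back into the symbol classes,'' i.e.\ into the error. That is only the linearization at $z=0$. First, since $\Omega_0$ is not the flat form, the bracket with $H_2$ produces the $z$-dependent frequencies $\widetilde{\mathbf e}(z)$ (the factors $1+\varpi_j(|z_j|^2)$ in \eqref{eq:hamf1}), not $\mathbf e$. More importantly, the corrections generated by the flow --- the terms $\mathcal T_j,\tilde{\mathcal T}_j,\mathcal T_\eta$ of Lemma \ref{lem:expand} and the remainders $\resto^{2,\iota+1}_{\mathbf r,\infty}(b,B,z,\mathbf Z,\eta)$ appearing in \eqref{eq:backH}, \eqref{eq:lamexp}, \eqref{eq:normexp} --- contain monomials of exactly the order being normalized (for $\mathbf l=1$, $\mathcal T_j=\mathcal R^{3,2\mathbf l-1}_{\mathbf r,\infty}$ has the same $(\mathbf Z,\eta)$-degree as the terms you are eliminating), and their coefficients depend on the unknown $(b,B)$. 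Hence the coefficient equations form the coupled nonlinear system \eqref{eq:enexp56}: your division/inversion gives only its principal part, and one must Taylor-expand the $(b,B)$-dependent remainders into the coefficients $\alpha_{\mathbf m}(b,B,\varrho(z))$ and $\Gamma_{j\mathbf m}(b,B,\cdot)$ and solve by the implicit function theorem for $|z|$ small. Without this step the non-resonant coefficients after your transformation vanish only at $z=0$, not identically as functions of $\varrho(z)$, so the exact structure \eqref{eq:enexp41} on which the FGR argument of Section 6 relies is not achieved. The repair is standard and stays entirely within your framework, but it is a needed ingredient, not a formality.
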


\proof   Notice that the functional $K$  in Lemma \ref{lem:KExp1} satisfies
case $\iota =1$.
The proof will be    by induction  on $\iota$.
%We
%will prove first that if the statement holds for $\iota \ge 2$, it holds also for
%$\iota +1$. We will then prove the existence of   case  $\iota =2$.
%\bigskip
%\noindent  {\it The step $\iota \ge 2$.}
We   assume that
 $H^{(\iota )}$ satisfies the statement for $\iota \ge 1$  and prove that there is a
 $\phi _{\iota  +1}$ such that  $H^{(\iota +1)} := H^{(\iota )}\circ \phi _{\iota  +1}$
  satisfies the statement for $\iota +1$. We consider the representation \eqref{eq:enexp30} for $H^{(\iota )}$, which is guaranteed by the Structure Lemma \ref{lem:birkflow1}.
  Using  \eqref{eq:enexp30}    we set $\mathbf{h}=H^{(\iota )}(z,  \mathbf{{Z}}, \eta )$   interpreting    $(z , \mathbf{{Z} } , \eta)$ as independent variables.
  Then we have    for $\mathbf l=\iota$
\begin{align}
   \label{eq:derivmain1}  &  a_{  \textbf{m} }^{(\mathbf l )}( |z_1 |^2,...,|z_n |^2  )
=\frac{1}{ \textbf{m} ! }  \partial  _{\mathbf{Z}}  ^{\textbf{m}}
 \mathbf{h} {|_{(z,\eta ,\mathbf{{Z} } )= (z;0,0 )}}   \  ,  \quad   |\textbf{m} |\le 2 {N}+4 ,\\&  \label{eq:derivmain2} \overline{z} _j G_{j \textbf{m}}^{(\mathbf l )}(|z_j|^2  )
=\frac{1}{\textbf{m} ! }     \partial  _{\mathbf{Z}}  ^{\textbf{m}} \nabla _\eta  \mathbf{h} {|_{(z,\eta ,\mathbf{{Z} } )= (0,..., z_j,0,...0; 0,0 )}}
\  ,  \quad   |\textbf{m} |\le   2 {N}+3    .
\end{align}
 The inductive hypothesis on   $H^{(\iota )}$ is a statement on the
 Taylor coefficients in \eqref{eq:derivmain1}--\eqref{eq:derivmain2}, that is that,   for $\mathbf l=\iota $  (see Def. \ref{def:setM} and Remark \ref{rem:H3})
\begin{align}
   \label{eq:derivmain3}  &     \partial  _{\mathbf{Z}}  ^{\textbf{m}}
 \mathbf{h} {|_{(z,\eta ,\mathbf{{Z} } )= (z;0,0 )}} =0   \text{ for  all   $\mathbf{m} \not \in \mathcal{M}_0  (\mathbf l )$, }     \\&  \label{eq:derivmain4}       \partial  _{\mathbf{Z}}  ^{\textbf{m}} \nabla _\eta  \mathbf{h} {|_{(z,\eta ,\mathbf{{Z} } )= (0,..., z_j,0,...0; 0,0 )}}=0
  \text{ for  all $(j,\mathbf{m})$ with $\mathbf{m} \not \in \mathcal{M}_j (\mathbf l -1)$. }
\end{align}
 We consider now  a yet unknown $\chi$ as in  \eqref{eq:chi1}   with $\mathbf{l}=\iota  $, $\mathbf{r}=r_\iota $ and a yet to be determined $\mathbf{d}=\delta >0$.
Set $\phi :=\phi ^1$, where  $\phi ^t$ is the flow
of Lemma \ref{lem:birkflow0}. We are seeking $\chi$  such that $H^{(\iota )}
\circ\phi $ satisfies the conclusions of Theorem \ref{th:main} for $\iota +1$,
i.e. that  using again Lemma \ref{lem:birkflow1} and   setting this time   $\mathbf{h}=(H^{(\iota )}
\circ\phi )(z,\eta , \mathbf{{Z}} )$, we will have \eqref{eq:derivmain3}--\eqref{eq:derivmain4}
for $\mathbf l = \iota +1$. Notice that for any   $\chi$,    \eqref{eq:derivmain3}--\eqref{eq:derivmain4} are automatically true for  $\mathbf l = \iota  $.
This because    $  H^{(\iota )}
 (z,\eta , \mathbf{{Z}} )$ and  $  (H^{(\iota )}
\circ\phi )(z,\eta , \mathbf{{Z}} )$ have same derivatives in   \eqref{eq:derivmain1}  for $|\textbf{m} |\le \iota   $ and  in \eqref{eq:derivmain2}  for $|\textbf{m} |\le \iota -1  $.
So it is enough to consider   \eqref{eq:derivmain3}  for  $|\textbf{m} |= \iota +1 $ and
 \eqref{eq:derivmain4} for  $|\textbf{m} |= \iota   $.  This will be true for a specific choice of $\chi$ whose coefficients solve the \textit{Homological Equations}, which we set up in the sequel.

By \eqref{eq:expand1}   and  by $G_{2\textbf{0} ij } ^{(\iota )}( z, \eta   ) =G_{2\textbf{0} ij }  ( z    ) $ we have
\begin{align} &\nonumber
   H^{(\iota )} (z',\eta ' )=H_2(z',\eta ' )  + \sum _{j=1}^{n} \lambda_{j} (|z_j'| ^{2}  )+ Z ^{(\iota )} (z',\mathbf{Z}',\eta' ) +
	 	  \resto ^{1,  2  }_{r, \infty  } (z', \eta ') + \sum _{ i+j  = 2}      \langle G_{2\textbf{0} ij } ( z ' ),   \eta ^{\prime   i} \overline{\eta} ^{\prime   j} \rangle \\& +(*)+
	 	   \sum _{
		  |\textbf{m}|=   \iota +1 }  \textbf{Z}^{\textbf{m}}   a_{  \textbf{m} }^{(\iota )}( |z |^2 )     + \sum _{j =1}^n  	\sum _{   |\textbf{m}  |  =\iota} ( \overline{z}_j \textbf{Z}^{\textbf{m}}  \langle
  G_{j \textbf{m}}^{(\iota )}(|z_j|^2  ), \eta \rangle  +  \text{ c.c.} )  , \label{eq:enexp50}
		  \end{align}
where   $\mathbf{h}:=(*)(z,\eta , \mathbf{Z})$  satisfies  \eqref{eq:derivmain3}--\eqref{eq:derivmain4} for $\mathbf{l} = \iota +1 $.
In the sequel we will   use $(*)$ with this meaning.
Let $(z ',\eta ') =\phi  (z,\eta )$.
We have
\begin{equation}\label{eq:h2expan1}
\begin{aligned}
&\sum_{j=1}^n e_j \(\overline z_j (Y_\chi)_j(z,\eta) + z_j (Y_\chi)_{\overline j}(z,\eta)\)=
\sum_{|\mathbf m|=\iota +1}\im\widetilde{\mathbf{e}}\cdot (\mu(\mathbf m)-\nu(\mathbf m))b_{  \textbf{m} }( |z _1|^2,...,|z _n|^2 )\mathbf Z^{\mathbf m}\\&
 + \sum _j \sum _{|\mathbf{ m }| =\iota  }\(
\im\widetilde{\mathbf{e}}\cdot(\tilde \mu_j(\mathbf m) -\tilde \nu_j(\mathbf m))\<B_{j \mathbf m}(|z_j|^2),\eta\>\overline z_j \mathbf Z^{\mathbf m} +\text{c.c.}\) \text{  for }
\end{aligned}
\end{equation}\begin{equation}\label{eq:etilde}
\begin{aligned}& \text{$ \mathbf{{Z}}^{  \mathbf{{m}}}= z^{\mu (\mathbf{{m}})}
{\overline{z}}^{\nu (\mathbf{{m}})}$,   $ \overline{z}_j\mathbf{{Z}}^{  \mathbf{{m}}}= z^{\tilde\mu _j(\mathbf{{m}})}
{\overline{z}}^{\tilde\nu _j (\mathbf{{m}})}$, }\\
&\widetilde{\mathbf{e}} (z):=(e_1
    (1 +   \varpi  _1(|z_1|^2)) ,....,  e_n   (1 +   \varpi  _n(|z_n|^2))),
\end{aligned}
\end{equation}
and, summing on repeated indexes,
\begin{equation}\label{eq:h2expan2}
\begin{aligned}
&\<H\eta, (X_\chi)_{\overline \eta}(z,\eta)\>+\<H (X_\chi)_{ \eta}(z,\eta), \overline \eta\>=
\im\overline z_j  \mathbf{ Z}^{\mathbf m}\<HB_{j,\mathbf m}(|z_j|^2),\eta\>+\text{c.c.}\quad .
\end{aligned}
\end{equation}
So, by Lemma \ref{lem:expand}, \eqref{eq:h2expan1}--\eqref{eq:h2expan2} and using the notation in\eqref{eq:etilde},  we have

\begin{align}
 H_2 (z ',\eta ')=& \sum_{j=1}^ne_j|z_j'|^2+\<H\eta',\overline\eta'\>=H_2  (z,\eta ) + \sum_{ \substack{|\textbf{m}  |   = \mathbf{l} +1\\   \mathbf{{m}}
 \not \in \mathcal{M} _0 (\mathbf{l}+1) }}\im\widetilde{\mathbf{e}}\cdot (\mu(\mathbf m)-\nu(\mathbf m))b_{  \textbf{m} }( |z _1|^2,...,|z _n|^2 )\mathbf{ Z}^{\mathbf m} \nonumber\\&+
 \sum _j \sum _{ \substack{|\textbf{m}  |   = \mathbf{l} \\ \mathbf{{m}}
 \not \in \mathcal{M} _j (\mathbf{l}) }  }
\(\im\<\(\widetilde{\mathbf{e}}\cdot(\tilde \mu_j(\mathbf m) -\tilde \nu_j(\mathbf m))+H\)B_{j\mathbf m}(|z_j|^2),\eta\>\overline z_j \mathbf{ Z}^{\mathbf m} +\text{c.c.}\) \label{eq:backH}\\&+
\resto^{2,2\iota}_{\mathbf r,\infty}(b,B,z,\mathbf{ Z},\eta) + (*),\nonumber
\end{align}
 where    c.c. refers
 only to the second line and   in the last line
$$
\resto^{2,2\iota}_{\mathbf r,\infty}(b,B,z,\mathbf{ Z},\eta)=\sum_{j=1}^ne_j \tilde{\mathcal T}_j+\<H\eta,\overline{\mathcal T}_\eta\>+\<H\mathcal T_\eta,\overline \eta\>+\<H\mathcal T_\eta,\overline{\mathcal T}_\eta\>,
$$
where here and in the sequel of this proof we abuse notation
denoting by $(b,B)$ the element  in $X_r(\iota )$, see Def. \ref{def:xr},
with  entries $b _{\mathbf{m}}(|z_1|^2,...,|z_n|^2)$ and $B _{j\mathbf{m}}(|z_j|^2)$.
$\resto^{2,2\iota}_{\mathbf r,\infty}(b,B,z,\mathbf{ {Z}},\eta) $ can   be absorbed in $(*)$   if $\iota\geq 2$   but  if $\iota=1$ needs to be considered explicitly.
    By $\lambda_j(|z_j|^2)=\resto^{2,0}_{\infty,\infty}$ and \eqref{eq:expand2}  we have
\begin{equation}\label{eq:lamexp}
\lambda_j(|z_j'|^2)=\lambda_j(|z_j|^2)+ \resto^{2,\iota +1}_{\mathbf r, \infty}(b,B,z,\mathbf Z, \eta)+(*).
\end{equation}
Next, we claim
\begin{equation}\label{eq:normexp}
Z ^{(\iota )} (z',\mathbf{Z}',\eta' )=Z ^{(\iota )} (z,\mathbf{Z},\eta )+ \resto^{2,\iota +1}_{\mathbf r, \infty}(b,B,z,\mathbf Z, \eta)+(*).
\end{equation}
 Let us take a term  $\mathbf{Z}^{\mathbf{m}} a_{\mathbf{m}}(\varrho (z))$ in
 the sum \eqref{eq:enexp41}. Notice that by Lemma \ref{lem:M0} we have necessarily $|\mathbf{m}| \ge 2$. Furthermore, by   \eqref{eq:expand2} it is easy to see that we
 can omit the factor $a_{\mathbf{m}}(\varrho (z))$. For definiteness let $\mathbf{Z}^{\mathbf{m}} = |z_1|^2 |z_2|^2$ (so $ |\mathbf{m}| = 2$; the case $ |\mathbf{m}| > 2$ is simpler).  By   \eqref{eq:expand2} we have
 \begin{equation*}  \begin{aligned} &
  |z_1'|^2 |z_2'|^2 = (|z_1|^2 +   \mathcal R^{0, \iota +1   }_{\mathbf r, \infty} )(|z_2|^2  +   \mathcal R^{0, \iota +1   }_{\mathbf r, \infty})  = |z_1 |^2 |z_2 |^2+ R^{2, \iota +1   }_{\mathbf r, \infty}(b,B, z,\mathbf{Z},\eta),
\end{aligned}\end{equation*}
 where we used information such as $\tilde {\mathcal T}_j = \mathcal R ^{1, 2\iota }_{\mathbf r, \infty}$ contained in Lemma \ref{lem:expand} and the fact, easy to check, that
 $\overline z _j (Y_\chi)_j(z,\eta) + z_j (Y_\chi)_{\overline j}(z,\eta) =R^{0, \iota +1   }_{\mathbf r, \infty}(b,B,z,\mathbf{Z},\eta) $.

 \noindent To complete the proof of \eqref{eq:normexp} let us take now a term of the
 form $\overline{z}_2  \mathbf{Z}^{\mathbf{m}} \langle G (|z_2|^2) , \eta \rangle $.
  Here we can write $G=G (|z_2|^2) $  ignoring the  dependence on $|z_2|^2$
  and we can focus on $ |\mathbf{m}| = 1$. For definiteness let $\mathbf{Z}^{\mathbf{m}}= z_1\overline{z}_2  $. By Lemma \ref{lem:expand}
\begin{equation*}  \begin{aligned} &
    z_1'(\overline{z}_2') ^2  \langle G   , \eta ' \rangle  =
  ( {z}_1 +  \mathcal R^{1, \iota     }_{\mathbf r, \infty})   (\overline{z}_2 +  \mathcal R^{1, \iota     }_{\mathbf r, \infty}) ^2
    \langle G   , \eta + S ^{1, \iota     }_{\mathbf r, \infty} \rangle  .
\end{aligned}\end{equation*}
which for  $\iota >1$ is of the form  $ z_1  \overline{z}_2  ^2  \langle G   , \eta   \rangle +(*)$ and for    $\iota =1$ using   formula   \eqref{eq:expand1}
yields
\eqref{eq:normexp}.

By claim (1) in Lemma \ref{lem:birkflow0} and $d_\eta     \resto ^{1,  2  }_{r, \infty} (z , \eta   ) \cdot   \textbf{S}^{1, \iota }_{\mathbf r,\infty}(b,B,z,\eta)=\resto^{2,\iota+1}_{\mathbf r,\infty}(b,B,z,\mathbf Z,\eta)$ we get
 \begin{equation}  \label{eq:enexp51}    \begin{aligned}
	 	  \resto ^{1,  2  }_{r, \infty} (z', \eta ')&= \resto ^{1,  2  }_{r, \infty} (z, \eta ')+(*)=\resto ^{1,  2  }_{r, \infty} (z , \eta    ) + (*)\\&  +\int _0^1 d_\eta     \resto ^{1,  2  }_{r, \infty} (z , \eta  + \tau \textbf{S}^{1, \iota }_{\mathbf r,\infty}(b,B,z,\eta) ) \cdot   \textbf{S}^{1, \iota }_{\mathbf r,\infty}(b,B,z,\eta) d\tau\\&
=\resto ^{1,  2  }_{r, \infty} (z , \eta    )  + d_\eta     \resto ^{1,  2  }_{r, \infty} (z , \eta   ) \cdot   \textbf{S}^{1, \iota }_{\mathbf r,\infty}(b,B,z,\eta) + (*) . \\
		  \end{aligned}
\end{equation}

\noindent Like in   \eqref{eq:enexp51} and using \eqref{eq:expand1} and $ G_{2\textbf{0} ij } ( z ) =\mathcal{R}^{2,  0  }_{\infty, \infty}  (z)$, see \eqref{eq:b02}, we have
\begin{equation}  \label{eq:enexp54}    \begin{aligned}
	 	\sum _{ i+j  = 2}      \langle G_{2\textbf{0} ij } ( z ' ),   \eta ^{\prime   i} \overline{\eta} ^{\prime   j} \rangle &=  	\sum _{ i+j  = 2}      \langle G_{2\textbf{0} ij } ( z   ),   \eta ^{\prime   i} \overline{\eta} ^{\prime   j} \rangle + (*)\\&   =
 \sum _{ i+j  = 2}      \langle G_{2\textbf{0} ij } ( z ),   \eta ^{    i} \overline{\eta} ^{   j} \rangle + \resto^{3,\iota+1}_{\mathbf r,\infty}(b,B,z,\mathbf Z, \eta)+(*).
		  \end{aligned}
\end{equation}
 Therefore, we seek $\chi _{\iota}$ s.t. the following holds, with $\varrho (z)=( |z _1|^2,...,|z _n|^2 )$ and the notation in \eqref{eq:etilde}:
 \begin{align}
(*)&= \sum_{ \substack{|\textbf{m}  |   = \iota +1\\   \mathbf{{m}}
 \not \in \mathcal{M} _0 (\iota+1) }}\im\widetilde{\mathbf{e}}\cdot (\mu(\mathbf m)-\nu(\mathbf m))b_{  \textbf{m} }( \varrho (z)    )\mathbf{Z}^{\mathbf m}\nonumber\\&+
 \sum _j \sum _{ \substack{|\textbf{m}  |   = \iota \\ \mathbf{{m}}
 \not \in \mathcal{M} _j (\iota ) }  }
\(\im\<\(  \widetilde{\mathbf{e}} \cdot(  \mu_j(\mathbf m) -  \nu_j(\mathbf m))+H\)B_{j\mathbf m}(|z_j|^2),\eta\>\overline z_j \mathbf{Z}^{\mathbf m} +\text{c.c.}\)     \label{2ndline}\\&+
\resto^{2,\iota+1}_{\mathbf r,\infty}(b,B,z,\mathbf{Z},\eta)+
 \sum _{
		 \substack{|\textbf{m}  |   = \iota +1\\   \mathbf{{m}}
 \not \in \mathcal{M} _0 (\iota +1) } }  \textbf{Z}^{\textbf{m}}   a_{  \textbf{m} }^{(\iota )}(  \varrho (z))     + \sum _{j =1}^n  	\sum _{   \substack{|\textbf{m}  |   = \iota \\ \mathbf{{m}}
 \not \in \mathcal{M} _j (\iota ) } } ( \overline{z}_j \textbf{Z}^{\textbf{m}}  \langle
  G_{j \textbf{m}}^{(\iota )}(|z_j|^2  ), \eta \rangle  +  \text{ c.c.} ) . \nonumber
\end{align}
By a Taylor expansion we  can write
\begin{equation*}
\begin{aligned} & \resto^{2,\iota+1}_{\mathbf r,\infty}(b,B,z,\mathbf{Z},\eta) = (*)+\sum _{
		 \substack{|\textbf{m}  |   = \iota +1\\   \mathbf{{m}}
 \not \in \mathcal{M} _0 (\iota +1) } }  \textbf{Z}^{\textbf{m}}   \alpha _{  \textbf{m} } (b,B,  \varrho (z))    \\&  + \sum _{j =1}^n  	\sum _{   \substack{|\textbf{m}  |   = \iota \\ \mathbf{{m}}
 \not \in \mathcal{M} _j (\iota ) } } ( \overline{z}_j \textbf{Z}^{\textbf{m}}  \langle
  \Gamma _{j \textbf{m}} (b (0,...,|z_j|^2,0,...,0), B (0,...,|z_j|^2,0,...,0), |z_j|^2  ), \eta \rangle  +  \text{ c.c.} )
\end{aligned}
\end{equation*}
where $\alpha _{  \textbf{m} } (b,B,  \varrho (z))  = \resto^{1,0}_{\mathbf r,\infty}(b,B, \varrho (z) )$ and
\begin{equation*}
     \begin{aligned}
 \text{ where } &  \Gamma _{j \textbf{m}} ( b (0,...,|z_j|^2,0,...,0), B (0,...,|z_j|^2,0,...,0), |z_j|^2) \\  =&S^{1,0}_{\mathbf r,\infty}(b (0,...,|z_j|^2,0,...,0), B (0,...,|z_j|^2,0,...,0),|z_j|^2).
 \end{aligned}
\end{equation*}
Furthermore, by \eqref{eq:etilde} and  $\varpi  _j(|z_j|^2) =\resto ^{2,0}_{{r_0}, \infty }(|z_j|^2)$  the 2nd line of \eqref{2ndline} has an expansion
\begin{equation*}
     \begin{aligned}
  \sum _j \sum _{ \substack{|\textbf{m}  |   = \iota \\ \mathbf{{m}}
 \not \in \mathcal{M} _j (\iota ) }  }
\(\im\<\( {\mathbf{e}}\cdot(  \mu_j(\mathbf m) - \nu_j(\mathbf m)) + \resto ^{1,0}_{r_0,\infty }(|z_j|^2)+H\)B_{j\mathbf m}(|z_j|^2),\eta\>\overline z_j \mathbf{Z}^{\mathbf m} +\text{c.c.}\) +(*).
 \end{aligned}
\end{equation*}
Then we reduce to the following system:
 \begin{equation}
     \begin{aligned}
b_{  \textbf{m} }( \varrho (z)  ) &= \frac{\im  }{\widetilde{\textbf{e}} (z)    \cdot (  \mu (\mathbf{{m}})- \nu  (\mathbf{{m}}))}   [ a_{  \textbf{m} }^{(\iota )}(  \varrho (z) ) +\alpha _{  \textbf{m} } (( b _{\mathbf{n}}( \varrho (z)  ))_{\mathbf{n}}  ,(B_{j\mathbf{n}}( \varrho _j (z)  ) ) _{j\mathbf{n}},  \varrho (z)) ], \label{eq:enexp56}\\
B_{j \textbf{m}}(|z_j|^2  ) &=\im  R_H ( {\mathbf{e}}\cdot(  \mu_j(\mathbf m) -  \nu_j(\mathbf m)) + \resto ^{1,0}_{r_0,\infty }(|z_j|^2))   [ G_{j \textbf{m}}^{(\iota )}(|z_j|^2  ) \\& +
\Gamma _{j \textbf{m}} ( b (0,...,|z_j|^2,0,...,0), B (0,...,|z_j|^2,0,...,0), |z_j|^2)
 \end{aligned}
\end{equation}

\noindent The $b_{\textbf m}( \varrho (z)  )$, $ B_{j \textbf{m}}(|z_j|^2  )$ can be found by implicit function theorem  for $|z|<\delta _{\iota  }'$   for $\delta _{\iota  }'$   sufficiently small.
This gives us the desired
polynomial $\chi  $  yielding $H^{(\iota +1)}$. Formulas \eqref{eq:bsyst122}
for the flow $ \phi   ^{t} $ of   $\chi $ are obtained choosing
$  \delta _{\iota    }>0  $ sufficiently small by claim (1) in Lemma \ref{lem:birkflow0}. For the composition $ \mathcal{F}^{(2 {N}+4)}$ we obtain \eqref{eq:diff1}
as a consequence of \eqref{eq:bsyst122} and of \eqref{eq:KExp11}.

\qed

\section{Dispersion} \label{sec:disp}

We apply Theorem \ref{th:main},  set $  \mathcal{H} = H^{(2N+4 )}$ so that
\begin{equation}  \label{eq:enexp401}    \begin{aligned} &
  \mathcal{H} (z,\eta )=H_2(z,\eta )  +  \sum _{j=1}^{n} \lambda _j (|z_j| ^{2}  ) + {Z}^{(2N+4 )}  (z,\mathbf{Z},\eta  )   +\resto  \end{aligned}
\end{equation}
\begin{equation}  \label{eq:rest1}  \begin{aligned} & \resto :=
	 	  \resto ^{1,  2  }_{r , \infty} (z, \eta )+
	 	  \resto ^{0,  2N+5 }_{r , \infty} (z,\textbf{Z}  ,\eta )+   \Re \langle
  \textbf{S} ^{0,   2N+4 }_{r , \infty} (z,\textbf{Z},\eta ) , \overline{\eta} \rangle
		\\& +  \sum _{ i+j  = 2}   \sum _{
		  |\textbf{m}|\le 1 }  \textbf{Z}^{\textbf{m}}    \langle G_{2\textbf{m} ij } ( z,\eta  ),   \eta ^{   i} \overline{\eta} ^j\rangle
		+     \sum _{ d+c  = 3}  \ \sum _{ i+j  = d}   \langle G_{d ij }  ( z,\eta ),   \eta ^{   i} \overline{\eta} ^j\rangle  \resto ^{0,  c }_{r , \infty} (z, \eta )     + E _P( \eta)  . \end{aligned}
\end{equation}
Using  formula \eqref{eq:enexp41} for $\iota = 2N+4$   we have
\begin{equation}  \label{eq:rest-1}  \begin{aligned} &   \sum _{j=1}^{n} \lambda _j (|z_j| ^{2}  ) + {Z}^{(2N+4 )}  (z,\mathbf{Z},\eta  )  =  {Z}_0(z) +     \sum _{j =1}^n	 ( \sum _{   \mathbf{m}  \in \mathcal{M}_{j}(2N+3)  } \overline{z}_j \textbf{Z}^{\textbf{m}}  \langle
  G_{j \textbf{m}}(|z_j|^2  ), \eta \rangle  +   \text{c.c.} ), \\&   {Z}_0(z):=    \sum _{j=1}^{n} \lambda _j (|z_j| ^{2}  ) +  \sum _{   \mathbf{m}  \in \mathcal{M}_{0}(2N+4 )  } \textbf{Z}^{\textbf{m}}   a_{  \textbf{m} }( |z _1|^2,...,|z _n|^2 ) =\mathcal{Z}_0(|z_1|^2, ...,|z_n|^2)  ,   \end{aligned}
\end{equation}
 where the last equality holds for some $\mathcal{Z}_0(|z_1|^2, ...,|z_n|^2)  $  by       Lemma \ref{lem:M0}.

      \begin{theorem}[Main Estimates]\label{thm:mainbounds}
There exist $\epsilon_0>0$ and $C_0>0$ s.t.\ if the constant  $0<\epsilon   $  of Theorem \ref{thm:small en} satisfies $\epsilon<\epsilon_0$,    for $I= [0,\infty )$ and $C=C_0$    we have:
\begin{align}
&   \|  \eta \| _{L^p_t(I,W^{ 1 ,q}_x)}\le
  C   \epsilon \text{ for all admissible pairs $(p,q)$,}
  \label{Strichartzradiation}
\\& \| z_j \mathbf{Z}  ^{\mathbf{m}} \| _{L^2_t(I)}\le
  C   \epsilon \text{ for all   $(j,\mathbf{m})$
  with  $\mathbf{m} \in \mathcal{M}_j (2N+4)  $,} \label{L^2discrete}\\& \| z _j  \|
  _{W ^{1,\infty} _t  (I )}\le
  C   \epsilon \text{ for all   $j\in \{ 1, \dots ,  {n}\}$ } \label{L^inftydiscrete}
   .
\end{align}
Furthermore,  there exists $\rho  _+\in [0,\infty )^n$ s.t.   there exist  a   $j_0$  with $\rho_{+j}=0$ for $j\neq j_0$,
and   there exists $\eta _+\in H^1$   s.t.
  $| \rho  _+ - |z(0)| | \le C   \epsilon $ and $\eta _+\in H^1$
with $\|  \eta _+\| _{H^1}\le C    \epsilon $, such that
\begin{equation}\label{eq:small en31}
\begin{aligned}&     \lim_{t\to +\infty}\| \eta (t,x)-
e^{\im t\Delta }\eta  _+ (x)   \|_{H^1_x}=0  \quad  , \quad
  \lim_{t\to +\infty} |z_j(t)|  =\rho_{+j}  .
\end{aligned}
\end{equation}

\end{theorem}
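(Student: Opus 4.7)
The plan is to run a standard continuity (bootstrap) argument on $I_T=[0,T]$ for the three norms
\begin{equation*}
\mathcal{N}(T):=\|\eta\|_{L^p_t(I_T,W^{1,q}_x)}+\sum_{j,\mathbf{m}\in\mathcal{M}_j(2N+4)}\|z_j\mathbf{Z}^{\mathbf{m}}\|_{L^2_t(I_T)}+\|z\|_{L^\infty_t(I_T)},
\end{equation*}
and to prove $\mathcal{N}(T)\le C_0\epsilon$ continuously in $T$, so that the set of admissible $T$ is open, closed, and nonempty. First I would derive Hamilton's equations for $(z,\eta)$ from $\mathcal{H}$ in \eqref{eq:enexp401} via \eqref{eq:hamf1}. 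Using \eqref{eq:rest-1} and the fact that $\mathcal{Z}_0$ depends only on $|z_j|^2$, the $z_j$-equation reads
\begin{equation*}
\im\dot z_j = e_j z_j + z_j\partial_{|z_j|^2}\mathcal{Z}_0 + \sum_{k,\mathbf{m}\in\mathcal{M}_k(2N+3)}\partial_{\overline{z}_j}\bigl(\overline{z}_k\mathbf{Z}^{\mathbf{m}}\bigr)\langle G_{k\mathbf{m}},\eta\rangle + \text{c.c.-terms}+\partial_{\overline{z}_j}\mathcal{R},
\end{equation*}
while $\eta$ satisfies a Schrödinger equation $\im\dot\eta=H\eta+P_c F$ whose source contains the linear-in-$\eta$ FGR terms $\sum z_j\overline{\mathbf{Z}}^{\mathbf{m}}\overline{G}_{j\mathbf{m}}$, purely discrete sources of order $|z|^{2N+4}$, and the cubic piece coming from $E_P(\eta)$ plus remainders of order $\mathbf{S}^{0,2N+4}$, etc.

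For the dispersive estimate \eqref{Strichartzradiation} I would apply Strichartz to the Duhamel representation $\eta(t)=e^{-\im tH}\eta(0)+\int_0^t e^{-\im(t-s)H}P_c F\,ds$, using (H2) to identify $e^{-\im tH}P_c$ with a dispersive propagator satisfying Kato smoothing. The cubic term $|\eta|^2\eta$ is absorbed by the standard trilinear estimate; the smoothing remainders in $\mathcal{R}$ are handled by Kato-type bounds with weight $\Sigma_{-r}$, using the bootstrap on $\|z\|_{L^\infty}$; and the resonant sources $z_k\overline{\mathbf{Z}}^{\mathbf{m}}\overline{G}_{k\mathbf{m}}$, which have the key $L^2_t$ control as the bootstrap hypothesis, are handled by Strichartz after pairing the $L^2_t$ of the scalar coefficients with the fixed function $G_{k\mathbf{m}}\in\Sigma_r$.

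The core of the proof is the $L^2_t$ bound \eqref{L^2discrete}, which encodes the FGR. Following the Hamiltonian derivation of FGR of \cite{Cu2,Cu4}, I would introduce the auxiliary variable $g=\eta-\eta^\sharp$, where $\eta^\sharp$ collects the quasi-stationary response to the resonant sources, i.e.\ the formal solution of $(H-\mathbf{e}\cdot(\mu_j(\mathbf{m})-\nu_j(\mathbf{m})))\eta^\sharp = \sum z_j\overline{\mathbf{Z}}^{\mathbf{m}}\overline{G}_{j\mathbf{m}}$; since $\mathbf{m}\in\mathcal{M}_j(2N+3)$ means exactly that this eigenvalue $L:=-\mathbf{e}\cdot(\mu_j(\mathbf{m})-\nu_j(\mathbf{m}))+e_j>0$ lies in $\sigma_c(H)=[0,\infty)$, the substitution $\eta\rightsquigarrow g+\eta^\sharp$ introduces $\delta(H-L)$ via the Plemelj formula applied to $(H-L-\im 0)^{-1}$. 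Inserting this into $\frac{d}{dt}\mathcal{Z}_0(|z|^2)+\sum_j e_j\frac{d}{dt}|z_j|^2$, which equals zero only to leading order because $\{\mathcal{H}_2+\mathcal{Z}_0,\mathcal{Z}_0\}=0$, one finds that $\frac{d}{dt}(\cdots)$ produces a sum, with strictly positive coefficients by (H4), of squares $|z_j\mathbf{Z}^{\mathbf{m}}|^2$ plus terms in $g$ that are absorbed by Strichartz, plus remainders $\mathcal{R}^{0,2N+5}$ controlled by the bootstrap. Integrating in $t$ yields \eqref{L^2discrete}; the $W^{1,\infty}$ bound \eqref{L^inftydiscrete} follows because the resulting $\dot z_j\in L^\infty_t$ and $\frac{d}{dt}|z_j|^2\in L^1_t$ by Cauchy--Schwarz on \eqref{L^2discrete} and \eqref{Strichartzradiation}.

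Once the bootstrap closes, asymptotics are routine: $|z_j(t)|^2$ converges to some $\rho_{+j}^2$ because its derivative is $L^1_t$, and for any $j\neq k$ the product $z_j\overline{z}_k$ is an element of $\mathcal{M}_0$ and of $\mathcal{M}_j\cdot\mathcal{M}_k$ in a sense giving $z_jz_k\in L^2_t$, forcing $\rho_{+j}\rho_{+k}=0$ and thus uniqueness of $j_0$; scattering of $\eta$ to $\eta_+\in H^1$ follows from the fact that $e^{\im tH}P_c\eta(t)$ is Cauchy as $t\to\infty$, a standard consequence of the Strichartz control on $F$ together with the intertwining of $e^{\im tH}P_c$ and $e^{-\im t\Delta}$ via a wave operator. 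I expect the main obstacle to be the FGR step, specifically ensuring that after all the substitutions the quadratic form on the vector $(z_j\mathbf{Z}^{\mathbf{m}})_{j,\mathbf{m}}$ is truly the nonnegative expression in (H4), for which the Cancellation Lemma \ref{lem:KExp2} and the structural identity $\{\mathcal{Z}_0,\mathcal{Z}_0\}=0$ are both essential.
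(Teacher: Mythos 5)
You take essentially the same route as the paper: a continuity argument (Prop.~\ref{prop:mainbounds}) reducing everything to improving the constant on $[0,T]$, Strichartz estimates for $\eta$ fed by the $L^2_t$ bootstrap on the resonant monomials, the quasi-stationary substitution $g=\eta+\sum \overline z^{\alpha}z^{\beta}R_H^{+}(\mathbf{e}\cdot(\beta-\alpha))\overline G_{\alpha\beta}$ whose $+\im 0$ regularization produces $\delta(H-L)$, coercivity from (H4) for a Lyapunov functional quadratic in the $|z_j|$, and then routine asymptotics. The devices you leave implicit --- the corrected variable $\zeta$ of \eqref{eq:FGR21} that decouples the different resonance levels $M_L$, the symmetrization showing that the principal-value parts cancel while the $\delta(H-L)$ parts are nonnegative (Lemmas \ref{lem:can1}--\ref{lem:pos1}), and the local decay estimate of Lemma \ref{lem:lemg9} needed for the $L^2_t L^{2,-S}$ bound on $g$ --- are exactly what the paper supplies, and they slot into your scheme unchanged.
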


\noindent {\it Proof that Theor.\ref{thm:mainbounds} implies Theor.\ref{thm:small en}.}
 Denote by $(z',\eta ')$  the initial coordinate system.   By  \eqref{eq:diff2}
\begin{equation*}  \begin{aligned} &
 z' =z+\resto ^{1, 1}_{r,\infty}(z,  \mathbf{{Z}}, \eta )  \, , \quad   \eta ' =\eta +\mathbf{S} ^{1, 1}_{r,\infty}(z,  \mathbf{{Z}}, \eta ).
\end{aligned}\end{equation*}
Notice that    \eqref{eq:small en31} and $\lim _{t\to + \infty}\mathbf{Z}(t)=0$ and  that by standard arguments  for $s>3/2$ we have \begin{equation} \label{eq:w-conv}
\begin{aligned} &  \lim _{t\to  + \infty} \|  e^{ t \Delta }\eta _+\| _{L^{2,-s}(\R ^3)}=0   \text{   for any $\eta _+\in L^2$}.
\end{aligned} \end{equation}
     These two limits,   Definitions \ref{def:scalSymb}--\ref{def:opSymb}  and \eqref{eq:small en31} imply
\begin{equation*}  \begin{aligned} &
 \lim _{t\to  + \infty }\resto ^{1, 1}_{r,\infty}(z, \mathbf{Z}, \eta ) =  0 \text{ in $\C ^n$ and }  \quad   \lim _{t\to  + \infty } \mathbf{S} ^{1, 1}_{r,\infty}(z, \mathbf{Z}, \eta ) =0  \text{ in $\Sigma _{r}$}.
\end{aligned}\end{equation*}
This means     that
\begin{equation}\label{eq:small en32}
\begin{aligned}&     \lim_{t\to +\infty}\| \eta ' (t,x)-
e^{\im t\Delta }\eta  _+ (x)   \|_{H^1_x}=0  \quad  , \quad
 \lim_{t\to +\infty} |z_j'(t)|  =\rho  _{+j} .
\end{aligned}
\end{equation}
so that   \eqref{eq:small en3}  is true.
Notice also that if we set $\widetilde{\eta}=\eta$ and $A(t,x)= \mathbf{S} ^{1, 1}_{r,\infty}(z,  \mathbf{{Z}}, \eta )$ we obtain the desired decomposition of $\eta'$ satisfying \eqref{eq:small en2} and \eqref{eq:small en4}.  Finally we have
\begin{equation*}
\begin{aligned}&          \dot z _j' +\im e_j z_j'  =  \dot z _j  +\im e_j z_j + \frac{d}{dt}\resto ^{1, 1}_{r,\infty}(z, \mathbf{Z}, \eta ) +  \resto ^{1, 1}_{r,\infty}(z, \mathbf{Z}, \eta )  = O(\epsilon ^2),
\end{aligned}
\end{equation*}
where $\dot z _j  +\im e_j z_j= O(\epsilon ^2) $  by \eqref{eq:FGR01} below, $ \resto ^{1, 1}_{r,\infty}(z, \mathbf{Z}, \eta ) =  O(\epsilon ^2)$ by \eqref{eq:scalSymb}
and   $\frac{d}{dt}\resto ^{1, 1}_{r,\infty}(z, \mathbf{Z}, \eta )= O(\epsilon ^2) $. To check the latter, we write  (it is easy that $d_w \resto ^{1, 1}_{r,\infty}(z, \mathbf{Z}, \eta )=  \resto ^{1, 0}_{r,\infty}(z, \mathbf{Z}, \eta ) $ for $w=z,\mathbf{Z}$)
\begin{equation*}
\begin{aligned}&        \frac{d}{dt}  \resto ^{1, 1}_{r,\infty}(z, \mathbf{Z}, \eta )  =  \resto ^{1, 0}_{r,\infty}(z, \mathbf{Z}, \eta )\dot z +   \resto ^{1, 0}_{r,\infty}(z, \mathbf{Z}, \eta )\dot {\mathbf{Z}} + d _\eta  \resto ^{1, 1}_{r,\infty}(z, \mathbf{Z}, \eta ) \cdot\dot \eta ,
\end{aligned}
\end{equation*}
with $ d _\eta  \resto ^{1, 1}_{r,\infty}$ the partial derivative in $\eta$.
 By a simple use of Taylor expansions and  Def. \ref{def:scalSymb}  \begin{equation*}
\begin{aligned}&        \| d_\eta  \resto ^{1, 1}_{r,\infty}(z, \mathbf{Z}, \eta ) \| _{\Sigma _{-r}^c \to \Sigma _{ r}^c} \le C (|z| + \| \eta \|  _{\Sigma _{-r}}).
\end{aligned}
\end{equation*}
Then by equations \eqref{eq:eq f}  and \eqref{eq:FGR01}   below, we have $\frac{d}{dt}\resto ^{1, 1}_{r,\infty}(z, \mathbf{Z}, \eta )= O(\epsilon ^2) $.
 This yields
the inequality claimed in the  second line in  \eqref{eq:small en2}.

 \qed

\noindent By a standard argument
\eqref{Strichartzradiation}--\eqref{L^inftydiscrete} for  $I= [0,\infty )$ are a consequence of the following Proposition.

\begin{proposition}\label{prop:mainbounds} There exists  a  constant $c_0>0$  such that
for any  $C_0>c_0$ there is a value    $\epsilon _0= \epsilon _0(C_0)   $ such that   if   the inequalities  \eqref{Strichartzradiation}--\eqref{L^inftydiscrete}
hold  for $I=[0,T]$ for some $T>0$, for $C=C_0$  and for $0< \epsilon < \epsilon _0$,
then in fact for $I=[0,T]$  the inequalities  \eqref{Strichartzradiation}--\eqref{L^inftydiscrete} hold  for   $C=C_0/2$.
\end{proposition}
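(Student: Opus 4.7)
The argument is the standard bootstrap/continuity closure. Assuming \eqref{Strichartzradiation}--\eqref{L^inftydiscrete} with constant $C_0$ on $[0,T]$, I propose to rederive them with constant $C_0/2$ on the same interval, after possibly shrinking $\epsilon_0$. The four ingredients are: the Hamiltonian equations of motion associated to $\mathcal H = H^{(2N+4)}$ via \eqref{eq:hamf1}; Strichartz estimates for $e^{-itH}P_c$; the Fermi Golden Rule hypothesis (H4); and conservation of mass $Q$ (plus, as a lower-order input, of the energy $E$).

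First, I would write the equations of motion by applying \eqref{eq:hamf1} to the Hamiltonian \eqref{eq:enexp401}--\eqref{eq:rest-1}. For the radiation one obtains
\begin{equation*}
i\dot\eta = H\eta + \sum_{j=1}^n\sum_{\mathbf m\in\mathcal M_j(2N+3)} z_j \overline{\mathbf Z}^{\mathbf m}\,\overline G_{j\mathbf m}(|z_j|^2) + \mathcal N_\eta(z,\mathbf Z,\eta),
\end{equation*}
where $\mathcal N_\eta$ collects the contributions of $\resto$ from \eqref{eq:rest1} and has at least the ``$\mathbf S^{1,2}$'' size under the bootstrap. For the discrete coordinates, $i\dot z_j = e_j z_j (1+\varpi_j) + (1+\varpi_j)\partial_{\overline z_j}\mathcal Z_0 + \text{(resonant $\langle \cdot,\eta\rangle$-coupling)} + \text{(remainders)}$. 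Crucially, the normal-form structure in $Z^{(2N+4)}$, together with Lemma \ref{lem:M0}, guarantees that only combinations $z_j\mathbf Z^{\mathbf m}$ with $\mathbf m\in\mathcal M_j(2N+3)$ appear in the leading coupling of $\eta$ to the discrete variables.

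To close \eqref{Strichartzradiation}, I would perform a Poincar\'e-type substitution $\eta = \tilde\eta + Y(z,\mathbf Z)$ with
\begin{equation*}
Y := -\sum_{j,\mathbf m} z_j\overline{\mathbf Z}^{\mathbf m}\,R_H^{+}\!\bigl(\mathbf e\cdot(\mu_j(\mathbf m)-\nu_j(\mathbf m))\bigr)\,\overline G_{j\mathbf m}(|z_j|^2),
\end{equation*}
where $R_H^{+}$ denotes the boundary resolvent on $P_c L^2$ evaluated at the effective frequency. By Lemma \ref{lem:M0} the frequency avoids $\sigma_p(H)$, so $R_H^{+}$ is well-defined and the choice of branch is precisely what produces the FGR imaginary part. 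With this substitution the forcing in the equation for $\tilde\eta$ is only a cubic-or-higher remainder, and Strichartz estimates for $e^{-itH}P_c$ combined with the bootstrap bounds yield $\|\tilde\eta\|_{L^pW^{1,q}}\le c\epsilon$. The contribution of $Y$ is controlled pointwise by $|z_j\mathbf Z^{\mathbf m}|$ times a Schwartz profile, so \eqref{L^2discrete} plus Sobolev embedding gives $\|Y\|_{L^pW^{1,q}}\le c\epsilon$, improving \eqref{Strichartzradiation} to $C_0/2$.

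For \eqref{L^2discrete}, which is the decisive step and the main obstacle, I would compute the time derivative of $\sum_j c_j |z_j|^2$ for weights $c_j$ matched to $\mathbf e$, and then substitute the $\eta$ equation after the $\eta = \tilde\eta + Y$ decomposition. The Hamiltonian structure constrains coefficients of $z_j\mathbf Z^{\mathbf m}$ and their conjugates to match across the discrete and continuous sectors; hence, after using Plemelj's formula for $R_H^{+}$, the non-oscillatory leading contribution takes the shape $-2\sum_{L\in\Lambda}\langle\delta(H-L)\overline G_L,G_L\rangle$, nonnegative and strictly positive by (H4). The contribution of $\partial_{\overline z_j}\mathcal Z_0$ is purely real and cancels because $\mathcal Z_0$ depends only on $|z_j|^2$; the oscillatory pieces are integrated by parts in time, and the nonresonant contributions from $\mathcal N_\eta$ and the $\resto$ from \eqref{eq:rest1} are genuinely cubic or higher in the bootstrap norms. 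Integrating on $[0,T]$ and rearranging yields $\sum_{(j,\mathbf m)}\|z_j\mathbf Z^{\mathbf m}\|_{L^2([0,T])}^2\le c\epsilon^2$ plus a term bounded by $C_0\epsilon\cdot\sum\|z_j\mathbf Z^{\mathbf m}\|_{L^2}$, which after absorption improves \eqref{L^2discrete} to $C_0/2$. Finally \eqref{L^inftydiscrete} follows from the equation of motion (giving the $\dot z_j$ part) and from mass conservation: since $\mathfrak F^{(2N+4)}$ is an $O(1)$-perturbation of the identity near $0$, $Q(u) = \sum_j |z_j|^2 + \|\eta\|_{L^2}^2 + O(|z|^3+|z|\|\eta\|_{H^1}^2)$ is conserved, which combined with $\|u_0\|_{H^1}<\epsilon$ gives $|z_j(t)|\le c\epsilon$ uniformly on $[0,T]$. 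The delicate point throughout is to verify, in Step 3, that the Hamiltonian identity producing the FGR-positive quadratic form truly survives the passage through the nontrivial $\mathcal Z_0$ and the $Y$-substitution without spurious first-order-in-$\epsilon$ error terms; this is precisely what the Cancellation Lemma \ref{lem:KExp2} and the normal-form structure of Theorem \ref{th:main} were designed to enable.
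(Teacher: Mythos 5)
Your overall architecture is the same as the paper's (Strichartz control of the radiation, a resolvent correction of $\eta$, a weighted Lyapunov functional for the discrete modes whose time derivative exhibits the FGR quadratic form, and mass conservation for the sup bound), but two steps as written would fail. First, the correction $Y$ is \emph{not} ``$|z_j\mathbf Z^{\mathbf m}|$ times a Schwartz profile'': for $\lambda=\mathbf e\cdot(\beta-\alpha)\in\Lambda\subset(0,\infty)$, which lies inside the continuous spectrum, the boundary value $R_H^{+}(\lambda)\overline G$ is an outgoing wave decaying only like $|x|^{-1}$, hence it is not in $L^2(\R^3)$, let alone $H^1$. So the claim $\|Y\|_{L^p_tW^{1,q}_x}\le c\epsilon$ is false already for the admissible pair $(p,q)=(\infty,2)$, and the terms $\partial_{z_k}Y\,(\im\dot z_k-e_kz_k)$ generated by your substitution cannot be placed in dual Strichartz spaces either, so \eqref{Strichartzradiation} cannot be closed for $\tilde\eta+Y$ along this route. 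The paper separates the two tasks: the Strichartz bound for $\eta$ itself is obtained directly from Duhamel, putting the resonant forcing $\sum\overline z^{\mu}z^{\nu}\overline G_{\mu\nu}$ (whose profiles really are in $\Sigma_r$) into $L^2_tH^{1,S}_x\hookrightarrow L^2_tW^{1,6/5}_x$ via \eqref{L^2discrete}, yielding \eqref{4.5}; the resolvent-corrected variable $g=\eta+Y$ of \eqref{eq:def g} is estimated only in the weighted norm $L^2_tL^{2,-S}_x$ (Lemma \ref{lem:bound g}), and this hinges on the weighted dispersive estimate $\|e^{-\im Ht}R_H^{+}(\lambda)P_cv_0\|_{L^{2,-S}}\lesssim\langle t\rangle^{-3/2}\|v_0\|_{L^{2,S}}$ of Lemma \ref{lem:lemg9}, an ingredient entirely missing from your proposal.

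Second, your closing bookkeeping does not produce the improvement. If the error in the integrated FGR identity is only controlled by $C_0\epsilon\cdot\sum\|z_j\mathbf Z^{\mathbf m}\|_{L^2}$, then absorption gives $\sum\|z_j\mathbf Z^{\mathbf m}\|_{L^2}\lesssim C_0\epsilon$ with no gain of the factor $1/2$, so the bootstrap does not close. What is actually needed, and what Lemma \ref{lemma:E} (see \eqref{5.33}) provides, is that the dangerous coupling of the monomials to the radiation be bounded by the bootstrap quantity $C_0\epsilon$ times $\|g\|_{L^2_tL^{2,-S}}\le c\epsilon$ with $c$ \emph{independent of} $C_0$, so the total error in \eqref{eq:FGR31} is $\le c(1+C_0)\epsilon^2$; combined with (H4) and the mass-conservation bound $\sum_j|\zeta_j(t)|^2\le3\epsilon^2$ this gives \eqref{eq:crunch}, $\sum\|z^{\mu+\nu}\|_{L^2}^2\le c(1+C_0)\epsilon^2$, which yields the constant $C_0/2$ once $C_0$ exceeds a fixed $c_0$ (all genuinely higher-order $C(C_0)\epsilon^3$ terms being handled by taking $\epsilon_0=\epsilon_0(C_0)$ small). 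Thus the $C_0$-independence of the local-decay constant for $g$ is not a technicality but the very mechanism of the improvement, and without Lemmas \ref{lem:bound g} and \ref{lem:lemg9} your Step 3 cannot deliver it. The remaining elements of your sketch—the weights matched to $\mathbf e$, the cancellation of the principal-value parts by the Hamiltonian symmetry, the reality of the $\mathcal Z_0$ contribution, the integration by parts in time (the paper's $\zeta$-correction), and mass conservation for \eqref{L^inftydiscrete}—do correspond to the paper's argument.
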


 \subsection{Proof of Proposition \ref{prop:mainbounds}} \label{subsec:prop}

\begin{lemma}\label{lem:conditional4.2} Assume
the hypotheses of Prop. \ref{prop:mainbounds} and  take the ${M}$ of  Def. \ref{def:setM}.    Then  $\exists$ a fixed $c$
s.t.
\begin{equation}
  \|  \eta  \| _{L^p_t([0,T],W^{ 1 ,q} )}\le
  c  \epsilon   + c    \sum _{(\mu , \nu )\in {M}  }|   z ^{\mu}\overline{z}  ^{\nu}  | _{L^2_t( 0,T  )}  \text{ for all admissible pairs $(p,q)$} .
  \label{4.5}
\end{equation}

\end{lemma}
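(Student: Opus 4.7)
\textbf{Proof proposal for Lemma \ref{lem:conditional4.2}.}

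The plan is to derive a Strichartz estimate for $\eta$ by Duhamel's formula applied to the Hamiltonian evolution equation $\im \dot\eta = \nabla_{\overline\eta}\mathcal{H}(z,\eta)$ in the final normal form coordinates, and then to separate the source term on the right-hand side into a resonant part, which lives in $L^2_t L^2_x$ and produces the sum over $(\mu,\nu)\in M$, and a remainder part that is either quadratic in $\eta$ (and hence absorbable via the bootstrap) or higher order in $(z,\mathbf{Z})$. Concretely, applying $\nabla_{\overline\eta}$ to \eqref{eq:enexp401}--\eqref{eq:rest-1} one obtains schematically
\begin{equation*}
\im\dot\eta = H\eta + \sum_{j=1}^n\sum_{\mathbf{m}\in\mathcal{M}_j(2N+3)} z_j\overline{\mathbf{Z}}^{\mathbf{m}}\,\overline G_{j\mathbf{m}}(|z_j|^2) + \mathcal{N}(z,\mathbf{Z},\eta),
\end{equation*}
where $\mathcal{N}$ collects the contributions of $\mathcal{R}$ in \eqref{eq:rest1}.

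First I would invoke the Strichartz estimates for $e^{-\im tH}P_c$, which hold under (H1)--(H2) because $V\in\mathcal{S}$, $0$ is neither eigenvalue nor resonance, and $P_c\eta=\eta$. Applying them to the Duhamel formula yields
\begin{equation*}
\|\eta\|_{L^p_tW^{1,q}_x([0,T])} \le C\|\eta(0)\|_{H^1} + C\Bigl\|\sum_{j,\mathbf{m}} z_j\overline{\mathbf{Z}}^{\mathbf{m}}\,\overline G_{j\mathbf{m}}\Bigr\|_{L^2_tH^{1,2}_x}+C\|\mathcal{N}\|_{L^{p'}_tW^{1,q'}_x+L^1_tH^1_x}
\end{equation*}
for suitable admissible pairs. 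For the resonant source I would use that each $G_{j\mathbf{m}}(|z_j|^2)\in \Sigma_{r}$ with $H^1$ norm bounded uniformly for $|z_j|<\epsilon_0$, so that factoring out $H^1_x$ norms one obtains
$\sum_{j,\mathbf{m}} \|z_j\overline{\mathbf{Z}}^{\mathbf{m}}\|_{L^2_t([0,T])}$, and since $\{z_j\overline{\mathbf{Z}}^{\mathbf{m}}:j,\,\mathbf{m}\in\mathcal{M}_j(2N+3)\}$ is (up to conjugation) precisely the monomial family indexing $M=M(2N+4)$ from Def.\ref{def:setM}, this yields the second term in \eqref{4.5}.

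Next I would bound each piece of $\mathcal{N}$ using the bootstrap hypotheses \eqref{Strichartzradiation}--\eqref{L^inftydiscrete} and the estimates guaranteed by Definitions \ref{def:scalSymb}--\ref{def:opSymb}. The $E_P(\eta)=\tfrac12\|\eta\|_{L^4}^4$ contribution gives, via $\nabla_{\overline\eta}E_P=|\eta|^2\eta$, the standard cubic term handled by the endpoint-admissible Strichartz norms (Sobolev embedding $W^{1,q}\hookrightarrow L^\infty$ for $q$ close to $6$ and Hölder), contributing $\lesssim\|\eta\|_{L^p_tW^{1,q}_x}^3\lesssim (C_0\epsilon)^3$. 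The terms $\langle G_{2\mathbf{m} ij},\eta^i\overline\eta^j\rangle$ with $i+j=2$ are at least bilinear in $\eta$ (or contain an extra $\mathbf{Z}^{\mathbf{m}}$ factor), hence are of order $(C_0\epsilon)^2$ by the bootstrap. The symbols $\mathcal{R}^{0,2N+5}_{r,\infty}(z,\mathbf{Z},\eta)$ and $\langle\mathbf{S}^{0,2N+4}_{r,\infty},\overline\eta\rangle$ yield source terms of weight $|\mathbf{Z}|^{2N+4}$ or $|\mathbf{Z}|^{2N+3}\|\eta\|_{\Sigma_{-r}}$ which, combined with Lemma \ref{lem:comb1}(3) (splitting any high-order monomial into a product of two factors in $\mathcal{M}_k\cdot\mathcal{M}_l$) plus $\|z\|_{L^\infty}\le C_0\epsilon$, reduce to $L^2_t$ bounds already controlled by \eqref{L^2discrete}. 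Finally $\mathcal{R}^{1,2}_{r,\infty}(z,\eta)$ is estimated analogously; the Hamiltonian factor of $z$ and at least two factors from $(\mathbf{Z},\eta)$ enforce sufficient smallness.

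The main obstacle will be the term $\mathcal{R}^{0,2N+5}_{r,\infty}(z,\mathbf{Z},\eta)$, whose derivative in $\overline\eta$ mixes high-order monomials in $\mathbf{Z}$ with $z$ in delicate ways; here the key is Lemma \ref{lem:comb1}(3), which allows me to rewrite any such monomial of weight $\ge 2N+3$ as a product $(z_k\mathbf{Z}^{\mathbf{a}})(z_l\mathbf{Z}^{\mathbf{b}})$ with $\mathbf{a}\in\mathcal{M}_k$, $\mathbf{b}\in\mathcal{M}_l$. Using Cauchy-Schwarz one of these factors is taken in $L^\infty_t$ (cost $C_0\epsilon$ by \eqref{L^inftydiscrete}) while the other is taken in $L^2_t$ (controlled by the hypothesis in \eqref{L^2discrete}). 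Collecting all contributions gives \eqref{4.5} with a universal constant $c$ independent of $C_0$, provided $\epsilon_0$ is chosen small enough relative to $C_0$.
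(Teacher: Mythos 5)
Your proposal follows essentially the same route as the paper: Duhamel plus Strichartz estimates for $e^{-\im tH}P_c$ (Yajima), with the resonant source $\sum z_j\overline{\mathbf{Z}}^{\mathbf{m}}\overline G_{j\mathbf{m}}$ producing the $M$-indexed sum and all remaining contributions (cubic term, symbol terms, high-order $\mathbf{Z}$ monomials split via Lemma \ref{lem:comb1}) bounded by $C(C_0)\epsilon^2$ through the bootstrap hypotheses, so the constant $c$ stays fixed for $\epsilon_0$ small. The only cosmetic difference is that the paper freezes the coefficients at $G_{j\mathbf{m}}(0)$ and puts the $O(|z_j|^2)$ difference into the remainder $\mathbb{A}$, while you keep them $z$-dependent and bound their norms uniformly, which works equally well here.
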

 \proof   First of all,  for $|z|< \delta _f$ and  $\|\eta \| _{H^1\cap \mathcal{H}_{c}[0]}< \delta _f$ defining the domain of the Hamiltionian   $\mathcal{H} (z,\eta )$ in  \eqref{eq:enexp401}, we will pick $ \epsilon _0\in (0,  \delta _f )$ sufficiently small.  Let  $ \epsilon  \in (0,  \epsilon _0)$, where  $\epsilon =\| u (0)\| _{H^1} $. By \eqref{eq:coo11}  we have
 $|z '(0)|+\| \eta ' (0) \| _{X} \le  c_1   \epsilon $, where  $(z '(0), \eta ' (0))$ are the  coordinates in the initial system of coordinates introduced in Lemma \ref{lem:systcoo}.
 Let   $(z  (0), \eta   (0))$ be the corresponding  coordinates in the final system of coordinates. Then by the relation \eqref{eq:diff2}, if $\epsilon _0$ is sufficiently small we conclude that
 \begin{equation}
  |z  (0)|+\| \eta   (0) \| _{H^1} \le  c_1'   \epsilon
  \label{4.5id}
\end{equation}
 for some other fixed constant $ c_1'$. We now turn to the equation  of
 $\eta$.
We have  for $\overline{G}_{j \textbf{m}}   = \overline{G}_{j \textbf{m}} (0  )$
\begin{equation} \label{eq:eq f} \begin{aligned} &
 \im \dot \eta =\im \{ \eta , \mathcal{H}  \} =    H \eta  + \sum _{j =1}^n \sum _{l=1 }^{ 2N+3 }	\sum _{   |\textbf{m}  |  =l}  {z}_j \overline{\textbf{Z}}^{\textbf{m}}
  \overline{G}_{j \textbf{m}}  + \mathbb{A} \text{  where}\\&
    \mathbb{A}:=
    \sum _{j =1}^n \sum _{l=1 }^{ 2N+3 }	\sum _{   |\textbf{m}  |  =l}  {z}_j \overline{\textbf{Z}}^{\textbf{m}}
  [\overline{G}_{j \textbf{m}} (|z_j|^2  )  - \overline{G}_{j \textbf{m}}  ] + \nabla _{\overline \eta} \resto   .
\end{aligned}\end{equation}
We rewrite
\begin{equation} \label{4.9} \begin{aligned} &
 \sum _{j =1}^n \sum _{l=1 }^{ 2N+3 }	\sum _{   |\textbf{m}  |  =l}  {z}_j \overline{\textbf{Z}}^{\textbf{m}}
  \overline{G}_{j \textbf{m}}= \sum _{(\mu , \nu )\in {M}  }    \overline{z} ^{\mu} {z}  ^{\nu} \overline{G} _{\mu \nu}.
\end{aligned}\end{equation}
Notice that \eqref{L^2discrete} is the same as
 \begin{align}
&    \|   {z} ^{\mu} \overline{{z}}  ^{\nu} \| _{L^2_t(I)}\le
  C   \epsilon \text{ for all     $(\mu , \nu )\in {M}  $ } .\label{L^2disbis}
\end{align}
Suppose we can show that for $I_T:= [0,T]$
\begin{equation} \label{eq:eq A} \begin{aligned} &
 \| \mathbb{A} \|  _{ L^2 (I_T, H ^{1, S}) + L^1 (I_T , H^1) } \le C (S,C_0) \epsilon ^2.
\end{aligned}\end{equation}
Then,   if $\epsilon _0 $  is small enough    and $\epsilon \in (0,\epsilon _0 )$, we obtain \eqref{4.5}
by $H ^{1, S}(\R ^3)\hookrightarrow W^{1,\frac{6}{5}}(\R ^3)$,
    by  \eqref{4.5id}, \eqref{L^2disbis} and   \eqref{eq:eq A}
 and by the   Strichartz estimates, which, for $P_c $ the orthogonal projection of $L^2$ onto $\mathcal{H}[0]$,  are valid for  $P_cH$
 by \cite{Y1} (here notice that all the terms in \eqref{eq:eq f} belong to $\mathcal{H}[0]$).

So now we prove  \eqref{eq:eq A}.
We have    for   $r-1\ge S>9/2$
\begin{equation} \label{4.88} \begin{aligned} &
\| {z}_j \overline{\textbf{Z}}^{\textbf{m}}
  [\overline{G}_{j \textbf{m}} (|z_j|^2  )  - \overline{G}_{j \textbf{m}}  ]   \|  _{  L^2 (I_T, H ^{1, S})} \le \| {z}_j \overline{\textbf{Z}}^{\textbf{m}}
    \|  _{  L^2 (I_T, \C)}    \|  \overline{G}_{j \textbf{m}} (|z_j|^2  )  - \overline{G}_{j \textbf{m}}    \|  _{  L^\infty (I_T, H ^{1, S})} \\& \le C_0  \epsilon
       \sup \{ \|  {G}_{j \textbf{m}} '(|z_j|^2  )\| _{\Sigma _{r}}:  |z_j|\le   \delta _{0} \}
        \|   z_j ^2     \|  _{  L^\infty (I_T, \C)} \le   C C_0^3 \epsilon ^{3} < c\epsilon .
\end{aligned}\end{equation}
We have for a fixed $c_1>0$
\begin{equation} \label{4.8} \begin{aligned} &
 \| \nabla _\eta E_P (\eta )   \|  _{L^1 (I_T , H^1) } = 2 \|  |\eta  | ^2 \eta    \|  _{L^1 (I_T , H^1) }   \le   c_1  \|    \eta    \|  _{L^\infty  (I_T , H^1) } \|    \eta    \| ^2  _{L^2 (I_T , L^6) }\le c_1 C_0^3   \epsilon ^3  .
\end{aligned}\end{equation}
 We finally show that for an arbitrarily preassigned $S>2$
\begin{align} &
  \| R_1  \|  _{ L^2 (I_T, H ^{1, S})} \le C (S,C_0) \epsilon ^2  \text{ for $R_1 =\nabla _\eta ( \resto  -E_P (\eta ))$.} \label{4.6}
\end{align}
$R_1$ is a sum of various term obtained from the expansion \eqref{eq:rest1}.
Let us start by showing
\begin{align} &
  \|  \nabla _{\overline{\eta}  }\resto ^{1,  2  }_{r,\infty} (z, \eta )   \|  _{ L^2 (I_T, H ^{1, S})} \le C (S,C_0) \epsilon ^2 . \label{4.7}
\end{align}
Recalling \eqref{eq:scalSymb2},  it is elementary to show that $\nabla _{\overline{\eta}  }\resto ^{1,  2  }_{r,\infty} (z, \eta ) =\mathbf{S}^{1,  1  }_{r,\infty} (z, \eta )$ and
\begin{equation*}  \begin{aligned} &
 \| \mathbf{S}^{1,  1  }_{r,\infty} (z, \eta )   \|  _{ L^2 (I_T, H ^{1, S})} \le C _1
 \|  (\|  \eta \| _{\Sigma   _{-r }}+  |z |)   \|  _{ L^\infty (I_T )}
 \|  \eta     \|  _{ L^2 (I_T, \Sigma   _{-r })} \\& \le C_2  \|  (\|  \eta \| _{H^1}+  |z |)   \|  _{ L^\infty (I_T )}
 \|  \eta     \|  _{ L^2 (I_T, L^6)}  \le C
 (S,C_0) \epsilon ^2 .
\end{aligned}\end{equation*}
We next show
\begin{align} &
  \|  \nabla _{\overline{\eta}  }\resto ^{0,  2N+5 }_{r,\infty} (z,\textbf{Z}  ,\eta )   \|  _{ L^2 (I_T, H ^{1, S})} \le C (S,C_0) \epsilon ^2 . \label{4.17}
\end{align}
We have, for a reminder   $\| O (  \| \eta  \|  _{ \Sigma _{-r}}^2   )  \|  _{ \Sigma _{ r}}\le C \| \eta  \|  _{ \Sigma _{-r}}^2 $   easily shown to satisfy an inequality like \eqref{4.17},
\begin{equation*}  \begin{aligned} &
   \nabla _{\overline{\eta}  }\resto ^{0,  2N+5 }_{r,\infty} (z,\textbf{Z}  ,\eta ) = \mathbf{S} ^{0,  2N+4 }_{r,\infty} (z,\textbf{Z}  ,\eta )  \\& =  \mathbf{S} ^{0,  2N+4 }_{r,\infty} (z,\textbf{Z}   ) + d _\eta  \mathbf{S} ^{0,  2N+4 }_{r,\infty} (z,\textbf{Z}  ,0 )
   \cdot \eta + O (  \| \eta  \|  _{ \Sigma _{-r}}^2   ) .
\end{aligned}\end{equation*}
We have  by Lemma \ref{lem:comb1}
\begin{equation*}  \begin{aligned} &
 \| \mathbf{S} ^{0,  2N+4 }_{r,\infty} (z,\textbf{Z}   )    \|  _{ L^2 (I_T, H ^{1, S})} \le C _1 \sup_{|z|\le C_0\epsilon} \| \mathbf{S} ^{0,  0 }_{r,\infty} (z,\textbf{Z}   )    \|  _{ \Sigma_{M'}}   \|   |\mathbf{Z} |  ^{2N+4 } \|  _{ L^2 (I_T)} \\& \le C_2     \|  z \|  _{ L^\infty (I)}  \sum _{j} \sum _{(\mu , \nu )\in M_j(N+1)}  \|  z ^{\mu}\overline{z}  ^{\nu}    \|  _{ L^\infty (I_T)}    \|  z ^{\mu}\overline{z}  ^{\nu}   \|  _{ L^2 (I_T)}
  \le C
 (S,C_0) \epsilon ^3.
\end{aligned}\end{equation*}
We have
\begin{equation*}  \begin{aligned} &
 \| d _\eta  \mathbf{S} ^{0,  2N+4 }_{r,\infty} (z,\textbf{Z}  ,0 )
   \cdot \eta  \|  _{ L^2 (I_T, H ^{1, S})}  \le C _1(S)    \|  \eta  \|  _{ L^2 (I_T, \Sigma_{-r})}\sup_{|z|\le C_0 \epsilon } \| d _\eta  \mathbf{S} ^{0,  2N+4 }_{r,\infty} (z,\textbf{Z}  ,0 )   \|  _{ \Sigma_{-r}  \to \Sigma_{r}}   \\& \le C _2(S)  \|  \eta  \|  _{ L^2 (I_T , L^6)} \sup_{|z|\le C_0 \epsilon } |  \textbf{Z} |   ^{2N+3}
  \le C
 (S,C_0) \epsilon  ^2
\end{aligned}\end{equation*}
Hence \eqref{4.17}  is proved.
 Other terms in $R_1$ can be bounded with similarly elementary  arguments,
 yielding  \eqref{4.6}.  Then  \eqref{4.88},   \eqref{4.8} and  \eqref{4.6}
 imply  \eqref{eq:eq A}.

   \qed

 Setting $M= M (2N+4)$, see  Def. \ref{def:setM},
  we now introduce a new variable $g$ setting
\begin{equation} \label{eq:def g} \begin{aligned} &
 g   =  \eta +Y \text{   with } Y:=
  \sum _{(\alpha , \beta )\in M   }   \overline{{z}}^\alpha  {{z}}^\beta   R_H^{+}( {\textbf{e} }     \cdot (\beta  -
\alpha  ))
  \overline{G}_{\alpha \beta  }     .
\end{aligned}\end{equation}

\begin{lemma}\label{lem:bound g}  Assume the hypotheses of Prop. \eqref{prop:mainbounds} and fix $S>9/2$. Then  there is a $c_1(S)>0$
 s.t.
 for any $C_0$ there is a   $\epsilon _0=\epsilon _0(C_0,S) >0$   such that for $\epsilon \in (0, \epsilon _0)$ in Theor.\ref{thm:small en} we have
\begin{equation} \label{bound:auxiliary}\| g
\| _{L^2 ([0,T], L^{2,-S}  )}\le c_1(S) \epsilon  .\end{equation}
\end{lemma}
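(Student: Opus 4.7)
The auxiliary variable $g$ is designed so that the resonant source terms in the equation \eqref{eq:eq f} for $\eta$ are absorbed into $\dot Y$ modulo terms small enough to be handled by Kato smoothing. The strategy is thus: (i) derive an equation of the form $\im\dot g=Hg+F$ in which the resonant summation \eqref{4.9} has been canceled exactly; (ii) estimate $F$ in a smoothing-compatible norm using the inductive hypotheses \eqref{Strichartzradiation}--\eqref{L^inftydiscrete}; (iii) apply the Kato smoothing estimate for $P_ce^{-\im tH}$ to conclude.

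For step (i), I would differentiate \eqref{eq:def g}. From the Hamiltonian \eqref{eq:enexp401} together with \eqref{eq:hamf1}, the equations of motion for the discrete coordinates read $\dot z_j=-\im e_jz_j+\widetilde r_j$ with $\widetilde r_j=\resto^{1,1}_{r,\infty}(z,\mathbf Z,\eta)$, so
\begin{equation*}
\im\dot Y=\sum_{(\alpha,\beta)\in M}\mathbf e\cdot(\beta-\alpha)\,\overline z^{\alpha}z^{\beta}\,R_H^+(\mathbf e\cdot(\beta-\alpha))\overline G_{\alpha\beta}+F_1,
\end{equation*}
where $F_1$ gathers all contributions in which some factor $z_j$ or $\overline z_j$ of the weight has been replaced by $\widetilde r_j$ or $\overline{\widetilde r}_j$. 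Substituting $\eta=g-Y$ into \eqref{eq:eq f} and using the resolvent identity $HR_H^+(\lambda)=1+\lambda R_H^+(\lambda)$, one obtains
\begin{equation*}
HY=\sum_{(\alpha,\beta)\in M}\overline z^{\alpha}z^{\beta}\overline G_{\alpha\beta}+\sum_{(\alpha,\beta)\in M}\mathbf e\cdot(\beta-\alpha)\,\overline z^{\alpha}z^{\beta}R_H^+(\mathbf e\cdot(\beta-\alpha))\overline G_{\alpha\beta}.
\end{equation*}
Combining the three expressions, the resonant sums cancel exactly against \eqref{4.9} and against the leading piece of $\im\dot Y$, leaving $\im\dot g=Hg+F_1+\mathbb A$.

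For steps (ii) and (iii), I would note that $g\in\mathcal H_c[0]$ (since $R_H^+(\lambda)\overline G_{\alpha\beta}$, applied to a Schwartz function orthogonal to $\phi_1,\dots,\phi_n$, lies in $\mathcal H_c[0]$ distributionally by the limiting absorption principle) and apply Duhamel. The Kato smoothing estimate for $P_ce^{-\im tH}$ from \cite{Y1} gives
\begin{equation*}
\|g\|_{L^2_t(I_T,L^{2,-S})}\le C\bigl(\|g(0)\|_{L^2}+\|F_1+\mathbb A\|_{L^2_t(I_T,L^{2,S})+L^1_t(I_T,L^2)}\bigr),
\end{equation*}
with $\|g(0)\|_{L^2}\le C\epsilon$ from \eqref{4.5id}. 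The bound on $\mathbb A$ is precisely \eqref{eq:eq A}. Each term of $F_1$ has the form (temporal polynomial of $z$)$\cdot R_H^+(\lambda)\overline G_{\alpha\beta}$ whose spatial factor, by the limiting absorption principle applied to the Schwartz function $\overline G_{\alpha\beta}$, is a fixed element of $\Sigma_{-S'}$ for any $S'>1/2$ and hence bounded in $L^{2,S}$ for $S>9/2$; the temporal factor, thanks to the extra $\widetilde r_j=O(\epsilon^2)$ together with Lemma \ref{lem:comb1} and the resonant $L^2_t$-bound \eqref{L^2discrete}, is itself $O(\epsilon^2)$ in $L^2_t$. The main obstacle, which is the crux of the matter, is the algebraic cancellation in step (i): one must verify that the index sets and coefficients in \eqref{eq:def g} precisely match those of \eqref{4.9}, and that the identity $HR_H^+(\lambda)-\lambda R_H^+(\lambda)=1$ removes the non-dispersive contribution. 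Once that cancellation is in place, the rest is a routine application of the symbol-class machinery of Section \ref{section:set up} and the bootstrap bounds \eqref{Strichartzradiation}--\eqref{L^inftydiscrete}.
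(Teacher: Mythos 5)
Your step (i) is exactly the paper's starting point: the identity $HR_H^+(\lambda)=1+\lambda R_H^+(\lambda)$ together with the leading rotation $\im\dot z_j\approx e_jz_j$ cancels the resonant sum \eqref{4.9}, and your $F_1$ is precisely the term $\mathbf{T}$ in the paper's equation \eqref{eq:eq g}. The gap is in steps (ii)--(iii). Every $L\in\Lambda$ lies in the interior of the continuous spectrum of $H$ (by \eqref{eq:FGR22} and (H4), $\Lambda$ is a compact subset of $(0,\infty)$), so the limiting absorption principle only gives $R_H^+(L)\overline G_{\alpha\beta}\in L^{2,-s}$, $s>1/2$: this function is an outgoing wave behaving like $e^{\im\sqrt L|x|}/|x|$ at infinity, and it belongs neither to $L^2$ nor, a fortiori, to $L^{2,S}$ with $S>9/2$ (your inference ``element of $\Sigma_{-S'}$, hence bounded in $L^{2,S}$'' reverses the weight). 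Consequently (a) your bound $\|g(0)\|_{L^2}\le C\epsilon$ fails, since $g(0)=\eta(0)+Y(0)$ and $Y(0)\notin L^2$; and (b) you cannot place $F_1$ in the dual space $L^2_t(I_T,L^{2,S})+L^1_t(I_T,L^2)$, because its spatial profile is exactly $R_H^+(L)\overline G_{\alpha\beta}$. So the plain Kato-smoothing/Strichartz Duhamel estimate you invoke does not close for either the initial datum or the $F_1$ forcing.

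The missing analytic input is the local decay estimate of Lemma \ref{lem:lemg9}: $\|e^{-\im Ht}R_H^{+}(\lambda)P_cv_0\|_{L^{2,-S}}\le c(S,\Lambda)\langle t\rangle^{-3/2}\|P_cv_0\|_{L^{2,S}}$, applied with $v_0=\overline G_{\alpha\beta}$ Schwartz. This is what the paper uses both for the free evolution of $Y(0)$ (the scalar coefficient is $O(\epsilon^2)$ and $\langle t\rangle^{-3/2}\in L^2_t$) and inside the Duhamel integral for $\mathbf T$, where it yields a convolution against the $L^1_t$ kernel $\langle\cdot\rangle^{-3/2}$; Young's inequality then reduces everything to $\|\tfrac{\overline z^{\alpha}z^{\beta}}{z_j}(\im\dot z_j-e_jz_j)\|_{L^2(0,T)}\lesssim\epsilon^2$, which is proved by substituting \eqref{eq:FGR01} term by term (each term of $\im\dot z_j-e_jz_j$, e.g.\ $\partial_{\overline z_j}\mathcal Z_0$, supplies the factor compensating the division by $z_j$, after which \eqref{L^2discrete}, \eqref{Strichartzradiation} and \eqref{eq:bd R} apply); this is a point your ``temporal factor is $O(\epsilon^2)$ in $L^2_t$'' claim would also need to make precise. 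Only the pieces $e^{-\im Ht}\eta(0)$ and $\mathbb A$ are handled by Strichartz/smoothing as you propose, via \eqref{4.5id} and \eqref{eq:eq A}. Without Lemma \ref{lem:lemg9} (or an equivalent weighted dispersive bound for $e^{-\im Ht}R_H^{+}(\lambda)P_c$), the argument does not go through.
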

\proof
We have \begin{equation} \label{eq:eq g} \begin{aligned} &
 \im \dot g =     H g  +   \mathbb{A} + \mathbf{T} \text{ where }      \textbf{T} :=\sum _j \left [\partial _{z_j}Y (\im \dot z_j- {e}_jz_j)+\partial _{\overline{z}_j}Y (\im \dot {\overline{z}}_j+ {e}_j\overline{z}_j)
 \right ] .
\end{aligned}\end{equation}
We then have
\begin{equation} \label{eq:exp g} \begin{aligned} &
 g(t)= e^{-\im Ht} \eta (0) + e^{-\im Ht} Y (0)-\im \int _0^t  e^{-\im H(t-s)}  (\mathbb{A} (s) +\textbf{T} (s) ) ds .
\end{aligned}\end{equation}
We have  for fixed constants  by \eqref{4.5id}  and \eqref{eq:eq A}   the following inequalities:
\begin{equation*}   \begin{aligned} &
\|  e^{-\im Ht} \eta (0)
\| _{L^2 ([0,T], L^{2,-S} )} \le c_2  \|  e^{-\im Ht} \eta (0)
\| _{L^2 ([0,T], L^ 6 )} \le  c_2 '  \|   \eta (0)
\| _{L^2} \le   c_3  \epsilon ; \\&\|  \int _0^t  e^{-\im H(t-s)}   \mathbb{A} (s)   ds
\| _{L^2 ([0,T], L^{2,-S} )} \le c_2   \|   \mathbb{A}
\| _{L^2 ([0,T],  H ^{1,S} )+ L^1 ([0,T],  H ^{1 } ) } \le   C(C_0,S)  \epsilon ^2 .
\end{aligned}\end{equation*}
 For a proof of the following  standard lemma see for instance to the proof of Lemma 5.4   \cite{CuRMP}.

\begin{lemma}\label{lem:lemg9}
Let $ \Lambda$ be a
compact subset of $( 0,\infty) $
and let $S >9/2$. Then  there exists a fixed $ c(S,\Lambda)$ s.t.
  for every $t \ge 0$ and  $\lambda \in  \Lambda$
\begin{equation*} \label{eq:lemg91}
\|  e^{-\im H  t}R_{ H }^{+}( \lambda )
P_cv_0 \|_{L^{ 2, - S}(\R^3)} \le c(S,\Lambda)\langle  t\rangle ^{-\frac 32} \| P_cv_0  \|_{L^{ 2,   S}(\R^3)}  \text{  for all $v_0\in L^{ 2,   S}(\R^3)$} .
\end{equation*}
\end{lemma}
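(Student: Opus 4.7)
The plan is to reduce the estimate to the standard dispersive bound for the spectral density of $H$ via integration-by-parts in the spectral parameter. Starting from the resolvent identity
\[
R_H^+(\lambda)\bigl[R_H^+(\mu)-R_H^-(\mu)\bigr] = \frac{R_H^+(\mu)-R_H^-(\mu)}{\mu-\lambda-\im 0}
\]
together with Stone's formula $e^{-\im tH}P_c = \tfrac{1}{2\pi \im}\int_0^\infty e^{-\im t\mu}[R_H^+(\mu)-R_H^-(\mu)]\,d\mu$, I would first rewrite
\[
e^{-\im tH}R_H^+(\lambda)P_c v_0 \;=\; \frac{1}{2\pi \im}\int_0^\infty \frac{e^{-\im t\mu}}{\mu-\lambda-\im 0}\bigl[R_H^+(\mu)-R_H^-(\mu)\bigr] v_0\,d\mu.
\]

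Next, by Sokhotski--Plemelj I would split $(\mu-\lambda-\im 0)^{-1} = \mathrm{p.v.}\,(\mu-\lambda)^{-1}+\im\pi\,\delta(\mu-\lambda)$. The $\delta$-contribution is $\tfrac12\,e^{-\im t\lambda}[R_H^+(\lambda)-R_H^-(\lambda)]v_0$; the required bound for this term is exactly the dispersive estimate for the spectral projector $\delta(H-\lambda)P_c$, which is Lemma 5.4 of \cite{CuRMP} and follows from stationary phase applied to the distorted Fourier transform of $H$. For the principal-value term I would insert a partition of unity in $\mu$ separating: (i) a small neighbourhood of $\mu=\lambda$, (ii) a small neighbourhood of $\mu=0$, and (iii) the complement, extending to $\mu\to\infty$. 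On (iii) the integrand is smooth in $\mu$, and integrating by parts twice against $e^{-\im t\mu}$ produces a factor $t^{-2}$; this is controlled using the Jensen--Kato smoothness $\mu\mapsto[R_H^+(\mu)-R_H^-(\mu)]\in C^k((0,\infty),B(L^{2,S},L^{2,-S}))$, itself a consequence of (H1). On (ii), the low-energy behaviour $\mu^{1/2}$ of the spectral density combined with one integration by parts and a Van der Corput estimate gives exactly the $\langle t\rangle^{-3/2}$ rate, with the no-resonance hypothesis (H2) providing the required regularity down to $\mu=0$. On (i) I would remove the singularity using the difference quotient
\[
\frac{[R_H^+(\mu)-R_H^-(\mu)]v_0 \;-\; [R_H^+(\lambda)-R_H^-(\lambda)]v_0}{\mu-\lambda},
\]
which, by the smoothness in (iii), is again a smooth $B(L^{2,S},L^{2,-S})$-valued function of $\mu$; the resulting integral is treated like (iii).

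The main obstacle will be uniformity in $\lambda\in\Lambda$. Because $\Lambda$ is compact in $(0,\infty)$, it is bounded away from both $0$ and $\infty$, so the partition of unity above can be chosen once and for all, and every ingredient --- the limiting absorption principle $\sup_{\mu\in\Lambda}\|R_H^\pm(\mu)\|_{L^{2,S}\to L^{2,-S}}<\infty$, the Jensen--Kato derivative bounds on $[R_H^+(\mu)-R_H^-(\mu)]$, and the low/high-energy dispersive estimates for $\delta(H-\mu)P_c$ --- depends only on $S$, $V$, and $\mathrm{dist}(\Lambda,\{0\})+\sup\Lambda$. The delicate steps are the matching of the stationary-phase gain at $\mu=0$ with the pole at $\mu=\lambda$, and the verification that $S>9/2$ is sufficient (rather than the weaker $S>3/2$ that handles just $e^{-\im tH}P_c$) to afford the extra derivative absorbed by the factor $(\mu-\lambda-\im 0)^{-1}$; both issues are standard once the partition is in place.
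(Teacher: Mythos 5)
Your Sokhotski--Plemelj decomposition destroys exactly the structure that makes the estimate true, and the step where you dispose of the $\delta$-term is a genuine gap. The term $\im\pi e^{-\im t\lambda}\delta(H-\lambda)v_0$ is a unimodular scalar times a \emph{fixed} element of $L^{2,-S}$: its norm is independent of $t$ and strictly positive for generic $v_0\in L^{2,S}$ (by the trace formula recalled in the proof of Lemma \ref{lem:pos1}, $\langle \delta(H-\lambda)v_0,\overline{v}_0\rangle$ is a positive multiple of $\int_{|\xi|=\sqrt{\lambda}}|\widehat{v}_0(\xi)|^2\,d\sigma$). So there is no "dispersive estimate for the spectral projector" giving $\langle t\rangle^{-3/2}$ for this piece: $e^{-\im tH}\delta(H-\lambda)=e^{-\im t\lambda}\delta(H-\lambda)$ does not decay at all, and consequently the principal-value piece cannot decay either (their sum does, so for large $t$ the p.v.\ piece has norm bounded below). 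A quick sanity check exposes the problem: your argument is symmetric in the $\pm\im 0$ prescription and would equally yield the bound for $e^{-\im tH}R_H^{-}(\lambda)P_c$ with $t\ge 0$; but $R_H^{+}(\lambda)-R_H^{-}(\lambda)=2\pi\im\,\delta(H-\lambda)$ has constant, nonzero $L^{2,S}\to L^{2,-S}$ norm under the evolution, so the two bounds cannot both hold. Note also that the paper defers to the proof of Lemma 5.4 in \cite{CuRMP} precisely for the present estimate, so invoking that lemma as an independent time-decay bound for $\delta(H-\lambda)P_c$ is both circular and, as just observed, impossible.

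The decay in this lemma is a one-sided phenomenon: it holds for $R_H^{+}$ with $t\ge 0$ (and for $R_H^{-}$ with $t\le 0$) because the pole $\mu=\lambda+\im 0$ lies on the side of the real axis into which $e^{-\im t\mu}$ is damped when $t>0$. Any correct argument must keep the factor $(\mu-\lambda-\im 0)^{-1}$ intact and exploit this correlation, for instance through the identity $e^{-\im tH}R_H^{+}(\lambda)P_c=\im\int_t^{\infty}e^{\im\lambda(s-t)}e^{-\im sH}P_c\,ds$ (valid as forms between the weighted spaces), upgraded from the naive $\langle t\rangle^{-1/2}$ it gives directly by integration by parts in the spectral variable, using the Jensen--Kato smoothness of the weighted spectral density at energies near $\lambda$ -- which is where the compactness of $\Lambda\subset(0,\infty)$ and the size of $S$ actually enter -- or, equivalently, through a contour deformation in $\mu$ in which the pole produces an exponentially damped contribution for $t>0$. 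Your treatment of the threshold $\mu=0$, of the high-energy tail, and of uniformity in $\lambda\in\Lambda$ is reasonable in spirit, but without repairing the pole contribution the proof does not go through; this repaired argument is the content of the proof in \cite{CuRMP} to which the paper refers.
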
\qed

\noindent  By   {Lemma} \ref{lem:lemg9}, by      \eqref{4.5id}  and by $ {G}_{\alpha \beta  }=P_c {G}_{\alpha \beta  }$  we have
\begin{equation*}   \begin{aligned} &
\|  e^{-\im Ht} Y (0)
\| _{L^2 ([0,T], L^{2,-S} )} \le  \sum _{(\alpha , \beta )\in M   } | {{z}}^\alpha  (0)  {{z}}^\beta  (0)|\  \|  e^{-\im Ht}
%\overline{{z}}^\alpha  {{z}}^\beta
   R_H^{+}( {\textbf{e} }     \cdot (\beta  -
\alpha  ))
  \overline{G}_{\alpha \beta  }
\| _{L^2 ([0,T], L^{2,-S} )}
\\&  \le ( \sharp   M  ) c_2 \epsilon ^2  \|  \langle  t\rangle ^{-\frac 32}
\| _{L^2 (0,T) }  c(S,\Lambda)    \|  \overline{G}_{\alpha \beta  }   \|_{L^{ 2,   S} }
 \le   C(N,C_0,S)  \epsilon ^2
\end{aligned}\end{equation*}
with $\sharp   M$ the cardinality of $M$ and a fixed $c_2$ and where   the following set $\Lambda$ is as in {Lemma} \ref{lem:lemg9},
\begin{equation}  \label{eq:FGR22}    \begin{aligned}   &   \Lambda := \{ (\nu - \mu ) \cdot \mathbf{e}    :   (\mu  , \nu )\in M \}  .
\end{aligned}    \end{equation}
We finally consider, for definiteness  (the term $\partial _{\overline{z}_j}Y (\im \dot {\overline{z}}_j+ {e}_j\overline{z}_j)$ can be treated similarly)
\begin{equation} \label{eq:bdY1}  \begin{aligned} &
\|  \int _0^t  e^{-\im H(t-s)}  R_H^{+}( {\textbf{e} }     \cdot (\beta  -
\alpha  ))
  \overline{G}_{\alpha \beta  }  \partial _{z_j}Y(s) (\im \dot z_j- {e}_jz_j)(s)   ds
\| _{L^2 ([0,T], L^{2,-S} )}\\&  \le c(S,\Lambda)
 \sum _{(\alpha , \beta )\in M   } \|  {G}_{\alpha \beta  } \| _{L^{2, S}} \beta _j
\|  \int _0^t   \langle t-s\rangle ^{-\frac{3}{2}} |\frac{ \overline{{z}} ^\alpha  (s)  {{z}}^\beta  (s)}{z_j(s)} (\im \dot z_j- {e}_jz_j)(s) |  ds
\| _{L^2 (0,T) }  \\&  \le c( S,\Lambda)   c_2  \sum _{(\alpha , \beta )\in M   }  \beta _j
\|     \frac{ \overline{{z}} ^\alpha  (s)  {{z}}^\beta   }{z_j } (\im \dot z_j- {e}_jz_j)
\| _{L^2 (0,T) }  ,
\end{aligned}\end{equation}
for  fixed $c_2$.
We have
\begin{equation}\label{eq:FGR01} \begin{aligned} &
\im \dot z _j
=(1 +   \varpi  _j(|z_j|^2)) (e_jz_j+
\partial _{\overline{z}_j}\mathcal{Z}_0(|z _1|^2, ..., |z _n|^2) +
\partial _{  \overline{z} _j}   \mathcal{R}) \\  &
+  (1 +   \varpi  _j(|z_j|^2))[   \sum _{ (\mu , \nu )\in M }    \nu _j  \frac{z  ^{\mu }
 \overline{ {z }}^ { {\nu}  } }{\overline{z}_j}
\langle \eta  ,
  {G}_{\mu \nu }  \rangle      +  \sum _{(\mu ', \nu' )\in M}    \mu _j ' \frac{z  ^{\nu '}
 \overline{ {z }}^ { {\mu}'  } }{\overline{z}_j}
\langle \overline{\eta}  ,
  \overline{{G}}_{\mu' \nu' }  \rangle ]\\&
+(1 +   \varpi  _j(|z_j|^2))[\sum_{\mathbf m \in \mathcal M_j (2N+3)}|z_j|^2\mathbf Z^{\mathbf m}\<G_{j\mathbf m}',\eta\>+ z_j^2 \overline{\mathbf{Z}}^{\mathbf m}\<\overline G_{j\mathbf m}',\overline \eta\>]  .
\end{aligned}  \end{equation}
To bound \eqref{eq:bdY1} we  substitute  $(\im \dot z _j
-e_jz_j)$ by the other terms in \eqref{eq:FGR01}  in the last line of \eqref{eq:bdY1} . So for example we have $\partial _{\overline{z}_j}\mathcal{Z}_0(|z _1|^2, ..., |z _n|^2)  \sim z_j O(\epsilon)$
which by \eqref{L^2disbis} yields
\begin{equation*}   \begin{aligned} &
  \beta _j
\|     \frac{ \overline{{z}} ^\alpha     {{z}}^\beta   }{z_j }
 \partial _{\overline{z}_j}\mathcal{Z}_0(|z _1|^2, ..., |z _n|^2) \| _{L^2 (0,T) }  \le C(C_0) \epsilon  \|     \overline{{z}} ^\alpha     {{z}}^\beta     \| _{L^2 (0,T) } \le C(C_0)C_0 \epsilon ^2 .
\end{aligned}\end{equation*}
 For  $(\mu , \nu )\in M$ we have  in $(0,T)$
\begin{equation*}   \begin{aligned} &
  \beta _j \nu _j
\|     \frac{ \overline{{z}} ^\alpha     {{z}}^\beta   }{z_j }  \frac{z  ^{\mu }
 \overline{ {z }}^ { {\nu}  } }{\overline{z}_j}
 \langle \eta  ,
  {G}_{\mu \nu }  \rangle \| _{L^2 _t }  \le  \beta _j \nu _j
  \|     \frac{ \overline{{z}} ^\alpha     {{z}}^\beta   }{z_j }  \frac{z  ^{\mu }
 \overline{ {z }}^ { {\nu}  } }{\overline{z}_j} \| _{L^\infty _t }
 \|  {G}_{\mu \nu }   \| _{L ^{\frac{6}{5}}   } \| \eta  \| _{L^\infty _t  L^6  }
 \le C(C_0) \epsilon ^2.
\end{aligned}\end{equation*}
A similar argument works for the terms in the 2nd summation in the 2nd line of
\eqref{eq:FGR01}. Finally
\begin{equation*}   \begin{aligned} &
  \beta _j
\|     \frac{ \overline{{z}} ^\alpha     {{z}}^\beta   }{z_j }
 \partial _{\overline{z}_j}\mathcal{R}  \| _{L^2 (0,T) }
\le  \beta _j
\|     \frac{ \overline{{z}} ^\alpha     {{z}}^\beta   }{z_j } \| _{L^\infty (0,T) }
 \|  \partial _{\overline{z}_j}\mathcal{R}  \| _{L^2 (0,T) }
\le C(C_0)   \epsilon ^3
\end{aligned}\end{equation*}
  is a consequence of   the bound
\begin{equation} \label{eq:bd R}  \begin{aligned} &
   \|
 \partial _{\overline{z}_j}\mathcal{R}  \| _{L^p (0,T)   }  \le  C(C_0)   \epsilon ^2
 \text{ for any $p\in [1, \infty ]$.}
\end{aligned}\end{equation}
Here we need to check  \eqref{eq:bd R} term by term   for the sum in the r.h.s. of \eqref{eq:rest1}. This is straightforward  using     \eqref{eq:scalSymb},  \eqref{eq:scalSymb2}  and \eqref{eq:opSymb} and the fact,
 stated in Lemma  \ref{lem:birkflow1}, that
 $G_{2\textbf{m} ij }    $   and    $G_{d ij }  $ are   $\textbf{S} ^{0,  0} _{r,\infty }  $.

 \qed

We turn now to the   Fermi Golden Rule (FGR).  We   substitute \eqref{eq:def g} in \eqref{eq:FGR01}
getting
\begin{equation}\label{eq:FGR02}
\begin{aligned}
\im \dot z _j
&= (1 +   \varpi  _j(|z_j|^2))(  e_j z_j+
\partial _{\overline{z}_j}\mathcal{Z}_0(|z _1|^2, ..., |z _n|^2) ) \\&   -\sum _{ \substack{  (\mu , \nu )\in M \\
(\alpha , \beta )\in M}}     \nu _j  \frac{z  ^{\mu +\beta}
 \overline{ {z }}^ { {\nu} +\alpha } }{\overline{z}_j}
\langle  R_H^{+}( {\textbf{e} }     \cdot (\beta -
\alpha ))  \overline{G}_{\alpha \beta}  ,
  {G}_{\mu \nu }  \rangle  \\&
-  \sum _{ \substack{  (\mu ', \nu' )\in M \\
(\alpha ', \beta ')\in M}}         \mu _j ' \frac{z  ^{\nu '+ \alpha '}
 \overline{ {z }}^ { {\mu}'+\beta '  } }{\overline{z}_j}
\langle  R_H^{-}( {\textbf{e} }     \cdot (\beta '-
\alpha' ))  {G}_{\alpha ' \beta '}  ,
  \overline{{G}}_{\mu' \nu' }  \rangle      +
    \mathcal{F} _j  \text{, where}
\end{aligned}
\end{equation}
\begin{align} &
 \mathcal{F} _j  :=(1 +   \varpi  _j(|z_j|^2))
\partial _{  \overline{z} _j}   \mathcal{R} + \varpi  _j(|z_j|^2) [   \sum _{ (\mu , \nu )\in M }    \nu _j  \frac{z  ^{\mu }
 \overline{ {z }}^ { {\nu}  } }{\overline{z}_j}
\langle \eta  ,
  {G}_{\mu \nu }  \rangle      +  \sum _{(\mu ', \nu' )\in M}    \mu _j ' \frac{z  ^{\nu '}
 \overline{ {z }}^ { {\mu}'  } }{\overline{z}_j}
\langle \overline{\eta}  ,
  \overline{{G}}_{\mu' \nu' }  \rangle  ] \nonumber
\\&+     \sum _{ (\mu , \nu )\in M }    \nu _j  \frac{z  ^{\mu }
 \overline{ {z }}^ { {\nu}  } }{\overline{z}_j}
\langle g  ,
  {G}_{\mu \nu }  \rangle      +  \sum _{(\mu ', \nu' )\in M}    \mu _j ' \frac{z  ^{\nu '}
 \overline{ {z }}^ { {\mu}'  } }{\overline{z}_j}
\langle \overline{g}  ,
  \overline{{G}}_{\mu' \nu' }  \rangle \label{eq:FGRrem1}\\&
+(1 +   \varpi  _j(|z_j|^2))[\sum_{\mathbf m \in \mathcal M_j (2N+3)}|z_j|^2\mathbf Z^{\mathbf m}\<G_{j\mathbf m}',\eta\>+ z_j^2 \overline{\mathbf{Z}}^{\mathbf m}\<\overline G_{j\mathbf m}',\overline \eta\>]  .\nonumber
\end{align}
 We now introduce the new variable   $\zeta
 $  defined by
 \begin{equation}\label{eq:FGR21}  \begin{aligned}   &
z _j -\zeta _j =-\sum _{ \substack{  (\mu , \nu )\in M \\
(\alpha , \beta )\in M}}       \frac{\nu _jz  ^{\mu +\beta}
 \overline{ {z }}^ { {\nu} +\alpha } }{((\mu - \nu)\cdot \mathbf{e}-(\alpha - \beta )\cdot \mathbf{e}  )\overline{z}_j}
\langle  R_H^{+}( {\textbf{e} }     \cdot (\beta -
\alpha ))  \overline{G}_{\alpha \beta}  ,
  {G}_{\mu \nu }  \rangle  \\&   - \sum _{ \substack{  (\mu ', \nu' )\in M \\
(\alpha ', \beta ')\in M}}         \frac{\mu _j ' z  ^{\nu '+ \alpha '}
 \overline{ {z }}^ { {\mu}'+\beta '  } }{((\alpha ' - \beta ')\cdot \mathbf{e}-(\mu ' - \nu ' )\cdot \mathbf{e}  )\overline{z}_j}
\langle  R_H^{-}( {\textbf{e} }     \cdot (\beta '-
\alpha' ))  {G}_{\alpha ' \beta '}  ,
  \overline{{G}}_{\mu' \nu' }  \rangle
  \end{aligned}
\end{equation}
where we are summing only on pairs where the formula makes sense (i.e. only
on pairs not in a same set $M_L$ for an $L\in   \Lambda$, see \eqref{eq:FGR23} below).
It is easy to see that
\begin{equation}  \label{equation:FGR3} \begin{aligned}   & \| \zeta  -
 z  \| _{L^2(0,T)} \le  c(N,C_0) \epsilon ^2  \text{  and }  \| \zeta  -
 z \| _{L^\infty (0,T)} \le  c(N,C_0) \epsilon ^2 .
\end{aligned}
\end{equation}

 Recall now the set $\Lambda = \{ (\nu - \mu ) \cdot \mathbf{e}    :   (\mu  , \nu )\in M \}      $ defined in \eqref{eq:FGR22}.
For any $L\in \Lambda $ set
\begin{equation}  \label{eq:FGR23}    \begin{aligned}   &   M_L  := \{  (\mu  , \nu )\in M : (\nu - \mu ) \cdot \mathbf{e}   =L  \}  .
\end{aligned}    \end{equation}
We then get
\begin{equation}\label{eq:FGR251} \begin{aligned} &
\im \dot \zeta _j
=(1+\varpi(|z_j|^2))(e_j\zeta _j+
\partial _{\overline{j} }\mathcal{Z}_0(|\zeta _1|^2, ..., |\zeta _n|^2))\\& - \sum _{L\in \Lambda}  \sum _{ \substack{  (\mu , \nu )\in M _L\\
(\alpha , \beta )\in M _L }}     \nu _j  \frac{\zeta  ^{\mu +\beta}
 \overline{ {\zeta }}^ { {\nu} +\alpha } }{\overline{\zeta}_j}
\langle  R_H^{+}( {\textbf{e} }     \cdot (\beta -
\alpha ))  \overline{G}_{\alpha \beta}  ,
  {G}_{\mu \nu }  \rangle  \\&   - \sum _{L\in \Lambda}  \sum _{ \substack{  (\mu ', \nu' )\in M _L\\
(\alpha ', \beta ')\in M_L}}         \mu _j ' \frac{\zeta  ^{\nu '+ \alpha '}
 \overline{ {\zeta }}^ { {\mu}'+\beta '  } }{\overline{\zeta}_j}
\langle  R_H^{-}( {\textbf{e} }     \cdot (\beta '-
\alpha' ))  {G}_{\alpha ' \beta '}  ,
  \overline{{G}}_{\mu' \nu' }  \rangle      +
    \mathcal{G} _j  ,
\end{aligned}  \end{equation}
where for some $A_{k\alpha\beta\mu\nu}, B_{k\alpha\beta\mu\nu}$ we have
\begin{equation}  \label{eq:restG}
\begin{aligned}
&  \mathcal{G} _j =   \mathcal{F} _j +(1+\varpi(|z_j|^2))[  {\partial} _{ \overline{j}}\mathcal{Z}_0(|z_1|^2, ..., |z_n|^2) -{\partial} _{ \overline{j}}\mathcal{Z}_0(|\zeta _1|^2, ..., |\zeta _n|^2)]\\&
-e_j \varpi(|z_j|^2)[\sum _{ \substack{  (\mu , \nu )\in M \\
(\alpha , \beta )\in M}}       \frac{\nu _jz  ^{\mu +\beta}
 \overline{ {z }}^ { {\nu} +\alpha } }{((\mu - \nu)\cdot \mathbf{e}-(\alpha - \beta )\cdot \mathbf{e}  )\overline{z}_j}
\langle  R_H^{+}( {\textbf{e} }     \cdot (\beta -
\alpha ))  \overline{G}_{\alpha \beta}  ,
  {G}_{\mu \nu }  \rangle  \\&   + \sum _{ \substack{  (\mu ', \nu' )\in M \\
(\alpha ', \beta ')\in M}}         \frac{\mu _j ' z  ^{\nu '+ \alpha '}
 \overline{ {z }}^ { {\mu}'+\beta '  } }{((\alpha ' - \beta ')\cdot \mathbf{e}-(\mu ' - \nu ' )\cdot \mathbf{e}  )\overline{z}_j}
\langle  R_H^{-}( {\textbf{e} }     \cdot (\beta '-
\alpha' ))  {G}_{\alpha ' \beta '}  ,
  \overline{{G}}_{\mu' \nu' }  \rangle]\\&
+\sum_k\sum _{ \substack{  (\mu , \nu )\in M \\(\alpha , \beta )\in M}} (\im \dot z_k-e_kz_k)\frac{z^{\mu+\beta}\overline z^{\nu+\alpha}}{\overline z_j}A_{k\alpha\beta\mu\nu}+\overline{(\im \dot z_k-e_kz_k)}\frac{z^{\mu+\beta}\overline z^{\nu+\alpha}}{\overline z_j}B_{k\alpha\beta\mu\nu}.
\end{aligned}
\end{equation}
 \begin{lemma}\label{lemma:E} There are fixed $c_4$  and $ \epsilon _0>0$ such that
 for $\epsilon \in (0, \epsilon _0)$ we have
 \begin{equation}\label{4.33}
  \begin{aligned}  & \| \mathcal{G}_j \overline{\zeta} _j\| _{L^1 [0,T]}\le (1+C_0) c_4 \epsilon ^2   .
\end{aligned}
\end{equation}
\end{lemma}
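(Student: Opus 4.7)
The plan is to estimate $\mathcal{G}_j \overline{\zeta}_j$ in $L^1[0,T]$ term by term, going through the four pieces in formula \eqref{eq:restG}. Since $|\overline{\zeta}_j| \le |z_j| + |z - \zeta|_\infty \lesssim C_0\epsilon$ by \eqref{L^inftydiscrete} and \eqref{equation:FGR3}, it suffices to show $\|\mathcal{G}_j\|_{L^1[0,T]} \le c \epsilon$ for each piece (absorbing one factor of $\epsilon$ from $\overline{\zeta}_j$). Actually, to hit the right-hand side $(1+C_0)c_4\epsilon^2$, I will bound each subterm's contribution to $\|\mathcal{G}_j \overline{\zeta}_j\|_{L^1}$ directly and observe that the extra factor of $\overline{\zeta}_j$ provides one small factor to combine with the already-small prefactors appearing in $\mathcal{G}_j$.

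The first block, namely $\mathcal{F}_j$ from \eqref{eq:FGRrem1}, splits into: (i) the $\partial_{\overline{z}_j}\mathcal{R}$ term, controlled in $L^1[0,T]$ using the key bound \eqref{eq:bd R} which gives $\|\partial_{\overline{z}_j}\mathcal{R}\|_{L^1} \le T \cdot C(C_0)\epsilon^2$ pointwise, but more usefully $\|\partial_{\overline{z}_j}\mathcal{R} \,\overline{\zeta}_j\|_{L^1} \le \|\overline{\zeta}_j\|_{L^2}\|\partial_{\overline{z}_j}\mathcal{R}\|_{L^2} \lesssim C_0\epsilon \cdot C(C_0)\epsilon^2$ after using $|\zeta_j|$ as an element of $\mathbf{Z}^{\mathbf m}$-type monomials via \eqref{L^2discrete}; (ii) the $\varpi_j(|z_j|^2)(\cdots)$ summations, where the prefactor $\varpi_j = \mathcal{R}^{2,0}_{\infty,\infty}(|z_j|^2)$ gives size $O(C_0^2\epsilon^2)$, combined with $\|z^\mu \overline{z}^\nu/\overline{z}_j\|_{L^\infty} \le C_0\epsilon$ and $\|\eta\|_{L^2 L^6} \le C_0\epsilon$ from \eqref{Strichartzradiation}; (iii) the $g$-dependent summations, bounded via $\|G_{\mu\nu}\|_{L^{2,S}}$ paired with $\|g\|_{L^2 L^{2,-S}} \le c_1(S)\epsilon$ from Lemma \ref{lem:bound g}, times $\|z^\mu \overline{z}^\nu/\overline{z}_j\|_{L^\infty}$; (iv) the final $|z_j|^2\mathbf{Z}^{\mathbf m}\langle G', \eta\rangle$ terms, where we pair the $L^2_t$ bound on $z^\mu\overline{z}^\nu$-monomials from \eqref{L^2discrete} with the Strichartz bound on $\eta$.

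The second block is the difference $\partial_{\overline{j}}\mathcal{Z}_0(|z|^2) - \partial_{\overline{j}}\mathcal{Z}_0(|\zeta|^2)$. Since $\mathcal{Z}_0$ is smooth and $\partial_{\overline{j}}\mathcal{Z}_0 = O(|z|)$, the mean value theorem plus \eqref{equation:FGR3} gives this difference $\lesssim C(C_0)\epsilon^2$ pointwise and in fact in $L^2_t$ by the same bound, which together with $\|\overline\zeta_j\|_{L^2}\lesssim C_0\epsilon$ (once we pair $\zeta_j$ with a suitable monomial in \eqref{L^2discrete}) gives the desired bound. The third block carries an explicit $\varpi_j = O(\epsilon^2)$ prefactor in front of a finite sum where each summand $\frac{z^{\mu+\beta}\overline{z}^{\nu+\alpha}}{\overline{z}_j}$ is bounded in $L^\infty$ by $O(C_0\epsilon)$ (since $|\mu|+|\nu| \ge 2$ allows one cancellation with $\overline{z}_j$) and the resolvent bracket is a fixed constant, so this contributes $\lesssim C_0^3\epsilon^5 \cdot T$ in $L^1$ — here one should be careful and pair the $L^\infty_t$ estimate with the factor $\overline\zeta_j \in L^2_t$ via \eqref{L^2discrete} to avoid the $T$-dependence, so the contribution is $\lesssim C(C_0)\epsilon^3$.

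The fourth block, involving $\im\dot z_k - e_k z_k$, is the main obstacle and requires the most care: I would substitute $\im\dot z_k - e_k z_k$ from equation \eqref{eq:FGR01} itself, producing a cascade of terms including $\partial_{\overline{z}_k}\mathcal{Z}_0$, the resolvent-type couplings with $\eta$, and the $|z_k|^2\mathbf{Z}^{\mathbf m}\langle G', \eta\rangle$ pieces. After substitution each resulting term carries either an explicit $O(\epsilon^2)$ prefactor (from $\partial_{\overline{z}_k}\mathcal{Z}_0$ or $\partial_{\overline{z}_k}\mathcal{R}$) or contains a factor that lies in $L^2_t$ via \eqref{L^2discrete} or a radiation factor controlled via \eqref{Strichartzradiation} or \eqref{bound:auxiliary}. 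One verifies, using $|\mu+\beta| + |\nu+\alpha| \ge 4$ so that division by $\overline{z}_j$ still leaves a high-degree monomial in $z$, that all homogeneity balance works out, after which Cauchy–Schwarz pairs an $L^2_t$ monomial estimate with either $\|\eta\|_{L^2 W^{1,6}}$ or $\|g\|_{L^2 L^{2,-S}}$ to produce the $\epsilon^2$ bound. Summing the four blocks and absorbing constants into $c_4$ gives \eqref{4.33}.
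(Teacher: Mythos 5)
Your overall architecture (a term-by-term estimate of \eqref{eq:restG} and \eqref{eq:FGRrem1} using \eqref{eq:bd R}, Lemma \ref{lem:bound g}, \eqref{L^2discrete}, \eqref{Strichartzradiation} and \eqref{equation:FGR3}) is the same as the paper's, but there is a recurring genuine gap: at several load-bearing points you bound $\|\overline\zeta_j\|_{L^2[0,T]}$ by $C_0\epsilon$, ``using $|\zeta_j|$ as an element of $\mathbf{Z}^{\mathbf m}$-type monomials via \eqref{L^2discrete}''. This is false. The quantities controlled in $L^2_t$ uniformly in $T$ are $z_j\mathbf{Z}^{\mathbf m}$ with $\mathbf m\in\mathcal M_j(2N+4)$, and $\mathbf m=\mathbf 0\notin\mathcal M_j$, since for $\mathbf m=\mathbf 0$ the defining inequality would read $-e_j<0$ while $e_j<0$. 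Indeed the whole point of Theorem \ref{thm:small en} is that one component $z_{j_0}$ need not decay, so $\|z_{j_0}\|_{L^2[0,T]}$ (hence also $\|\zeta_{j_0}\|_{L^2[0,T]}$, by \eqref{equation:FGR3}) grows like $\epsilon\sqrt{T}$. You invoke this false bound (i) as your ``more useful'' treatment of the $\partial_{\overline z_j}\mathcal R$ term, (ii) for the $\partial_{\overline j}\mathcal Z_0$ difference, (iii) to remove the $T$-dependence in the $\varpi_j$-weighted double-resolvent sums, and implicitly (iv) in the $g$-terms, where placing $z^\mu\overline z^\nu/\overline z_j$ in $L^\infty_t$ leaves only one $L^2_t$ factor ($\langle g,G_{\mu\nu}\rangle$) and no uniform-in-$T$ route to $L^1_t$. (You also misquote \eqref{eq:bd R}: it is an $L^p_t(0,T)$ bound for every $p\in[1,\infty]$, uniform in $T$, not a pointwise bound producing a factor of $T$; the workaround that introduced the error was unnecessary.)

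The correct bookkeeping, which is what the paper does, is: for the $\partial_{\overline z_j}\mathcal R$ term use \eqref{eq:bd R} with $p=1$ directly and multiply by $\|\zeta_j\|_{L^\infty}\lesssim\epsilon$; for the $g$-terms write $\overline\zeta_j=\overline z_j+(\overline\zeta_j-\overline z_j)$ and use $\nu_j\ge1$, so that $\frac{z^\mu\overline z^\nu}{\overline z_j}\,\overline\zeta_j$ reconstitutes the $L^2_t$-controlled monomial $z^\mu\overline z^\nu$ (norm $\le C_0\epsilon$ by \eqref{L^2disbis}; this is precisely the source of the $C_0$ in \eqref{4.33}) plus an error controlled in $L^2_t$ by \eqref{equation:FGR3}, and then Cauchy--Schwarz against $\|g\|_{L^2([0,T],L^{2,-S})}\le c_1(S)\epsilon$ gives the $L^1_t$ bound (this is \eqref{5.33}--\eqref{5.331}). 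For the $\mathcal Z_0$-difference, the $\varpi_j$-weighted resolvent lines and the $(\im\dot z_k-e_kz_k)$ lines, the factor $\frac{z^{\mu+\beta}\overline z^{\nu+\alpha}}{\overline z_j}\,\overline\zeta_j$ is, up to the same small correction, a product of two controlled monomials and hence lies in $L^1_t$ with norm $\lesssim C_0^2\epsilon^2$, after which the remaining prefactors ($\varpi_j=O(\epsilon^2)$, the $\mathcal Z_0$-derivative difference $=O(\epsilon^2)|z-\zeta|$, $\im\dot z_k-e_kz_k=O(\epsilon^2)$ in $L^\infty_t$ upon substituting \eqref{eq:FGR01}, or $\partial_{\overline z_k}\mathcal R\in L^2_t$ by \eqref{eq:bd R}) are harmless. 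Your fourth block is close to this in spirit, but as written several of your estimates are actually carried by the $\zeta_j\in L^2_t$ claim, and that claim does not hold.
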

\proof  We consider separately the terms in the  r.h.s. of \eqref{eq:restG} and \eqref{eq:FGRrem1}. By \eqref{L^inftydiscrete}, \eqref{eq:bd R}   \eqref{equation:FGR3}
\begin{equation*}
  \begin{aligned}  & \| \partial _{  \overline{z} _j}   \mathcal{R}  \overline{\zeta} _j\| _{L^1_t[0,T]}\le C(C_0) \epsilon ^ 3 .
\end{aligned}
\end{equation*}
 For fixed constants $c_2$ and $c_3$, by \eqref{Strichartzradiation} and \eqref{bound:auxiliary}, we have
\begin{equation} \label{5.33}
  \begin{aligned}  &
\| \frac{z  ^{\mu }
 \overline{ {z }}^ { {\nu}  }  \overline{\zeta} _j}{\overline{z}_j}
\langle g  ,
  {G}_{\mu \nu } \rangle  \| _{L^1 [0,T]} \le c_2  \| \frac{z  ^{\mu }
 \overline{ {z }}^ { {\nu}  }  \overline{\zeta} _j}{\overline{z}_j} \| _{L^2 [0,T]}
\|  g      \| _{L^2([0,T], L ^{2,-S})} \le  c_3 C_0 \epsilon ^2.
 \end{aligned}
\end{equation}
To get  \eqref{5.33} we exploit Lemma \ref{lem:bound g} and the following bound:
\begin{equation} \label{5.331}
  \begin{aligned}  &    \nu _j \| \frac{z  ^{\mu }
 \overline{ {z }}^ { {\nu}  }  \overline{\zeta} _j}{\overline{z}_j} \| _{L^2 [0,T]}
 \le  \nu _j\| z  ^{\mu }
 \overline{ {z }}^ { {\nu}  }   \| _{L^2 [0,T]} +  \nu _j \| \frac{z  ^{\mu }
 \overline{ {z }}^ { {\nu}  }  }{\overline{z}_j} \| _{L^\infty [0,T]}  \|  {\zeta} _j-\overline{z}_j  \| _{L^2 [0,T]}\\& \le  c_ 2 C_0 \epsilon  +  C(C_0) \epsilon ^3
\end{aligned}
\end{equation}
for fixed $c_2$, where we used \eqref{L^2disbis} and \eqref{equation:FGR3}.
Terms such as \eqref{5.33},
that is the terms  from the 2nd term in the r.h.s. of \eqref{eq:FGRrem1},
are the ones  responsible for the $C_0c_4\epsilon ^2$ in
\eqref{4.33}, where $C_0$ could be large. The other terms   are $O(\epsilon ^2)$ with fixed constants, if $\epsilon  _0$
is small enough.

\noindent By \eqref{Strichartzradiation} and \eqref{L^2discrete}, for $\mathbf m \in \mathcal M_j (2N+4)$ we have
\begin{equation}\label{eq:F_jlast}
\||z_j|^2\mathbf{Z}^{\mathbf m}\<G_{j\mathbf m}',\eta\>\overline{\zeta}_j\|_{L^1[0,T]}\leq c_4\|z_j\zeta_j\|_{L^\infty}\|z_j \mathbf{Z}^{\mathbf m}\|_{L^2[0,T]}\|\eta\| _{L^2([0,T], L ^{2,-S})}\leq C(C_0)\epsilon^4.
\end{equation}
It is easy to see by  \eqref{equation:FGR3} that
\begin{equation} \label{eq:skip3}
  \begin{aligned}  & \|     \overline{{\zeta}} _j (\text{2nd--6th\ line\ of\ r.h.s.\eqref{eq:restG}})     \| _{L^2 [0,T]}\le C(C_0) \epsilon ^ 3,
\end{aligned}
\end{equation}
see Lemma 4.11 \cite{Cu4},
\begin{equation} \label{eq:skip4}
  \begin{aligned}  & \| [{\partial} _{ \overline{j}}\mathcal{Z}_0(|z_1|^2, ..., |z_n|^2) -{\partial} _{ \overline{j}}\mathcal{Z}_0(|\zeta _1|^2, ..., |\zeta _n|^2) ]  \overline{\zeta} _j\| _{L^2 [0,T]}\le C(C_0) \epsilon ^ 3,
\end{aligned}
\end{equation}
see Lemma 4.10 \cite{Cu4}.
   Finally we have for $(\mu , \nu )\in M$

\begin{equation*}
  \begin{aligned}  &
 \|  \varpi  _j(|z_j|^2)      \nu _j  \frac{z  ^{\mu }
 \overline{ {z }}^ { {\nu}  } }{\overline{z}_j}
\langle \eta  ,
  {G}_{\mu \nu }  \rangle      \zeta _j\| _{L^1_t} \le \|  \varpi  _j(|z_j|^2)      \nu _j   z  ^{\mu }
 \overline{ {z }}^ { {\nu}  }
\langle \eta  ,
  {G}_{\mu \nu }  \rangle       \| _{L^1_t} \\& + \|  \varpi  _j(|z_j|^2)      \nu _j  \frac{z  ^{\mu }
 \overline{ {z }}^ { {\nu}  } }{\overline{z}_j}
\langle \eta  ,
  {G}_{\mu \nu }  \rangle      (\zeta _j -z_j)\| _{L^1_t}  \le   C(C_0) \epsilon ^ 3
\end{aligned}
\end{equation*}
by $\varpi  _j(|z_j|^2) =O(|z_j|^2)$, \eqref{Strichartzradiation}--\eqref{L^inftydiscrete}  and \eqref{equation:FGR3}.
This completes the proof of Lemma \ref{lemma:E}.
\qed

We now consider
\begin{align} &
   2 ^{-1} \frac{d}{dt}\sum _j   |e_j|\  |  \zeta _j| ^2
=-  \sum _j e_j \overbrace{\Im [ (1+\varpi(|z_j|^2)) e_j |\zeta _j|^2+
\partial _{\overline{\zeta}_j}\mathcal{Z}_0(|\zeta _1|^2, ..., |\zeta _n|^2) \overline{\zeta} _j ]}^{0}
 -  \sum _j e_j  \Im [
  \mathcal{G} _j  \overline{\zeta} _j ]
\nonumber \\&
 + \sum _{L\in \Lambda} \Im [  \sum _{ \substack{  (\mu , \nu )\in M _L\\
(\alpha , \beta )\in M _L }}     \nu  \cdot \mathbf{e}  \   \zeta  ^{\mu +\beta}
 \overline{ {\zeta }}^ { {\nu} +\alpha }
\langle  R_H^{+}(L)  \overline{G}_{\alpha \beta}  ,
  {G}_{\mu \nu }  \rangle  \label{eq:FGR25}\\&   +    \sum _{ \substack{  (\mu ', \nu' )\in M _L\\
(\alpha ', \beta ')\in M_L}}         \mu   ' \cdot \mathbf{e}\  \zeta  ^{\nu '+ \alpha '}
 \overline{ {\zeta }}^ { {\mu}'+\beta '  }
\langle  R_H^{-}( L)  {G}_{\alpha ' \beta '}  ,
  \overline{{G}}_{\mu' \nu' }  \rangle    ]  . \nonumber
\end{align}
We can now substitute
$
  R_H^{\pm }( L) = P.V. \frac{1}{H-L}\pm \im \pi \delta ( H-L) .$
 \begin{lemma}
  \label{lem:can1}   The contributions to \eqref{eq:FGR25} from the $P.V. \frac{1}{H-L} $  cancel out:
  \begin{equation} \label{eq:can2} \begin{aligned} &
 \Im [  \sum _{ \substack{  (\mu , \nu )\in M _L\\
(\alpha , \beta )\in M _L }}     \nu  \cdot \mathbf{e}  \   \zeta  ^{\mu +\beta}
 \overline{ {\zeta }}^ { {\nu} +\alpha }
\langle  P.V. \frac{1}{H-L}  \overline{G}_{\alpha \beta}  ,
  {G}_{\mu \nu }  \rangle  \\&   +    \sum _{ \substack{  (\mu ', \nu' )\in M _L\\
(\alpha ', \beta ')\in M_L}}         \mu   ' \cdot \mathbf{e}\  \zeta  ^{\nu '+ \alpha '}
 \overline{ {\zeta }}^ { {\mu}'+\beta '  }
\langle P.V. \frac{1}{H-L}  {G}_{\alpha ' \beta '}  ,
  \overline{{G}}_{\mu' \nu' }  \rangle     ] =0.
\end{aligned}\end{equation}

\end{lemma}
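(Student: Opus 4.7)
The plan is to deduce \eqref{eq:can2} from three ingredients: (i) the operator $A_L:=P.V.(H-L)^{-1}$ is formally self-adjoint for the bilinear pairing $\langle f,g\rangle=\int fg$, (ii) it is a real operator, $\overline{A_L f}=A_L\overline{f}$, since $H=-\Delta+V$ has real potential and $L\in\mathbb{R}$, and (iii) both $(\mu,\nu)$ and $(\alpha,\beta)$ in $M_L$ satisfy $(\nu-\mu)\cdot\mathbf{e}=L=(\beta-\alpha)\cdot\mathbf{e}$ by the very definition of $M_L$ in \eqref{eq:FGR23}.

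Let $S_1,S_2$ denote the two sums inside $\Im[\cdot]$ on the l.h.s.\ of \eqref{eq:can2}. In $S_2$, ingredients (i)--(ii) give $\langle A_L G_{\alpha'\beta'},\overline G_{\mu'\nu'}\rangle=\langle A_L\overline G_{\mu'\nu'}, G_{\alpha'\beta'}\rangle$; after relabeling $(\alpha',\beta',\mu',\nu')\mapsto(\mu,\nu,\alpha,\beta)$, this recasts $S_2$ as a sum with the same monomial $\zeta^{\mu+\beta}\overline\zeta^{\nu+\alpha}$ and the same inner product $\langle A_L\overline G_{\alpha\beta},G_{\mu\nu}\rangle$ as $S_1$, but with coefficient $\alpha\cdot\mathbf{e}$ in place of $\nu\cdot\mathbf{e}$. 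Adding,
\[
S_1+S_2=\sum_{(\mu,\nu),(\alpha,\beta)\in M_L}(\nu+\alpha)\cdot\mathbf{e}\ \zeta^{\mu+\beta}\overline\zeta^{\nu+\alpha}\langle A_L\overline G_{\alpha\beta},G_{\mu\nu}\rangle.
\]

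Running the same two manipulations (conjugation of the pairing via (i)--(ii), then the swap $(\mu,\nu)\leftrightarrow(\alpha,\beta)$) on the complex conjugate $\overline{S_1+S_2}$ reproduces the identical expression but with coefficient $(\mu+\beta)\cdot\mathbf{e}$ in place of $(\nu+\alpha)\cdot\mathbf{e}$. Subtracting gives
\[
2\mathrm{i}\,\Im(S_1+S_2)=\sum\bigl[(\nu-\mu)\cdot\mathbf{e}-(\beta-\alpha)\cdot\mathbf{e}\bigr]\zeta^{\mu+\beta}\overline\zeta^{\nu+\alpha}\langle A_L\overline G_{\alpha\beta},G_{\mu\nu}\rangle,
\]
and by (iii) the bracket is $L-L=0$, proving \eqref{eq:can2}. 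The only mild technical point is that $A_L$ be well-defined on the $G_{\mu\nu}$'s and that the pairings be absolutely convergent; since the $G_{\mu\nu}$ are Schwartz by Theorem \ref{th:main} and every element of $\Lambda$ is positive while $\sigma_p(H)\subset(-\infty,0)$ by (H2), this follows from standard limiting absorption principles for $H=-\Delta+V$, so the formal self-adjointness and reality used above are rigorous on the relevant weighted spaces.
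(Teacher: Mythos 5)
Your proof is correct and follows essentially the same route as the paper: relabel the indices of the second sum, use the symmetry and reality of $P.V.\,\frac{1}{H-L}$ with respect to the bilinear pairing $\langle f,g\rangle=\int fg$, and exploit that both pairs lie in $M_L$ so that $(\nu+\alpha)\cdot\mathbf{e}=(\mu+\beta)\cdot\mathbf{e}$. The only cosmetic difference is that the paper finishes by pairing each term with its index-swapped partner to exhibit a real quantity inside $\Im[\cdot]$, whereas you subtract the complex conjugate to get the coefficient $L-L=0$; these are the same cancellation.
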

 \proof We set $(\alpha ', \beta ')=(\mu  , \nu  )$ and
 $(\mu ', \nu' )=(\alpha  , \beta  )$ in the 2nd line of \eqref{eq:can2}.
 With these choices
 \begin{equation*} \begin{aligned} &     \mu   ' \cdot \mathbf{e}\  \zeta  ^{\nu '+ \alpha '}
 \overline{ {\zeta }}^ { {\mu}'+\beta '  }
\langle P.V. \frac{1}{H-L}  {G}_{\alpha ' \beta '}  ,
  \overline{{G}}_{\mu' \nu' }  \rangle =  \alpha \cdot \mathbf{e}\  \zeta  ^{\mu +\beta}
 \overline{ {\zeta }}^ { {\nu} +\alpha }
\langle  P.V. \frac{1}{H-L}  \overline{G}_{\alpha \beta}  ,
  {G}_{\mu \nu }  \rangle
\end{aligned}\end{equation*}
 Then 2 times the  l.h.s. of  \eqref{eq:can2} becomes
\begin{equation*} \label{eq:can3} \begin{aligned} &2 \Im   [  \sum _{ \substack{  (\mu , \nu )\in M _L\\
(\alpha , \beta )\in M _L }}   (\alpha +  \nu )   \cdot \mathbf{e}  \  \zeta  ^{\mu +\beta}
 \overline{ {\zeta }}^ { {\nu} +\alpha }
\langle  P.V. \frac{1}{H-L}  \overline{G}_{\alpha \beta}  ,
  {G}_{\mu \nu }  \rangle     ]  =   \sum _{ \substack{  (\mu , \nu )\in M _L\\
(\alpha , \beta )\in M _L }} \Im   [    (\alpha +  \nu )   \cdot \mathbf{e}  \  \zeta  ^{\mu +\beta}
 \overline{ {\zeta }}^ { {\nu} +\alpha } \times \\& \times
\langle  P.V. \frac{1}{H-L}  \overline{G}_{\alpha \beta}  ,
  {G}_{\mu \nu }  \rangle    +  (\mu  +  \beta )   \cdot \mathbf{e}  \  \overline{\zeta}  ^{\mu +\beta}
  { {\zeta }}^ { {\nu} +\alpha }
\langle  P.V. \frac{1}{H-L}  \overline{G}_{\mu \nu }  ,
  {G}_{\alpha \beta }  \rangle     ] \\& =
 \Im   [  \sum _{ \substack{  (\mu , \nu )\in M _L\\
(\alpha , \beta )\in M _L }}   (\alpha +  \nu )   \cdot \mathbf{e}  \left (   \zeta  ^{\mu +\beta}
 \overline{ {\zeta }}^ { {\nu} +\alpha }
\langle  P.V. \frac{1}{H-L}  \overline{G}_{\alpha \beta}  ,
  {G}_{\mu \nu }  \rangle  +\text{c.c.} \right )    ]  =0
\end{aligned}\end{equation*}
  where we exploited the fact that if  $ (\mu , \nu )$ and
   $ (\alpha , \beta )$ both belong to $M_L$ then $(\alpha +  \nu )   \cdot \mathbf{e}= (\mu  +  \beta )   \cdot \mathbf{e}$.
\qed

 \begin{lemma}
  \label{lem:pos1}   Set for any $L\in \Lambda$
  \begin{equation} \label{eq:pos2} \begin{aligned} &
  G _L(\zeta ) :=  \sqrt{\pi} \sum _{   (\mu , \nu )\in M _L } \zeta  ^{\mu }
 \overline{ {\zeta }}^ { {\nu}   } {G}_{\mu \nu }   .
\end{aligned}\end{equation}
  Then we have
  \begin{equation} \label{eq:pos3} \begin{aligned} &
 \Im [ \im \pi  \sum _{ \substack{  (\mu , \nu )\in M _L\\
(\alpha , \beta )\in M _L }}     \nu  \cdot \mathbf{e}  \   \zeta  ^{\mu +\beta}
 \overline{ {\zeta }}^ { {\nu} +\alpha }
\langle \delta ({H-L})  \overline{G}_{\alpha \beta}  ,
  {G}_{\mu \nu }  \rangle  \\&  +\im \pi     \sum _{ \substack{  (\mu ', \nu' )\in M _L\\
(\alpha ', \beta ')\in M_L}}         \mu   ' \cdot \mathbf{e}\  \zeta  ^{\nu '+ \alpha '}
 \overline{ {\zeta }}^ { {\mu}'+\beta '  }
\langle \delta ({H-L})  {G}_{\alpha ' \beta '}  ,
  \overline{{G}}_{\mu' \nu' }  \rangle     ]   =  L \langle \delta ({H-L})  \overline{G} _L(\zeta )  ,
    {G} _L(\zeta )   \rangle      \ge 0.
\end{aligned}\end{equation}

\end{lemma}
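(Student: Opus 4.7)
My plan is to combine a Plemelj-type sign bookkeeping with a symmetrization argument on the four-index sum. First, I would decompose $R_H^\pm(L) = \mathrm{P.V.}(H-L)^{-1} \pm \im\pi\delta(H-L)$. Since Lemma~\ref{lem:can1} eliminates the principal-value contributions, the left-hand side of \eqref{eq:pos3} reduces (after $\Im[\im\pi(\cdot)] = \pi\Re(\cdot)$) to $\pi\Re(A_L - \tilde B_L)$, where $A_L$ and $\tilde B_L$ are the two sums with $R_H^\pm(L)$ replaced by $\delta(H-L)$. The relative minus sign, reflecting $\Im R_H^+(L) = -\Im R_H^-(L) = \pi\delta(H-L)$, is the key structural input.

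Next I would bring both sums to a common index form. Using that $H$ is symmetric with respect to the bilinear pairing $\langle f,g\rangle = \int fg$ (since $V$ is real), we have $\langle\delta(H-L) G_{\alpha'\beta'}, \bar G_{\mu'\nu'}\rangle = \langle\delta(H-L)\bar G_{\mu'\nu'}, G_{\alpha'\beta'}\rangle$. Relabeling $(\mu',\nu')\leftrightarrow(\alpha,\beta)$ and $(\alpha',\beta')\leftrightarrow(\mu,\nu)$ in $\tilde B_L$ then yields
\[
A_L - \tilde B_L \;=\; \sum_{(\mu,\nu),(\alpha,\beta)\in M_L}(\nu-\alpha)\cdot\mathbf{e}\,\zeta^{\mu+\beta}\bar\zeta^{\nu+\alpha}\langle\delta(H-L)\bar G_{\alpha\beta}, G_{\mu\nu}\rangle.
\]

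The crux is then a symmetrization under the involution $(\mu,\nu)\leftrightarrow(\alpha,\beta)$ on $M_L\times M_L$. Writing $W_{\mu\nu\alpha\beta}T_{\mu\nu\alpha\beta}$ for the summand, one checks that $\overline{W_{\mu\nu\alpha\beta}} = W_{\alpha\beta\mu\nu}$ by inspection of the exponents, and $\overline{T_{\mu\nu\alpha\beta}} = T_{\alpha\beta\mu\nu}$ from $\overline{\delta(H-L)f} = \delta(H-L)\bar f$ (reality of $H$) combined with the bilinear symmetry of $H$. Hence the swap sends $WT$ to $\overline{WT}$ and the coefficient $(\nu-\alpha)\cdot\mathbf{e}$ to $(\beta-\mu)\cdot\mathbf{e}$, so averaging the sum with its swapped version produces
\[
2\Re(A_L - \tilde B_L) \;=\; \sum\bigl[(\nu-\alpha)+(\beta-\mu)\bigr]\cdot\mathbf{e}\,W_{\mu\nu\alpha\beta}T_{\mu\nu\alpha\beta} \;=\; 2L\sum W_{\mu\nu\alpha\beta}T_{\mu\nu\alpha\beta},
\]
where the bracketed expression collapses because $(\nu-\mu)\cdot\mathbf{e} = L = (\beta-\alpha)\cdot\mathbf{e}$ is the very defining property of $M_L$. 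Recognising the surviving sum as $\pi^{-1}\langle\delta(H-L)\bar G_L(\zeta), G_L(\zeta)\rangle$ via \eqref{eq:pos2} gives the identity. Non-negativity is then immediate: $L\in\Lambda\subset\R_+$ by \eqref{eq:FGR22}, while $\langle\delta(H-L)\bar G_L, G_L\rangle = (\delta(H-L)\bar G_L, \bar G_L)_{L^2}\ge 0$ because $\delta(H-L)$ is a non-negative self-adjoint operator.

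The main obstacle is the sign tracking. The argument is clean only because the two resolvents contribute opposite $\Im$ parts, so that the symmetrization acts on the \emph{difference} of weighted sums and the quadratic term $(\mu+\alpha)\cdot\mathbf{e}$ cancels, leaving exactly $2L$; with the same sign one would instead obtain $(\nu+\alpha+\beta+\mu)\cdot\mathbf{e} = 2L + 2(\mu+\alpha)\cdot\mathbf{e}$, which does not reduce to $2L$. In other words, the positivity of the FGR expression is a Hamiltonian consequence of the matching of $\Im R_H^\pm(L)$ with the defining condition of $M_L$.
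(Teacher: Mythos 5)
Your proposal is correct and is essentially the paper's argument: after discarding the principal-value parts via Lemma \ref{lem:can1}, one relabels the dummy indices in the second sum, uses the conjugation/symmetry identity $\overline{\langle \delta(H-L)\overline{G}_{\alpha\beta},G_{\mu\nu}\rangle}=\langle \delta(H-L)\overline{G}_{\mu\nu},G_{\alpha\beta}\rangle$ together with $(\nu-\mu)\cdot\mathbf{e}=(\beta-\alpha)\cdot\mathbf{e}=L$ to collapse the coefficient to $L$, and concludes with the non-negativity of $\langle\delta(H-L)\overline{G}_L(\zeta),G_L(\zeta)\rangle$ (which the paper makes explicit through the distorted Fourier transform formula, while you invoke the positivity of the spectral density abstractly). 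Your insistence on the relative $-\im\pi$ in the second sum, inherited from $R_H^-(L)$, is exactly what the paper's own computation and its application in \eqref{eq:FGR31} use (the printed $+\im\pi$ in the second line of \eqref{eq:pos3} is a sign slip), so you have proved the intended statement by the intended route, with your cross-relabelling plus symmetrization being only a cosmetic variant of the paper's direct renaming.
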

 \proof  First of all the last inequality
 is a consequence of the formula
 \begin{equation*}   \begin{aligned} &   \langle    F
  ,
\delta ( H -L)   \overline{G}\rangle =   \frac{1}{16\pi ^2\sqrt{L}} \int _{|\xi |= \sqrt{{L}}} \widehat{F } (\xi
)\overline{\widehat{G  } } (\xi ) d\sigma (\xi )
\end{aligned}\end{equation*}
with $\widehat{F}$  and $\widehat{G}$   the   Fourier transforms  of
$F$  and $G$ associated to $ H$, see Prop. 2.2 Ch. 9 \cite{taylor}.

 To prove the first equality in \eqref{eq:pos3}
 set $(\alpha ', \beta ')=(\alpha  , \beta  )$ and
 $(\mu ', \nu' )=(\mu  , \nu  )$ in the 2nd line of \eqref{eq:pos3}.
 Then the l.h.s. of  \eqref{eq:pos3} equals
 \begin{equation*} \label{eq:pos4} \begin{aligned} &
 \pi \Re [    \sum _{ \substack{  (\mu , \nu )\in M _L\\
(\alpha , \beta )\in M _L }}    \overbrace{( \nu  -\mu )  \cdot \mathbf{e}} ^{L} \   \zeta  ^{\mu +\beta}
 \overline{ {\zeta }}^ { {\nu} +\alpha }
\langle \delta ({H-L})  \overline{G}_{\alpha \beta}  ,
  {G}_{\mu \nu }  \rangle       ]  = L \langle \delta ({H-L})  \overline{G} _L(\zeta )  ,
    {G} _L(\zeta )   \rangle     .
\end{aligned}\end{equation*}

     \qed

From \eqref{eq:FGR25} and Lemmas \ref{lem:can1}--\ref{lem:pos1} we obtain

\begin{equation}\label{eq:FGR31} \begin{aligned} &
    2\sum _{L\in \Lambda}L \langle \delta ({H-L})  \overline{G} _L(\zeta )  ,
    {G} _L(\zeta )   \rangle
= \frac{d}{dt}\sum _j   |e_j|\  |  \zeta _j| ^2
 +2  \sum _j e_j  \Im [
  \mathcal{G} _j  \overline{\zeta} _j ]
     .
\end{aligned}  \end{equation}
We are able to restate,   precisely this time, hypothesis (H4).

\begin{itemize}
\item[(H4)]  We assume
that for some fixed constants   we have:
\begin{equation}\label{eq:FGR}      \sum _{
L \in \Lambda  }
    \langle \delta ( {H}-L)
   \overline{G} _L(\zeta ),   {G} _L(\zeta )\rangle
 \sim \sum _{ ( \mu , \nu )
   \in M}  | \zeta ^{\mu +\nu }  | ^2\quad  \text{ for all $\zeta \in
\mathbb{C}^n$  with $|\zeta| \le 1$}.
\end{equation}
\end{itemize}
We now  complete the proof of Prop. \ref{prop:mainbounds}.
 We \textit{claim}  we have for a fixed $c$
 \begin{equation}\label{eq:conch}
   |\sum _j   |e_j| ( |  \zeta _j(t)| ^2 - |  \zeta _j(0)| ^2 )|\le c \epsilon ^2.
 \end{equation}
 Indeed, first of all   we have $|  \zeta _j(0)|\le c' \epsilon$
 by $\epsilon :=\| u _0\| _{H^1} $. Observe that for $(z', \eta ')$ the initial
coordinates in Lemma \ref{lem:systcoo},  by Proposition \ref{prop:bddst}
 and Lemma \ref{lem:contcoo} it is easy to see that we have
 \begin{equation*}  \begin{aligned} &  \epsilon ^2>\| u_0 \| _{L^2}^2=\| u(t) \| _{L^2}^2 = \| (\sum_{j=1}^n  z'_j(t)  \phi _j   +  \eta'(t) ) +(\sum_{j=1}^n q_{j z'_j(t) } +(R[z'(t) ] -1)\eta'(t) ) \| _{L^2}^2\\& =\sum_{j=1}^n |  z'_j(t)  | ^2 +\|\eta '(t)   \| _{L^2}^2  + O( | z'(t) | ^6+| z'(t) | ^4 \|\eta '  (t) \| _{L^2}^2 ).  \end{aligned}
\end{equation*}
 This gives   the following version of \eqref{eq:coo11}:
 \begin{equation}\label{eq:conch1}
  \sum_{j=1}^n |  z'_j(t)  | ^2 +\|\eta '(t)   \| _{L^2}^2 \le 2 \epsilon ^2.
 \end{equation}
 This yields an analogous formula for  the last system of coordinates
   $(z , \eta  )$ in \eqref{eq:diff2}. Finally, this yields  the following
   inequality for the variables $\zeta$ introduced in \eqref{eq:FGR21}:
  \begin{equation}\label{eq:conch2}
  \sum_{j=1}^n |  \zeta _j(t)  | ^2   \le 3 \epsilon ^2.
 \end{equation}
 Hence the \textit{claim} \eqref{eq:conch} is proved. By Lemma \ref{lemma:E}, by
 the hypothesis \eqref{eq:FGR},   by
 \eqref{equation:FGR3} and by  \eqref{eq:conch},
 for $\epsilon \in (0, \epsilon _0)$ with $\epsilon _0$
small enough we obtain for a fixed $c$
 \begin{equation} \label{eq:crunch}\begin{aligned}&  \sum _{ ( \mu , \nu )
   \in M}  \| z^{\mu +\nu } \| _{L^2(0,t)}^2\le c
\epsilon ^2+ cC_0\epsilon ^2.\end{aligned}
\end{equation}
\eqref{eq:crunch} tells us that $\| z ^{\mu +\nu } \| _{L^2(0,t)}^2\lesssim
  C_0^2\epsilon ^2$ implies  $\| z ^{\mu +\nu } \|
_{L^2(0,t)}^2\lesssim \epsilon ^2+ C_0\epsilon ^2$ for all $( \mu , \nu )
   \in M$.   This means that we can take
$C_0\sim 1$. This completes the proof of  Prop. \ref{prop:mainbounds}.

\qed

\subsection{Proof   of the asymptotics \eqref{eq:small en32}} \label{subsec:prop1}

We write \eqref{eq:eq f} in the form $\im \dot \eta =   -\Delta  \eta    + V\eta + \mathbb{B}$.
Then $\partial _t(e^{- \im \Delta  t}\eta )=-\im e^{-\im \Delta t}(\eta + \mathbb{B})$ and so
\begin{equation*}  \begin{aligned} &   e^{-\im \Delta t_2}\eta (t_2) -e^{-\im \Delta t_1}\eta (t_1) =-\im  \int _{t_1} ^{t_2}  e^{-\im \Delta t}(V \eta  (t) +\mathbb{B} (t)) dt  \text{  for $t_1<t_2$}.\end{aligned}
\end{equation*}
Then   for a fixed $c_2$ by the Strichartz estimates
\begin{equation*}  \begin{aligned} &  \|  e^{-\im \Delta  t_2}\eta (t_2) -e^{-\im \Delta  t_1}\eta (t_1) \| _{H^1}\le  c_2 ( \| \eta \| _{L^2 (\R _+, W ^{1,6})}    +  \|   \mathbb{B} (t)   \|  _{L^1([{t_1} ,{t_2}], H^1) + L^2([{t_1} ,{t_2}], W ^{\frac{6}{5}}))}).\end{aligned}
\end{equation*}
Since we have
\begin{equation*}  \begin{aligned} &  \mathbb{B} =  \sum _{(\mu , \nu )\in {M}  }    \overline{z} ^{\mu} {z}  ^{\nu} \overline{G} _{\mu \nu} +   \mathbb{A}  , \end{aligned}
\end{equation*}
 and by \eqref{L^2disbis} and  \eqref{eq:eq A},
  valid now in $[0,\infty)$,   for a fixed $C$ we have
\begin{equation*}  \begin{aligned} &    \|   \sum _{(\mu , \nu )\in {M}  }    \overline{z} ^{\mu} {z}  ^{\nu} \overline{G} _{\mu \nu}  \|  _{  L^2(\R _+, W ^{1,\frac{6}{5}}) } \le C \epsilon  \, , \,
 \| \mathbb{A} \|  _{ L^2 (\R _+,  W ^{1,\frac{6}{5}}) + L^1 (\R _+ , H^1) } \le C  \epsilon ^2,
  \end{aligned}
\end{equation*}     we conclude that there exists an $\eta _+\in   H ^{1} $ with
\begin{equation*}  \begin{aligned} & \lim _{t\to + \infty}    e^{ -\im \Delta t }\eta (t )
  = \eta _+   \text{ in $ H ^{1}$   with }  \|    \eta (t ) -  e^{  \im \Delta t }\eta _+ \| _{H^1} \le C \epsilon   \text{  for all $t\ge 0$ }  .\end{aligned}
\end{equation*}
 So we have the first limit in \eqref{eq:small en31} and  the inequality $\|  \eta _+\| _{H^1}\le C    \| u (0)\| _{H^1}$ in Theorem \ref{thm:mainbounds}.

\noindent  We prove now  the existence of $z_+$  and the facts about it in  Theorem \ref{thm:mainbounds}. First of all, from \eqref{eq:FGR01}  \begin{equation*}  \begin{aligned} &
 \frac{1}{2}  \sum _j \frac{d}{dt} | z _j|^2 =
 \sum _j \Im \left [
\partial _{   \overline{j}}   \mathcal{R} \overline{z} _j   +     \sum _{ (\mu , \nu )\in M }    \nu _j   z  ^{\mu }
 \overline{  z }^ {  \nu   }
\langle \eta  ,
  {G}_{\mu \nu }  \rangle      +  \sum _{(\mu ', \nu' )\in M}    \mu _j '   z  ^{\nu '}
 \overline{  z  }^ {  \mu  '    }
\langle \overline{\eta}  ,
  \overline{{G}}_{\mu' \nu' }  \rangle  \right ]      .
\end{aligned}  \end{equation*}
Since the r.h.s. has $L^1(0,\infty )$ norm  bounded by $C \epsilon ^2$ for a fixed $C$,
we conclude that   the limit
\begin{equation*} \begin{aligned} &
\lim _{t\to + \infty} ( | z _1(t)|  , ...   , | z _n(t)|)=  ( \rho _{ +1}  ,  ...   , \rho _{ +n}) \text{ exists with  $| \rho  _+   | \le C  \| u (0)\| _{H^1}   $.}
\end{aligned}  \end{equation*}
By $\lim _{t\to  + \infty} \mathbf{Z}(t)=0$ we conclude that all but at most one
of the $\rho _{+j}$  are equal to 0.

\section{Proof   of Theorem  \ref{thm:orbstab}} \label{sec:stab}

 The stability of   $e^{-\im t E _{1z}}Q_{1z}$ is known.
By   Theorem 1 \cite{GSS} the stability of  $e^{-\im t E _{1z}}Q_{1z}$, or  equivalently of   $e^{-\im t E _{1\rho _1}}Q_{1\rho }$ for $\rho >0$,
   is a consequence of the following two
points.

\begin{itemize}
\item[(1)]  The  self--adjoint operator $L_{- \rho }:= H - E _{1\rho }+   |Q _{1 \rho } |^{2} $
has kernel $\ker L_{- \rho } =\{ Q _{1 \rho } \}$ and  $L_{- \rho } >0$  in $\{ Q _{1 \rho } \} ^{\perp}$.

\item[(2)] The   self--adjoint operator $L_{+ \rho }= H - E _{1\rho }+ 3 |Q _{1 \rho } | ^{2} $ is strictly positive: $L_{+ \rho }>0$.

\end{itemize}

\noindent If $|Q _{1 \rho }(x) |>0$ $\forall$ $x$, then (2) is an immediate consequence of (1).  The fact that
$\ker L_{- \rho } =\{ Q _{1 \rho } \}$  follows
by the fact that $Q _{1 \rho} \in \ker L_{- \rho }$ and by the fact  that   for  $|\rho |<\epsilon _0$   with $ \epsilon _0>0$ small, the number $E _{1\rho }\sim e_1$
is the smallest eigenvalue of $H  +   |Q _{1 \rho } |^{2}$ since $e_1$ is the smallest eigenvalue of $H   $.

\bigskip

 We recall that \cite{TY2,TY3,TY4,SW4,GW1,GW2,GP,NPT} give partial  proofs
of the instability of the 2nd excited state, and only for $2e_2>e_{1}$. We now prove the instability of the excited states.

\noindent Fix a $j>1$ and assume   that $Q_{j r}$ is orbitally stable.
Then     $Q_{j r}$ is asymptotically stable by Theorem \ref{thm:small en}.
So if $\|u(0)-Q_{j r}\|_{H^1}\ll 1$  then $\|u(t)-Q_{j z_j}-e^{\im \Delta t}\eta_+\|_{H^1} \to  0$    for $\ t\to \infty $ and $|z_j(t)|\to \rho$ with $\rho \neq 0$ and close to $r$.
  In this case  we have
\begin{align*}
E(u(0))&=\lim_{t\to \infty}E(u(t))=\lim_{t\to\infty}E(Q_{j z_j(t)}+e^{\im \Delta t}\eta_+),\\
\|u(0)\|_{L^2}^2&=\lim_{t\to \infty}\|Q_{j z_j(t)}+e^{\im \Delta t}\eta_+\|_{L^2}^2.
\end{align*}
Since $\|e^{\im \Delta t}\eta_+\|_{L^2_tL^6_x}\lesssim \|\eta_+\|_{L^2}$  there exists $t_n\to\infty$ s.t. $\|e^{\im \Delta t_n}\eta\|_{L^6_x}\to 0$.
So,  since $\|e^{\im t_n \Delta}\eta_+\|_{L^4}\to 0$, $\int V|e^{\im t_n \Delta}\eta_+|^2\,dx\to 0$ and the cross terms in \eqref{eq:enexp1} disappear, we have
\begin{align*}
E(u(0))&=\lim_{n\to\infty}E(Q_{j z_j(t_n)}+e^{\im \Delta t_n}\eta_+)=E(Q_{j \rho})+\|\nabla \eta_+\|_{L^2}^2,\\
\|u(0)\|_{L^2}^2&=\lim_{n\to \infty}\|Q_{j z_j(t_n)}+e^{\im \Delta t_n}\eta_+\|_{L^2}^2=\|Q_{j \rho}\|_{L^2}^2+\|\eta_+\|_{L^2}^2.
\end{align*}
We claim that for  $j\geq 2$  we can construct a curve on $H^1$ with the following property.
\begin{lemma}\label{lem:curve}
For sufficiently small $\delta$, there exists a map $[0,\delta)\ni\varepsilon\mapsto \Psi(\varepsilon)\in H^1$ s.t.
\begin{itemize}
\item
$\Psi(0)=Q_{j r}$,
\item
$\|\Psi(\varepsilon)\|_{L^2}^2=\|Q_{j r}\|_{L^2}^2$,
\item
$E(\Psi(\varepsilon))<E(Q_{j r})$ if $\varepsilon>0$.
\end{itemize}
\end{lemma}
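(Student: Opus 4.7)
The plan is to produce $\Psi(\varepsilon)$ as a mass-preserving variation through $Q_{jr}$ along a direction in which the second variation of $E$ is negative. Concretely, for a real-valued $w\in H^{1}$ with $\langle w,Q_{jr}\rangle=0$, set
\begin{equation*}
\Psi(\varepsilon):=\alpha(\varepsilon)\,Q_{jr}+\varepsilon w,\qquad \alpha(\varepsilon):=\sqrt{1-\varepsilon^{2}\|w\|_{L^{2}}^{2}/\|Q_{jr}\|_{L^{2}}^{2}},
\end{equation*}
so that the first two bullet points of the lemma hold by construction (the cross term in $\|\Psi(\varepsilon)\|_{L^{2}}^{2}$ vanishes because $w\perp Q_{jr}$). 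The whole problem thus reduces to choosing $w$ so that $\varepsilon\mapsto E(\Psi(\varepsilon))$ is strictly decreasing for small $\varepsilon>0$.

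Using the bound state equation \eqref{eq:sp} in the form $\nabla E(Q_{jr})=2E_{jr}Q_{jr}$, together with $\Psi-Q_{jr}=\varepsilon w-\tfrac{\varepsilon^{2}\|w\|_{L^{2}}^{2}}{2\|Q_{jr}\|_{L^{2}}^{2}}Q_{jr}+O(\varepsilon^{4})$ and the orthogonality of $w$ and $Q_{jr}$, a direct Taylor expansion of $E$ at $Q_{jr}$ produces
\begin{equation*}
E(\Psi(\varepsilon))=E(Q_{jr})+\varepsilon^{2}\langle L_{+,r}w,w\rangle+O(\varepsilon^{3}),\qquad L_{+,r}:=H-E_{jr}+3Q_{jr}^{2},
\end{equation*}
where the two $\varepsilon^{2}$-contributions $-\varepsilon^{2}E_{jr}\|w\|_{L^{2}}^{2}$ (from the linear term evaluated on the quadratic mass-correction $(\alpha-1)Q_{jr}$) and $\varepsilon^{2}(\langle Hw,w\rangle+3\int Q_{jr}^{2}w^{2})$ (from the Hessian of $E$ at $Q_{jr}$ applied to $w$) combine into $\varepsilon^{2}\langle L_{+,r}w,w\rangle$. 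Therefore the lemma is reduced to exhibiting a real $w\perp Q_{jr}$ with $\langle L_{+,r}w,w\rangle<0$.

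To produce such a $w$ I use crucially that $j\geq 2$. At $r=0$ the operator $L_{+,0}=H-e_{j}$ has the strictly negative eigenvalues $e_{1}-e_{j},\dots,e_{j-1}-e_{j}$ with real eigenfunctions $\phi_{1},\dots,\phi_{j-1}$. By Proposition \ref{prop:bddst}, $Q_{jr}=r\phi_{j}+q_{jr}$ is smooth in $r$ with $\|q_{jr}\|_{\Sigma_{m}}=O(r^{3})$ for every $m$, so $L_{+,r}$ is a smooth real self-adjoint perturbation of $L_{+,0}$, and analytic perturbation theory supplies a negative eigenvalue $\lambda_{r}=e_{1}-e_{j}+O(r^{2})$ with real eigenfunction $w_{r}=\phi_{1}+O(r^{2})$ in $\Sigma_{1}$. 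Since $\langle w_{r},Q_{jr}\rangle=r\langle w_{r},\phi_{j}\rangle+\langle w_{r},q_{jr}\rangle=O(r^{3})$, the projected vector $\tilde w_{r}:=w_{r}-\|Q_{jr}\|_{L^{2}}^{-2}\langle w_{r},Q_{jr}\rangle Q_{jr}$ is orthogonal to $Q_{jr}$ and still satisfies $\langle L_{+,r}\tilde w_{r},\tilde w_{r}\rangle\leq\tfrac{1}{2}(e_{1}-e_{j})\|\phi_{1}\|_{L^{2}}^{2}<0$ for $|r|$ small. Choosing $w=\tilde w_{r}$ and $\delta>0$ small enough that the $O(\varepsilon^{3})$ remainder is absorbed by $\varepsilon^{2}|\langle L_{+,r}w,w\rangle|$ completes the construction. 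The only step requiring genuine care, and the mildest technical obstacle, is verifying that $\lambda_{r}$ remains bounded away from zero uniformly for small $r$; this follows from the spectral gap of $L_{+,0}$ at $r=0$ via standard Kato-Rellich perturbation arguments.
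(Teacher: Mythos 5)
Your proposal is correct, and in substance it is the same variational argument as the paper's: a mass-preserving two-term curve $\beta Q_{jr}+\varepsilon\,(\text{direction}\approx\phi_1)$ through $Q_{jr}$, with the strict energy decrease coming from the constrained second variation being negative because $e_1-e_j<0$, i.e. because $j\ge 2$. The differences are organizational. You impose $\langle w,Q_{jr}\rangle=0$ and normalize with the exact square root $\alpha(\varepsilon)$, then expand $E$ itself, so the linear term evaluated on the quadratic mass correction contributes $-\varepsilon^2E_{jr}\|w\|_{L^2}^2$ and combines with the Hessian into $\varepsilon^2\langle L_{+,r}w,w\rangle$; the paper instead takes the non-orthogonal direction $\phi_1$, solves a quadratic for $\beta(\varepsilon)$, and expands the action $S_{E_{jr}}=E-E_{jr}\|\cdot\|_{L^2}^2$, whose first variation vanishes at $Q_{jr}$, so the linear term never appears. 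Since your curve preserves the $L^2$ norm exactly, the two bookkeepings are equivalent. The one place you work harder than necessary is the choice of $w$: no Kato--Rellich perturbation theory is needed to produce a negative direction for $L_{+,r}$, since testing directly with $\phi_1$ (or with your projected $\tilde w_r$, whose correction is only $O(r^2)$ because $\langle\phi_1,Q_{jr}\rangle=\langle\phi_1,q_{jr}\rangle=O(r^3)$) already gives $\langle L_{+,r}\phi_1,\phi_1\rangle=e_1-E_{jr}+3\int Q_{jr}^2\phi_1^2=e_1-e_j+O(r^2)<0$ for $r$ small, which is exactly how the paper gets the sign; your spectral-perturbation step is harmless but superfluous. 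With that caveat, the expansion, the absorption of the $O(\varepsilon^3)$ remainder for fixed small $r$, and the use of Proposition 1.1 are all sound, so the proof goes through.
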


\noindent Before proving the lemma  we show that the assumption that $Q_{j r}$ is asymptotically stable and the existence of $\Psi$ lead to a  contradiction.

\begin{proof}[Proof of instability]
Since $\|Q_{j  r}\|_{L^2}^2 =r^2+O(r^6)$ by Proposition \ref{prop:bddst},    $\|Q_{j  r}\|_{L^2}^2  $
is strictly  increasing in  $r$ for $r$ small.  By  Proposition \ref{prop:bddst}
we have $E'(Q_{j r})= (e_j+O(r^2))Q'(Q_{j r})$. This implies that  $E (Q_{j r})$  is a   strictly  decreasing function of $r$.
Setting $u(0)=\Psi(\varepsilon)$, we have
\begin{align*}
\|Q_{j r}\|_{L^2}^2=\|\Psi(\varepsilon)\|_{L^2}^2=\|Q_{j \rho}\|_{L^2}^2+\|\eta _{+}\|_{L^2}^2.
\end{align*}
Therefore  we have $\|Q_{j r}\|_{L^2}^2\geq \|Q_{j \rho}\|_{L^2}^2$.
This implies $r\geq \rho$ and  so  $E(Q_{j \rho})\geq E(Q_{j r}) $.
But  looking at the energy  we get the following  contradiction which ends the proof
of Theorem \ref{thm:orbstab}:
\begin{align*}
E(Q_{j r})>E(\Psi(\varepsilon))=E(Q_{j \rho})+\|\nabla \eta _{+}\|_{L^2}^2\geq E(Q_{j \rho})\geq E(Q_{j r}).
\end{align*}
\end{proof}

We now construct the curve $\Psi$.

\begin{proof}[Proof of Lemma \ref{lem:curve}]
We set $
\Psi(\varepsilon)=\beta(\varepsilon)Q_{j,r}+\varepsilon \phi_1
$
and choose $\beta(\varepsilon)$ to make $\|\Psi(\varepsilon)\|_{L^2}^2=\|Q_{j r}\|_{L^2}^2$:
\begin{align*}
\|Q_{j r}\|_{L^2}^2 \beta^2 + 2\varepsilon \<Q_{j r},\phi_1\>\beta+\varepsilon^2-\|Q_{j r}\|_{L^2}^2=0.
\end{align*}
So, we have
\begin{align*}
\beta(\varepsilon)&=\frac{-\<Q_{j r},\phi_1\>\varepsilon+\sqrt{\<Q_{j r},\phi_1\>^2 \varepsilon^2 -\|Q_{j r}\|_{L^2}^2(\varepsilon^2-\|Q_{j r}\|_{L^2}^2)}}{\|Q_{j r}\|_{L^2}^2}=
\sqrt{1- g_1(r)\varepsilon^2}+g_2(r)\varepsilon,
\\
g_1(r)&:=\frac{1}{\|Q_{j r}\|_{L^2}^4}\(\|Q_{j r}\|_{L^2}^2-\<Q_{j r},\phi_1\>^2\)  = \frac{1}{\|Q_{j r}\|_{L^2}^4}\(\|Q_{j r}\|_{L^2}^2-\<q_{j r},\phi_1\>^2\)  ,\\
g_2(r)&:=-\frac{\<Q_{j r},\phi_1\>}{\|Q_{j r}\|_{L^2}^2}  =-\frac{\<q_{j r},\phi_1\>}{\|Q_{j r}\|_{L^2}^2}  ,\\
\end{align*}
We now show $E(\Psi(\varepsilon))<E(Q_{j,r})$ for $\varepsilon>0$.
It suffices to show $S_{E_{j r}}(\Psi(\varepsilon))<S_{E_{j r}}(Q_{j r})$, where
\begin{align*}
S_{E_{j r}}(u)=E(u)-E_{j r}\|u\|_{L^2}^2.
\end{align*}
Notice that we have $S_{E_{j r}}'(Q_{jr})=0$.
Therefore, setting $\gamma(\varepsilon)=\beta(\varepsilon)-1$, we have
\begin{align*}
&S_{E_{j r}}(\Psi(\varepsilon))=S_{E_{j r}}(Q_{jr}+\gamma(\varepsilon)Q_{j r}+\varepsilon\phi_1)\\&=S_{E_{j r}}(Q_{j r})+\frac 1 2 \<S_{E_{j r}}''(Q_{j r})\(\gamma(\varepsilon)Q_{j r}+\varepsilon\phi_1\)
,\gamma(\varepsilon)Q_{j r}+\varepsilon\phi_1\>
+o\(\|\gamma(\varepsilon)Q_{j r}+\varepsilon\phi_1\|_{H^1}^2\)
\end{align*}
If $g_2(r)=0$  we have $\gamma(\varepsilon) = O(\varepsilon^2 r^{-2})$ and  we conclude
\begin{align*}
&S_{E_{j r}}(\Psi(\varepsilon))=S_{E_{j r}}(Q_{j r}) + \varepsilon^2\<S_{E_{j r}}(Q_{j r})\phi_1,\phi_1\>+o(\varepsilon^2)\\&=S_{E_{j r}}(Q_{j r})+\varepsilon^2(e_1-e_j) + O(\varepsilon^2 r)+o(\varepsilon^2) <S_{E_{j r}}(Q_{j r}) .
\end{align*}
If $g_2(r)\neq0$  we have $\gamma(\varepsilon)=O(r \varepsilon)$ and
\begin{align*}
S_{E_{j r}}(\Psi(\varepsilon))= S_{E_{j r}}(Q_{j r})+\varepsilon^2(e_1-e_j) + O(r \varepsilon^2) < S_{E_{j r}}(Q_{j r}).
\end{align*}
Therefore Lemma \ref{lem:curve} is proved. This also completes the proof of Theorem \ref{thm:orbstab}.

\end{proof}

\appendix
\section{Appendix:  a generalization of  Proposition \ref{prop:bddst} }
\label{app:bdstates}

For the reference purposes  we generalize \eqref{eq:NLS} as
\begin{equation}\label{NLSb}
 \im u_{t }=-\Delta  u +  V(x)u+\beta  (|u|^2)u  \ , \quad
 (t,x)\in\mathbb{ R}\times
 \mathbb{ R}^3.
\end{equation}
and assume that $\beta  (0)=0$, $\beta\in C^\infty(\R,\R)$ and further, there exists a
$p\in(1,5)$ such that for every $k\ge 0$ there is a fixed $C_k$ with $$\left|
\frac{d^k}{dv^k}\beta(v^2)\right|\le C_k |v|^{p-k-1} \quad\text{if $|v|\ge
1$}.$$

\begin{proposition} \label{prop:bddst1}
Fix $j\in \{ 1,\cdots,n\}$. Then
$\exists a _0>0$ s.t.\ $\forall z_j \in     B_\C (0,   a _0 )$     there is a unique  $  Q_{jz_j } \in \mathcal{S}(\R^3, \C ) := \cap _{t\ge 0}\Sigma _t (\R^3, \C )$ s.t.
\begin{equation}\label{eq:sp1}
\begin{aligned}
&(-\Delta + V) Q_{jz_j } + \beta (|Q_{jz_j }|^2) Q_{jz_j }= E_{jz_j }Q_{jz_j },\\&
Q_{jz_j }=  z_j\phi_j + q_{j z_j}, \ \langle q_{j z_j},\overline{\phi}_j\rangle =0,
\end{aligned}
\end{equation}
and s.t. we have for any $r\in \N$:
 \begin{itemize}
\item[(1)]    $(q_{jz_j },E_{jz_j }) \in C^\infty ( B_\C (   a _0 ), \Sigma _r\times \R )$;   we have $q_{jz_j } =  z_j \widehat{q}_{j  } (|z_j|^2)$ , with
$ \widehat{q}_{j  } (t^2 ) =t ^2\widetilde{q}_{j }(t^2)$,   $\widetilde{q}_{j } (t ) \in C^\infty (   ( - {a _0 }^{2}, {a _0 }^{2}), \Sigma _r (\R ^3, \R ) )$  and  $E_{jz_j }  =E_{j } (|z_j|^2)$ with $E_{j } (t ) \in C^\infty (   ( - {a _0 }^{2}, {a _0 }^{2}),   \R  )$;
\item[(2)]    $\exists$ $C >0$ s.t.
$\|q_{jz_j }\|_{\Sigma _r} \leq C |z_j|^3$, $|E_{jz_j }-e_j|<C | z_j|^2$.

\end{itemize}

\end{proposition}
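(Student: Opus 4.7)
\section*{Proof proposal for Proposition \ref{prop:bddst1}}

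The plan is a standard Lyapunov--Schmidt/implicit function theorem argument, followed by exploiting the gauge symmetry to extract the special structure, and finally a bootstrap in $r$ for the Schwartz-type estimates.

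First, I would set up the bifurcation problem. Using the splitting $Q = z\phi_j + q$ with $\langle q,\overline{\phi}_j\rangle = 0$, the equation \eqref{eq:sp1} is equivalent to the pair
\begin{equation*}
\begin{aligned}
& E = e_j + z^{-1}\langle \beta(|z\phi_j+q|^2)(z\phi_j+q),\overline{\phi}_j\rangle,\\
& (H-E)q + P_j^\perp\bigl[\beta(|z\phi_j+q|^2)(z\phi_j+q)\bigr] = 0,
\end{aligned}
\end{equation*}
where $P_j^\perp$ is the spectral projection complementary to $\phi_j$. Since $(H-E)$ restricted to $\mathrm{Ran}\,P_j^\perp$ is invertible with uniformly bounded inverse for $E$ close to $e_j$ (by (H2)), I would view the pair as a map
$F(z,q,E)\in \bigl(L^2\cap\{\langle\cdot,\overline{\phi}_j\rangle=0\}\bigr)\times\R$
and verify that $F(0,0,e_j)=0$ and $D_{(q,E)}F(0,0,e_j)$ is invertible, since $\beta(0)=0$ kills the nonlinear contribution at $z=0$. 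The implicit function theorem then produces a unique smooth solution $(q_{jz},E_{jz})$ for $|z|<a_0$, defined initially in $H^2\cap\{\langle\cdot,\overline\phi_j\rangle=0\}\times\R$ (since the Nemytskii operator associated with $\beta$ is smooth there, as $H^2(\R^3)\hookrightarrow L^\infty$ and $\beta\in C^\infty$).

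Next, I would extract the gauge structure. Equation \eqref{eq:sp1} is invariant under $u\mapsto e^{\im\vartheta}u$, so uniqueness forces $Q_{j,e^{\im\vartheta}z}=e^{\im\vartheta}Q_{j z}$ and $E_{j,e^{\im\vartheta}z}=E_{j z}$. The second identity immediately yields $E_{jz}=E_j(|z|^2)$ with $E_j$ smooth near $0$, and the first gives $q_{jz}=z\,\widehat{q}_j(|z|^2)$ for some smooth $\widehat q_j$ with values in the relevant subspace. Because $\phi_j$ is real-valued and $\beta$ is real, complex conjugation of \eqref{eq:sp1} with $z$ replaced by $\overline z$ shows that we can take $\widehat q_j$ to be real-valued. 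Finally, since $q_{jz}$ vanishes when $z=0$ and the leading source term $P_j^\perp\bigl[\beta(|z\phi_j|^2)z\phi_j\bigr]$ is $O(z|z|^2)$ (because $\beta(v^2)=\beta'(0)v^2+O(v^4)$), one obtains $\widehat q_j(t^2)=t^2\widetilde q_j(t^2)$ for a smooth $\widetilde q_j$, together with $\|q_{jz}\|_{H^2}\le C|z|^3$ and $|E_{jz}-e_j|\le C|z|^2$.

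The remaining point is upgrading $q_{jz}\in H^2$ to $q_{jz}\in\Sigma_r$ for every $r$, with the corresponding estimate. This is the bootstrap step and is where I expect the only mild technical work. I would proceed inductively: assuming $q_{jz}\in\Sigma_k$, one writes
\begin{equation*}
q_{jz} = -\bigl(H-E_{jz}\bigr)^{-1}P_j^\perp\bigl[\beta(|Q_{jz}|^2)Q_{jz}\bigr].
\end{equation*}
Since $V\in\mathcal{S}(\R^3)$, the resolvent $(H-E_{jz})^{-1}P_j^\perp$ maps $\Sigma_k$ into $\Sigma_{k+2}$ (it gains two derivatives via ellipticity, and the weight $\langle x\rangle$ is preserved modulo Schwartz commutators with $V$, which are harmless). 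The nonlinear term $\beta(|Q|^2)Q$ lies in $\Sigma_k$ whenever $Q$ does, because $\beta$ is smooth and $\phi_j\in\mathcal{S}$. Iterating yields $q_{jz}\in\Sigma_r$ for every $r$, with the same smoothness in $z$ inherited from the implicit function theorem statement in each $\Sigma_r$. The main potential obstacle is the polynomial-growth assumption on $\beta$ for large $|v|$, but this is irrelevant since $Q_{jz}$ is uniformly small in $L^\infty$ for $|z|<a_0$, so only values of $\beta$ in a compact neighborhood of $0$ enter, where $\beta$ is bounded with all derivatives. This closes the proof.
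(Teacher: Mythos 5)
Your overall architecture (implicit function theorem, gauge covariance to extract the $|z_j|^2$-structure, weighted elliptic bootstrap up to $\mathcal S$) parallels the paper's appendix, but the first step fails as you formulated it, and it fails exactly at the point where the paper introduces its key device. In the undivided Lyapunov--Schmidt system the scalar bifurcation equation is $(e_j-E)z+\langle \beta(|z\phi_j+q|^2)(z\phi_j+q),\phi_j\rangle=0$, and at the base point $(z,q,E)=(0,0,e_j)$ its differential in $(q,E)$ vanishes identically: the $E$-derivative is $-\langle z\phi_j+q,\phi_j\rangle=0$ and the $q$-derivative is $\langle (H-e_j)h,\phi_j\rangle=0$, the nonlinear contribution dropping out precisely because $\beta(0)=0$. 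So for that system $D_{(q,E)}F(0,0,e_j)$ is \emph{not} invertible. Your divided formulation, $E=e_j+z^{-1}\langle \beta(|z\phi_j+q|^2)(z\phi_j+q),\overline{\phi}_j\rangle$, does not repair this: as a function of the independent variables $(z,q,E)$ the right-hand side is not defined, let alone $C^1$, on any neighborhood of $(0,0,e_j)$ (take $z=0$, $q\neq 0$ small, where the bracket is generically nonzero). The information that the source term is $O(z|z|^2)$, which you invoke only afterwards to get the estimates, must be fed into the construction \emph{before} the implicit function theorem can be applied. That is what the paper does in Lemma \ref{lem:bddst}: reduce to $z=\rho\in\R$ by gauge covariance, make the rescaled ansatz $Q_{jz}=z(\phi_j+|z|^2\psi)$, $E_{jz}=e_j+|z|^2f$, and use the factorization $\beta(z^2\tilde\phi^2)z\tilde\phi=z^3\bigl(\int_0^1\beta'(sz^2\tilde\phi^2)\,ds\bigr)\tilde\phi^3$ to cancel the powers of $z$; after this the map $F(z,\psi)$ is smooth with $F_\psi(0,\cdot)=\mathrm{Id}$ and the theorem applies. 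An equivalent repair of your scheme: solve the projected equation first for $q=q(z,E)$ alone (there the linearization $P_j^\perp(H-E)$ is invertible), prove $q(z,E)=O(|z|^3)$, and only then divide the scalar equation by $z$.

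The remaining two steps are sound in substance and coincide with the paper's route: gauge covariance yields $E_{jz}=E_j(|z|^2)$ and $q_{jz}=z\widehat q_j(|z|^2)$ (the paper builds this into the ansatz and recovers complex $z$ via $Q_{j e^{\im\theta}\rho}=e^{\im\theta}Q_{j\rho}$), and the upgrade to $\Sigma_r$ for all $r$ on a $z$-ball independent of $r$ is carried out in Lemmas \ref{lemA:1} and \ref{lemA:2} by the same kind of bootstrap you sketch, applied to $(-\Delta-E_{j\rho})Q_{j\rho}=-VQ_{j\rho}-\int_0^1\beta'(sQ_{j\rho}^2)\,ds\,Q_{j\rho}^3$ with $E_{j\rho}<\tfrac12 e_j<0$, using the commutator identities of Lemma \ref{lem:commutator}. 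One small correction there: $V$ is a multiplication operator, so it commutes with the weights exactly; the commutators you actually need to control are those of the weights with $-\Delta$, not ``Schwartz commutators with $V$''. Also be explicit, as the paper is, that the smallness radius and the $C^\infty$ dependence in each $\Sigma_r$ are obtained on one fixed ball rather than on $r$-dependent balls coming from the implicit function theorem in $\Sigma_r$.
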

The rest of this section is devoted to the proof of Prop. \ref{prop:bddst1}.

The first step is the following lemma, which follows by
a direct computation.

\begin{lemma}\label{lem:commutator}
Let $m\in \N_0$ and $k\in \{1,2,3\}$.
Then, we have
\begin{equation}\label{eq:commutator}
\begin{aligned}
&[-\Delta, |x|^{2m}] =  -2m(2m+1)|x|^{2m-2}-4m|x|^{2m-2}x\cdot \nabla \\
&[-\Delta, |x|^{2m}x_k]  = -2m(2m+3)|x|^{2m-2}x_k - 4m x_k |x|^{2m-2} x\cdot \nabla - 2 |x|^{2m}\partial_{x_k}
\end{aligned}
\end{equation}

\end{lemma}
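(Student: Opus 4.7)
My plan is to prove both identities by direct computation using the general commutator formula
\[
[-\Delta, f] = -(\Delta f) - 2(\nabla f)\cdot \nabla,
\]
which follows from the Leibniz rule $-\Delta(fg) = -(\Delta f)g - 2\nabla f\cdot\nabla g - f\Delta g$. The whole lemma reduces to evaluating $\nabla f$ and $\Delta f$ for the two radially-based multipliers $f = |x|^{2m}$ and $f = |x|^{2m}x_k$ in $\R^3$.

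For the first identity, I would compute $\nabla |x|^{2m} = 2m|x|^{2m-2}x$ and, using $\nabla\cdot x = 3$ in $\R^3$, $\Delta |x|^{2m} = 2m(2m-2)|x|^{2m-2} + 6m|x|^{2m-2} = 2m(2m+1)|x|^{2m-2}$. Substituting into the commutator formula yields exactly the first line of \eqref{eq:commutator}. For the second identity, I would apply the product rule: $\Delta(|x|^{2m}x_k) = x_k\Delta|x|^{2m} + 2\nabla|x|^{2m}\cdot\nabla x_k$ (since $\Delta x_k = 0$), which gives $2m(2m+1)|x|^{2m-2}x_k + 4m|x|^{2m-2}x_k = 2m(2m+3)|x|^{2m-2}x_k$, and $\nabla(|x|^{2m}x_k) = 2m|x|^{2m-2}x_k\,x + |x|^{2m}e_k$ so that $2\nabla(|x|^{2m}x_k)\cdot\nabla = 4m|x|^{2m-2}x_k(x\cdot\nabla) + 2|x|^{2m}\partial_{x_k}$. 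Combining the two pieces in $[-\Delta, f] = -(\Delta f) - 2(\nabla f)\cdot\nabla$ produces the second line of \eqref{eq:commutator}.

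There is no real obstacle here: the statement is purely algebraic and the identities are verified by the two displayed computations above. The only mild care required is in the $m=0$ case, where the notation $|x|^{2m-2} = |x|^{-2}$ appears formally but is multiplied by $m=0$, so both sides vanish and the identities still hold as written (interpreting the product $m|x|^{2m-2}$ as zero at $m=0$). Since the lemma is invoked later only to produce controlled commutators with the weight operators defining the $\Sigma_r$ norms, no regularity or domain issues enter at this stage.
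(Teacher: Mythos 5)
Your proof is correct: the identity $[-\Delta,f]=-(\Delta f)-2(\nabla f)\cdot\nabla$ together with your evaluations of $\nabla f$ and $\Delta f$ for $f=|x|^{2m}$ and $f=|x|^{2m}x_k$ (and the harmless $m=0$ convention) reproduces exactly \eqref{eq:commutator}. This is precisely the "direct computation" the paper invokes without writing it out, so your argument matches the paper's approach.
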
  \qed

Our second step is the following lemma.
\begin{lemma}\label{lemA:1} The eigenfunctions $\phi_j$ of $-\Delta + V$ satisfy
$\phi_j\in \mathcal S (\R^3)$.
\end{lemma}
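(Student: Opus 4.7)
The plan is to separate the two features that characterize $\mathcal{S}(\R^3)$, namely $C^\infty$ smoothness and rapid decay of every derivative, prove them independently, and then combine them via the equivalent definition $\mathcal{S}(\R^3)=\bigcap_{r\ge 0}\Sigma_r(\R^3,\C)$ recorded in Subsection 2.1.

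For smoothness, since $V\in\mathcal{S}(\R^3)\subset C^\infty_b$, the elliptic operator $-\Delta+V-e_j$ has smooth coefficients, so standard interior elliptic regularity (bootstrap: $\Delta\phi_j=(V-e_j)\phi_j\in L^2\Rightarrow\phi_j\in H^2_{\mathrm{loc}}$, and iterate) yields $\phi_j\in C^\infty(\R^3)$. At this stage $\phi_j\in H^s(\R^3)$ for every $s\ge 0$ as well, since $V-e_j\in C^\infty_b$ and Sobolev norms can be estimated globally from the equation once $\phi_j\in L^2$ is known.

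For decay, the key input is (H2), which gives $e_j<0$. Fix any $0<\alpha<\sqrt{-e_j}$ and set $\psi_R:=\chi_R\, e^{\alpha\rho}\phi_j$, where $\rho(x)$ is a smooth, nonnegative regularization of $|x|$ with $|\nabla\rho|\le 1$ and $\chi_R$ is a cutoff equal to $1$ on $|x|\le R$. Using the equation $(-\Delta+V-e_j)\phi_j=0$, a standard Agmon-type computation (multiply by $e^{2\alpha\rho}\chi_R^2\overline{\phi_j}$ and integrate by parts) produces
\begin{equation*}
\int\bigl(-e_j-\alpha^2|\nabla\rho|^2-V(x)\bigr)|\psi_R|^2\,dx \ \le\ \mathrm{(controllable\ cutoff\ error)}.
\end{equation*}
Since $V\in\mathcal{S}$ decays at infinity and $-e_j-\alpha^2>0$, the integrand is strictly positive outside a fixed ball, so letting $R\to\infty$ yields $e^{\alpha|x|}\phi_j\in L^2(\R^3)$. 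In particular $\langle x\rangle^N\phi_j\in L^2$ for every $N\in\N$.

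To upgrade to Schwartz, I would induct on $|\beta|$ to show $\|x^\mu\partial^\beta\phi_j\|_{L^2}<\infty$ for all multi-indices $(\mu,\beta)$. The base case $\beta=0$ is covered by the Agmon bound. For the inductive step, solve for second derivatives from the equation in the form $\partial_i\partial_i\phi_j=(V-e_j)\phi_j-\sum_{k\ne i}\partial_k^2\phi_j$, differentiate, and use the Leibniz expansion together with $V\in\mathcal{S}$ (so $x^\mu\partial^\gamma V\in L^\infty$ for all $\mu,\gamma$) to reduce to weighted $L^2$ bounds on lower-order derivatives; the commutator identities of Lemma \ref{lem:commutator} are precisely what is needed to move weights past $\Delta$ cleanly when it is more convenient to work with the equivalent norm $\|(1-\Delta+|x|^2)^{r/2}u\|_{L^2}$ mentioned in Subsection 2.1. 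Combining the inductive bound with $\phi_j\in H^s$ (from Step 1) yields $\phi_j\in\Sigma_r$ for every $r$, hence $\phi_j\in\mathcal{S}(\R^3)$.

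The only genuinely nontrivial input is the Agmon exponential decay in Step 2; everything else is bootstrap from the equation. That step is nonetheless classical for Schr\"odinger operators with short-range smooth potentials at negative energy, and the rapid decay of $V$ makes the cutoff error and the sign of $-e_j-\alpha^2-V$ outside a large ball easy to control.
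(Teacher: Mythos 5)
Your proposal is correct in substance but follows a genuinely different route from the paper in the decay step. The paper never invokes exponential (Agmon) decay: it exploits $e_j<0$ only through the invertibility of $-\Delta-e_j$ on $L^2$, and obtains the polynomial weights directly by the algebraic bootstrap of Lemma \ref{lem:commutator}, showing inductively that $|x|^{2m}\phi_j$ and $|x|^{2m}x_k\phi_j$ lie in $H^2$ because each application of the commutator identities produces a right-hand side already known to be in $L^2$; smoothness is handled by the same resolvent bootstrap $\phi_j\in H^{2m}\Rightarrow V\phi_j\in H^{2m}\Rightarrow\phi_j\in H^{2m+2}$. Your Step 2 instead proves the stronger statement $e^{\alpha|x|}\phi_j\in L^2$ for $\alpha^2<-e_j$, after which your Step 3 essentially reproduces the paper's commutator induction to attach derivatives to the weights. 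What your approach buys is a stronger (exponential) decay statement and independence from the specific polynomial-weight identities; what the paper's approach buys is that it is entirely elementary and self-contained, needing only the two commutator formulas and the positivity of $-\Delta-e_j$, with no integration-by-parts estimate at infinity. One caveat on your Agmon step as written: if you cut off the \emph{function} with $\chi_R$ while keeping the unbounded weight $e^{\alpha\rho}$, the error term involves $\int|\nabla\chi_R|^2e^{2\alpha\rho}|\phi_j|^2$, which is not a priori finite before the decay is known; the standard remedy is to use bounded regularized weights (e.g. $e^{\alpha\rho/(1+\epsilon\rho)}$), obtain the estimate uniformly in $\epsilon$, and let $\epsilon\to0$ by monotone convergence. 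With that routine adjustment, and with your Step 3 carried out by induction exactly in the spirit of Lemma \ref{lem:commutator}, the argument is complete.
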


\begin{proof}
First, $\phi_j\in L^2(\R^3)$, so  we have $\phi_j\in H^2(\R^3)$ by
$$
(-\Delta-e_j)\phi_j=-V\phi_j.
$$
Furthermore, if we have $\phi_j\in H^{2m}(\R^3)$, then we have $\phi_j\in H^{2m+2}(\R^3)$.
This implies $\phi_j\in \cap_{m=1}^\infty H^m$.

\noindent
Next, by Lemma \ref{lem:commutator}, we have
$$
(-\Delta - e_j) x_k\phi_j = -2\partial_{x_k}\phi_j - V x_k \phi_j,
$$
for $k=1,2,3$.
Therefore, we have $x_k\phi_j\in H^2(\R^3)$.
Again, by Lemma \ref{lem:commutator}, we have
$$
(-\Delta - e_j) |x|^2 \phi_j = -6\phi_j-4x\cdot\nabla \phi_j - V x_k \phi_j.
$$
So, by $x\cdot \nabla \phi_j=\nabla (x\phi_j)-3\phi_j\in L^2(\R^3)$, we have $|x|^2\phi_j\in H^2$.

\noindent
Now, suppose $|x|^{2m} \phi_j\in H^2(\R^3)$.
By Lemma \ref{lem:commutator}, we have
\begin{equation*}
\begin{aligned}
(-\Delta - e_j) |x|^{2m}x_k \phi_j =& -2m(2m+3)|x|^{2m-2}x_k\phi_j - 4mx_k|x|^{2m-2}x\cdot \nabla \phi_j \\&
-2|x|^{2m}\partial_{x_k}\phi_j-V|x|^{2m}x_k\phi_j.
\end{aligned}
\end{equation*}
Since
$$
|x|^{2m}\partial_{x_k}\phi_j=\partial_{x_k}\(|x|^{2m}\phi_j\)-4m|x|^{2m-2}x_k\phi_j\in L^2(\R^3),
$$
we have $ |x|^{2m}x_k \phi_j\in H^2(\R^3)$.
Finally, by
\begin{equation*}
\begin{aligned}
(-\Delta - e_j) |x|^{2m+2}\phi_j =& -2(m+1)(2m+3)|x|^{2m}\phi_j - 4(m+1)|x|^{2m}x\cdot \nabla \phi_j
-V|x|^{2m+2}\phi_j,
\end{aligned}
\end{equation*}
and $|x|^{2m}x\cdot \nabla \phi_j = \nabla \cdot (|x|^{2m}x\phi_j)-(4m+3)|x|^{2m}\phi_j\in L^2(\R^3)$, we have $|x|^{2m+2}\phi_j\in H^2(\R^2)$.
By induction we have $\phi_j\in \Sigma _{2m}$ for any $m\geq 1$.
\end{proof}

The next step is the following lemma.

\begin{lemma}\label{lem:bddst}
Fix $j\in \{ 1,\cdots,n\}$  and $r\in \N  $  with $r\ge 2$.    Then
$\exists \delta _{r}>0$ s.t.\ $\forall z_j \in     B_\C (  0, \delta _{r})$  there is a unique  $  Q_{jz_j } \in\Sigma _{r}(\R^3, \C )$  satisfying  \eqref{eq:sp}
and  claims (1) and (2) in Prop. \ref{prop:bddst}.

\end{lemma}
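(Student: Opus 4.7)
The natural tool is the implicit function theorem, applied after a gauge reduction and a rescaling that absorbs the expected orders of vanishing in $z$. Using the $U(1)$ gauge covariance $Q(e^{\im\theta}z) = e^{\im\theta}Q(z)$ of \eqref{eq:sp1}, I would first restrict to real parameters $z = \rho \in [0, \delta_r)$ and look for real--valued solutions of the form
\[
Q_{j\rho} = \rho\phi_j + \rho^3 \widetilde q, \qquad E_{j\rho} = e_j + \rho^2 \mu,
\]
with $\widetilde q \in \Sigma_r^\perp := \Sigma_r(\R^3,\R) \cap \phi_j^\perp$ and $\mu \in \R$. Since $\beta \in C^\infty(\R,\R)$ with $\beta(0) = 0$, I can factor $\beta(v) = v\gamma(v)$ with $\gamma(v) := \int_0^1 \beta'(tv)\,dt \in C^\infty$. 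Substituting the ansatz into \eqref{eq:sp1}, using $H\phi_j = e_j\phi_j$, dividing through by $\rho^3$ and writing $s = \rho^2$, the equation collapses to
\[
F(\widetilde q, \mu, s) := (H-e_j)\widetilde q - s\mu\widetilde q - \mu\phi_j + |\phi_j + s\widetilde q|^2 \gamma\bigl(s|\phi_j+s\widetilde q|^2\bigr)(\phi_j + s\widetilde q) = 0.
\]
I would split this into $F_1 := P_j^\perp F$ and $F_2 := \langle F, \phi_j\rangle$, obtaining a smooth map $F = (F_1, F_2)\colon \Sigma_r^\perp \times \R \times (-\delta, \delta) \to \Sigma_r^\perp \times \R$.

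Next, I would solve the reduced system at $s = 0$ to produce a base point for the IFT: the equations become $(H-e_j)\widetilde q(0) = -\gamma(0)P_j^\perp(\phi_j^3)$ and $\mu(0) = \gamma(0)\|\phi_j\|_{L^4}^4$. By (H2) the eigenvalue $e_j$ is simple and isolated from the rest of $\sigma(H)$, so $(H-e_j)|_{\phi_j^\perp}$ is a topological isomorphism of $\Sigma_r^\perp$ onto itself (using $V \in \mathcal S$ and Lemmas \ref{lem:commutator}--\ref{lemA:1} to propagate $\Sigma_r$ regularity), and since $\phi_j \in \mathcal S(\R^3)$ the source term $\gamma(0)P_j^\perp(\phi_j^3)$ lies in $\Sigma_r$. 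The Jacobian of $(F_1,F_2)$ in $(\widetilde q, \mu)$ at $(\widetilde q(0), \mu(0), 0)$ is block lower--triangular with diagonal blocks $(H-e_j)|_{\phi_j^\perp}$ and $-1$, hence invertible. The smoothness of $F$ as a map between the relevant $\Sigma_r$--spaces rests on the Sobolev multiplication $\Sigma_r \cdot \Sigma_r \cdot \Sigma_r \hookrightarrow \Sigma_r$ valid for $r \ge 2$ in dimension three, together with the $C^\infty$ regularity of $\gamma$. The IFT then produces smooth branches $s \mapsto (\widetilde q(s), \mu(s))$ for $s \in (-\delta_r^2, \delta_r^2)$.

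The conclusions of the lemma follow directly: setting $\widetilde q_j(s) := \widetilde q(s)$ and $E_j(s) := e_j + s\mu(s)$ gives the claimed smooth structure $\widehat q_j(s) = s\widetilde q_j(s)$ with $q_{j\rho} = \rho\,\widehat q_j(\rho^2) = \rho^3\widetilde q_j(\rho^2)$, from which $\|q_{j\rho}\|_{\Sigma_r} \le C\rho^3$ and $|E_{j\rho}-e_j| \le C\rho^2$ are immediate. Extending to complex $z = e^{\im\theta}\rho$ via $Q_{jz} := e^{\im\theta}Q_{j|z|}$ produces the full family and the gauge relation $q_{jz} = z\widehat q_j(|z|^2)$. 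Uniqueness in \eqref{eq:sp1} is obtained by noting that if $(Q', E')$ solves \eqref{eq:sp1} and is close to $(z\phi_j, e_j)$, then after gauge rotation $\widetilde Q := e^{-\im\arg z}Q'$ solves the same equation with real parameter $|z|$; since $\overline{\widetilde Q}$ is also a solution with the same parameter, the local uniqueness provided by the IFT first forces $\widetilde Q$ to be real, then identifies it with the constructed branch. The main technical point I anticipate is verifying that $(H-e_j)^{-1}P_j^\perp$ maps $\Sigma_r^\perp$ into itself and that the pointwise nonlinear map $\widetilde q \mapsto \gamma(s|\phi_j+s\widetilde q|^2)(\phi_j+s\widetilde q)$ is $C^\infty$ as a map $\Sigma_r^\perp \to \Sigma_r$; both rest on the Schwartz--class decay of $\phi_j$ from Lemma \ref{lemA:1}, the $\mathcal S$--regularity of $V$, and standard algebra--type bounds for the weighted Sobolev spaces $\Sigma_r$ with $r \ge 2$.
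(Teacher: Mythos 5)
Your proposal is correct and follows essentially the same route as the paper: gauge reduction to a real parameter, the rescaled ansatz $Q=\rho\phi_j+\rho^3\widetilde q$, $E=e_j+\rho^2\mu$ factoring out the cubic vanishing of the nonlinearity, projection off $\phi_j$, and the implicit/inverse function theorem at $\rho=0$ with the invertibility of $(H-e_j)$ on $\phi_j^\perp$. The only cosmetic difference is that the paper eliminates the eigenvalue correction (its $f$) by pairing with $\phi_j$ and applies the inverse function theorem to $\psi$ alone, whereas you keep $(\widetilde q,\mu)$ as joint unknowns with a triangular Jacobian; both yield the same branch and the same estimates.
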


\begin{proof}In this proof we write  $g(u):=\beta(|u|^2)u$.
Notice that it suffices to show the claim of Lemma \ref{lem:bddst} for $z_j\in\R$ with $\R$ valued $Q_{j,z_j}$.
Indeed, if we define
\begin{equation}\label{eq:gauge}
Q_{j z_j}=e^{\im \theta}Q_{j \rho},\quad E_{j z_j}=E_{j \rho}
\end{equation}
for $z=e^{\im \theta}\rho$, $Q_{j z}$ and $E_{j z}$ satisfies \eqref{eq:sp} if $Q_{j \rho}$ and $E_{j \rho}$ satisfy \eqref{eq:sp}.
Further, if $B_{\R}(0,\delta)\ni z\mapsto (Q_{j z}, E_{j z})\in \Sigma _r\times \R$ is $C^\infty$, then by \eqref{eq:gauge}, we have
$B_{\C}(0,\delta)\ni z\mapsto (Q_{j,z}, E_{j,z})\in \Sigma _r\times \R$ is $C^\infty$.

\noindent
Fix $j\in \{0,1,\cdots,n\}$.
For simplicity   we set $z_j=z$, $e_j=e$ and $\phi_j=\phi$.
Set
$$
Q_{j,z}=z\(\phi + |z|^2\psi(z)\),\quad E_{j,z}=e + |z|^2f(z).
$$
We   solve \eqref{eq:sp} under the above ansatz.
Substituting the ansatz into \eqref{eq:sp}, we have
\begin{equation}
H \psi + z^{-3}g(z(\phi+z^2\psi)) = e\psi + f\phi + z^2 f \psi.
\end{equation}
Set $Pu=u- \langle u,\phi \rangle \phi$.
Then, we have
\begin{equation*}
\begin{aligned}
H\psi + z^{-3}P g(z(\phi+z^2\psi))&=e\psi + z^2 f\psi \, , \quad
\langle z^{-3}g(z(\phi+z^2\psi)),\phi \rangle =f.
\end{aligned}
\end{equation*}
Therefore, it suffices to solve
\begin{equation}\label{eq:sp2}
(H-e)\psi = -z^{-3}P g(z(\phi+z^2\psi))+z^{-1}\langle g(z(\phi+z^2\psi)),\phi \rangle \psi.
\end{equation}
Now, set $\tilde \phi (z):= \phi + z^2 \psi (z)$.
Then,
\begin{equation*}
\begin{aligned}
g(z\tilde \phi)&= \beta(z^2\tilde \phi) z\tilde \phi
= z^3 \int_0^1 \beta'(s z^2 \tilde \phi ^2) \,ds \tilde \phi ^3.
\end{aligned}
\end{equation*}

\noindent
So, \eqref{eq:sp2} can be rewritten as
\begin{equation}\label{eq:sp3}
(H-e)\psi = - P \(\int_0^1 \beta'(sz^2\tilde \phi ^2)\,ds \tilde \phi ^3\) + \langle \beta(z^2\tilde \phi ^2)\tilde \phi, \phi \rangle \psi.
\end{equation}

\noindent
To  show that $z\mapsto \psi(z)\in \Sigma _ r$ exists and  is $  C^\infty$, we use the inverse function theorem.
Set
\begin{equation*}
\Phi(z, \psi):=-(H-e)^{-1}P \(\int_0^1 \beta'(sz^2\tilde \phi ^2)\,ds \tilde \phi ^3\) + \langle \beta(z^2\tilde \phi ^2)\tilde \phi, \phi \rangle (H-e)^{-1}\psi,
\end{equation*}
and
\begin{equation*}
F(z,\psi):=\psi-\Phi(z,\psi).
\end{equation*}
Then, $F:\R\times P \Sigma _r\to P\Sigma _r$ is $C^\infty$.
Next, since
\begin{equation*}
F(0,\psi)=\psi+\beta'(0)(H-e)^{-1}P\phi^3,
\end{equation*}
we have
\begin{equation*}
F(0,-\beta'(0)(H-e)^{-1}P\phi^3)=0.
\end{equation*}
We now compute $F_\psi(z,\psi)$.
\begin{equation*}
\begin{aligned}
\Phi_{\psi}(z,\psi)h=&-2z^4(H-e)^{-1}P \(\int_0^1 \beta''(sz^2\tilde \phi ^2)s\,ds \tilde \phi ^4 h\)-3z^2(H-e)^{-1}P \(\int_0^1 \beta'(sz^2\tilde \phi ^2)\,ds \tilde \phi ^2 h\)\\&
+ 2z^4 \langle  \beta'(z^2\tilde \phi ^2)\tilde \phi^2 h, \phi \rangle (H-e)\psi
+ z^2\langle \beta(z^2\tilde \phi ^2)h, \phi \rangle (H-e)\psi
+ \langle \beta(z^2\tilde \phi ^2)\tilde \phi, \phi \rangle(H-e)h.
\end{aligned}
\end{equation*}
So, we have
\begin{equation*}
F_{\psi}(0,\psi)h=h.
\end{equation*}
Therefore, by the inverse function theorem we have the conclusion of the Lemma.
\end{proof}

The final step is that the $\delta _{r}>0$ can be chosen independent of $r$.
\begin{lemma}\label{lemA:2} Consider the   $Q_{j z_j}$ in  {Lemma} \ref{lem:bddst}.
Then $\exists$
a $\delta >0$  s.t.    $Q_{j z_j}\in \mathcal S (\R^3)$ for $|z_j|<\delta $.
\end{lemma}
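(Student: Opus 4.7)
The strategy is a bootstrap argument based on the uniqueness clause of Lemma \ref{lem:bddst} combined with the commutator identities of Lemma \ref{lem:commutator}, in the same spirit as the proof of Lemma \ref{lemA:1}. First, I would fix $r_0 \geq 2$ (so that, in $\R^3$, $\Sigma_{r_0}\hookrightarrow L^\infty$ and $\Sigma_{r_0}$ is an algebra) and set $\delta := \delta_{r_0}$ from Lemma \ref{lem:bddst}. For $|z_j|<\delta$ let $Q:=Q_{jz_j}\in\Sigma_{r_0}$ and $E:=E_{jz_j}$. By the uniqueness part of Lemma \ref{lem:bddst}, any $\Sigma_r$--solution of \eqref{eq:sp1} with $r\ge r_0$ satisfying the ansatz must coincide with $Q$. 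Therefore it suffices to establish that the single element $Q$ satisfies $Q\in\Sigma_r$ for every $r\ge r_0$, and then conclude $Q\in\mathcal{S}(\R^3)=\cap_{r\ge 0}\Sigma_r$.

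The induction hypothesis is $Q\in\Sigma_r$. Rewrite the equation \eqref{eq:sp1} as
\begin{equation*}
(-\Delta - E)Q = -V Q - \beta(|Q|^2)Q =: F(Q).
\end{equation*}
Since $V\in\mathcal{S}(\R^3)$, one has $VQ\in\Sigma_r$; and because $Q\in L^\infty\cap\Sigma_r$ with $\beta\in C^\infty$, Moser-type estimates together with the algebra property of $\Sigma_r$ give $\beta(|Q|^2)Q\in\Sigma_r$, hence $F(Q)\in\Sigma_r$. Note that for $|z_j|<\delta$ we have $E$ close to $e_j<0$, so $-E>0$ and $-\Delta - E$ is an invertible elliptic operator with resolvent gaining two derivatives on tempered distributions, a fact I will use freely below.

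Next, following the proof of Lemma \ref{lemA:1}, I would apply the commutators \eqref{eq:commutator} to $|x|^{2m}Q$ and $|x|^{2m}x_kQ$ to produce
\begin{equation*}
(-\Delta - E)(|x|^{2m+2}Q) = |x|^{2m+2}F(Q) - 2(m+1)(2m+3)|x|^{2m}Q - 4(m+1)|x|^{2m}x\cdot\nabla Q,
\end{equation*}
and an analogous identity with $|x|^{2m}x_k$ in place of $|x|^{2m+2}$. Using $x\cdot\nabla Q = \nabla\cdot(xQ)-3Q$, the inductive hypothesis $Q\in\Sigma_r$ (which controls $|x|^{2m}Q$, $|x|^{2m}\partial Q$ and $|x|^{2m}x_kQ$ in $L^2$ for $2m+1\le r$) together with $F(Q)\in\Sigma_r$ guarantees that the right-hand side lies in $L^2$. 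Elliptic regularity for $-\Delta - E$ then promotes the identity from $L^2$ to $H^2$, yielding one extra power of $|x|$ paired with two extra derivatives, and an alternating argument as in Lemma \ref{lemA:1} upgrades $Q\in\Sigma_r$ to $Q\in\Sigma_{r+1}$.

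The main technical point to control is the nonlinear term $\beta(|Q|^2)Q$: one must check that multiplying it by $|x|^{2m}$ keeps it in $L^2$. This is the step that genuinely uses the hypothesis on $\beta$ and the algebra structure of $\Sigma_r$ for $r\ge r_0$; since $|x|^{2m}\beta(|Q|^2)Q$ can be distributed as $\beta(|Q|^2)\cdot(|x|^{2m}Q)$ with $\beta(|Q|^2)\in L^\infty$ (and its derivatives controlled via the chain rule and Faà di Bruno), the needed bounds follow routinely from $Q\in\Sigma_r\cap L^\infty$. Iterating the induction for all $r\ge r_0$ finishes the proof. The only obstacle is bookkeeping the Moser estimates for $\beta(|Q|^2)Q$ uniformly in $r$; all the rest is a direct transcription of Lemma \ref{lemA:1} with a nonlinear source term.
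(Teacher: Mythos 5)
Your overall architecture is the same as the paper's: fix one finite-regularity instance of Lemma \ref{lem:bddst}, keep that $\delta$ once and for all, and bootstrap both smoothness and decay of the single function $Q=Q_{jz_j}$ directly from equation \eqref{eq:sp1} via the commutator identities \eqref{eq:commutator}, exactly in the spirit of Lemma \ref{lemA:1} (the appeal to uniqueness is harmless but not needed, and the invertibility of $-\Delta-E$ is indeed available after shrinking $\delta$ so that $E_{jz_j}\le e_j/2<0$). The genuine gap is in the weighted estimate of the nonlinear term. In your own displayed identity the right-hand side carries the factor $|x|^{2m+2}F(Q)$, i.e.\ two powers of $|x|$ \emph{more} than the inductive hypothesis provides on $Q$, whereas your proposed factorization $|x|^{2m}\beta(|Q|^2)Q=\beta(|Q|^2)\,(|x|^{2m}Q)$ with $\beta(|Q|^2)\in L^\infty$ only absorbs the weight you already control; for the same reason the statement that ``$F(Q)\in\Sigma_r$ guarantees the right-hand side lies in $L^2$'' fails precisely at the inductive step where the weight exceeds $r$. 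For the term $VQ$ there is no issue, since $V\in\mathcal{S}(\R^3)$ swallows any power of $x$; but for the nonlinearity the two excess powers of $|x|$ must be transferred onto extra factors of $Q$. This is what the paper does: using $\beta(0)=0$ one writes $\beta(Q^2)Q=\int_0^1\beta'(sQ^2)\,ds\,Q^3$ and estimates
$|x|^{2m+2}\big(\textstyle\int_0^1\beta'(sQ^2)ds\big)Q^3=\big(\textstyle\int_0^1\beta'(sQ^2)ds\big)\,(|x|Q)^2\,(|x|^{2m}Q)$,
which requires the a priori bound $|x|Q\in L^\infty$ in addition to the induction hypothesis $|x|^{2m}Q\in L^2$.

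That bound also shows your choice $r_0\ge 2$ is too weak: $Q\in\Sigma_{r_0}$ gives $|x|Q\in\Sigma_{r_0-1}$, and in $\R^3$ you need $\Sigma_{r_0-1}\hookrightarrow L^\infty$, which fails for $r_0=2$ (since $\Sigma_1\not\hookrightarrow L^\infty$); the paper fixes $r=4$, so that $|x|Q\in\Sigma_3\hookrightarrow L^\infty$. With $r_0\ge 4$, the quadratic vanishing of $\beta$ at the origin, and the factorization above (an excess weight of one power in the intermediate step with $|x|^{2m}x_k$ is handled the same way), your induction closes and becomes the paper's proof; the pure-derivative part of your argument (Moser/algebra estimates, i.e.\ the ring property of $H^m$ for $m\ge 2$) is fine as stated.
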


\begin{proof}
We use a bootstrap argument similar to the proof of Lemma \ref{lemA:1}.
We can consider the $Q_{j z}$   given in Lemma \ref{lem:bddst} with $r=4$.
It is enough to consider $z= \rho \in (0,\delta)$  with $\delta <\delta _4$. For $\delta>0$ sufficiently small
we also have $E_{j\rho}<\frac 1 2 e_j <0 $.
By \eqref{eq:sp1} we have
\begin{equation}\label{eq:boot1}
(-\Delta - E_{j\rho})Q_{j\rho}=- VQ_{j\rho} -    \int_0^1 \beta'(s Q_{j\rho} ^2)\,ds  Q_{j\rho}^3.
\end{equation}

\noindent
We proceed as in  Lemma \ref{lemA:1}.
Since the commutator term and $-VQ_{j\rho}$ are the same as in \ref{lemA:1},c
we conclude that
 Lemma \ref{lemA:2}  is a consequence of the following two simple  facts for $m\ge 2$.
 \begin{itemize}
\item [(i)]
If $Q_{j\rho}\in H^m$, then $\beta(Q_{j\rho}^2)Q_{j\rho}=\int_0^1 \beta'(s Q_{j\rho}^2)\,ds Q_{j\rho}^3\in H^m$.
\item [(ii)]
If $|x|^{2m}Q_{j\rho} \in L^2(\R^3)$, then $|x|^{2m+2}\int_0^1 \beta'(s Q_{j\rho}^2)\,ds Q_{j\rho}^3\in L^2 $.
\end{itemize}
 (i)  follows from the fact that
  $H^m (\R ^3)$ is a ring for $m\geq 2$.
We now look at (ii). Since $Q_{j\rho}$ is a continuous function with $ Q_{j\rho}(x) \to 0$ as $|x|\to \infty$, the range of $Q_{j\rho} $ (i.e. $\{ Q_{j\rho} (x)\in\R\ |\ x\in\R^3\}$) is relatively compact.
So, since $t\to \int_0^1\beta'(s t^2)\, ds$ is a  continuous function from $\R\to \R$, the range of $\int_0^1\beta'(s Q_{j\rho}^2)\, ds$ is  relatively compact.
Therefore, we have $\int_0^1\beta'(s Q_{j\rho}^2)\, ds\in L^\infty$.
On the other hand, by $Q_{j\rho}\in \Sigma _4$ we have $|x|Q_{j\rho}\in \Sigma _3\hookrightarrow L^\infty$.
Therefore, we have
$$|x|^{2m+2}\int_0^1 \beta'(s Q_{j\rho}^2)\,ds Q_{j\rho}^3=\int_0^1 \beta'(s Q_{j\rho}^2)\,ds\(|x|Q_{j\rho}\)^2|x|^{2m}Q_{j\rho}\in L^2(\R^3).
$$
  This proves (ii) and completes the proof of Lemma \ref{lemA:2}.
\end{proof}
Finally, {Proposition} \ref{prop:bddst1}  is a consequence of Lemmas
  \ref{lem:commutator}--\ref{lemA:2}.

\section{Appendix: expansions of gauge invariant functions}\label{app:B}

We prove here  \eqref{eq:fix2} and \eqref{eq:Tayl1}, which are  direct consequences of Lemmas \ref{lem:expandzzj} and \ref{lem:expandzzS}.

\begin{lemma}\label{lem:gaugesmooth}
Let    $a(z)\in C^\infty(B_{\C}(0,\delta),\R)$  and    $a(e^{\im\theta}z)=a(z)$ for any $\theta\in\R$.
Then  there exists $\alpha\in \C^\infty([0,\delta^2);\R)$ s.t. $\alpha(|z|^2)=a(z)$.
\end{lemma}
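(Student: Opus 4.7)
The plan is to reduce the statement to the classical fact that a smooth even function of a single real variable is a smooth function of its square. First, I would use the gauge invariance $a(e^{\im\theta}z) = a(z)$ to reduce to a radial problem. Define $f\colon(-\delta,\delta)\to\R$ by $f(r):=a(r)$, reading $r$ as a real element of $B_\C(0,\delta)$. Then $f$ is smooth, since $a$ is smooth on $B_\C(0,\delta)$ and the inclusion $r\mapsto r+\im 0$ is smooth, and $f(-r)=a(-r)=a(r)=f(r)$ by the gauge invariance with $\theta=\pi$, so $f$ is even. Moreover, applying the gauge invariance with $e^{\im\theta}z\in\R_{\geq 0}$ gives $a(z)=f(|z|)$ for every $z\in B_\C(0,\delta)$.

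It remains to produce $\alpha\in C^\infty([0,\delta^2),\R)$ with $\alpha(r^2)=f(r)$ for $r\in[0,\delta)$. The natural definition is $\alpha(t):=f(\sqrt t)$ for $t\in[0,\delta^2)$. For $t>0$, $\alpha$ is smooth by the chain rule. At $t=0$, smoothness is the content of the classical Whitney lemma on even functions, which I would either cite or verify directly as follows: a Taylor expansion of $f$ to order $2N$ gives
\begin{equation*}
f(r)=\sum_{k=0}^{N}\frac{f^{(2k)}(0)}{(2k)!}\,r^{2k}+R_N(r),
\end{equation*}
where all odd-order terms vanish because $f$ is even, and $R_N$ is an even smooth function with $|R_N^{(j)}(r)|\leq C_{N,j}|r|^{2N+2-j}$ for $0\leq j\leq 2N+1$. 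Substituting $r=\sqrt t$, the polynomial part becomes the polynomial $\sum_{k=0}^{N}\frac{f^{(2k)}(0)}{(2k)!}\,t^{k}$ in $t$, and an induction on the order of differentiation shows that $t\mapsto R_N(\sqrt t)$ is of class $C^N$ on $[0,\delta^2)$. Since $N$ is arbitrary, $\alpha\in C^\infty([0,\delta^2),\R)$, and by construction $\alpha(|z|^2)=f(|z|)=a(z)$.

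The only step that requires care is the smoothness of $\alpha$ at $t=0$; the rest is bookkeeping with the gauge symmetry. Even there, the evenness of $f$, forced by the $\theta=\pi$ case of the gauge invariance, is exactly what is needed to kill the obstruction: without it, $\alpha(t)=f(\sqrt t)$ would fail to be differentiable at the origin. This lemma is used elsewhere in the paper (in particular, Lemma \ref{lem:expandzzj} needs exactly this observation to extract the radial dependence of gauge-invariant coefficients), so the statement is the minimal one required for those applications.
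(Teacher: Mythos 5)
Your proof is correct and follows essentially the same route as the paper: restrict $a$ to the real axis, use the gauge invariance (with $\theta=\pi$ and with a rotation onto $\R_{\ge 0}$) to see that this restriction is an even smooth function $f$ with $a(z)=f(|z|)$, and then invoke the classical Whitney result that a smooth even function is a smooth function of the square. The paper simply cites Whitney at this point, whereas you additionally sketch the standard Taylor-remainder verification, but this is the same argument in substance.
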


\proof
For $z=re^{\im \theta} $ we have $a(z)=a(r +\im 0) $.
Since $x\to a(x +\im 0 )$ is even and smooth,  we have $a(x +\im 0 ) = \alpha  (x^2)$ with $\alpha (x )$ smooth, see \cite{Whitney}.   So $a(z)=\alpha  (|z|^2)$.
\qed

\begin{lemma}\label{lem:expandzz}
Let $\delta>0$.
Suppose $a\in C^\infty(B_{\C^n}(0,\delta);\R)$ satisfies $a(e^{\im\theta}z_1,\cdots,e^{\im\theta}z_n)=a(z_1, \cdots, z_n)$ for all $\theta\in\R$ and $a(0,\cdots,0)=0$.
Then, for any $M>0$, there exists $b_{\mathbf{m}}$ s.t.
\begin{equation}\label{eq:lemexpandzz}
a(z_1,\cdots,z_n)=\sum_{j=1}^n\alpha_j(|z_j|^2)+\sum_{|m|=1}\mathbf Z^{\mathbf m}b_{\mathbf m}(z_1,\cdots,z_n)+\mathcal R^{0,M}(z,\mathbf Z),
\end{equation}
where $\alpha_j(|z_j|^2)=a(0,\cdots,0,z_j,0,\cdots,0)$.
Furthermore, $b_{\mathbf m}\in C^\infty(B_{\C^{ n}}(0,\delta);\R)$ and satisfies\\ $b_{\mathbf m}(e^{\im\theta}z_1,\cdots,e^{\im\theta}z_n)=b_{\mathbf m}(z_1,\cdots,z_n)$ for all $\theta\in\R$.
\end{lemma}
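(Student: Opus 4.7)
The plan is to express $a$ as a smooth function of the gauge-invariant coordinates $\rho_j=|z_j|^2$ and $\mathbf Z_{ij}=z_i\overline{z}_j$ ($i\ne j$) and then Taylor-expand in $\mathbf Z$. First I would invoke G.~W.~Schwarz's theorem on smooth invariants of a compact Lie group: applied to the $U(1)$-action $z\mapsto e^{\im\theta}z$ on $\C^n\cong\R^{2n}$, whose ring of polynomial invariants is generated by $\rho_1,\dots,\rho_n$ together with $\Re\mathbf Z_{ij}$, $\Im\mathbf Z_{ij}$ for $i<j$, the theorem produces a smooth function $F$ on a neighborhood of the origin in $\R^n\times L$ with $a(z)=F(\rho(z),\mathbf Z(z))$.

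Next I would Taylor-expand $F$ in the $\mathbf Z$ variables about $\mathbf Z=0$, treating $\rho$ as a parameter, to order $M-1$:
\begin{equation*}
F(\rho,\mathbf Z)=F(\rho,0)+\sum_{1\le|\mathbf m|\le M-1}\mathbf Z^{\mathbf m} C_{\mathbf m}(\rho)+R_M(\rho,\mathbf Z),\qquad |R_M|\le C|\mathbf Z|^M.
\end{equation*}
To identify $F(\rho,0)$ with $\sum_j\alpha_j(\rho_j)$ I would use that whenever $\rho$ lies on a coordinate axis (only one $\rho_{j_*}$ non-zero), the locus $\mathbf Z=0$ is actually realized by the corresponding axis point $z$, so that $F(\rho,0)=a(z)=\alpha_{j_*}(\rho_{j_*})=\sum_j\alpha_j(\rho_j)$ there. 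Hence $B(\rho):=F(\rho,0)-\sum_j\alpha_j(\rho_j)$ is smooth on $\R^n$ and vanishes on the union of coordinate axes; an iterated application of the elementary Hadamard lemma in the $\rho$ variables then yields $B(\rho)=\sum_{i<j}\rho_i\rho_j\,E_{ij}(\rho)$ with smooth $E_{ij}$.

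Finally I would pull everything back to $z$: using $\rho_i\rho_j=\mathbf Z_{ij}\mathbf Z_{ji}$, each summand of $B$ becomes $\mathbf Z_{ij}\cdot(\mathbf Z_{ji}E_{ij}(\rho(z)))$, which fits into the $|\mathbf m|=1$ middle sum; likewise each Taylor term with $|\mathbf m|\ge 1$ factors as $\mathbf Z^{\mathbf e_{ij}}\cdot(\mathbf Z^{\mathbf m-\mathbf e_{ij}}C_{\mathbf m}(\rho(z)))$, again of the required form. The remainder $R_M(\rho(z),\mathbf Z(z))$ is directly $\resto^{0,M}$, and gauge-invariance of the resulting $b_{\mathbf m}$ is automatic because every absorbed factor depends on $z$ only through $\rho$ or through products $z_i\overline{z}_j$.

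The main obstacle is the appeal to Schwarz's theorem; a self-contained alternative would Taylor-expand $a$ in $(z,\bar z)$ to order $\ge 2M$, use gauge invariance to restrict to monomials with $|\alpha|=|\beta|$, peel off the single-axis monomials to reconstruct each $\alpha_j$, and extract a $z_i\bar z_j$ factor from every multi-axis monomial; one then has to verify that the residual error vanishes on each coordinate axis and obeys the quantitative bound $C|\mathbf Z|^M$ rather than merely $C|z|^{2M+1}$, which is the delicate bookkeeping step.
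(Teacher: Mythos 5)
Your argument is correct in substance but follows a genuinely different route from the paper's. The paper proves the lemma by hand: it iterates first--order Taylor expansions with integral remainders, peeling off the axis restrictions $a(0,\cdots,0,z_j,0,\cdots,0)$ (which become $\alpha_j(|z_j|^2)$ via the even--function lemma of Whitney, i.e.\ Lemma \ref{lem:gaugesmooth}), keeping the mixed terms $z_j\overline{z}_k$ as they stand, and disposing of the terms $z_jz_k$, $\overline{z}_j\overline{z}_k$ by noting that gauge invariance forces their coefficient functions to vanish at $0$ (only Taylor monomials with $|\alpha|=|\beta|$ survive) and re--expanding them, repeating until the residue is $O(|\mathbf Z|^M)$. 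You instead invoke Schwarz's theorem on smooth invariants of the diagonal $U(1)$--action to write $a=F(\rho,\mathbf Z)$ with $\rho_j=|z_j|^2$, Taylor--expand $F$ in $\mathbf Z$, and reconcile $F(\rho,0)$ with $\sum_j\alpha_j(\rho_j)$ through a Hadamard--type factorization in the ideal generated by the $\rho_i\rho_j$, $i\neq j$, finally using $\rho_i\rho_j=\mathbf Z_{ij}\mathbf Z_{ji}$ to convert everything into $|\mathbf m|=1$ terms with gauge--invariant coefficients. This is conceptually cleaner, gives the whole expansion in one stroke, and makes the remainder manifestly a smooth function of the auxiliary variable $\mathbf Z\in L$ with the bound $C|\mathbf Z|^M$; the price is the reliance on Schwarz's theorem plus two small repairs. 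First, $a$ is defined only on a ball, so you need an invariant cutoff or a local version of Schwarz, and the representation is then obtained on a possibly smaller ball (harmless here, since the symbol classes of Definition \ref{def:scalSymb} are local at $0$, whereas the paper's integral formulas produce $b_{\mathbf m}$ on all of $B_{\C^n}(0,\delta)$). Second, $F(\rho,0)-\sum_j\alpha_j(\rho_j)$ is known to vanish only on the \emph{nonnegative} half--axes, because only $\rho_j=|z_j|^2\ge 0$ is realized by the Hilbert map; your iterated Hadamard argument should therefore be run with integral remainders, using vanishing on the half--axes only, which still yields the factorization on the closed orthant --- and that is all that is needed after substituting $\rho_j=|z_j|^2$. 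Your closing ``self--contained alternative'' is essentially the paper's strategy, and the delicate bookkeeping you anticipate there is exactly what the paper carries out.
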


\proof
First, we expand $a$ as
\begin{equation*}
a(z_1,\cdots,z_n)=a(z_1,0,\cdots,0)+\int_0^1\big (\sum_{j=2}^n\partial_j a(z_1,tz_2\cdots,t z_n)z_j+\partial_{\overline j} a(z_1,tz_2\cdots,t z_n)\overline z_j\big )\,dt.
\end{equation*}
Then, by
\begin{equation*}
\begin{aligned}
&a(0,z_2,\cdots, z_n)=\int_0^1\big (\sum_{j=2}^n\partial_j a(0,tz_2\cdots,t z_n)z_j+\partial_{\overline j} a(0,tz_2\cdots,t z_n)\overline z_j\big )\,dt,
\end{aligned}
\end{equation*}
we have
\begin{equation*}
\begin{aligned}
&a(z_1,\cdots,z_n)=a(z_1,0,\cdots,0)+a(0,z_2,\cdots, z_n) \\& +
\int_0^1 \sum_{j=2}^n \Big [ \(\partial_j a(z_1,tz_2\cdots,t z_n)-\partial_j a(0,tz_2\cdots,t z_n)\)z_j
 \\& \ \ \
+\(\partial_{\overline j} a(z_1,tz_2\cdots,t z_n)-\partial_{\overline j} a(0,tz_2\cdots,t z_n)\)\overline z_j\Big ] \,dt
=a(z_1,0,\cdots,0)+a(0,z_2,\cdots, z_n)\\&+
\sum_{j\geq2}\int_0^1\int_0^1 \Big [ \(\partial_1\partial_{ j} a(sz_1,tz_2\cdots,t z_n)\)z_1z_j +
 \(\partial_{\overline 1}\partial_{ j} a(sz_1,tz_2\cdots,t z_n)\)\overline z_1z_j \\&  \qquad +
 \(\partial_{ 1}\partial_{\overline j} a(sz_1,tz_2\cdots,t z_n)\)\overline z_1z_j +
 \(\partial_{\overline 1}\partial_{\overline j} a(sz_1,tz_2\cdots,t z_n)\) \overline z_1\overline z_j\Big ]dsdt.
\end{aligned}
\end{equation*}
Iterating this argument first for  $a(0,z_2,\cdots, z_n)$ and then for $a(0,...,0, z_{k},\cdots, z_n)$, we have
\begin{equation}\label{eq:lemexpandzz1}
\begin{aligned}
 & a(z_1,\cdots,z_n) = a(z_1,0,\cdots,0)+a(0,z_2,0,\cdots,0)+\cdots +a(0,\cdots,0,z_n)\\&+
\sum_{k=1}^{n-1}\sum_{j\geq k+1} \int_0^1\int_0^1\left [ \(  \partial_k\partial_{ j} a(0,\cdots,0,sz_k,tz_{k+1}\cdots,t z_n)\)z_kz_j \right.    \\&  +
 \(\partial_{\overline k}\partial_{ j} a(0,\cdots,0,sz_k,tz_{k+1}\cdots,t z_n)\)\overline z_kz_j  +
 \(\partial_{ k}\partial_{\overline j} a(0,\cdots,0,sz_k,tz_{k+1}\cdots,t z_n)\) z_k\overline{z_j} \\&\quad\left.+
 \(\partial_{\overline k}\partial_{\overline j} a(0,\cdots,0,sz_k,tz_{k+1}\cdots,t z_n)\) \overline z_k\overline z_j \right ]  \, dsdt .
\end{aligned}
\end{equation}
By Lemma \ref{lem:gaugesmooth}, there exist smooth $\alpha_j$ s.t. $\alpha_j(|z_j|^2)=a(0,\cdots,0,z_j,0,\cdots,0)$.
Furthermore,  the 3rd   line  of \eqref{eq:lemexpandzz1} has the same form as the 2nd term in the r.h.s.\ of \eqref{eq:lemexpandzz}.
So, it remains to handle the terms in the 2nd and 4th lines of \eqref{eq:lemexpandzz1}.
Since they can be treated similarly, we focus only  the  2nd line of \eqref{eq:lemexpandzz1}.
Set
$$
\beta_{jk}(z_k,\cdots, z_n)=\int_0^1\int_0^1\(\partial_k\partial_{ j} a(0,\cdots,0,sz_k,tz_{k+1}\cdots,t z_n)\)\,dsdt,
$$
with $j\geq k+1$.
Notice that $\partial ^\alpha  \overline{\partial} ^\beta  a(0,...,0)\neq 0 $  by the gauge invariance of $a$ is easily shown to imply $|\alpha | =|\beta|$. This in particular
implies $\beta_{jk}(0,\cdots,0)=0$.
So  as in \eqref{eq:lemexpandzz1}  we have
 \begin{align}
&\beta_{jk}(z_k,\cdots,z_n)=\beta_{jk}(z_k,0,\cdots,0)+\beta_{jk}(0,z_{k+1},0,\cdots,0)+\cdots +\beta_{jk}(0,\cdots,0,z_n)\nonumber \\&+
\sum_{m=k}^{n-1}\sum_{l\geq m+1}\int_0^1\int_0^1 \left [ \(\partial_m\partial_{ l} \beta_{jk}(0,\cdots,0,sz_m,tz_{m+1}\cdots,t z_n)\) z_mz_l \right. \label{eq:appB1}\\&\quad+
 \(\partial_{\overline{m}}\partial_{ l} \beta_{jk}(0,\cdots,0,sz_m,tz_{m+1}\cdots,t z_n)\)\overline z_mz_l  +
 \(\partial_m\partial_{\overline l} \beta_{jk}(0,\cdots,0,sz_m,tz_{m+1}\cdots,t z_n)\)z_m\overline z_l \nonumber\\&\quad\left.+
 \(\partial_{\overline m}\partial_{\overline l} \beta_{jk}(0,\cdots,0,sz_m,tz_{m+1}\cdots,t z_n)\)\overline z_m\overline z_l\right ]   \,dsdt. \nonumber
\end{align}
Since $z_l^2\beta_{jk}(0,\cdots,0,z_l,0,\cdots,0)$ is gauge invariant  by Lemma \ref{lem:gaugesmooth}  we have
$$
z_l^2\beta_{jk}(0,\cdots,0,z_l,0,\cdots,0)=\tilde \beta_{jkl}(|z_l|^2)=\tilde \beta_{jkl}(0)+\tilde \beta_{jkl}'(0)|z_l|^2+\gamma_{jkl}(|z_l|^2)|z_l|^4,
$$
for some smooth $\tilde \beta_{jkl}$ and $\gamma_{jkl}$.
By the smoothness of $\beta_{jk}$, we have $\tilde \beta_{jkl}(0)=\tilde \beta_{jkl}'(0)=0$.
Therefore,
$$
\beta_{jk}(0,\cdots,0,z_l,0,\cdots,0)z_kz_j=\gamma_{jkl}(|z_l|^2)  z_kz_j\overline{z}_l^2 \text{ with $k< \min \{ j,l \}$}.
$$
This can be absorbed in the 2nd term of the r.h.s.\ of \eqref{eq:lemexpandzz}.
 The same is true of the contribution of the last 2 lines of \eqref{eq:appB1}. The term
\begin{equation}\label{eq:appB3}
 \int_0^1\int_0^1\(\partial_m\partial_{ l} \beta_{jk}(0,\cdots,0,sz_m ,tz_{m+1}\cdots,t  z_n)\)z_mz_lz_jz_k\,dsdt
\end{equation}
does not have as factors components of $ {\mathbf{Z}}=(z_i  \overline{z}_j) _{i\neq j }$ but it is $O(|\mathbf Z|^2)$.
Treating  \eqref{eq:appB3} the way we treated  the 2nd line of \eqref{eq:lemexpandzz1}.
and repeating the procedure a sufficient number of times, we can express  \eqref{eq:appB3}
as a sum of a summation like the 2nd in the r.h.s. of \eqref{eq:lemexpandzz}   and of a
 $O(|\mathbf Z|^M)$ for arbitrary $M$. Furthermore, notice that since we can think of the dependence on $ {\mathbf{Z}}=(z_i  \overline{z}_j) _{i\neq j }$ to be polynomial, and so  the remainder term $R^{0,M}(z,\mathbf Z)$ in  \eqref{eq:lemexpandzz} can be thought to depend polynomially  on $ {\mathbf{Z}}=(z_i  \overline{z}_j) _{i\neq j }$, it can be thought as the
 restriction of a function in $ {\mathbf{Z}}\in  L$.
\qed

\begin{lemma}\label{lem:expandzzj} Take  $a(z_1,\cdots,z_n)$ like in Lemma \ref{lem:expandzz}.
Then, for any $M>0$, there exist smooth $a_j$ and $b_{j\mathbf{m}}$ s.t. for  $\alpha_j(|z_j|^2)=a(0,\cdots,0,z_j,0,\cdots,0)$ we have
\begin{equation}\label{eq:lemexpandzzj}
a(z_1,\cdots,z_n)=\sum_{j=1}^n\alpha_j(|z_j|^2)+\sum_{1\le |\mathbf{m}|\leq M-1}\mathbf Z^{\mathbf m}b_{j\mathbf m}(|z_j|^2)+\mathcal R^{0,M}(z,\mathbf Z).
\end{equation}
\end{lemma}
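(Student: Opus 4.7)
The plan is to deduce Lemma \ref{lem:expandzzj} from Lemma \ref{lem:expandzz} by iteration in the $\mathbf{Z}$-order. Starting from $a$ (which is gauge invariant with $a(0)=0$), Lemma \ref{lem:expandzz} gives
\begin{equation*}
a(z)=\sum_{j=1}^n\alpha_j(|z_j|^2)+\sum_{|\mathbf{m}|=1}\mathbf{Z}^{\mathbf{m}}b^{(1)}_{\mathbf{m}}(z)+\mathcal{R}^{0,M}(z,\mathbf{Z}),
\end{equation*}
where each $b^{(1)}_{\mathbf{m}}$ is smooth and gauge invariant. The constant part $b^{(1)}_{\mathbf{m}}(0)$ is already an admissible coefficient (a constant is trivially a function of $|z_j|^2$ for any fixed $j$; pick one such assignment), so it goes into the desired sum. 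To the remainder $\tilde{b}^{(1)}_{\mathbf{m}}:=b^{(1)}_{\mathbf{m}}-b^{(1)}_{\mathbf{m}}(0)$, which vanishes at $0$ and is gauge invariant, I would apply Lemma \ref{lem:expandzz} again, this time with remainder order $M-1$, to get
\begin{equation*}
\tilde{b}^{(1)}_{\mathbf{m}}(z)=\sum_{j=1}^n\alpha^{(\mathbf{m})}_j(|z_j|^2)+\sum_{|\mathbf{n}|=1}\mathbf{Z}^{\mathbf{n}}b^{(2)}_{\mathbf{m}\mathbf{n}}(z)+\mathcal{R}^{0,M-1}(z,\mathbf{Z}).
\end{equation*}
Multiplying through by $\mathbf{Z}^{\mathbf{m}}$ (with $|\mathbf{m}|=1$) produces terms of the desired form (the $\alpha^{(\mathbf{m})}_j$ give coefficients depending only on $|z_j|^2$ attached to $\mathbf{Z}^{\mathbf{m}}$), plus new terms $\mathbf{Z}^{\mathbf{m}+\mathbf{n}}b^{(2)}_{\mathbf{m}\mathbf{n}}(z)$ of $\mathbf{Z}$-order $2$, plus a remainder of the form $\mathbf{Z}^{\mathbf{m}}\mathcal{R}^{0,M-1}$, which is absorbed in $\mathcal{R}^{0,M}(z,\mathbf{Z})$.

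The key point is that the multiplicative boost by $\mathbf{Z}^{\mathbf{m}}$ converts an order-$M-|\mathbf{m}|$ remainder from the inner application into the final order-$M$ remainder. I would iterate this procedure: at step $k$, expand each of the currently outstanding $\mathbf{Z}^{\mathbf{m}}$-coefficients (with $|\mathbf{m}|=k$) by applying Lemma \ref{lem:expandzz} with remainder order $M-k$, peel off the constant, extract the $\sum_j \alpha^{(\cdot)}_j(|z_j|^2)$ contribution (which yields coefficients of the desired form $b_{j\mathbf{m}}(|z_j|^2)$), and roll the $|\mathbf{n}|=1$ terms into order-$(k+1)$ outstanding terms. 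After $k=M-1$ iterations every outstanding coefficient sits in a term of order at least $M$ and hence belongs to $\mathcal{R}^{0,M}(z,\mathbf{Z})$, completing the expansion.

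Two technical points will need care but are routine. First, to apply Lemma \ref{lem:expandzz} at each stage I need the gauge invariance of each intermediate coefficient; this is automatic because $\mathbf{Z}^{\mathbf{m}}$ is gauge invariant and so is the sum in which the coefficient appears, so gauge invariance is inherited (and Lemma \ref{lem:expandzz} itself asserts gauge invariance of the produced $b_{\mathbf{m}}$'s). Second, the remainder $\mathcal{R}^{0,M-k}$ multiplied by $\mathbf{Z}^{\mathbf{m}}$ with $|\mathbf{m}|=k$ lies in $\mathcal{R}^{0,M}$, which is the bound needed; this follows directly from the definition of the $\mathcal{R}^{i,j}$ class (see Definition \ref{def:scalSymb}) since the $|\mathbf{Z}|$ factor in the estimate combines as $|\mathbf{Z}|^k\cdot|\mathbf{Z}|^{M-k}=|\mathbf{Z}|^M$.

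The main obstacle is keeping the bookkeeping clean, particularly when combining $\mathbf{Z}^{\mathbf{m}}$ and $\mathbf{Z}^{\mathbf{n}}$ into $\mathbf{Z}^{\mathbf{m}+\mathbf{n}}$ at each iteration step and making sure that constants produced at intermediate steps are assigned consistently to a single $j$-slot in the final formula (one can simply fix a canonical $j_0$ associated with each $\mathbf{m}$ that appears, e.g., the smallest index occurring in $\mathbf{m}$). Once these bookkeeping issues are handled, the proof is an inductive application of Lemma \ref{lem:expandzz} in which only the decrement of the target remainder order $M\mapsto M-k$ and the increment of the $\mathbf{Z}$-order $k\mapsto k+1$ interact, and the iteration terminates in finitely many steps.
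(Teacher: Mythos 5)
Your proposal is correct and is essentially the paper's own proof: the paper disposes of this lemma with the single line ``one only has to repeatedly use Lemma \ref{lem:expandzz}'', and your iteration on the gauge-invariant coefficients, with the order bookkeeping $|\mathbf{Z}|^{k}\cdot|\mathbf{Z}|^{M-k}=|\mathbf{Z}|^{M}$ absorbing the remainders, is exactly the intended argument spelled out in detail.
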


\proof
To prove \eqref{eq:lemexpandzzj}  one only has to repeatedly use Lemma \ref{lem:expandzz}.
\qed

\begin{lemma}\label{lem:expandzzS}
Suppose that $a:\C^n\to \mathcal S$ is smooth from $B_{\R^{2n}}(0,\delta_r)$ to $\Sigma_r$ for arbitrary $r\in \R$ and satisfies $a(e^{\im\theta}z_1,\cdots,e^{\im\theta}z_n)=a(z_1,\cdots,z_n)$, $a(0,\cdots,0)=0$.
Then, for any $M>0$, there exist smooth $a_j$ and $b_{j\mathbf{m}}$ s.t. for  $\alpha_j(|z_j|^2)=a(0,\cdots,0,z_j,0,\cdots,0)$ we have
\begin{equation}\label{eq:lemexpandzzS}
a(z_1,\cdots,z_n)=\sum_{j=1}^n\alpha_j(|z_j|^2)+\sum_{1\le |\mathbf{m}|\leq M-1}\mathbf Z^{\mathbf m}G_{j\mathbf m}(|z_j|^2)+\mathcal S^{0,M}(z,\mathbf Z).
\end{equation}
\end{lemma}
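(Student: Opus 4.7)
The plan is to repeat, essentially verbatim, the argument used to prove Lemmas \ref{lem:expandzz} and \ref{lem:expandzzj}, with scalar-valued functions replaced by $\Sigma_r$-valued ones. The only genuinely new input needed is a Banach-valued analog of Lemma \ref{lem:gaugesmooth}, after which every other step of the scalar proof transfers mechanically because $\Sigma_r$ is a Banach space and $a$ is assumed smooth into $\Sigma_r$ for every $r$.

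First I would state and prove the following vector-valued Whitney lemma: if $A\in C^\infty(B_\C(0,\delta),\Sigma_r)$ satisfies $A(e^{i\theta}z)=A(z)$ for all $\theta\in\R$, then there exists $\mathcal{A}\in C^\infty([0,\delta^2),\Sigma_r)$ with $A(z)=\mathcal{A}(|z|^2)$. The argument is the same as in the scalar case: writing $z=re^{i\theta}$ and using gauge invariance, one reduces to $f(r):=A(r+i0)$, which is smooth into the Banach space $\Sigma_r$ and even (since $A(-r)=A(e^{i\pi}r)=A(r)$), hence all odd derivatives at $0$ vanish; Whitney's theorem on even smooth functions extends to Banach-valued maps (either appeal to the classical statement applied to $\langle A(\cdot),\varphi\rangle$ for $\varphi$ in a norming family together with a smoothness-plus-uniformity check, or use directly the Taylor formula with remainder in $\Sigma_r$ and the standard iterated division by $r^2$ argument).

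Next I would mimic the proof of Lemma \ref{lem:expandzz}: via the fundamental theorem of calculus one obtains the telescoping identity \eqref{eq:lemexpandzz1} with $a$ replaced by our $\Sigma_r$-valued $a$, noting that the partial derivatives $\partial_k\partial_j a$, $\partial_{\bar k}\partial_j a$, etc.\ are $\Sigma_r$-valued (since $a$ is smooth into $\Sigma_r$ for every $r$), and the integrals make sense as Bochner integrals with values in $\Sigma_r$. The pieces $a(0,\ldots,0,z_j,0,\ldots,0)$ are gauge invariant in $z_j$ alone, so the vector-valued Whitney lemma above produces $\alpha_j(|z_j|^2)$ with $\alpha_j\in C^\infty([0,\delta^2),\Sigma_r)$. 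The mixed terms $z_kz_j$, $\bar z_kz_j$, $z_k\bar z_j$, $\bar z_k\bar z_j$ get iteratively expanded exactly as in the scalar case; each time one isolates a diagonal gauge-invariant factor of the form $\beta_{jk\cdots}(0,\ldots,0,z_l,0,\ldots,0)$ one invokes the vector Whitney lemma again to absorb it into a $G_{j\mathbf m}(|z_j|^2)$ coefficient, while off-diagonal factors become contributions $\mathbf Z^{\mathbf m}G_{j\mathbf m}(|z_j|^2)$ after reindexing.

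Iterating the expansion $M$ times (as in the passage from Lemma \ref{lem:expandzz} to Lemma \ref{lem:expandzzj}), the remaining integrals carry $M$ factors belonging to $\{z_k\bar z_j : k\neq j\}$ together with smooth $\Sigma_r$-valued coefficients, so their $\Sigma_r$-norm is bounded by $C|\mathbf Z|^M$ uniformly on $B_{\C^n}(0,\delta)$; since this works for every $r$, the remainder satisfies the estimate \eqref{eq:opSymb} of Definition \ref{def:opSymb} with $(i,j)=(0,M)$ and arbitrary $K$, i.e.\ it is $\mathbf{S}^{0,M}_{K,\infty}(z,\mathbf Z)$, yielding \eqref{eq:lemexpandzzS}.

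The only real obstacle is the Banach-valued Whitney lemma; once it is in place, the rest is a line-by-line rewriting of the scalar argument, with $|\cdot|$ replaced by $\|\cdot\|_{\Sigma_r}$ and ordinary integrals replaced by Bochner integrals into $\Sigma_r$. No new combinatorics are needed, and the independence of $\delta_r$ on $r$ in the hypothesis ensures that the same neighborhood works for every $\Sigma_r$-estimate, which is what is required to conclude that the remainder is a genuine symbol $\mathbf{S}^{0,M}_{\infty,\infty}$.
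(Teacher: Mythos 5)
Your proposal is correct and follows essentially the same route as the paper, which simply states that the proof is the same as that of Lemmas \ref{lem:gaugesmooth}--\ref{lem:expandzzj} carried out with $\Sigma_r$-valued functions. Your explicit treatment of the Banach-valued Whitney even-function lemma and of the Bochner integrals merely fills in details the paper leaves implicit.
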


\proof
The proof is same as the proof of Lemmas \ref{lem:gaugesmooth}--\ref{lem:expandzzj}
\qed

\section*{Acknowledgments}   S.C. was partially funded      grants FIRB 2012 (Dinamiche Dispersive) from the Italian Government  and   FRA 2013 from the University of Trieste.
M.M. was supported by the Japan Society for the Promotion of Science (JSPS) with the Grant-in-Aid for Young Scientists (B) 24740081.

\bibliographystyle{amsplain}

Department of Mathematics and Geosciences,  University
of Trieste, via Valerio  12/1  Trieste, 34127  Italy. {\it E-mail Address}: {\tt scuccagna@units.it}
\\

Department of Mathematics and Informatics,
Faculty of Science,
Chiba University,
Chiba 263-8522, Japan.
{\it E-mail Address}: {\tt maeda@math.s.chiba-u.ac.jp}

\end{document}